\documentclass[a4paper,twoside]{amsart}
\usepackage[utf8]{inputenc}
\usepackage{enumitem}
\usepackage[T1]{fontenc}
\usepackage{amsmath}
\usepackage{amsfonts}
\usepackage{amssymb}
\usepackage{amsthm}
\usepackage{graphicx}
\usepackage{caption}
\usepackage{subcaption}
\usepackage{mathtools}
\usepackage{import}
\usepackage{fancyhdr}

%\usepackage{float}
%\usepackage[procnames]{listings}

%\usetikzlibrary{matrix}

%\setcounter{tocdepth}{0}

%%%%%%%% ATtention Arxiv
%\usepackage[hidelinks]{hyperref}
%\hypersetup{
%  colorlinks   = true, 
%  urlcolor     = black,
%  linkcolor    = red, 
%  citecolor   = blue 
%}

\graphicspath{{drawings/}}

\newcommand{\p}{\ensuremath{\textgoth{p}}}

\newcommand{\Q}{\ensuremath{\mathbb{Q}}}
\newcommand{\Z}{\ensuremath{\mathbb{Z}}}
\newcommand{\N}{\ensuremath{\mathbb{N}}}

\newcommand{\rp}{\text{rp}}
\newcommand{\Sym}{\text{Sym}}

\DeclareRobustCommand{\qbinom}{\genfrac[]{0pt}{}}

\theoremstyle{plain}
\newtheorem{thm}{Theorem}
\newtheorem*{theorem}{Theorem}
\newtheorem{prop}[thm]{Proposition}
\newtheorem{lemma}[thm]{Lemma}
\newtheorem{cor}[thm]{Corollary}

\newtheorem{Not}[thm]{Notation}

\theoremstyle{definition}
\newtheorem{defn}[thm]{Definition}
				
\theoremstyle{remark}
\newtheorem{rem}[thm]{Remark}
\newtheorem{ex}[thm]{Example}

%%%%%%%%%%%%%%%%%%%%%%%%%%%%%%%%%%%%%%%%%%%%%%%%%%%%%%%%%%%%%%%%%%%%%%%%%%%%%%%%%%%%%%% Commandes de Jules

%tickz

\usepackage{tikz}
\tikzstyle cross=[preaction={draw=white, -, line width=6pt}]
\tikzstyle normal=[thick]
\usepackage{braids}
\usetikzlibrary{arrows,decorations.markings}
\usepackage{relsize,exscale}
%\usetikzlibrary{arrows,decorations.markings}
\usetikzlibrary{cd}

% Alphabets
\newcommand{\BC}{\mathbb{C}}
\newcommand{\BN}{\mathbb{N}}
\newcommand{\BQ}{\mathbb{Q}}
\newcommand{\BZ}{\mathbb{Z}}

\newcommand{\CA}{\mathcal{A}}
\newcommand{\CB}{\mathcal{B}}
\newcommand{\CE}{\mathcal{E}}
\newcommand{\CK}{\mathcal{K}}
\newcommand{\CH}{\mathcal{H}}
\newcommand{\CP}{\mathcal{P}}
\newcommand{\CR}{\mathcal{R}}

%Tresses et permutations

\newcommand{\Bn}{\mathcal{B}_n}
\newcommand{\PBn}{\mathcal{PB}_n}

\newcommand{\Sk}{\mathfrak{S}}

\newcommand{\Id}{\mathop{Id}}
\newcommand{\Homeo}{\mathop{Homeo}}
\newcommand{\Mod}{\mathop{Mod}}
\newcommand{\inv}{\mathop{inv}}

%begin end
\newcommand{\bapp}{\left. \begin{array}{rcl}}
\newcommand{\eapp}{\end{array} \right.}
\newcommand{\bfct}{\left\lbrace \begin{array}{rcl}}
\newcommand{\efct}{\end{array} \right.}

%Conf space
\newcommand{\Conf}{\operatorname{Conf}}

%homology theories
\newcommand{\Hlf}{\operatorname{H} ^{\mathrm{lf}}}
\newcommand{\Hnot}{\operatorname{H}}

\newcommand{\Hrelm}{\operatorname{\CH}^{\mathrm{rel }}}

\newcommand{\Crelm}{\operatorname{C}^{\mathrm{rel }-}}

% Laurent et partition de r
\newcommand{\Laurent}{\CR}

\newcommand{\RR}{\operatorname{R}}
\newcommand{\Laurentcomplet}{\widehat{\Laurent}^{\widehat{I}}}

% Uq
\newcommand{\slt}{{\mathfrak{sl}(2)}}
\newcommand{\Uq}{{U_q\slt}}

\newcommand{\UqhL}{{U^{\frac{L}{2}}_q\slt}}

\newcommand{\qbin}[2]{\left[\begin{array}{c}
      #1 \\
      #2 \end{array}\right]}

\newcommand{\Jones}{\operatorname{J}}
\newcommand{\J}{\operatorname{J}}
\newcommand{\A}{\operatorname{\CA}}
\newcommand{\ADO}{\operatorname{ADO}}
\newcommand{\SB}{\operatorname{SB}}

%Linalg
\newcommand{\End}{\operatorname{End}}
\newcommand{\Tr}{\operatorname{Tr}}
\newcommand{\Trp}{\Tr_{2,\ldots,n}}
\newcommand{\Trpt}{\widetilde{\Tr}_{2,\ldots,n}}
\newcommand{\spec}{\operatorname{spec}}

\begin{document}

\title[Unified invariant of knots from braids]{Unified invariant of knots from homological braid action on Verma modules}% and applications}

\author[J. Martel]{Jules Martel} 
\address{Institut de mathématiques de Bourgogne, Université de Bourgogne, 9 avenue Alain Savary 21000 Dijon, France} 
\email{jules.martel-tordjman@u-bourgogne.fr}

\author[S. Willetts]{Sonny Willetts} 
\address{Institut de mathématiques de Toulouse, Université de Toulouse, 118 route de Narbonne, 31000 Toulouse, France} 
\email{sonny.willetts@math.univ-toulouse.fr}

\maketitle
\setcounter{tocdepth}{1}

\begin{abstract}
We re-build the quantum $\slt$ unified invariant of knots $F_{\infty}$ from braid groups' action on tensors of Verma modules. It is a two variables series having the particularity of interpolating both families of colored Jones polynomials and ADO polynomials, i.e. semi-simple and non semi-simple invariants of knots constructed from quantum $\slt$. We prove this last fact in our context which re-proves (a generalization of) the famous Melvin-Morton-Rozansky conjecture first proved by Bar-Natan and Garoufalidis. We find a symmetry of $F_{\infty}$ nicely generalizing the well known one of the Alexander polynomial, ADO polynomials also inherit this symmetry. It implies that quantum $\slt$ non semi-simple invariants are not detecting knots' orientation. Using the homological definition of Verma modules we express $F_{\infty}$ as a generating sum of intersection pairing between fixed Lagrangians of configuration spaces of disks. Finally, we give a formula for $F_{\infty}$ using a generalized notion of determinant, that provides one for the ADO family. It generalizes that for the Alexander invariant.  
\end{abstract}

\tableofcontents

\section{Introduction}

\subsection{Quantum invariants of knots associated with $\slt$}

From a quantum group and its category of finite dimensional representations, one can construct invariants of knots, links (and ribbon graphs). It is the original construction of N. Reshetikhin and  V. Turaev \cite{RT}. A quantum group could be think as a one parameter deformation of the enveloping algebra of a given semi-simple Lie algebra. In the present paper we only study the knot invariants arising from $\Uq$ which is a standard notation for the quantum group associated with $\slt$. Such knot invariants could be computed from {\em braid group} representations that are also part of the theory since they are defined using same finite dim. representations of $\Uq$. It is the approach of the present work. Historically, there are two families of knots that were extracted from the corresponding Reshetikhin--Turaev construction. 
\begin{enumerate}
\item The colored Jones polynomials $\lbrace \J_N\in \BZ\left[q^{\pm 1} \right], N \in \BN \rbrace$ obtained from {\em standard irreducible} rep. of dim. $N+1$ of $\Uq$ denoted $S_N$ as input (see e.g. \cite{MuMu}), by use of Reshetikhin--Turaev construction. They could all be derived from the famous Jones polynomial \cite{JonesPoly}. 
\item The {\em ADO} polynomials, sometimes called {\em colored Alexander polynomials}, $\lbrace \ADO_r \in \BC\left[A^{\pm 1} \right], r \in \BN\rbrace$ arising from particular irreducible representations of $\Uq$ when $q$ is evaluated at a root of unity. They were first defined by Akutsu--Deguschi--Ohtsuki \cite{akutsu1992invariants}, but they require a slight modification of the original tool developed by Reshetikhin--Turaev while the philosophy of using $\Uq$ is a constant. The first of the family is the well known Alexander polynomial denoted $\A$. 
\end{enumerate}

The construction of Reshetikhin and Turaev uses the fact that in categories of representations of quantum groups they find inherent tools of the category behaving nicely with {\em Reidemeister moves}. Namely, there are {\em $R$-matrices} allowing to linearly represent braid groups carrying Reidemeister moves for braids, and {\em Markov traces} allowing to extract knot invariants from braid groups representations hence taking care of the remainder Reidemeister moves. Even though finding these two objects in any context is not trivial (e.g. colored Jones vs ADO polynomials, where they are differently defined), they are always operators on $\Uq$ modules satisfying nice equation translating Reidemeister moves in an algebraic language. In the end, one obtains powerful topological invariants but their full algebraic flavor makes the topological interpretation of their content difficult and the subject of many conjectures in the field. One of the most famous expectation of topological content is the hyperbolic volume, which is the subject of the {\em volume conjecture} first stated by R. Kashaev \cite{KashVol} and relocated in the context of colored Jones polynomials by Murakami and Murakami in \cite{MuMu}. The question on how to interpret topologically quantum invariants is more generally central. 

Two other questions could be addressed to the picture:
\begin{itemize}
\item Could we construct knot invariants out of infinite dim. modules of $\Uq$? While Reshetikhin--Turaev construction requires finite dimension. 
\item Are quantum invariants colored Jones and ADO related or even equivalent? Even though the theory of representations of $\Uq$ is singularly different when $q$ is a root of one (ADO case) than when $q$ is generic (colored Jones case). 
\end{itemize}

These three last questions (the two above and the topological interpretation of the construction of knot invariants) have recently reached new steps by use of the same objects: $\Uq$ Verma modules, which are infinite dim. modules on $\Uq$. 

In \cite{willetts2020unification} the second author has constructed a knot invariant denoted $F_{\infty}$ using as input $\Uq$ Verma modules. The obtained object is a two variable infinite sum converging in the sense that it lives in a nice completion of the ring of Laurent polynomials with two variables $\Laurent := \BZ\left[q^{\pm 1} , s^{\pm 1} \right]$. By nice, we mean e.g. that $F_{\infty}$ can be evaluated at $q$ being a root of unity or $s$ being a power of $q$. Moreover in the first case $F_{\infty}$ recovers the ADO polynomials and in the second the colored Jones ones. This double interpolation property implies an equivalence between the two families of knots. 

In \cite{martel2020homological}, the first author has reconstructed $\Uq$ Verma modules, their tensor products, and the quantum braid group representation upon them from {\em homology of configuration spaces of points in punctured disks with coefficients in a local ring isomorphic to $\Laurent$}. The action of braid groups on these modules is given by (more or less) homeomorphisms of configuration spaces, using the fact that braid groups are {\em mapping class groups} (isotopy classes of homeomorphisms) of punctured disks. Hence one can use this purely homological definition of Verma modules and quantum braid group representations avoiding dealing with quantum modules theory, shedding light on the topological content of it. 

The present paper studies in details the tools surrounding Verma modules (their tensor product, braid group representations and knot invariant) developed in the two papers \cite{willetts2020unification,martel2020homological}, more particularly what topological information one could extract out of $F_{\infty}$. 

Next steps could be achieved using $\Uq$ and its modules, for instance constructions of {\em topological quantum field theories (TQFTs)} which is a categorical construction providing invariants of links and embedded graphs (extending those of knots), $3$-manifolds and mapping class groups of surfaces representations (extending those of braids). This was initiated by Reshetikhin and Turaev again \cite{RT2} and the universal construction of Blanchet--Habegger--Masbaum--Vogel \cite{BHMV}. In the colored Jones context (for which the category of $\Uq$ modules is semi-simple), the output is the {\em Witten--Reshetikhin--Turaev TQFT} (WRT). More recently, Blanchet--Costantino--Geer--Patureau have succeeded in constructing TQFT \cite{BCGP} from the category of modules on $\Uq$ when $q$ is a root of $1$ (which is non semi-simple). We call them {\em non semi-simple TQFTs} and the inherent knot invariant is hence the ADO family. These non semi-simple TQFTs are improvements of WRT for some reasons: e.g. detecting lens spaces and Dehn twists, but $F_{\infty}$ shows that at the level of invariants of knots they are the same. The invariant $F_{\infty}$ is still not defined on links, while colored Jones and ADO families might differ at some point. 

\subsection{Content: Unified invariant of knots and homological action of braids}

In \cite{willetts2020unification}, the second author defines an invariant of knots denoted $F_{\infty}$ which is an element living in some completion $\Laurentcomplet$ of $\Laurent := \BZ \left[ q^{\pm 1}, s^{\pm 1} \right]$. This definition implies the application of a universal invariant constructed by Lawrence and Ohtsuki \cite{lawrence1988universal} \cite{lawrence1990universal}\cite{ohtsuki2002quantum} and widely studied by Habiro \cite{habiro2006bottom}, on any vector of some quantum $\slt$ Verma module. In \cite{martel2020homological}, the first author has developed braid group representations on tensor products of these Verma modules with coefficients in $\Laurent$ providing a homological definition arising from local systems on configuration spaces of points in punctured disks. In this paper we express $F_{\infty}$ as a {\em partial trace} of the braid action on tensor products of Verma modules. 

\begin{theorem}[{Theorem~\ref{prop_unifed_braidrep}}]
Let $V$ be the universal Verma $\Laurent$-module of $\Uq$. Let $\CK$ be a knot such that it is the closure of a braid $\beta \in \Bn$. Then:
\[
F_{\infty} = \Trp \left(h \circ \beta , V^{\otimes n } \right),
\]
where the right term is the partial trace of the action of $\beta$ on $V^{\otimes n}$ post composed with the (fixed) operator $h$ explicitly defined later on. 
\end{theorem}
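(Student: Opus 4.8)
The plan is to unwind the definition of $F_\infty$ given in \cite{willetts2020unification} and to recognise it, piece by piece, as the partial trace on the right-hand side; the only genuine content beyond this identification is the matching of the homological braid action of \cite{martel2020homological} with the quantum one, the correct packaging of the pivotal and framing corrections into the fixed operator $h$, and the convergence of the trace over the infinite-dimensional factors.

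First I would spell out $F_\infty$ concretely. By construction it is obtained by feeding the universal Verma module $V$ into the Lawrence--Ohtsuki universal invariant of $\CK$. Presenting $\CK = \widehat{\beta}$ for some $\beta \in \Bn$, this universal invariant is assembled from: the $R$-matrix braiding on $V^{\otimes n}$ (one factor of $R^{\pm 1}$ per crossing of $\beta$), the ribbon/pivotal element implementing the cups and caps that close the strands $2,\dots,n$ around while leaving the first strand open — the step that makes the construction finite on an infinite-dimensional module, i.e. the passage from a knot to a $(1,1)$-tangle — and an overall normalisation of $R$ ensuring the output depends only on $\CK$, not on the braid word. I would then invoke the central result of \cite{martel2020homological}: under the dictionary between the homological local-system variables and $(q,s)$, the homological representation of $\Bn$ on configuration-space homology is isomorphic to the quantum braiding on $V^{\otimes n}$. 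This lets me replace the quantum braid action in the definition of $F_\infty$ by the homological operator $\beta \in \End(V^{\otimes n})$ throughout.

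Next I would identify $h$: it is the fixed endomorphism of $V^{\otimes n}$ obtained by inserting the pivotal (half-ribbon) element in the factors $2,\dots,n$, the ones that get closed up, so that — by the very recipe of the Lawrence--Ohtsuki construction — applying $\Trp$ to $h\circ\beta$ is exactly the closure-and-evaluation operation producing the $(1,1)$-tangle operator $V\to V$. Carrying this out rigorously requires pinning down conventions: which strand is kept open and on which side the closing arcs run, left versus right partial trace, the normalisation of $R$ and of the ribbon element, and the placement of the framing-anomaly corrections; reconciling the conventions of \cite{willetts2020unification} and \cite{martel2020homological} is where most of the bookkeeping lies. A separate point is that $\Trp$ is a priori an infinite sum over a basis of $V^{\otimes(n-1)}$, so one must check it converges in $\Laurentcomplet$; this follows from the weight grading, each graded piece contributing a finite sum and the resulting series converging in the topology defining $\Laurentcomplet$, a fact already established in \cite{willetts2020unification,martel2020homological}.

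Finally I would conclude: by the previous step $\Trp(h\circ\beta, V^{\otimes n})$ equals, as an operator on $V$, the action on $V$ of the Lawrence--Ohtsuki universal invariant of $\CK$, which by \cite{willetts2020unification} is the scalar $F_\infty \in \Laurentcomplet$ acting diagonally (independently of the vector it is applied to); consistently, $\Trp(h\circ\beta, V^{\otimes n})$ is manifestly a scalar operator, being a partial quantum trace in the ribbon category and hence a $\Uq$-endomorphism of the cyclic module $V$. The main obstacle is not a single hard lemma but the careful convention-matching in the middle step, and in particular verifying that the fixed $h$ is chosen so that the braid-dependent framing cancels — equivalently, that the formula is invariant under the two Markov moves and returns the invariant of the knot $\CK$ rather than of the braid $\beta$.
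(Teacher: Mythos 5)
Your proposal is correct and follows essentially the same route as the paper: the proof there is precisely the identification you describe, namely unwinding the state-sum definition of $F_{\infty}$ (one $E^{i_k}\otimes F^{(i_k)}$ contribution per crossing, pivotal insertions at the cups and caps) and regrouping the states according to the labels $\mu(\overline{i})$ of the closure strands, which recovers the diagonal coefficients of $(1\otimes K^{\otimes n-1})\circ\varphi_n(\beta)$ on the vectors $v_{\overline{j}}$ with $j_1=0$; the operator $h$ is exactly $1\otimes K^{\otimes n-1}$ as you guessed. Two small remarks: the detour through the homological representation of \cite{martel2020homological} is unnecessary here, since this theorem concerns only the quantum action $\varphi_n$ (the homological reformulation is the object of the later Theorem~\ref{thm_homol_formula_Foo}); and Markov invariance need not be verified, because the statement merely identifies the partial trace with the invariant $F_{\infty}$ already constructed in \cite{willetts2020unification}.
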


We re-prove the following property.

\begin{theorem}[{Theorem~\ref{thm_factorisation_unified_ADO}}]
For an integer $r \in \BN^*$ and $\zeta_{2r}$ a $2r$-th root of $1$, we have:
\[ F_{\infty} (\zeta_{2r}, A, \mathcal{K}) =  \frac{(q^{\alpha})^{rf} \times \ADO_r(A,\mathcal{K})}{\A_{\mathcal{K}} (A^{2r})} \]
		where $f$ is the framing of the knot, $\ADO_r$ is the $r$-th ADO polynomial (\cite{akutsu1992invariants}) and $\A_{\CK}$ the Alexander polynomial of $\CK$. 
\end{theorem}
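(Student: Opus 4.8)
The plan is to combine the braid‐trace formula for $F_\infty$ from Theorem~\ref{prop_unifed_braidrep} with the known Reshetikhin--Turaev-type state sum for $\ADO_r$ and the analogous state sum for the Alexander polynomial, and to show that specializing $q$ to a root of unity in the universal Verma module $V^{\otimes n}$ decomposes the module in a way that separates the ADO part from the Alexander part. First I would recall that, by Theorem~\ref{prop_unifed_braidrep}, $F_\infty = \Trp(h\circ\beta, V^{\otimes n})$, where $V$ is the universal Verma $\Laurent$-module with $\Laurent=\BZ[q^{\pm 1},s^{\pm 1}]$, and then set $q=\zeta_{2r}$ and $s=A$ (up to the usual identification $s = q^\alpha$, $A=q^\alpha$), so that the generic highest weight becomes a formal colour $\alpha$ rather than an integer. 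The framing factor $(q^\alpha)^{rf}$ is exactly the ribbon/writhe correction that appears when one passes between the normalized (framing-independent) invariant and the braid-closure computation, so I would track it separately from the start and not let it interfere with the algebraic heart of the argument.

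The key structural step is the following: at $q=\zeta_{2r}$ the Verma module $V$ (with generic weight) admits, after base change, a filtration (or direct-sum decomposition over an appropriate localization) whose "top piece" is the $r$-dimensional typical module $W_\alpha$ used in the ADO construction, and whose complementary pieces reassemble, under the partial (Markov) trace, into the Alexander polynomial evaluated at $A^{2r}$. Concretely, $V$ at a $2r$-th root of unity has a submodule structure governed by the divided powers $E^{(r)}, F^{(r)}$, and the "new" highest weight vector generated by $F^{(r)}$ shifts the colour $\alpha \mapsto \alpha - r$; iterating this produces a string of $r$-dimensional typical blocks indexed by $\BZ$, with weights $A^2, A^2\zeta^{2}, \dots$ — i.e. governed precisely by the $2r$-th powers appearing in $\A_{\CK}(A^{2r})$. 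I would make this precise using the homological model of \cite{martel2020homological}: the local system coefficient ring $\Laurent$ specializes, and the configuration-space homology with the specialized coefficients carries exactly this block decomposition, so the partial trace of $h\circ\beta$ factors as a product over the blocks.

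The cleanest route to the factorization is then: (i) identify the contribution of the single ADO block with $\ADO_r(A,\CK)$, using that the $R$-matrix and the ribbon element of $U_{\zeta_{2r}}\slt$ restricted to $W_\alpha$ are exactly those of the Akutsu--Deguchi--Ohtsuki construction (this is essentially the defining property of $\ADO_r$, up to the modified/renormalized trace — here the operator $h$ plays the role of the "modified trace" correction, which is why the modified dimension does not appear as a separate denominator); (ii) identify the remaining (semisimple, one-dimensional-per-weight) contribution as a geometric-type sum that telescopes to $1/\A_{\CK}(A^{2r})$, using the classical fact that the Alexander polynomial is the Reidemeister/Milnor torsion and arises from the $1$-dimensional reduced Burau-type representation — here the $A^{2r}$ appears because the blocks are spaced by $\alpha \mapsto \alpha - r$, i.e. the Burau variable is $A^{2r}$ rather than $A^2$; (iii) multiply and reinstate the framing factor $(q^\alpha)^{rf}$. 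For (ii) I would lean on the known statement that Habiro's unified invariant (or Lawrence--Ohtsuki's) already has this shape — since $F_\infty$ is built from exactly that universal invariant, this may reduce to citing \cite{willetts2020unification} and re-deriving it in the partial-trace language, which is the point of the phrase "we re-prove."

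The main obstacle I expect is step (ii): controlling the infinite sum over the $\BZ$-family of blocks and showing it converges, after specialization, to the \emph{rational function} $1/\A_{\CK}(A^{2r})$ rather than merely to a formal series. The difficulty is that $F_\infty$ lives in the completion $\Laurentcomplet$, and the identity $\sum_{k\ge 0} x^k = 1/(1-x)$-type resummations must be justified uniformly in the knot $\CK$; the correct framework is Habiro's cyclotomic completion, where the denominator $\A_{\CK}(A^{2r})$ is invertible after the root-of-unity specialization precisely because $\A_{\CK}(1)=\pm 1$. A secondary technical point is matching normalizations: the operator $h$, the ribbon element, and the ADO normalization each contribute powers of $q^\alpha$, and assembling them correctly is where the exponent $rf$ (and not, say, $r^2 f$ or $rf/2$) is pinned down; I would do this on the unknot and a single positive/negative Reidemeister-I kink first to fix all constants before treating a general braid $\beta\in\Bn$.
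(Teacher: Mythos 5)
Your overall strategy is the paper's: specialize the braid-trace formula of Theorem~\ref{prop_unifed_braidrep} at $q=\zeta_{2r}$, decompose the representation into size-$r$ blocks, extract $\ADO_r$ from the ``bottom'' block and $1/\A_{\CK}(A^{2r})$ from the residual Burau-type structure. But the two steps that actually make the product formula come out are missing or misdescribed. First, the decomposition that works is not a $\UqhL$-module filtration of the single Verma $V$ into typical blocks with shifted highest weights (note that $E\cdot v_r=v_{r-1}$, so the span of $\lbrace v_j, j\ge r\rbrace$ is not a submodule, and $Kv_{i+rj}=A\zeta_{2r}^{-2i}v_{i+rj}$ is independent of $j$, so your claimed weight spacing by $2r$-th powers is not where $A^{2r}$ comes from). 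What the paper uses is the \emph{$r$-part filtration of the braid representation on the tensor product} $V^{\otimes n}$, together with the crucial tensor factorization $V_n^m\cong V_n^0\otimes F_r(\SB_{n,m})$ of Proposition~\ref{prop_braid_factor} (an $R$-matrix coefficient identity), where $\SB_{n,m}$ is the $m$-th symmetric power of the $q=1$ Burau representation and $F_r$ is the Frobenius $s\mapsto s^r$. It is this factorization that makes $\ADO_r$ (the trace on $V_n^0$, Proposition~\ref{prop_braid_ado}) factor out of \emph{every} graded piece; in your picture, where the top piece gives ADO and the ``complementary pieces reassemble into Alexander,'' the partial trace would naturally produce a sum $\ADO_r+(\cdots)$, and your step (iii) ``multiply'' is unjustified without the tensor splitting.

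Second, your step (ii) is where the real work lies and your proposed mechanism would not deliver it: the residual factor is not a one-dimensional-per-weight geometric series, but the generating sum $\sum_m \Trp\bigl(\Sym^m(\psi_{n,1}(\beta))\bigr)$ over symmetric powers of the unreduced Burau matrix, which the paper evaluates to $q^{f\alpha}/\A_{\CK}(q^{2\alpha})$ via the Mac-Mahon master theorem and an explicit minor computation (Proposition~\ref{prop_trace_factor}); the exponent $rf$ and the variable $A^{2r}$ are pinned down there, after the Frobenius twist, not by a separate ribbon normalization. Finally, your fallback of ``citing \cite{willetts2020unification} and re-deriving it'' is circular for the stated purpose: the proof there goes through the Melvin--Morton--Rozansky theorem, which is precisely what this argument is meant to re-prove, so the factorization must be established independently as above. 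Your remarks on the completion $\Laurentcomplet$ and on $h$ playing the role of the modified trace are correct.
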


The latter was proved in \cite{willetts2020unification} but considering Melvin--Morton--Rozansky (MMR) conjecture which is a theorem due to Bar-Natan and Garoufalidis \cite{MMR}. Here we prove it carefully studying the structure of tensor products of Verma modules when $q$ is a root of $1$. Hence we have re-proved MMR conjecture in a slight generalization, namely an analytic relation between any colored Jones polynomial and the Alexander polynomial.

It is well known that the Alexander polynomial of a knot is invariant under the change of variable $s \mapsto s^{-1}$. We extend this symmetry to the entire $F_{\infty}$ and it gives a nice symmetry for the ADO invariants of knots too.

\begin{theorem}[{Theorem~\ref{symmetry_unified_invariant},~Coro.~\ref{cor_symmetry_ado}}]
For any knot $\CK$:
\begin{itemize}
\item $F_{\infty}(\CK)$ is unchanged under $s \mapsto s^{-1}q^{-2}$,
\item $\ADO_r(\CK)$ is unchanged under $s \mapsto s^{-1} \zeta_{2r}^{-2}$.% where $\zeta_{2r}$ is a $2r$-th root of $1$. 
\end{itemize}
\end{theorem}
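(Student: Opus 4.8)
The plan is to establish the symmetry at the level of the braid-group representations on $V^{\otimes n}$, and then push it through the partial-trace formula $F_\infty = \Trp(h\circ\beta, V^{\otimes n})$ of Theorem~\ref{prop_unifed_braidrep}. First I would set up the involution: the change of variable $s\mapsto s^{-1}q^{-2}$ on the base ring $\Laurent = \BZ[q^{\pm1},s^{\pm1}]$ is the specialization of the well-known Weyl/duality symmetry relating a Verma module $V$ of highest weight governed by $s$ with the Verma module of ``weight $-s$ shifted by the $\rho$-factor''. Concretely, I would produce an explicit $\Laurent$-semilinear isomorphism $\varphi\colon V \to V$ (semilinear for the ring automorphism $\sigma\colon s\mapsto s^{-1}q^{-2}$, $q\mapsto q$) intertwining the $\Uq$-action with its twist by the Cartan automorphism, and then take $\varphi^{\otimes n}$ on $V^{\otimes n}$. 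The homological incarnation of $V$ from \cite{martel2020homological} is useful here: the involution should correspond to a geometric symmetry of the configuration space (reflection/conjugation exchanging the two ``sides'' of the punctured disk), which makes it manifest that $\varphi^{\otimes n}$ commutes with the braid action, i.e. $\varphi^{\otimes n}\circ\rho(\beta) = \rho(\beta)\circ\varphi^{\otimes n}$ for every $\beta\in\Bn$, since braids act through mapping classes that commute with that reflection up to isotopy.

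Next I would check the two compatibility points that make the partial trace invariant: (i) the fixed operator $h$ is itself $\sigma$-equivariant, $\varphi\circ h = h^\sigma\circ\varphi$ where $h^\sigma$ is $h$ with coefficients acted on by $\sigma$ (this is a direct computation, since $h$ is built from the ribbon element and the half-twist, both of which transform controllably under the Weyl symmetry of Verma modules); and (ii) the partial trace $\Trp$ over the last $n-1$ tensor factors is compatible with conjugation by $\varphi^{\otimes n}$, which holds because $\varphi$ is an isomorphism on each factor so $\Trp(\varphi^{\otimes n} X (\varphi^{\otimes n})^{-1}) = \sigma(\Trp(X))$ by semilinearity of the trace. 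Combining, $\sigma(F_\infty) = \sigma\,\Trp(h\circ\beta) = \Trp\bigl(\varphi^{\otimes n}(h\circ\beta)(\varphi^{\otimes n})^{-1}\bigr) = \Trp(h\circ\beta) = F_\infty$, where the middle equality uses both (i) and (ii) and the crucial commutation $[\varphi^{\otimes n},\rho(\beta)]=0$. One subtlety: the identity lives in the completion $\Laurentcomplet$, so I must check that $\sigma$ extends continuously to $\Laurentcomplet$ (i.e. that the completion ideal $\widehat I$ is $\sigma$-stable), which should follow from the explicit description of $\widehat I$ in \cite{willetts2020unification}.

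For the corollary on $\ADO_r$, I would specialize the master symmetry at $q=\zeta_{2r}$. By Theorem~\ref{thm_factorisation_unified_ADO}, $F_\infty(\zeta_{2r},A,\CK) = (q^\alpha)^{rf}\,\ADO_r(A,\CK)/\A_\CK(A^{2r})$; applying $s\mapsto s^{-1}q^{-2} = s^{-1}\zeta_{2r}^{-2}$ (here $s$ and $A$ are related by the change of variables fixed in the root-of-unity specialization) the left side is invariant, and on the right side one uses the classical symmetry of the Alexander polynomial, $\A_\CK(A^{2r}) = \A_\CK(A^{-2r})$ up to the standard unit, together with the fact that the framing prefactor transforms by a monomial. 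Matching the two sides and cancelling $\A_\CK$ forces the stated invariance of $\ADO_r$ under $s\mapsto s^{-1}\zeta_{2r}^{-2}$, possibly up to the same monomial unit already present in the normalization of $\ADO_r$.

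The main obstacle I anticipate is step one: constructing the semilinear intertwiner $\varphi$ on the (infinite-dimensional) Verma module $V$ and proving cleanly that $\varphi^{\otimes n}$ commutes with the braid action. On the quantum side this is the Weyl-symmetry of Verma modules, which is slightly delicate because Verma modules are not self-dual on the nose — one must twist by a Cartan automorphism and track a shift — so the bookkeeping of weights and of the $q$-powers entering $\varphi$ has to be done carefully to land exactly on $s\mapsto s^{-1}q^{-2}$ rather than a nearby variant. The homological model should make the \emph{commutation with braids} essentially automatic (it is a reflection symmetry of the punctured disk), so I would lean on \cite{martel2020homological} for that half and reserve the careful algebra for identifying $\varphi$ and verifying property (i) for $h$.
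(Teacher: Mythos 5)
Your reduction of the $\ADO_r$ statement to the $F_\infty$ statement via Theorem~\ref{thm_factorisation_unified_ADO} and the symmetry $\A_\CK(A^{2r})=\A_\CK(A^{-2r})$ is exactly what the paper does. But your proof of the $F_\infty$ symmetry itself rests on an object that does not exist: a $\sigma$-semilinear automorphism $\varphi$ of the \emph{generic} Verma module realizing $s\mapsto s^{-1}q^{-2}$. Such a $\varphi$ would amount to an isomorphism between $V^{\alpha}$ and $V^{-\alpha-2}$ (possibly after a Cartan twist), and for generic $\alpha$ these are non-isomorphic: both are simple highest-weight modules, with weight supports $\{\alpha,\alpha-2,\dots\}$ and $\{-\alpha-2,-\alpha-4,\dots\}$ that are disjoint for generic $\alpha$; twisting by the Cartan involution produces a \emph{lowest}-weight module, which does not help. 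The homological escape route is also blocked: a reflection of the punctured disk reverses orientation and therefore sends $\beta$ to its mirror rather than commuting with it, so it cannot furnish the commuting $\varphi^{\otimes n}$ you need. You correctly flag this step as the main obstacle, but the issue is not delicate bookkeeping of $\rho$-shifts --- the intertwiner is absent at generic weight.

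The mechanism that actually works, and that the paper uses, replaces your global intertwiner by a relation that only exists at positive integral weights plus a density argument. Concretely: (a) Lemma~\ref{lemma_quotient_Verma_integer} shows $V^{-N-2}\cong V^N/S_N$ for $N\in\N^*$ (a quotient, not an isomorphism of Vermas); (b) since $V^{\alpha}$ is absolutely simple, the partial trace $\Trpt\bigl((1\otimes K^{\otimes n-1})\varphi_n(\beta)\bigr)$ is a scalar operator, so the scalar $F_\infty(q,q^N,\CK)$ read off $v_0$ can equally be read off $v_{N+1}$, and passing to the quotient $V^N/S_N$, where $\overline{v_{N+1}}$ is the highest-weight vector of $V^{-N-2}$, the same scalar equals $F_\infty(q,q^{-N-2},\CK)$; (c) agreement at infinitely many integral weights upgrades to the formal identity by the interpolation/unicity property (Prop.~\ref{prop_unicity_Foo}). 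Your proposal has no substitute for (a) or (c), and without them the argument cannot be completed as written. If you want to salvage your outline, steps (a)--(c) are precisely the replacement for your nonexistent $\varphi$.
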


The second bullet point implies that the non semi-simple $\Uq$ invariant of planar graphs introduced by Costantino--Geer--Patureau in \cite{CGP} does not detect orientation of knots (Coro. \ref{cor_orientation_Nr}).

 Using the homological definition from \cite{martel2020homological} for tensor products of Verma modules, and Poincaré duality in homology, we express $F_{\infty}$ as the intersection pairing with coefficients in $\Laurent$ between fixed middle dimension homology classes in configuration spaces of points in punctured disks.

\begin{theorem}[{Theorem~\ref{thm_homol_formula_Foo}}]
Let $\beta \in \Bn$ a braid such that its closure is the knot $\CK$. Then:
\[
F_{\infty}(\CK) = s^{n-1}\sum_{\overline{k}} \left\langle \beta  \cdot A''(\overline{k}) \cap B''(\overline{k}) \right\rangle q^{-2\sum k_i} %\right) 
\]
where for any list of $n-1$ integers $\overline{k}$, $A''(\overline{k})$ and $B''(\overline{k})$ are precisely defined middle dimension manifolds of the space of configurations of points in the $n$-th punctured disks. The action of $\beta$ is naturally defined by homeomorphism of the punctured disk, and $\langle \cdot \cap \cdot \rangle$ is a homological intersection pairing in $\Laurent$ given by Poincaré duality.  %the action of $\beta$ is that from Lamma \ref{Lawrence_rep}. 
\end{theorem}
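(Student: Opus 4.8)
The plan is to start from the partial-trace identity $F_\infty(\CK)=\Trp(h\circ\beta, V^{\otimes n})$ of Theorem~\ref{prop_unifed_braidrep} and to unfold its right-hand side entirely inside the homological model of \cite{martel2020homological}. In that model $V^{\otimes n}$ is identified, in each graded piece, with the homology in the middle (``Lagrangian'') degree of the configuration space of points in the disk with $n$ punctures, with coefficients in a rank-one local system over $\Laurent$; the geometric braid action (by a homeomorphism of the punctured disk fixing the boundary) induces the quantum braid representation on this homology; and there is a distinguished $\Laurent$-basis of homology classes, the family $A''(\overline{k})$ indexed by lists $\overline{k}$ of $n-1$ integers, a refinement adapted to the partial trace of the usual ``multifork'' basis. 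The first step is thus a translation: rewrite $\Trp$, $h$ and $\beta$ in these homological terms. Since, by Theorem~\ref{prop_unifed_braidrep}, $\Trp(h\circ\beta,V^{\otimes n})$ is a scalar, it is read off from the $(0,0)$ matrix coefficient of the partial trace on the first factor, that is, from a sum over $\overline{k}$ of the diagonal coefficients of $h\circ\beta$ on the basis vectors whose first-puncture index is $0$.

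The core of the proof is a Poincaré--Lefschetz duality computation. For the (open, relative) configuration space $X$ whose homology models $V^{\otimes n}$, duality with the local coefficients identifies the $\Laurent$-linear dual of $\Hnot_\ast(X)$ with the Borel--Moore (locally finite) homology $\Hlf_\ast(X)$ in complementary degree, the pairing being the geometric intersection pairing $\langle\,\cdot\cap\cdot\,\rangle$ of middle-dimensional submanifolds, valued in $\Laurent$. I would then exhibit an explicit locally finite family $\{B''(\overline{k})\}$ of middle-dimensional submanifolds (the ``dual barcodes'') and verify, by an explicit count of intersection points weighted by the local monodromy -- the higher-dimensional analogue of the classical intersection computations for Lawrence-type representations -- that $\langle A''(\overline{l})\cap B''(\overline{k})\rangle=\delta_{\overline{l},\overline{k}}$, so that $\{B''(\overline{k})\}$ is the basis dual to $\{A''(\overline{k})\}$. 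Since for any $\Laurent$-linear endomorphism $\phi$ one has $\Tr(\phi)=\sum\langle\phi(A''(\cdot))\cap B''(\cdot)\rangle$ summed over the full basis, and since the partial trace over the factors $2,\dots,n$ amounts -- precisely because the first puncture is built into the $A''$ and $B''$ classes with index $0$ -- to running $\overline{k}$ over lists of $n-1$ integers, we obtain $\Trp(\psi,V^{\otimes n})=\sum_{\overline{k}}\langle\psi\cdot A''(\overline{k})\cap B''(\overline{k})\rangle$ for any operator $\psi$.

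It remains to feed in $\psi=h\circ\beta$. By its explicit definition in Theorem~\ref{prop_unifed_braidrep}, $h$ is diagonal in the weight basis (being built from the ribbon/pivotal structure together with the weight grading); as the classes $A''(\overline{k})$ are weight vectors, collecting the scalars coming from $h$ and from the normalization of the homological identification shows that $h$ acts on $A''(\overline{k})$ by $s^{\,n-1}q^{-2\sum k_i}$. Using that $h$ is diagonal to pull this eigenvalue out of the pairing against $B''(\overline{k})$, and that $\beta$ acts through the geometric homeomorphism, the formula of the previous paragraph becomes
\begin{align*}
F_\infty(\CK)&=\sum_{\overline{k}}s^{\,n-1}q^{-2\sum k_i}\,\bigl\langle \beta\cdot A''(\overline{k})\cap B''(\overline{k})\bigr\rangle\\
&=s^{\,n-1}\sum_{\overline{k}}\bigl\langle \beta\cdot A''(\overline{k})\cap B''(\overline{k})\bigr\rangle\,q^{-2\sum k_i},
\end{align*}
which is the claim; convergence of this infinite sum in $\Laurentcomplet$ follows from the same control on the order, in the defining ideal, of the intersection numbers that already makes $F_\infty$ itself converge.

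I expect the main obstacle to be the duality step: pinning down the correct model space $X$ (which part of the boundary is collapsed, which relative or locally finite homology is used), the correct twisted coefficients and orientation conventions, and the precise shape of the dual submanifolds $B''(\overline{k})$, so that the intersection matrix is literally the identity and the \emph{partial} (rather than full) trace is faithfully encoded by a sum over $(n-1)$-tuples -- in other words, isolating cleanly the role of the first strand in the homological picture. The intersection count itself is of a familiar type but will be delicate in its signs and monodromy weights.
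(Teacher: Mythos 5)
Your proposal is correct and follows essentially the same route as the paper: start from the partial-trace formula of Theorem~\ref{prop_unifed_braidrep}, transport it through the equivariant isomorphism of Theorem~\ref{ModelMartelHomological2020}, express the partial trace as a sum of Poincaré--Lefschetz intersection pairings against the dual family $B''$ (Prop.~\ref{dual_bases_A''_B''}), and extract the diagonal eigenvalue $s^{n-1}q^{-2\sum k_i}$ of $h=1\otimes K^{\otimes n-1}$. The only cosmetic difference is that the paper first writes the trace in the $A$-basis and then passes to $A''$/$B''$ via the diagonal change of basis of Prop.~\ref{rel_A_A'_A''}, whereas you work with $A''$ directly.
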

The latter means that the right term in the equation, which is an infinite sum of intersection pairing of middle dimension homology classes, lives in $\Laurentcomplet$ and is invariant under Markov moves. 

Finally, we express $F_{\infty}$ using a generalized notion of determinant of matrices called {\em quantum determinant of right quantum matrices}, defined for matrices with non-commutative entries. This quantum determinant is presented in \cite{GLZ}. The quantum determinant formula resembles the classical one for the Alexander polynomial: it is the quantum determinant of a deformed Burau matrix instead of a regular determinant of the regular Burau matrix. It is stated in Theorem \ref{thm_F_from_qdet}, and generalizes formula of Lê and Huynh \cite{HL} for colored Jones polynomials. 

\subsection{Plan of the paper}

In Sec. \ref{sec_qVermas} we establish the context of the quantum group $\Uq$ and its Verma module. We define the action of braid groups, the splitting into finite dim. levels, and we carefully study the structure while specializing $q$ at a roots of one, giving rise to a particular {\em r-part} factorization. 

In Sec. \ref{sec_Foo_frombraids} we redefine (Theorem \ref{prop_unifed_braidrep}) the knot invariant $F_{\infty}$ as a partial trace on braid group representations previously defined after having recalled its former definition from \cite{willetts2020unification}. Using the r-part factorization at roots of unity from previous section, we prove Theorem \ref{thm_factorisation_unified_ADO} re-proving the factorization of $F_{\infty}$ at roots of one, re-proving MMR conjecture. We then prove Theorem \ref{symmetry_unified_invariant} providing an Alexander-like symmetry of invariants $F_{\infty}$ and ADO. 

In Sec. \ref{sec_Foo_fromHomology} we prove Theorem \ref{thm_homol_formula_Foo} which expresses $F_{\infty}$ as a sum of intersection pairing between Lagrangians in configuration spaces of punctured disks. This requires first a precise recall of the homological set-up from \cite{martel2020homological}, i.e. the homological definition of Verma modules, their tensor products and the braid action. 

In Sec. \ref{sec_Foo_from_qdet} we recall the definition of \emph{quantum determinant} for right quantum matrices. We recall the context of paper \cite{H-L}, and finally prove Theorem \ref{thm_F_from_qdet} providing a quantum determinant formula for invariants $F_{\infty}$ and ADO. 

%{\color{red} TO WRITE!!! Il suffit de faire concis!}
%
%The second section lays down the algebraic setup, an integral version of quantum $\mathfrak{sl}_2$ and a version of the Verma module. From this we study the quantum braid representations at roots of unity and prove a factorisation of $r$-part representations.
%
%This understanding of quantum representations at roots of unity allows us, in the third section, to reprove the factorisation theorem of the unified invariant defined in \cite{willetts2020unification} using braid representation. This approach uses simple linear algebraic methods, such as Mac-Mahon master theorem, and gives an alternative proof of the Melvin-Morton-Rozansky.
%
%The fourth section present a topological interpretation of the unified invariant. The homological model of quantum $\mathfrak{sl}
%_2$ studied in \cite{martel2020homological} gives an understanting of the Verma module. We can hence get a formula for the unified invariant as a homological intersection. Homological intersection formulas for the colored Jones and ADO polynomials can be deduced from it using specialization maps.
%
%Finally, the last section introduce a formula for the unified invariant as a quantum determinant, generalizing the quantum determinant formula for the colored Jones polynomials given in \cite{HL}.
\section{Quantum $\slt$ and its universal Verma module}\label{sec_qVermas}

We introduce quantum numbers, factorials and binomials.

\begin{defn}\label{quantumq}
Let $i,k,l,n$ be integers. We define the following elements of $\BZ \left[ q^{\pm 1} \right]$.
\begin{align}
\left[ i \right]_q := \frac{q^i-q^{-i}}{q-q^{-1}} ,& \text{  } \left[ k \right]_q! := \prod_{i=1}^k \left[ i \right]_q , \text{  } \qbin{k}{l}_q := \frac{\left[ k \right]_q!}{\left[ k-l \right]_q! \left[ l \right]_q!}\\
\{ n \} = q^{n}-q^{-n} & \text{ and } \{ n \} ! = \prod_{i=1}^n \{ i \} .
\end{align}
with the convention $\qbinom{n}{k}_q=0$ if $n < 0$.

We also fix notation for elements of $\BZ \left[ q^{\pm 1}, s^{\pm 1} \right]$ but using the following notation $q^{\pm \alpha} := s^{\pm 1}$ that will be useful later on.
\begin{align}
\{ \alpha \}_q = q^{\alpha}-q^{-\alpha}& ,  \{ \alpha +k \}_q = q^{\alpha +k } - q^{-\alpha -k } , \{ \alpha;n \}_q= \prod_{i=0}^{n-1} \{ \alpha -i\}_q.
\end{align}
where one can easily deduce how to write them in $\BZ \left[ q^{\pm 1}, s^{\pm 1} \right]$. (To do computation in $\BC$ and think of $q, \alpha$ and $s$ as complex numbers, one must fix a logarithm of $q$. )
\end{defn}

In what follows we will define $\Uq$ in Sec. \ref{halfLusztigversion}, then its Verma modules and the associated action of braid groups in Sec. \ref{VermaBraiding}. We study the structure of this braid group representation while variables are evaluated at some particular value in Sec. \ref{sec_spec_of_variables}. In the case of $q$ being a root of one, we show that the representation splits into \emph{r-parts} sub-representations. 

\subsection{The algebra $\UqhL$}\label{halfLusztigversion}

In this section, we define an integral version for the quantized algebra associated with $\slt$. By integral, we mean as an algebra over the ring of Laurent polynomials in one variable, but first we define the standard algebra $\Uq$ on the rational field. %First, we give the most standard definition of the quantum algebra $\Uq$ as a vector space over a rational field.

\begin{defn}\label{Uqnaif}
The algebra $\Uq$ is the algebra over $\BQ(q)$ generated by elements $E,F$ and $K^{\pm 1}$, satisfying the following relations:
\begin{align*}
KEK^{-1}=q^2E & \text{ , } KFK^{-1}=q^{-2}F \\
\left[E, F \right] = \frac{K-K^{-1}}{q-q^{-1}} & \text{ and }
KK^{-1}=K^{-1}K=1 .
\end{align*}
The algebra $\Uq$ is endowed with a coalgebra structure defined by $\Delta$ and $\epsilon$ as follows:
\[
\begin{array}{rl}
\Delta(E)= 1\otimes E+ E\otimes K, & \Delta(F)= K^{-1}\otimes F+ F\otimes 1 \\
\Delta(K) = K \otimes K, & \Delta(K^{-1}) = K^{-1}\otimes K^{-1} \\
\epsilon(E) = \epsilon(F) = 0, & \epsilon(K) = \epsilon(K^{-1}) = 1
\end{array}
\]
and an antipode defined as follows:
\[
S(E) = EK^{-1}, S(F)=-KF,S(K)=K^{-1},S(K^{-1}) = K.
\]
This provides a {\em Hopf algebra} structure, so that the category of modules over $\Uq$ is monoidal.%, namely there is a natural action over tensor products of modules given by the coproduct. 
\end{defn}

We are interested in an integral version that resembles Lusztig version but with only half of {\em divided powers} for generators. This version is used and introduced in \cite{Hab,JK,martel2020homological,willetts2020unification} (with subtle differences in the definitions of divided powers for $F$). Let:
\[
F^{(n)} :=  \frac{(q-q^{-1})^n}{\left[ n \right]_q!} F^n .
\]
Let $\Laurent_0 = \BZ\left[ q^{\pm 1} \right]$ be the ring of integral Laurent polynomials in the variable $q$. 

\begin{defn}[Half integral algebra]\label{Halflusztig}
Let $\UqhL$ be the $\Laurent_0$-subalgebra of $\Uq$ generated by $E$, $K^{\pm 1}$ and $F^{(n)}$ for $n\in \BN^*$. We call it a {\em half integral version} for $\Uq$, the word half to illustrate that we consider only half of divided powers as generators. 
\end{defn}

\begin{rem}[Relations in $\UqhL$, {\cite[(16)~(17)]{JK}}]\label{relationsUqhL}
The relations among generators involving divided powers are the following ones:
\[
KF^{(n)}K^{-1} = q^{-2n}F^{(n)}
\]
\[
\left[ E, F^{(n+1)}  \right] = F^{(n)} \left( q^{-n} K - q^n K^{-1}  \right) \text{ and }
F^{(n)} F^{(m)} = \qbin{n+m}{n}_q F^{(n+m)} .
\]
Together with relations from Definition \ref{Uqnaif}, they complete a presentation of $\UqhL$. 

$\UqhL$ inherits a Hopf algebra structure with a coproduct given by:
\[
\Delta(K) = K \otimes K \text{ , } \Delta(E) = E \otimes K + 1 \otimes E , \text{ and } \Delta(F^{(n)}) = \sum_{j=0}^n q^{-j(n-j)}K^{j-n} F^{(j)} \otimes F^{(n-j)}. 
\]
\end{rem}

\begin{prop}[Poincaré--Birkhoff--Witt basis]
The algebra $\UqhL$ admits the following set as an $\Laurent_0$-basis:
\[
\left\lbrace K^l E^m F^{(n)} , l \in \BZ, m,n \in \BN \right\rbrace .
\]
\end{prop}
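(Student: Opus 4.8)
The plan is to establish the PBW basis for $\UqhL$ by leveraging the known PBW-type basis of $\Uq$ over $\BQ(q)$ together with the explicit relations from Remark~\ref{relationsUqhL}. Recall that $\Uq$ has an ordered monomial basis $\{K^l E^m F^n : l \in \BZ, m,n \in \BN\}$ over $\BQ(q)$ (standard Lusztig/Jantzen result), and since $F^{(n)} = \frac{(q-q^{-1})^n}{[n]_q!}F^n$ differs from $F^n$ only by a nonzero scalar in $\BQ(q)$, the set $\{K^l E^m F^{(n)}\}$ is also a $\BQ(q)$-basis of $\Uq$. Thus $\UqhL$, being an $\Laurent_0$-subalgebra, is automatically torsion-free, and the proposed set is $\Laurent_0$-linearly independent. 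The content of the proposition is therefore the \emph{spanning} statement: every element of $\UqhL$ is an $\Laurent_0$-linear combination of the monomials $K^l E^m F^{(n)}$.

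First I would show that the $\Laurent_0$-span $M$ of $\{K^l E^m F^{(n)}\}$ is closed under left multiplication by each generator $K^{\pm 1}$, $E$, $F^{(n)}$; since these generate $\UqhL$ as an algebra and $1 = K^0 E^0 F^{(0)} \in M$, this forces $M = \UqhL$. Multiplication by $K^{\pm 1}$ is immediate from $KEK^{-1} = q^2 E$ and $KF^{(n)}K^{-1} = q^{-2n}F^{(n)}$: we get $K \cdot K^l E^m F^{(n)} = q^{2m - 2n} K^{l+1} E^m F^{(n)} \in M$ (and similarly for $K^{-1}$). Multiplication by $F^{(n)}$ on the left requires commuting $F^{(n)}$ past $K^l E^m$; commuting past $K^l$ is again a scalar, and for $E^m$ one iterates the relation $[E, F^{(n+1)}] = F^{(n)}(q^{-n}K - q^n K^{-1})$ — note the crucial point that this relation, unlike in the naive version, produces only a single divided power $F^{(n)}$ (not $F^{(n)}$ times $E$-terms with bad denominators), so the induction on $m$ stays within $\Laurent_0$-coefficients and eventually lands on monomials $E^{m'} F^{(n')}$ times Laurent polynomials in $q$ and powers of $K$. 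The product $F^{(n)} F^{(m)} = \qbinom{n+m}{n}_q F^{(n+m)}$, with $\qbinom{n+m}{n}_q \in \Laurent_0$, is what guarantees that iterating divided-power multiplications never leaves $\Laurent_0$.

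The remaining and most delicate case is left multiplication by $E$: one must rewrite $E \cdot K^l E^m F^{(n)}$, i.e. commute $E$ past $F^{(n)}$. Using $[E, F^{(n)}] = F^{(n-1)}(q^{-(n-1)}K - q^{n-1}K^{-1})$ one gets $E F^{(n)} = F^{(n)} E + F^{(n-1)}(q^{-(n-1)}K - q^{n-1}K^{-1})$, so $E \cdot K^l E^m F^{(n)}$ becomes a sum of $K^{l'} E^{m+1} F^{(n)}$-type and $K^{l'} E^{m} F^{(n-1)}$-type terms, all with coefficients in $\Laurent_0$ — hence in $M$. I expect the main obstacle to be bookkeeping: carefully tracking the powers of $q$ and $K$ generated when commuting $E$ or $F^{(n)}$ past a power $E^m$, and confirming by a clean double induction (say, on $m$ for the $F^{(n)}$-multiplication, then on the divided-power index for composing two divided powers) that the coefficients genuinely remain in $\Laurent_0 = \BZ[q^{\pm 1}]$ rather than in $\BQ(q)$. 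The whole point of the ``half-integral'' definition — taking $F^{(n)}$ but plain $E$ — is precisely that these commutation relations close up over $\Laurent_0$, so once the induction is set up the verification is routine, just lengthy.
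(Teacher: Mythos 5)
The paper does not actually prove this proposition --- it is stated without proof, as a standard fact about the (half-)integral form imported from the references \cite{JK,Hab} --- so there is no in-paper argument to compare against. Your proposal is the standard argument and is essentially sound: linear independence by base change to $\BQ(q)$ and the classical PBW theorem for $\Uq$ (using that $F^{(n)}$ is a nonzero $\BQ(q)$-multiple of $F^n$), and spanning by showing the $\Laurent_0$-span $M$ of the ordered monomials is stable under left multiplication by each generator, which suffices since the generators generate $\UqhL$ and $1\in M$. The genuinely delicate case, left multiplication by $F^{(k)}$, you handle correctly: commuting $F^{(k)}$ past $E^m$ via $[E,F^{(n+1)}]=F^{(n)}(q^{-n}K-q^nK^{-1})$ produces only lower divided powers and $K^{\pm1}$'s with coefficients in $\Laurent_0$, and the composition rule $F^{(n)}F^{(m)}=\qbin{n+m}{n}_q F^{(n+m)}$ keeps everything integral.

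Two slips, neither fatal. First, $K\cdot K^lE^mF^{(n)}=K^{l+1}E^mF^{(n)}$ with no scalar; your factor $q^{2m-2n}$ is spurious (no commutation is needed since $K$ is already leftmost). Second, and more substantively, you misidentify left multiplication by $E$ as ``the most delicate case'' requiring $E$ to be commuted past $F^{(n)}$. In the ordering $K^lE^mF^{(n)}$ this case is trivial: $E\,K^lE^mF^{(n)}=q^{-2l}K^lE^{m+1}F^{(n)}$, already a basis monomial up to a unit. The manipulation you describe there (rewriting $EF^{(n)}$ as $F^{(n)}E+\cdots$) moves $E$ to the \emph{right} of $F^{(n)}$, i.e.\ away from PBW order, and if pursued would leave you with trailing $E$'s to commute back. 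Delete that paragraph and replace it with the one-line computation above; the remainder of the argument then goes through as you describe.
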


\subsection{Verma modules and braiding}\label{VermaBraiding}

We define the {\em Verma modules}. They are infinite dimensional modules over $\UqhL$ depending on a parameter. Again we work with an integral version by including the parameter in the ring of Laurent polynomials as a formal variable. Let $\Laurent := \BZ \left[ q^{\pm 1} , s^{\pm 1} \right]$. %In \cite{JK}, they give an explicit presentation for the integral Verma-module of $\UqhL$, that we recall here.
%
%\begin{defn}[Universal integral Verma modules, {\cite[\S~10.1.A]{C-P}}]\label{universalVerma}
%Let $U$ be an integral version of $\Uq$ and $s$ be a variable. The Verma module $V^{s}$ is the infinite $U$-module defined as follows:
%\[
%V^{s} = \left( U \otimes \BZ \left[ s^{\pm 1} \right]  \right) \Big/ \CI
%\]
%where $\CI$ is the left ideal generated by $E$ and $K-s1$.
%\end{defn}
\begin{defn}[Verma modules for $\UqhL$]\label{GoodVerma}
Let $V^{s}$ be the Verma module of $\UqhL$. It is the infinite $\Laurent$-module, generated by vectors $\lbrace v_i, i \in \BN \rbrace$, and endowed with an action of $\UqhL$, generators acting as follows:
\[
K \cdot v_j = s q^{-2j} v_{j} \text{ , } E \cdot v_j = v_{j-1} \text{ and } F^{(n)} v_j = \left( \qbin{n+j}{j}_q \prod_{k=0}^{n-1} \left( sq^{-k-j} - s^{-1}q^{j+k} \right) \right) v_{j+n} .
\]
\end{defn}

\begin{rem}[Weight vectors]\label{weightdenomination}
We will often make implicitly the change of variable $s := q^{\alpha}$ and denote $V^s$ by $V_{\alpha}$. This choice made to use a practical and usual denomination for eigenvalues of the $K$ action (which is diagonal in the given basis). Namely we say that vector $v_j$ is {\em of weight $\alpha - 2j$}, as $K \cdot v_j = q^{\alpha - 2j} v_j$. The notation with $s$ shows an integral Laurent polynomials structure strictly speaking. In the case $s=q^{\alpha}$ one can use a simpler notation in the action of $F^{(n)}$:
\[
\prod_{k=0}^{n-1} (sq^{-k-j} - s^{-1}q^{j+k}) = \lbrace \alpha - j , n \rbrace
\]
\end{rem}

\begin{defn}[$R$-matrix, {\cite[(21)]{JK}}]\label{goodRmatrix}
Let $s=q^{\alpha}$ , $t=q^{\alpha'}$. The operator $q^{H \otimes H /2}$ is the following:
\[
q^{H \otimes H /2}:
\bfct
V^{s} \otimes V^{t} & \to & V^{s} \otimes V^{t}  \\
v_i \otimes v_j & \mapsto & q^{(\alpha - 2i)(\alpha'-2j)} v_i \otimes v_j 
\efct .
\]
We define the following R-matrix:
\[
R : q^{H \otimes H/2} \sum_{n=0}^\infty q^{\frac{n(n-1)}{2}} E^n \otimes F^{(n)} 
\]
which will be well defined as an operator on Verma modules in what follows. 
\end{defn}

We recall the Artin presentation of the braid groups.

\begin{defn}\label{Artinpres}
Let $n\in \BN$. The {\em braid group} on $n$ strands $\Bn$ is the group generated by $n-1$ elements satisfying the so called {\em ``braid relations"}:
$$\Bn := \left\langle \sigma_1,\ldots,\sigma_{n-1} \Big| \begin{array}{ll} \sigma_i \sigma_j = \sigma_j \sigma_i & \text{ if } |i-j| \ge 2 \\ 
\sigma_i \sigma_{i+1} \sigma_i = \sigma_{i+1} \sigma_i \sigma_{i+1} & \text{ for } i=1,\ldots, n-2 \end{array} \right\rangle$$
\end{defn}

\begin{prop}[{\cite[Theorem~7]{JK}}]\label{UqhLbraiding}
Let $V^s$ and $V^t$ be Verma modules of $\UqhL$ (with $s=q^{\alpha}$ and $t=q^{\alpha'}$). Let $\RR$ be the following operator:
\[
\RR: q^{-\alpha \alpha' /2} T \circ R 
\]
where $T$ is the twist defined by $T(v\otimes w ) = w \otimes v$. Then $\RR$ provides a braiding for $\UqhL$ integral Verma modules. 
Namely, the morphism:
\[
\phi_n: \bfct
\Laurent\left[ \Bn \right] & \to & \End_{\Laurent, \UqhL} \left({V^s}^{\otimes n}\right)  \\
\sigma_i & \mapsto & 1^{\otimes i-1} \otimes \RR \otimes 1^{\otimes n-i-2}
\efct
\]
is an $\Laurent$-algebra morphism. It provides a representation of $\Bn$ such that its action commutes with that of $\UqhL$. In the sequel we will sometime denote $\phi_n(q^\alpha, \cdot)$ to emphasize the dependence in variables.  
\end{prop}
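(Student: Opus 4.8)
The plan is to verify that $\phi_n$ is a well-defined $\Laurent$-algebra morphism by checking that the images of the generators $\sigma_i$ satisfy the braid relations, and that each such image is a well-defined endomorphism of ${V^s}^{\otimes n}$ commuting with the $\UqhL$-action. The whole statement reduces, by a standard locality argument, to the case $n=2$ (for the invertibility and $\UqhL$-linearity of $\RR$) and $n=3$ (for the Yang--Baxter relation $(\RR\otimes 1)(1\otimes\RR)(\RR\otimes 1)=(1\otimes\RR)(\RR\otimes 1)(1\otimes\RR)$), since $\sigma_i$ and $\sigma_j$ act on disjoint tensor factors when $|i-j|\ge 2$ and hence automatically commute. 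So the three things to establish are: (i) the formal sum defining $R$ converges as an operator on $V^s\otimes V^t$; (ii) $\RR$ is $\UqhL$-linear and invertible; (iii) $R$ satisfies the quasi-cocycle/Yang--Baxter identities making $\RR$ a braiding.

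For (i), I would observe that on the weight-graded module $V^s\otimes V^t$, the operator $E^n\otimes F^{(n)}$ lowers the second tensor index by $n$ and raises the first by $n$; applied to a fixed basis vector $v_i\otimes v_j$ only the terms with $n\le i$ contribute, because $E^n v_i=0$ for $n>i$. Hence the sum $\sum_{n\ge 0} q^{n(n-1)/2}E^n\otimes F^{(n)}$ is \emph{finite} on each basis vector, so $R$ is a genuine $\Laurent$-linear endomorphism (no completion needed), and likewise $q^{H\otimes H/2}$ acts diagonally with the scalars $q^{(\alpha-2i)(\alpha'-2j)}=s^{2}q^{-2i}{}\cdots$ rewritten in $\Laurent$ using $s=q^\alpha$, $t=q^{\alpha'}$. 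For the invertibility in (ii), the triangular shape of $R$ with respect to the filtration by the first tensor index (the $n=0$ term is $q^{H\otimes H/2}$, which is invertible, and higher terms are strictly upper triangular and nilpotent on each weight space) gives a two-sided inverse; $\UqhL$-linearity of $\RR$ is the assertion that $R$ intertwines $\Delta$ and $\Delta^{\mathrm{op}}$, i.e. $R\,\Delta(x)=\Delta^{\mathrm{op}}(x)\,R$ for all generators $x\in\{E,K^{\pm1},F^{(n)}\}$, which one checks generator by generator using the coproduct formulas in Remark \ref{relationsUqhL} and the commutation relations there; composing with the flip $T$ then turns the intertwiner of $\Delta$ with $\Delta^{\mathrm{op}}$ into an honest $\UqhL$-module map $V^s\otimes V^t\to V^t\otimes V^s$.

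The real content is (iii): the hexagon/Yang--Baxter identities. Rather than expanding both triple products termwise, I would invoke that $(\UqhL,R)$ is a (topological/integral form of a) quasitriangular Hopf algebra — the universal $R$-matrix identities $(\Delta\otimes\mathrm{id})(R)=R_{13}R_{23}$ and $(\mathrm{id}\otimes\Delta)(R)=R_{13}R_{12}$ together with $(\epsilon\otimes\mathrm{id})(R)=1$ hold at the level of the relevant completion of $\UqhL^{\otimes 2}$, and these formally imply the Yang--Baxter equation for $R$, hence the braid relation for $\RR=q^{-\alpha\alpha'/2}T\circ R$ after the usual bookkeeping of the flips $T$ and the scalar $q^{-\alpha\alpha'/2}$ (which is inserted precisely so that the normalization is compatible across tensor powers). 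Since Proposition \ref{UqhLbraiding} is quoted as \cite[Theorem~7]{JK}, I would in practice cite that reference for the verification of the $R$-matrix axioms and only spell out the reduction to $n=2,3$ and the convergence remark, which are the points genuinely needed in the sequel. \textbf{The main obstacle} is bookkeeping: making sure that the half-divided-power integral form $\UqhL$ (with divided powers on $F$ only) still supports the $R$-matrix identities over $\Laurent_0$ rather than over $\BQ(q)$ — i.e. that every coefficient appearing when one pushes $R$ through the coproducts lands in $\Laurent_0[s^{\pm1}]$ and not merely in its field of fractions — and that the action on the (non-free over the structure ring at special values, but here free over $\Laurent$) Verma module is genuinely well-defined termwise; this is exactly the kind of integrality check that Definitions \ref{Halflusztig}--\ref{GoodVerma} were set up to make routine.
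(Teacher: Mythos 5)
Your proposal is correct and matches the paper's treatment: the paper gives no proof of Proposition~\ref{UqhLbraiding} at all, quoting it directly as \cite[Theorem~7]{JK}, and your sketch likewise defers the verification of the $R$-matrix axioms to that reference while correctly supplying the routine points (termwise finiteness of $\sum_n q^{n(n-1)/2}E^n\otimes F^{(n)}$ on Verma vectors since $E^nv_i=0$ for $n>i$, invertibility by triangularity on each finite-dimensional sub-weight space, reduction of the braid relations to $n=2,3$, and the integrality caveat for the half-divided-power form). No gap to report.
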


\begin{rem}\label{coloredquantum}
One can consider a braid action over $V^{s_1} \otimes \cdots \otimes V^{s_n}$ (considering more variables in the ring) such that the morphism $\phi_n$ is well defined but becomes an algebra morphism only when restricted to the pure braid group $\PBn$. These braids are actually defining endomorphisms. See \cite[Appendix]{martel2020colored} for a detailed explanation on {\em colored versions}. %Then one can consider the induced representation of $\Bn$, see Definition \ref{inducedpure}, or restrict to a representation of the colored braid groupoid. 
\end{rem}

%\subsection{Weight sub-representations}
		Elements of same weight in the tensor product of Verma modules form a sub-representation of the braid group.
		
		\begin{defn}[Sub-weight representations]
		Let: \begin{align*} 
		V_{n,m}(q,q^{\alpha})&:= \left\lbrace v_{i_1} \otimes \dots \otimes v_{i_n} \in V_{\alpha}^{\otimes n} \text{ s. t. } \sum_{k=1}^n i_k = m \right\rbrace
		\end{align*}
		and be the space of {\em sub-weight $m$ vectors}. It is stable under the action of braids so that we denote: \[ \varphi_{n,m}(q, q^{\alpha}, .): B_n \to \End_{\Laurent}(V_{n,m}) .\]
		the associated (restricted) representation. 
		
		When there is no ambiguity on variables, we will write $V_{n,m}:= V_{n,m}(q,q^{\alpha})$ and $\varphi_{n,m}(\beta):=\varphi_{n,m}(q, q^{\alpha}, \beta)$.
		
		\end{defn}

		\begin{rem}
		The stability of sub-weight vectors under braid actions is deduced from the fact that the latter action commutes with that of $\UqhL$ and from the fact that sub-weight vectors are eigenvectors for the $K$ action. Namely:
		\begin{equation}\label{weightfromeigenvalues}
		V_{n,m}(q,q^{\alpha})=\{ v \in V_{\alpha}^{\otimes n} | Kv= q^{\alpha -2m}v \}.
		\end{equation}
		This is for $q$ being a formal variable. At roots of unity (i.e. when $q$ is a root of $1$, see next section), since $q^{-2r}=1$, Eq. \ref{weightfromeigenvalues} does not stand. Still the braid action preserves $V_{n,m}$ and this can be seen directly from the terms of the R-matrix preserving the sum of indices of tensors.
		\end{rem}

		\subsection{Specialization of variables}\label{sec_spec_of_variables}
		
		Working with the ring $\Laurent$ is particularly comfortable for specialization of variables, i.e. giving a complex value to variables $q$ and $s$. This corresponds to a morphism:
		\[
		\spec: \Laurent \to \BC
		\]
		and algebraically speaking, all the data set just presented has to be replaced by:
		\[
		\tilde{\mathcal{U}}_{\spec} := \UqhL \otimes_{\spec} \BC \text{ and } V^s \otimes_{\spec} \BC \text{ and so on.}
		\]
		This is what we will mean by {\em specialization}. (We will simply denote $\tilde{\mathcal{U}}$ when the specialization is clear)

		\subsubsection{Specialization to integral weights}
		
		We can take a specialization at {\em integral weights} setting $s=q^{\alpha}=q^N$ for $N\in \Z$ in the previous formulas and we denote $V^{N}$ the corresponding Verma module with integral weights. We find a classical sub-module in that case:
		\begin{defn}[Simple module of dim. $N$]
		We denote $S_N$ the module spanned by $\lbrace v_0 , \ldots , v_N \rbrace$. It is a sub-module of $V^N$ isomorphic to the highest weight simple module of dim. $N+1$. 
		\end{defn}

		This specialization has a symmetry as shown in the following lemma:

\begin{lemma}\label{lemma_quotient_Verma_integer}
For $N \in \N^*$, we have the isomorphism of $\tilde{\mathcal{U}}$ modules:  \[ V^{-N-2} \cong V^N / S_N .\].
\end{lemma}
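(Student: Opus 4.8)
The plan is to exhibit the isomorphism explicitly on bases. Write $\{v_j\}_{j\in\BN}$ for the standard basis of $V^N$ (the specialization $s=q^N$) and $\{v'_i\}_{i\in\BN}$ for that of $V^{-N-2}$ (the specialization $s=q^{-N-2}$); these are free over $\Laurent_0=\BZ[q^{\pm1}]$. Since $S_N=\langle v_0,\dots,v_N\rangle$, the quotient $V^N/S_N$ is free over $\Laurent_0$ on the classes $\bar v_{N+1},\bar v_{N+2},\dots$ (conceptually, $\bar v_{N+1}$ is a highest-weight vector of weight $-N-2$, which is why $V^{-N-2}$ is the right source). I would therefore set
\[
\Phi\colon V^{-N-2}\longrightarrow V^N/S_N,\qquad \Phi(v'_i):=\bar v_{N+1+i}\quad(i\in\BN),
\]
which is tautologically an isomorphism of underlying $\Laurent_0$-modules; the content is that $\Phi$ is $\tilde{\mathcal{U}}$-linear, and by Definition~\ref{Halflusztig} it suffices to test this on the generators $E$, $K^{\pm1}$ and $F^{(n)}$ ($n\ge1$).

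Along the way I would record why $S_N$ is a submodule (so the quotient makes sense): $K$ preserves each line $\Laurent_0 v_j$, $Ev_j=v_{j-1}$ with $Ev_0=0$, and in
\[
F^{(n)}v_j=\qbin{n+j}{j}_q\Bigl(\,\prod_{k=0}^{n-1}\{N-j-k\}_q\Bigr)v_{j+n}
\]
the scalar contains the factor $\{0\}_q=0$ as soon as $0\le j\le N<j+n$. The same two formulas show immediately that $\Phi$ intertwines $K$ (both sides multiply the $i$-th basis vector by $q^{-N-2-2i}$) and $E$ (both annihilate the $0$-th vector and send the $i$-th to the $(i-1)$-st). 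So the only remaining point is the scalar identity, for all $i,n\in\BN$,
\[
\qbin{n+i}{i}_q\prod_{k=0}^{n-1}\{-N-2-i-k\}_q
=\qbin{n+N+1+i}{N+1+i}_q\prod_{k=0}^{n-1}\{-1-i-k\}_q ,
\]
whose two sides are, respectively, the coefficient of $F^{(n)}$ on $v'_i$ in $V^{-N-2}$ and that of $F^{(n)}$ on $\bar v_{N+1+i}$ in $V^N/S_N$ (using $N-(N+1+i)=-1-i$ for the right-hand weight).

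This identity is a routine manipulation with balanced quantum factorials $\{m\}!:=\prod_{\ell=1}^{m}\{\ell\}=(q-q^{-1})^m[m]_q!$: one has $\qbin{a+b}{a}_q=\{a+b\}!/(\{a\}!\,\{b\}!)$, and from $\{-m\}_q=-\{m\}_q$ one gets $\prod_{k=0}^{n-1}\{-1-i-k\}_q=(-1)^n\{i+n\}!/\{i\}!$ and $\prod_{k=0}^{n-1}\{-N-2-i-k\}_q=(-1)^n\{N+1+i+n\}!/\{N+1+i\}!$. Substituting, both sides collapse to $(-1)^n\{n+i\}!\,\{N+1+i+n\}!\big/\bigl(\{i\}!\,\{n\}!\,\{N+1+i\}!\bigr)$, which closes the argument. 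I do not expect a genuine obstacle here: the only points needing care are the index shift by $N+1$ defining $\Phi$ and the vanishing argument for $S_N$. (Alternatively one could check $\Phi$ only for $F=F^{(1)}$ and propagate to every $F^{(n)}=\tfrac{(q-q^{-1})^n}{[n]_q!}F^n$ after inverting $[n]_q!$ over $\BQ(q)$, using that the Verma modules are torsion-free; but testing all $F^{(n)}$ directly is equally short.)
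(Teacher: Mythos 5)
Your proof is correct and follows essentially the same route as the paper's: identify $\bar v_{N+1+i}$ with the $i$-th basis vector of $V^{-N-2}$, check $K$ and $E$ trivially, and reduce the $F^{(n)}$-equivariance to the quantum-factorial identity $\qbin{n+N+1+i}{n}_q\{-i-1;n\}_q=\qbin{n+i}{n}_q\{-N-2-i;n\}_q$, which is exactly the identity the paper uses (up to dividing by $\{n\}!$). Your extra verification that $S_N$ is indeed a submodule is a small, welcome addition that the paper leaves implicit.
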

\begin{proof}
While $(v_i)_{i\in \BN}$ is set to be the basis of $V^N$, $(\overline{v_i})_i$ the basis of the quotient $V^N/S_N$, we get :
\begin{align*}
E \overline{v_{N+1}} &= 0\\
E\overline{v_{N+1+i+1}} &= \overline{v_{N+1+i}}\\
K\overline{v_{N+1+i}}&=q^{-N-2 -2i} \overline{v_{N+1+i}}\\
F^{(n)} \overline{v_{N+1+i}}&=\qbinom{n+N+1+i}{n}_q \{ -i-1 ; n \}_q \overline{v_{N+1+n+i}} 
\end{align*}
		
We can transform a bit the last equation using:
\begin{align*}
\{n+N+1+i;n \}_q \{ -i-1 ; n \}_q &= \{i+n;n\}_q \{-N-2-i;n \}_q
\end{align*}
Hence, 
\[ F^{(n)} \overline{v_{N+1+i}}=\qbinom{n+i}{n}_q \{-N-2-i;n \}_q \overline{v_{N+1+n+i}}\]

Setting $v_i := \overline{v_{N+1+i}}$ for $i \ge 0$, one recognizes the exact definition $V^{-N-2}$.
\end{proof}
		
		\subsubsection{Specialization of $q$ to $1$}
		
		We treat the case $q=1$ slightly differently from other roots of unity (see next section). We fix particular notations in this context.%, removing the quadratic part:
%		\begin{defn}
%		\[ W_{n,m} := V_{n,m}(1, q^{\alpha}) \]
%		
%		\[ \psi_{n,m}(q^{\alpha}, \beta):= q^{\frac{-f \alpha^2}{2}}\varphi_{n,m}(1,q^{\alpha},\beta) .\]
%		Moreover we denote $w_i := v_i$ at $q=1$.
%		\end{defn}

\begin{Not}
When $q=1$ we fix:
\begin{itemize}
\item $\SB_{n,m} := V_{n,m}(1, q^{\alpha})$,
\item $w_i := v_i$,
\item $\psi_{n,m}(q^{\alpha}, \beta):= \varphi_{n,m}(1,q^{\alpha},\beta)$.
\end{itemize}
(The notation $\SB$ refers to the fact that it is isomorphic to a symmetric power of the Burau representation, see next proposition). 
\end{Not}

		A nice property of the $q=1$ case is that the sub-weight $m$ level representation can be obtained as a symmetric power of the first sub-weight level.
		
		\begin{prop}\label{prop_psi_sym}
		Let $\beta \in B_n$, then : \[ \psi_{n,m}(q^{\alpha}, \beta) = \Sym^m (\psi_{n,1}(q^{\alpha},\beta)) .\]
		
		\end{prop}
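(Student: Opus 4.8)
The plan is to show that at $q=1$ the braid action on $V_\alpha^{\otimes n}$ factors through a natural symmetric-algebra construction applied to $V_\alpha$ itself, and then compare graded pieces. First I would analyze the $R$-matrix $\RR$ at $q=1$. When $q=1$ the coproduct on $E$ and on $F = F^{(1)}$ degenerates (the $q^{-j(n-j)}K^{j-n}$ factors all become powers of $s$ only, and crucially the series $\sum q^{n(n-1)/2}E^n\otimes F^{(n)}$ and the factor $q^{H\otimes H/2}$ become manageable). The key structural observation is that at $q=1$ the algebra $\tilde{\mathcal U}$ becomes (essentially) cocommutative up to the grading operator $K$, and the braiding $\RR$ on $V_\alpha\otimes V_\alpha$ is built from the braiding on the ``weight $1$'' part in a way compatible with symmetrization: concretely, on the basis $v_{i_1}\otimes\cdots\otimes v_{i_n}$ the operator $\sigma_i$ acts by a formula whose dependence on the multi-index is ``polynomial of symmetric-power type'' in the single-box data.

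The cleanest route is a direct induction on $m$. Define the symmetric power $\Sym^m$ of the representation $\psi_{n,1}$ on $\SB_{n,1}$ in the usual way: $\Sym^m W \subset W^{\otimes m}$ is the $\mathfrak S_m$-invariants (or coinvariants — at $q=1$ over $\Laurent$ with $s$ formal this is unambiguous after the standard symmetrization), with the diagonal $B_n$-action. I would construct an explicit $\Laurent[B_n]$-linear isomorphism $\Theta: \Sym^m(\SB_{n,1}) \xrightarrow{\ \sim\ } \SB_{n,m}$ sending a symmetrized product of basis vectors of $\SB_{n,1}$ to the corresponding tensor monomial in $\SB_{n,m}$. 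Checking $\Theta$ is a bijection is combinatorics (monomials of degree $m$ in $n$ commuting ``slots''). The real content is equivariance: it suffices to check it on the generators $\sigma_i$, and by the tensor/local nature of $\sigma_i$ (it acts only in factors $i,i+1$) this reduces to the two-strand case $n=2$, i.e. to showing $\psi_{2,m}(q^\alpha,\sigma_1) = \Sym^m(\psi_{2,1}(q^\alpha,\sigma_1))$ on $\SB_{2,m}$. That in turn is the statement that the $q=1$ specialization of $\RR$ on $V_\alpha\otimes V_\alpha$, restricted to sub-weight $m$, is the $m$-th symmetric power of its restriction to sub-weight $1$ — which I would verify by writing out $\RR = q^{-\alpha^2/2} T\circ q^{H\otimes H/2}\sum q^{n(n-1)/2} E^n\otimes F^{(n)}$ at $q=1$ explicitly on $v_i\otimes v_j$ and recognizing the resulting matrix coefficients as those of a symmetric power (they are, up to the scalar $q^{-\alpha^2/2}$ which is harmless, given by the Chu–Vandermonde-type identity that already appeared in the proof of Lemma~\ref{lemma_quotient_Verma_integer}).

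The main obstacle I anticipate is bookkeeping the scalars and the ``diagonal part'' $q^{H\otimes H/2}$ correctly at $q=1$: one must be careful that $q^{H\otimes H/2}$ does \emph{not} trivialize (it becomes $v_i\otimes v_j\mapsto q^{(\alpha-2i)(\alpha-2j)}v_i\otimes v_j$ with $q=1$ meaning the \emph{formal} variable $q$ is set to $1$ but $q^\alpha=s$ is kept formal, so this is $s^{\alpha}\cdot(\text{stuff})$ — one has to interpret these $\alpha$-exponents via the $\{\alpha;n\}_q$ notation and not accidentally kill them). The cleanest way to avoid sign/normalization pitfalls is to phrase everything through the $\{\alpha-j,n\}$ notation of Remark~\ref{weightdenomination} and the action formulas in Definition~\ref{GoodVerma} specialized at $q=1$, then observe that the $B_n$-module structure on $V_\alpha^{\otimes n}$ at $q=1$ is precisely that obtained by transporting the diagonal action along the algebra map $\tilde{\mathcal U}|_{q=1}\to \bigoplus_m \operatorname{Sym}^m$, so that sub-weight $m$ is $\operatorname{Sym}^m$ of sub-weight $1$ functorially. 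Once the $n=2$ case is in hand, the general case is formal from compatibility of $\operatorname{Sym}$ with the tensor structure, and the proof concludes.
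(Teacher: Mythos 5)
Your proposal follows essentially the same route as the paper: identify $\SB_{n,m}$ with $\Sym^m(\SB_{n,1})$ via monomials in the single-box basis vectors, reduce equivariance to the generators $\sigma_i$ (hence to a local two-strand computation), and match the resulting matrix coefficients against the $q=1$ specialization of the $R$-matrix. The one bookkeeping point you flag but do not resolve is handled in the paper by the diagonal rescaling $u_j := j!\,w_j$, which is exactly what makes the binomial coefficients from expanding $(\psi_{n,1}(\sigma_i)e_i)^{j_i}$ agree with the $\qbinom{n+j}{j}_q\big|_{q=1}$ factors in the Verma action after converting back to the $w_j$ basis.
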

		
		\begin{proof}
		First we need to consider a diagonal change of bases. We set $u_j:= j! w_j$.
		Let $e_k:=u_0 \otimes \dots \otimes u_1 \otimes \dots \otimes u_0 \in \SB_{n,1}$ where the only $u_1$ is located at the $k$-th position.  The family $\left\lbrace e_k, k = 1, \ldots, n \right\rbrace$ is a basis of $\SB_{n,1}$. We can identify higher weight tensors with symmetric powers of the $e_k$ using the one to one following correspondence:  \[ u_{j_1} \otimes \dots \otimes u_{j_n} \longleftrightarrow \prod_{k=1}^n e_k^{j_k}. \]
		
		Now, in the basis $e_k$, we have :
		\[ \psi_{n,1}(\sigma_i)= 
		\left(\begin{array}{@{}c|c@{}|c@{}}
		  I_{i-1}
		  & 0 
		  & 0\\
		\hline
		  0 
		  & \begin{matrix}
		  1-q^{-2\alpha} & q^{-\alpha} \\
		  q^{-\alpha} & 0
		  \end{matrix}
		  & 0\\
		\hline
		  0
		  & 0
		  & I_{n-i-1}
		\end{array}\right). \]
		
		We compute the symmetric power action in the $u_j$ basis, 
		\begin{align*}
		\Sym^m(\psi_{n,1}(\sigma_i))u_{\overline{j}} &= \prod_{k=0}^n (\psi_{n,1}(\sigma_i) e_k)^{j_k}\\
		&= e_1^{j_1} \cdots e_{i-1}^{j_{i-1}}((1-q^{-2\alpha})e_i+q^{-\alpha}e_{i+1})^{j_i} \times (q^{-\alpha} e_i)^{j_{i+1}} e_{i+2}^{j_{i+2}} \cdots e_n^{j_n}\\
		&= \sum_{l=0}^{j_i} \binom{j_i}{l} \{\alpha\}^l q^{-(j_i+ j_{i+1})\alpha} u_{j_1} \otimes \dots  \otimes u_{j_{i+1}+l} \otimes u_{j_i-l} \otimes \dots \otimes u_{j_n}
		\end{align*}
		
		If we transpose it back in the basis $w_j$ we get:
		\begin{align*}
		Sym^m(\psi_{n,1}(\sigma_i))w_{\overline{j}} &= \sum_{l=0}^{j_i} \binom{j_{i+1}+l}{l} \{\alpha\}^l q^{-(j_i+ j_{i+1})\alpha} w_{j_1} \otimes \dots  \otimes w_{j_{i+1}+l} \otimes w_{j_i-l} \otimes \dots \otimes w_{j_n} \\
		&= \psi_{n,m}(q^{\alpha}, \beta) w_{\overline{j}}
		\end{align*}
		The last equality is directly checked from the set-up: Defs. \ref{GoodVerma} and \ref{goodRmatrix} and Prop. \ref{UqhLbraiding}.
		\end{proof}

		\subsubsection{Specialization of $q$ to roots of $1$: $r$-part sub-representations}\label{section_rparts}
		
		In this subsection we set $q=\zeta_{2r}$ which corresponds to a specialization as defined above.
		
		\begin{defn}
		The $r$-part of a tensor $v=v_{i_1 + r j_1} \otimes \dots \otimes v_{i_n +r j_n} \in V_{\alpha}^{\otimes n}$ where $i_1, \dots i_n \leq r-1$ is defined by \[ \rp(v):=\sum_{k=0}^n j_k .\]
		
		\end{defn}

		\begin{defn}
		We define subspaces of $V_{\alpha}^{\otimes n}$
		\[ V_n^m (\zeta_{2r},q^{\alpha}):= < v | \ \rp(v)=m > \]
		\[ V_n^{ \leq m} (\zeta_{2r},q^{\alpha}):= \bigoplus_{i=0}^m V_n^m(\zeta_{2r},q^{\alpha})\]
		when there is no ambiguity, we will write $ V_n^m := V_n^m (\zeta_{2r},q^{\alpha}) $ and $V_n^{ \leq m}:=V_n^{ \leq m} (\zeta_{2r},q^{\alpha})$.
		\end{defn}

		\begin{prop}
		$V_n^{ \leq m}$ is a sub-representation of braids designed by $\varphi_n^{\leq m}(\beta)$ (the restriction of $\varphi_{n,m}$, with the implicit specialization of variables).
		\end{prop}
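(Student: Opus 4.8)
The claim is that the braid action on $V_\alpha^{\otimes n}$ at $q=\zeta_{2r}$ preserves the filtration by $r$-part $\le m$. Since the braid action is generated by the operators $\RR$ acting on two consecutive tensor factors (Proposition~\ref{UqhLbraiding}), it suffices to show that $\RR$ applied to $V^s\otimes V^s$ does not increase the $r$-part, i.e. that $\RR(v_{i}\otimes v_{j})$ with $i=a+ra'$, $j=b+rb'$, $a,b\le r-1$, lies in the span of tensors whose $r$-parts add up to at most $a'+b'$. Then for a general tensor $v_{i_1+rj_1}\otimes\cdots\otimes v_{i_n+rj_n}$, applying a generator $\sigma_k$ only affects the factors in positions $k,k+1$, and the total $r$-part is the sum over all positions, so non-increase on the two affected factors gives non-increase overall; hence $V_n^{\le m}$ is stable.

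First I would recall the explicit form of $\RR = q^{-\alpha\alpha/2}\, T\circ q^{H\otimes H/2}\sum_{n\ge 0} q^{n(n-1)/2} E^n\otimes F^{(n)}$. On $v_i\otimes v_j$ the factor $q^{H\otimes H/2}$ is a scalar and $T$ just swaps the factors, so the only part that moves indices around is $\sum_n q^{n(n-1)/2} E^n\otimes F^{(n)}$, which sends $v_i\otimes v_j \mapsto \sum_n c_n\, v_{i-n}\otimes v_{j+n}$ for suitable scalars $c_n$ (zero once $n>i$). Writing $i=a+ra'$ and $j=b+rb'$, I need to check that for each $n$ with $c_n\ne 0$, the $r$-part of $v_{i-n}\otimes v_{j+n}$ is at most $a'+b'$. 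Write $n = c + rc'$ with $0\le c\le r-1$. Then $i-n$ has $r$-part $a'-c'$ if $a\ge c$ and $a'-c'-1$ if $a<c$; symmetrically $j+n$ has $r$-part $b'+c'$ if $b+c\le r-1$ and $b'+c'+1$ if $b+c\ge r$. The only way the total $r$-part of $v_{i-n}\otimes v_{j+n}$ exceeds $a'+b'$ is the case $a\ge c$ \emph{and} $b+c\ge r$; in that case the scalar $c_n$ must be checked to vanish — this is where the root-of-unity hypothesis enters, via the coefficient of $F^{(n)}$ in the action on $v_j$ in Definition~\ref{GoodVerma}, namely the product $\prod_{k=0}^{n-1}(sq^{-k-j}-s^{-1}q^{j+k})$ (times a quantum binomial). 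When $q=\zeta_{2r}$, a $q$-binomial $\qbinom{n+j}{j}_q$ and the $\{n\}!$-type products have well-understood vanishing/divisibility at roots of unity, and the residue calculus of these coefficients modulo $r$ forces exactly the cancellation needed whenever the "$b+c\ge r$ carry" occurs with $a\ge c$. So the main obstacle — and the only nontrivial step — is this arithmetic lemma: identifying precisely which coefficients $c_n$ in the $R$-matrix expansion vanish at $q=\zeta_{2r}$, and matching that vanishing set with exactly the index pairs whose $r$-part would otherwise jump.

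Concretely I would proceed as follows. (1) Reduce to a single $\RR$ acting on two factors, as above. (2) Expand $\RR(v_i\otimes v_j)$ explicitly using Definitions~\ref{GoodVerma} and~\ref{goodRmatrix}, obtaining $\RR(v_i\otimes v_j)=\sum_{n=0}^{i}\lambda_{n}(i,j)\, v_j'\otimes v_i'$ with $v_i'=v_{i-n}$, $v_j'=v_{j+n}$ (after the twist), and record $\lambda_n$ in closed form. (3) Perform the base-$r$ bookkeeping on the indices $i-n$ and $j+n$ to pin down the "bad" pairs $(i,j,n)$, those increasing the $r$-part. (4) Show $\lambda_n(i,j)=0$ for all bad pairs when $q=\zeta_{2r}$, using standard root-of-unity identities for $[\,\cdot\,]_q!$, $\qbinom{\cdot}{\cdot}_q$ and the products $\{\alpha-j;n\}_q$ (the key point being that a factor $[r]_{\zeta_{2r}}=0$ appears in the numerator of $\lambda_n$ without being cancelled in the denominator precisely in the carry case). (5) Conclude that $\RR$, hence every $\sigma_k$ and thus all of $\phi_n(\zeta_{2r},\cdot)$, preserves $V_n^{\le m}$, and denote the restriction $\varphi_n^{\le m}$. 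I expect steps (1), (2), (5) to be routine from the set-up already in the paper, step (3) to be elementary modular arithmetic, and step (4) to carry essentially all the content.
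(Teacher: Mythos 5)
Your plan follows essentially the same route as the paper's proof: reduce to a single $\RR$ acting on two adjacent factors, do base-$r$ bookkeeping on the indices $i-n$ and $j+n$, and kill the only potentially $r$-part-raising terms by the vanishing at $q=\zeta_{2r}$ of the coefficient of $F^{(n)}v_j$ (the paper states this as $F^{(i+rj)}v_{a+ru}=0$ whenever $i,a\le r-1$ and $a+i\ge r$, i.e.\ whenever the addition carries in base $r$ --- which covers your ``bad case'' without even needing the condition $a\ge c$ on the $E$-side; the mechanism is indeed the uncancelled $[r]_{\zeta_{2r}}$ in $\qbinom{n+j}{j}_q$, via $q$-Lucas). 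The one point you leave out is stability under $\RR^{-1}$, needed for a genuine sub-representation of $\Bn$; the paper handles it by citation, and it also follows from the finite-dimensionality of each $V_n^{\le m}\cap V_{n,m'}$.
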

		\begin{proof}
		First remark that \[ \rp(E^r \otimes F^{(r)} (v_i \otimes v_j)) = \rp(v_i \otimes v_j).\]
		Moreover $F^{(i+rj)} v_{a+ru}=0$ if $i, a \leq r-1$ and $a+i \geq r$, hence \[\rp(E^n \otimes F^{(n)} (v_i \otimes v_j)) \leq \rp(v_i \otimes v_j) .\]
		Thus, $V_n^{ \leq m}$ is invariant via the action of the $R$ matrix and its inverse (for the inverse, see e.g. \cite[Prop. 6]{willetts2020unification}).
%		The only way to increase the $r$-part is by \[(E^{i+rj} \otimes F^{(i+rj)})v_{b+rv} \otimes v_{a+ru}= (-1)^{ui+aj+ruj} \binom{u+v}{v} \qbinom{a+i}{i}_{\zeta_{2r}} \{\alpha+a+ru;i+rj\}_{\zeta_{2r}}v_{b+rv-i-rj} \otimes v_{a+ru+i+rj} \] where $i,a,b \leq r-1$ and $i+a \geq r$.
%		But then since $\qbinom{a+i}{i}_{\zeta_{2r}}=0$, hence $\varphi_{\alpha}(\beta)(V_n^{\leq m }) \subset V_n^{\leq m}$. %MORE DETAILS?
		\end{proof}
		
		This allows us to have another sub-representation via projection maps
		\begin{prop}
		Let $\rho_n^m : V_n^{\leq m} \to V_n^m$ the canonical projection map, then $V_n^m$ is endowed with a representation of $\Bn$ using the projection of the general action: \[ \varphi_n^m:= \rho_n^m \circ \varphi_n^{\leq m}|_{V_n^m}.\]
		%Moreover, $\Tr(\varphi_n^m(\beta))=\Tr(\varphi_n^{\leq m}(\beta)|_{V_n^m})$.
		\end{prop}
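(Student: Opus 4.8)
The strategy is to realize $(V_n^m, \varphi_n^m)$ as a subquotient of the representation $(V_n^{\leq m}, \varphi_n^{\leq m})$ established in the previous proposition, and thereby let it inherit the group-homomorphism property for free. First I would observe that the previous proposition applies verbatim to the index $m-1$ as well, so that $V_n^{\leq m-1}$ is itself a sub-representation of braids; since $V_n^{\leq m-1}\subset V_n^{\leq m}$, it is in particular a $\Bn$-stable subspace of $V_n^{\leq m}$. Hence the quotient $V_n^{\leq m}/V_n^{\leq m-1}$ carries a canonical $\Bn$-representation, call it $\overline{\varphi}$, characterized by $\overline{\varphi}(\beta)\circ\pi=\pi\circ\varphi_n^{\leq m}(\beta)$ where $\pi\colon V_n^{\leq m}\to V_n^{\leq m}/V_n^{\leq m-1}$ is the canonical surjection.

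Next I would identify this quotient with $V_n^m$. Because $V_n^{\leq m}=V_n^{\leq m-1}\oplus V_n^m$ as $\Laurent$-modules (this is how $V_n^{\leq m}$ is defined), the canonical projection $\rho_n^m\colon V_n^{\leq m}\to V_n^m$ has kernel exactly $V_n^{\leq m-1}$ and restricts to the identity on $V_n^m$; therefore it induces an $\Laurent$-linear isomorphism $\overline{\rho}\colon V_n^{\leq m}/V_n^{\leq m-1}\xrightarrow{\ \sim\ }V_n^m$ with $\overline{\rho}\circ\pi=\rho_n^m$. Transporting the representation $\overline{\varphi}$ along $\overline{\rho}$ produces a representation of $\Bn$ on $V_n^m$, and I would then check that it coincides with the map $\varphi_n^m:=\rho_n^m\circ\varphi_n^{\leq m}|_{V_n^m}$ of the statement: for $v\in V_n^m$ one has $\pi(v)=\overline{\rho}^{-1}(v)$, so $\big(\overline{\rho}\circ\overline{\varphi}(\beta)\circ\overline{\rho}^{-1}\big)(v)=\overline{\rho}\big(\overline{\varphi}(\beta)\pi(v)\big)=\overline{\rho}\big(\pi(\varphi_n^{\leq m}(\beta)v)\big)=\rho_n^m(\varphi_n^{\leq m}(\beta)v)=\varphi_n^m(\beta)(v)$.

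Finally, since a representation transported along a linear isomorphism is again a representation, $\varphi_n^m$ is an $\Laurent$-algebra morphism $\Laurent[\Bn]\to\End_\Laurent(V_n^m)$, in particular $\varphi_n^m(\beta_1\beta_2)=\varphi_n^m(\beta_1)\varphi_n^m(\beta_2)$ and $\varphi_n^m(1)=\mathrm{id}$. If one prefers a direct verification avoiding the abstract quotient, the same computation reads: for $v\in V_n^m$, write $\varphi_n^{\leq m}(\beta_2)v=\varphi_n^m(\beta_2)v+w$ with $\varphi_n^m(\beta_2)v\in V_n^m$ and $w\in V_n^{\leq m-1}$; apply $\varphi_n^{\leq m}(\beta_1)$, use that $V_n^{\leq m-1}$ is braid-stable so that $\varphi_n^{\leq m}(\beta_1)w\in V_n^{\leq m-1}\subset\ker\rho_n^m$, and conclude $\varphi_n^m(\beta_1\beta_2)v=\rho_n^m\varphi_n^{\leq m}(\beta_1)\varphi_n^m(\beta_2)v=\varphi_n^m(\beta_1)\varphi_n^m(\beta_2)v$.

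\textbf{Main obstacle.} There is essentially no hard computation here: the only genuine content is the braid-invariance of the $r$-part filtration, which is already granted by the previous proposition (and which is the place where the root-of-unity phenomenon $\rp(E^r\otimes F^{(r)}(v_i\otimes v_j))=\rp(v_i\otimes v_j)$ really matters). The one point requiring a little care is making sure that $\rho_n^m$ is applied to a genuinely $\Bn$-stable complement, i.e. invoking the previous proposition at level $m-1$ and not merely at level $m$; once that is in place the argument is purely formal.
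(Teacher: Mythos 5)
Your proposal is correct and, in its final "direct verification" paragraph, is exactly the paper's argument: the key point in both is that $V_n^{\leq m-1}$ is braid-stable (the previous proposition applied at level $m-1$), so the error term landing in $\ker\rho_n^m$ is killed after applying $\varphi_n^{\leq m}(\beta_1)$. The quotient-representation packaging you give first is just a more formal restatement of the same computation.
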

		
		\begin{proof}
		Since $V_n^{\leq m-1}$ is a sub-representation, if $v \in V_n^{\leq m-1}$ we have $\varphi_n^{\leq m-1}(\beta_1)v \in V_n^{\leq m -1}$ and hence $\rho_n^m \circ \varphi_n^{\leq m}(\beta_1) v=\rho_n^m \circ \varphi_n^{\leq m-1}(\beta_1) v=0$. 
		
		This means the following \[\rho_n^m \circ \varphi_n^{\leq m}(\beta_1) \circ \rho_n^m \circ \varphi_n^{\leq m}(\beta_2)|_{V_n^m}=\rho_n^m \circ \varphi_n^{\leq m}(\beta_1) \circ \varphi_n^{\leq m}(\beta_2)|_{V_n^m}. \] 
		Finally $\varphi_n^m(\beta_1 \beta_2)= \varphi_n^m(\beta_1) \circ \varphi_n^m(\beta_2)$. 
		\end{proof}	
		
		\begin{rem}
		As braid group representation, $V_n^0 \cong V_n^{\leq 0} $ (meaning $\varphi_n^0= \varphi_n^{\leq 0}$).
		
		\end{rem}		
		
		We may now state the factorisation of $r$-part representation, recall that the Frobenius map $F_r: \Z[q^{\alpha}] \to \Z[q^{\alpha}]$ sends $q^{\alpha} \mapsto q^{r\alpha}$ ($s \to s^r$ in the Laurent polynomials language):
		
		\begin{prop}\label{prop_braid_factor}
		The isomorphism 
		\[
		\Phi : \bfct V_n^m &\to& V_n^0 \otimes F_r(\SB_{n,m})\\
			 v_{\overline{i+rj}} &\mapsto& v_{\overline{i}} \otimes F_r(w_{\overline{j}}) \efct,
		\]
		where $\overline{i+rj}=(i_1+r j_1, \dots, i_n + rj_n)$ with $i_1, \dots ,i_n \leq r-1$,
		is a braid group representation isomorphism. 
		
		In other word, the following diagram commutes:
		
		\begin{center}
		\begin{tikzpicture}
		  \matrix (m) [matrix of math nodes,row sep=3em,column sep=4em,minimum width=2em]
		  {
		     V_n^m & V_n^m \\
		     V_n^0 \otimes F_r(\SB_{n,m}) & V_n^0 \otimes F_r(\SB_{n,m}) \\};
		  \path[-stealth]
		    (m-1-1) edge node [left] {$\Phi$} (m-2-1)
		            edge node [below] {$\varphi_n^m$} (m-1-2)
		    (m-2-1) edge node [below=0.2cm] {$\varphi_n^0 \otimes (F_r \circ \psi_{n,m})$} (m-2-2)
		    (m-1-2) edge node [right] {$\Phi$} (m-2-2);
		\end{tikzpicture}
		\end{center}
		
		\end{prop}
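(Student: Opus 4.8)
The plan is to verify that $\Phi$ intertwines the two braid actions by checking it on a single generator $\sigma_i$, since both $\varphi_n^m$ and $\varphi_n^0 \otimes (F_r \circ \psi_{n,m})$ are algebra morphisms and $\Phi$ is a linear isomorphism (its inverse sends $v_{\overline{i}} \otimes F_r(w_{\overline{j}})$ back to $v_{\overline{i+rj}}$). Because $\sigma_i$ acts only on the $i$-th and $(i+1)$-th tensor factors, everything reduces to a statement about $\RR$ acting on $V_\alpha \otimes V_\alpha$ at $q = \zeta_{2r}$, decomposed according to $r$-parts. So the first step is to write a general basis vector $v_{a+ru} \otimes v_{b+rv}$ with $a,b \le r-1$, apply $\RR = q^{-\alpha\alpha'/2} T \circ q^{H\otimes H/2}\sum_n q^{n(n-1)/2} E^n \otimes F^{(n)}$, and expand.

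The key computational step is the following splitting of the relevant structure constants. Applying $E^n \otimes F^{(n)}$ to $v_{a+ru}\otimes v_{b+rv}$ moves indices to $v_{a+ru-n} \otimes v_{b+rv+n}$; write $n = c + rw$ with $c \le r-1$. One must show three things at $q=\zeta_{2r}$: (i) the terms with $c \neq 0$ that do not preserve the $r$-part either vanish (this is exactly the observation already recorded in the proof of the previous proposition, namely $F^{(i+rj)}v_{a+ru}=0$ when $a+i \ge r$ with $a,i\le r-1$, together with $\rp$ being non-increasing), so after projecting to $V_n^m$ only the $r$-part--preserving terms survive, i.e. those with $c=0$, $n = rw$; (ii) on those surviving terms $q^{n(n-1)/2} = \zeta_{2r}^{rw(rw-1)/2}$ simplifies, and the quantum binomials/brackets appearing in $E^{rw}\otimes F^{(rw)}$ and in the coefficient of $F^{(rw)}$ on $V_\alpha$ factor, via a Lucas-type congruence for quantum binomials at roots of unity and via the identity $\{\alpha; rw\}_{\zeta_{2r}}$-type products collapsing, into precisely the product of (the $q=1$, weight-$w$ Burau-symmetric structure constant with $\alpha$ replaced by $r\alpha$) and (the $r$-part--zero structure constant coming from the residual $a,b$ data); (iii) the prefactors $q^{H\otimes H/2}$ and $q^{-\alpha\alpha'/2}$ split as $q^{(\alpha-2(a+ru))(\alpha-2(b+rv))}$-type weights into an $\SB$-part weight at $r\alpha$ and a $V_n^0$-part weight, using $\zeta_{2r}^{2r}=1$ to discard cross terms. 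Assembling (i)–(iii) gives exactly $\RR(v_a\otimes v_b) \otimes F_r(\RR^{\mathrm{Burau}}(w_u \otimes w_v))$ under $\Phi$, where the second factor is the $q=1$, weight-level action — i.e. $\Phi \circ \varphi_n^m(\sigma_i) = (\varphi_n^0 \otimes (F_r\circ\psi_{n,m}))(\sigma_i) \circ \Phi$ on each tensor pair, hence on all of $V_n^m$.

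Finally I would extend from $\sigma_i$ to arbitrary $\beta \in \Bn$ by multiplicativity: both vertical maps are the same $\Phi$, the top and bottom horizontal maps are algebra morphisms (the bottom because it is a tensor product of two braid representations, $\varphi_n^0$ and $F_r\circ\psi_{n,m}$, the latter being a representation by Prop.~\ref{prop_psi_sym} and functoriality of $F_r$), so commutation on generators propagates to words, and one also checks it on $\sigma_i^{-1}$ using that $\RR$ is invertible with the explicit inverse $R$-matrix (cf.\ \cite[Prop.~6]{willetts2020unification}), which is needed because $\Bn$ is generated by the $\sigma_i$ as a group, not as a monoid.

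The main obstacle will be step (ii): getting the quantum binomial coefficients and the bracket products $\prod_k(sq^{-k-j}-s^{-1}q^{j+k})$ appearing in $F^{(n)}$ to factor cleanly at $q=\zeta_{2r}$ into the "Frobenius-twisted $\SB$ part" times the "$r$-part--zero part." This is the root-of-unity combinatorial identity underlying the whole Frobenius/quantum-Frobenius phenomenon, and care is needed with the convention $q^{\pm\alpha}=s^{\pm1}$, with the half-divided-power normalization of $F^{(n)}$, and with the $q^{n(n-1)/2}$ twist factor; the diagonal rescaling $u_j := j!\, w_j$ from the proof of Prop.~\ref{prop_psi_sym} will reappear here to match normalizations on the $\SB$ side.
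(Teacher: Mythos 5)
Your overall strategy is the same as the paper's: reduce to a single $\RR$ acting on two adjacent tensor factors, write the summation index of the $R$-matrix as $n=c+rw$ with $0\le c\le r-1$, discard the terms killed either by vanishing of $F^{(n)}$ or by the projection $\rho_n^m$, and factor the surviving coefficients (this last factorization is exactly what the paper imports as Lemma~26 of \cite{willetts2020unification}). However, your step (i) contains a genuine error: it is not true that the $r$-part--preserving terms are exactly those with $c=0$. Acting on $v_{b+rv}\otimes v_{a+ru}$ with residues $a,b\le r-1$, the term $E^{c+rw}\otimes F^{(c+rw)}$ preserves the $r$-part whenever $a+c\le r-1$ and $c\le b$, for \emph{any} $c\in\{0,\dots,r-1\}$; it vanishes outright when $a+c\ge r$, and it lowers the $r$-part (hence is killed by $\rho_n^m$) only when $c>b$. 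The surviving sum is therefore a genuine double sum over $c$ and $w$, and it is precisely the $c$-sum that produces the off-diagonal part of $\varphi_n^0$ on the $V_n^0$ tensor factor. If you literally keep only the $n\equiv 0\pmod r$ terms as proposed, the first factor of your output degenerates to the Cartan part $q^{H\otimes H/2}$ of $\RR$ rather than the full truncated $R$-matrix; this cannot be correct, since $\varphi_n^0$ must be nontrivial (its partial trace computes $\ADO_r$, Prop.~\ref{prop_braid_ado}). Your final formula $\RR(v_a\otimes v_b)\otimes F_r(\RR^{\mathrm{Burau}}(w_u\otimes w_v))$ is the right one, but it is not what (i)--(iii) as stated would assemble to.

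The fix is the paper's computation: for $0\le c\le r-1$ with $a+c\le r-1$ and $b-c\ge 0$, the coefficient of the $(c+rw)$-th term factors (via the quantum Lucas congruence for $\qbinom{a+c+r(u+w)}{c+rw}_{\zeta_{2r}}$ and the collapse of $\{\alpha-a-ru;c+rw\}$ into $\{\alpha-a;c\}\cdot\{\alpha\}^{w}$-type pieces) as the $c$-th structure constant of the truncated $R$-matrix on $v_b\otimes v_a$ times the Frobenius twist of the $w$-th structure constant of the $q=1$ symmetric-Burau $R$-matrix on $w_v\otimes w_u$; summing over $c$ and $w$ independently then yields the claimed tensor factorization. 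The remainder of your plan --- reduction to generators, multiplicativity of both sides, and the treatment of $\sigma_i^{-1}$ via the inverse $R$-matrix --- is sound and matches what the paper leaves implicit.
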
 
		
		\begin{proof}
		Using Lemma 26 in \cite{willetts2020unification} we can factorise the action of the $R$ matrix as follows.
		
		Let $0 \leq a,b,i \leq r-1$ such that $ 0 \leq a+i \leq r-1$ and $ 0 \leq b-i \leq r-1$, we have:
		
		\begin{align*}
		q^{\frac{H\otimes H}{2}}(q^{\frac{(i+rj)(i+rj-1)}{2}} E^{i+rj} \otimes F^{(i+rj)}). v_{b+rv} \otimes v_{a+ru}&=q^{\frac{\alpha^2}{2}} q^{\frac{(i+rj)(i+rj-1)}{2}} \qbinom{i+rj+a+ru}{i+rj}_{q} \\ & \times \{\alpha-a-ru; i+rj\}_{q} q^{-(a+ru+b+rv) \alpha} \\ & \times q^{ 2(a+ru+i+rj)(b+rv-i-rj)}  v_{b+rv-i-rj} \otimes v_{a+ru+i+rj}\\
		 &=q^{\frac{\alpha^2}{2}} q^{\frac{i(i-1)}{2}} \qbinom{a+i}{i}_{q}\\ & \times \{\alpha-a; i\}_{q}  q^{-(a+b) \alpha} q^{ 2(a+i)(b-i)}v_{b-i} \otimes v_{a+i} \\ & \otimes F_r\left( \binom{u+j}{j} \{\alpha\}^{j} q^{-(u+v)\alpha} w_{v-j} \otimes w_{u+j}\right) .
		\end{align*}
		
		Hence we have,
		
		\begin{align*}
		\Phi \left(\rho_2^{u+v}(R. v_{b+rv} \otimes v_{a+ru}) \right)& = (R. v_{b} \otimes v_{a}) \otimes F_r(R.w_{v} \otimes w_{u})
		\end{align*}
		
		Finally,
		\begin{align*}
		 \Phi \left( \varphi_n^m(\beta).v_{\overline{i+rj}} \right) &= \varphi_n^0(\beta).v_{\overline{i}} \otimes F_r\left( \psi_{n,m}(\beta).w_{\overline{j}} \right)
		\end{align*}
		%%MORE DETAIL WHY \rho_n^m MAKES IT WORK?
		\end{proof}
		
				\begin{ex}
		Figure \ref{pyramid_factor} illustrates the weight level pyramid at $n=2$ and $q=\zeta_{6}$ where we denote \[v_{a,b}= v_a \otimes v_b .\]
		
		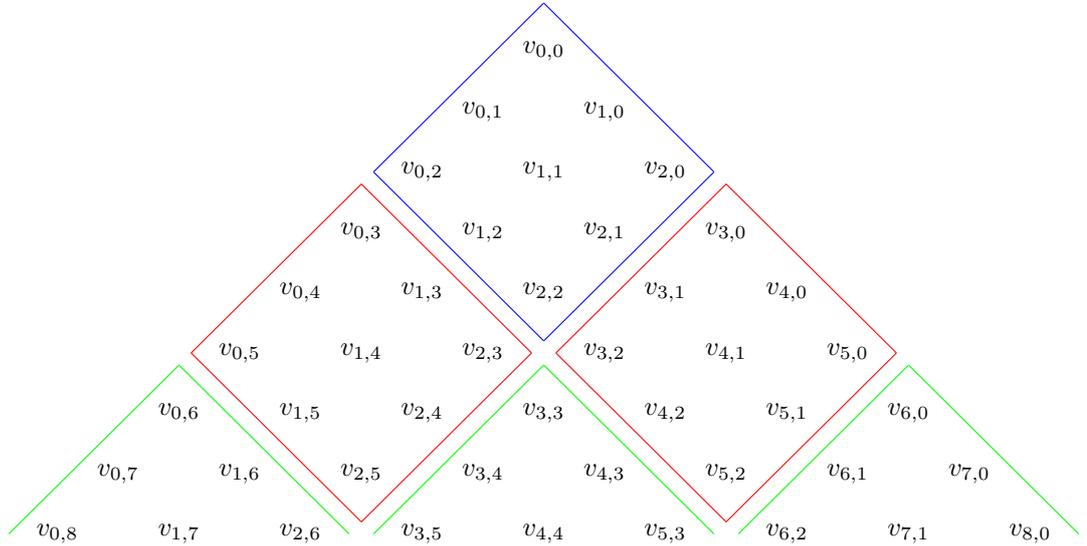
\begin{figure}[h]
		\centering
		\begin{tikzpicture}[scale=0.8]
		\foreach \n in {0,...,8} {
		  \foreach \k in {0,...,\n} {
		  	\edef\aux{\n};
		  	\pgfmathparse{\n-\k};
  			\edef\aux{\pgfmathresult};
		    \node at (2*\k-\n,-\n) {$v_{\k,\pgfmathprintnumber[int trunc]{\aux}}$};
		  }
		}
		% first level
		\draw[blue] (0,0.8) -- (-2-0.8,-2);
		\draw[blue] (-2-0.8,-2) -- (0,-4-0.8);
		\draw[blue] (0,-4-0.8) -- (2+0.8,-2);
		\draw[blue] (2+0.8,-2) -- (0,0.8);
		
		% second level
		\foreach \p in {-1,1} {
			\draw[red] (0+3*\p,0.8-3) -- (3*\p-2-0.8,-2-3);
			\draw[red] (3*\p-2-0.8,-2-3) -- (0+3*\p,-4-0.8-3);
			\draw[red] (0+3*\p,-4-0.8-3) -- (2+0.8+3*\p,-2-3);
			\draw[red] (2+0.8+3*\p,-2-3) -- (0+3*\p,0.8-3);
		}
		
		% third level
		\foreach \p in {-2,0,2} {
			\draw[green] (0+3*\p,0.8-6) -- (3*\p-2-0.8,-2-6);
			%\draw[red] (3*\p-2-0.8,-2-6) -- (0+3*\p,-4-0.8-6);
			%\draw[red] (0+3*\p,-4-0.8-6) -- (2+0.8+3*\p,-2-6);
			\draw[green] (2+0.8+3*\p,-2-6) -- (0+3*\p,0.8-6);
		}
		\end{tikzpicture}
		\caption{Weight level pyramid factorisation at root of unity}
		\label{pyramid_factor}
		\end{figure}
		
		The blue square delimits generators of $V_n^0$, the red squares of $V_n^1$, etc. Each squares correspond to a tensor in the pyramid at $q=1$ as shown in Figure \ref{pyramid_one}. Families of colored squares are stable under the braid action $\varphi_n^m$ where $m$ correspond to a color. The union of a colored family plus higher colored family in the pyramid are stable under the whole quantum braid action $\varphi_n$ (e.g. the union of red and blue vectors from Figure \ref{pyramid_factor} is stable under $\Bn$ action). 
		
		\begin{figure}[h]
		\centering
		\begin{tikzpicture}[scale=1.5],every node/.style={scale=0.5}]
			%first level
			\node at (0,0) {\scalebox{1.8}{$\color{blue} w_{0,0}$}};
			%second level
			\node at (-1,-1) {\scalebox{1.8}{$\color{red} w_{0,1}$}};
			\node at (1,-1) {\scalebox{1.8}{$\color{red} w_{1,0}$}};
			%third level
			\node at (-2,-2) {\scalebox{1.8}{$\color{green} w_{0,2}$}};
			\node at (0,-2) {\scalebox{1.8}{$\color{green} w_{1,1}$}};
			\node at (2,-2) {\scalebox{1.8}{$\color{green} w_{2,0}$}};
			
		\end{tikzpicture}
		\caption{Weight level pyramid at $q=1$}
		\label{pyramid_one}
		\end{figure}
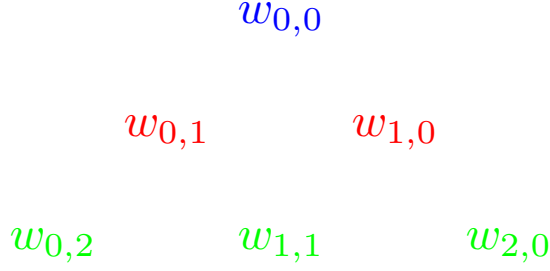

		\end{ex}

%	\subsection{{\color{red} Temperley--Lieb specialization}}
	
%	{\color{red} To fill!}	

	\section{Unified invariant of knots from quantum braid representations}\label{sec_Foo_frombraids}
	
We want to define the knot invariant $F_{\infty}$ from \cite{willetts2020unification} from braid group representation on tensor products of Verma modules, defined above. We need a completion of the ring $\Laurent$ as $F_{\infty}$ will be some series living in this completion. We start with definitions for this ring and for the invariant in Sec. \ref{sec_Foo_def}. Then (Sec. \ref{sec_Foo_def_frombraids}) we can define (Theorem \ref{prop_unifed_braidrep}) $F_{\infty}$ from the braid action on tensors of Verma modules. In Sec. \ref{sec_Foo_factorization} we use the r-part factorization of the braid action at roots of unity to prove the factorization of $F_{\infty}$ at roots of unity recovering ADO polynomials (Theorem \ref{thm_factorisation_unified_ADO}). In Sec. \ref{sec_Alex_sym} we prove Theorem \ref{symmetry_unified_invariant} which shows a symmetry in variables for $F_{\infty}$ resembling that of the Alexander polynomial. 	
	
\subsection{Ring completion and unified invariant}\label{sec_Foo_def}

We recall $\Laurent=\Z[q^{\pm 1}, s^{\pm 1}]$, we will construct a completion of that ring. For the sake of simplicity, we will denote $q^{\alpha} := s$ as explained before. %Keep in mind that, here, $\alpha$ is just a notation, not a complex number.

\begin{defn}
Let $I_n$ be the ideal of $\Laurent$ generated by the following set $\left\{ \ \{ \alpha+l; n \}_q , \ l \in \Z \right\}$.
\end{defn}

We then have a projective system : \[ \hat{I} : I_1 \supset I_2 \supset \dots \supset I_n \supset \dots \]
From which we can define the completion of $\Laurent$ from the projective limit as follows.

\begin{defn}
Let $\Laurentcomplet = \underset{\underset{n}{\leftarrow}}{\lim} \dfrac{R}{I_n} = \{ (a_n)_{n \in \N^*} \in \prod_{i=1}^{\infty} \frac{R}{I_n} \ | \ p_n(a_{n+1})=a_n \}$  where $p_n: \frac{R}{I_{n+1}} \to \frac{R}{I_{n}}$ is the projection map.
\end{defn}

\begin{rem}
\begin{itemize}
\item If $b_0 \in R$ and $b_n \in I_{n-1}$ for $n \geq 1$, the partial sums $ \underset{i=0}{\overset{N}{\sum}} b_n $ converge in $\Laurentcomplet$ as $N$ goes to infinity.
\item We denote the limit $\underset{i=0}{\overset{+\infty}{\sum}} b_n := (\overline{\underset{i=0}{\overset{N}{\sum}} b_n})_{N \in \N^*}$.
\item Conversely, if $a= (\overline{a_N})_{N  \in \N^*} \in \Laurentcomplet$, let $a_n \in R$ be any representative of $\overline{a_n}$ in $R$, then $a= \underset{i=0}{\overset{+\infty}{\sum}} b_n$ where $b_0=a_1$ and $b_{n}=a_{n+1}-a_n$ for $n \in \N^*$.
 \end{itemize}
\end{rem}
The completion $\Laurentcomplet$ contains $\Laurent$: 
\begin{prop}
The canonical projection maps induce an injective map $\Laurent \xhookrightarrow{} \Laurentcomplet$
\end{prop}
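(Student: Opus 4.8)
The statement to prove is that the canonical projection maps $\Laurent \to \Laurent/I_n$ assemble into an injective ring homomorphism $\Laurent \hookrightarrow \Laurentcomplet$. By the universal property of the projective limit this map exists and is a ring homomorphism, so the only content is injectivity, which amounts to showing $\bigcap_{n \geq 1} I_n = \{0\}$ inside $\Laurent = \BZ[q^{\pm 1}, s^{\pm 1}]$.

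The plan is to analyze the ideal $I_n$ concretely. Recall $I_n$ is generated by $\{\alpha + l; n\}_q = \prod_{i=0}^{n-1}\{\alpha + l - i\}_q = \prod_{i=0}^{n-1}(q^{\alpha + l - i} - q^{-\alpha - l + i})$, which in the $s$-variable is $\prod_{i=0}^{n-1}(q^{l-i} s - q^{-l+i} s^{-1})$, a product of $n$ linear-in-$s$ (up to units) factors. First I would clear denominators: each generator, multiplied by a suitable unit $q^a s^n$ of $\Laurent$, becomes $\prod_{i=0}^{n-1}(q^{2(l-i)} s^2 - 1)$, a polynomial in $s^2$ of degree $n$ with constant term $(-1)^n$. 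So $I_n$ (as an ideal of $\Laurent$) equals the ideal generated by these degree-$n$ polynomials in $s^2$ over $\BZ[q^{\pm 1}, s^{\pm 1}]$. The key point is that any element of $I_n$ is an $\Laurent$-linear combination of such products, hence — thinking of $\Laurent$ as a polynomial ring in $s^2$ over $\BZ[q^{\pm 1}, s^{\pm 2}]$... rather, I would argue via a degree/valuation bound: pass to the localization or just observe that a nonzero $P \in \Laurent$ can be written $P = q^b s^c P_0(q, s)$ with $P_0 \in \BZ[q^{\pm 1}][s]$ not divisible by $s$, of some finite $s$-degree $d = \deg_s P_0$. I would then show $P \in I_n$ forces $n$ to be bounded in terms of $d$ (and the $q$-degree), contradicting membership in all $I_n$.

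Concretely, the cleanest route: fix a group homomorphism / evaluation that detects the obstruction. Specialize $q$ to a transcendental or generic value (work over the field $\BQ(q)$, since $\Laurent \hookrightarrow \BQ(q)[s^{\pm1}]$ and it suffices to show the intersection is zero there). Over $\BQ(q)[s^{\pm 1}]$, which is a PID, the ideal $I_n \otimes \BQ(q)$ is principal, generated by the gcd of the generators $g_{n,l}(s) = \prod_{i=0}^{n-1}(q^{l-i} s - q^{-l+i}s^{-1})$ for $l \in \BZ$. The root set of $g_{n,l}$ is $\{s : s^2 = q^{2(i-l)}, 0 \le i \le n-1\} = \{\pm q^{i-l}\}$; as $l$ ranges over $\BZ$ with $n$ fixed, the union of root sets is all of $\{\pm q^k : k \in \BZ\}$, but I want the gcd, i.e. the \emph{common} roots of all $g_{n,l}$. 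A value $\pm q^k$ is a root of $g_{n,l}$ iff $k \in \{-l, 1-l, \dots, n-1-l\}$; for this to hold for \emph{every} $l \in \BZ$ is impossible (take $l$ large), so the gcd is a unit and $I_n \otimes \BQ(q) = \BQ(q)[s^{\pm 1}]$ — wait, that would make the completion trivial, which is wrong. The resolution: $I_n$ is generated using only a finite window, or rather I should re-examine — the generating set is $\{g_{n,l} : l \in \BZ\}$ and these do \emph{not} have a common root, but over $\BZ$ (not $\BQ(q)$) they needn't generate the unit ideal. So the field-specialization shortcut fails; injectivity must genuinely use the $\BZ[q^{\pm 1}]$-structure.

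Hence the main obstacle, and the step I would spend the most care on, is precisely this: showing $\bigcap_n I_n = 0$ in $\BZ[q^{\pm1}, s^{\pm1}]$ where each $I_n$ is already the unit ideal after tensoring with $\BQ(q)$. The right tool is a \emph{degree filtration} argument rather than a vanishing-locus one. I would set it up as follows: give $\Laurent$ the $\BZ[q^{\pm1}]$-module grading by powers of $s$, and for $0 \neq P \in \Laurent$ let $N(P) = \max_s \deg - \min_s \deg$ be the $s$-width. Each generator $g_{n,l}$ has $s$-width exactly $2n$, and more importantly its top and bottom $s$-coefficients are units ($\pm$ powers of $q$) — so $g_{n,l}$ is, up to a unit, monic of $s^2$-degree $n$. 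Now if $P = \sum_l a_l g_{n,l}$ with $a_l \in \Laurent$, by looking at the highest power of $s$ appearing I can argue (a Gauss-lemma / leading-term cancellation bookkeeping) that $N(P) \geq 2n$ unless massive cancellation occurs among the $a_l g_{n,l}$; ruling out such cancellation producing a low-width nonzero element is the crux. One clean way: reduce modulo a prime $p$ and modulo $(q - c)$ for a non-root-of-unity $c \in \overline{\BF_p}$... — but again careful, over a field this is the unit ideal. I think the genuinely correct argument uses that $\{g_{n,l}\}_l$, while generating the unit ideal over $\BQ(q)$, has the property that any \emph{single} $\BZ[q^{\pm1}]$-linear combination landing in $\Laurent$ of bounded $q$-degree and bounded $s$-width can only use boundedly many $l$'s and boundedly high $n$; making this precise via resultants — the resultant of $g_{n,l}$ and $g_{n,l'}$ in $s$ is a non-unit in $\BZ[q^{\pm1}]$ whose size grows with $n$ — lets one conclude that an element in all $I_n$ would be divisible by arbitrarily large such resultants, forcing it to be $0$. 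I will present the proof by first reducing to injectivity, then to $\bigcap I_n = 0$, then carrying out this degree-and-resultant bounding; the reader can also consult \cite{willetts2020unification} where the completion is originally set up.
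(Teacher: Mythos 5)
Your reduction of the statement to $\bigcap_{n\ge 1} I_n = \{0\}$ is correct, and your observation that the field-theoretic shortcut fails (each $I_n$ already generates the unit ideal of $\BQ(q)[s^{\pm1}]$, because the generators $\{\alpha+l;n\}_q$, $l\in\BZ$, have no common zero) is genuine and worth making. But from there the argument does not close. Two concrete problems: (i) the width bound $N(P)\ge 2n$ for $P=\sum_l a_l g_{n,l}$ is exactly what cancellation destroys — already $g_{n,0}-q^{-n}g_{n,1}$ has smaller $s$-width than $2n$ — and you acknowledge that ruling out such cancellation is ``the crux'' without resolving it; (ii) the resultant step is false as stated: an element of the ideal $(g_{n,l},g_{n,l'})$ need not be divisible by $\mathrm{Res}_s(g_{n,l},g_{n,l'})$; the resultant lies \emph{in} that ideal, so divisibility goes the wrong way. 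As written, the proposal is a plan whose hardest step is missing, not a proof.

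The paper itself only cites \cite{willetts2020unification} here, but the argument that works — and that this paper invokes elsewhere (Prop.~\ref{prop_unicity_Foo} and the MMR remark) — is the $h$-adic embedding: send $q\mapsto e^h$, $s\mapsto e^{\alpha h}$ into $\BQ[\alpha][[h]]$. Each factor $\{\alpha+l-i\}_q=2\sinh((\alpha+l-i)h)$ is divisible by $h$, so every generator of $I_n$, hence all of $I_n$, lands in $h^n\BQ[\alpha][[h]]$, and $\bigcap_n h^n\BQ[\alpha][[h]]=0$. Injectivity of $\Laurent\to\BQ[\alpha][[h]]$ is elementary: specializing $\alpha$ to a large integer $N$ makes the exponents $j+kN$ distinct for the finitely many monomials $q^js^k$ in the support of a putative kernel element, forcing all coefficients to vanish. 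This supplies, in one stroke, the uniform lower bound (``a valuation that is $\ge n$ on every generator of $I_n$ simultaneously'') that your degree-and-resultant bookkeeping was trying, and failing, to extract from the $s$-grading.
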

\begin{proof}
See Prop 17 in \cite{willetts2020unification}.
\end{proof}
	
%		\subsection{States diagrams and unified invariant}
		We now recall how the unified invariant $F_{\infty}(q,q^{\alpha},\mathcal{K})$ is defined using states diagrams of the knot, which is the subject of \cite{willetts2020unification}. 
		
		\bigskip
		For any knot seen as a $(1,1)$-tangle, take a diagram $D$ and $\overline{i}=(i_1, \dots, i_N) \in \N^N$ where $N$ is the number of crossings of $D$. 
		
		Label the top and bottom strands $0$ and starting from the bottom strand, label the strand after the $k$-th crossing encountered with the rule described in Figure \ref{crossings_simple_2}. The resulting labeled diagram is called a \textit{state diagram} of $D$, we denote it $D_{\overline{i}}$.
		
		\begin{figure}[h!]
		\begin{subfigure}[b]{0.5\textwidth}
		 \centering
		  \def\svgwidth{25mm}
		    %% Creator: Inkscape inkscape 0.92.4, www.inkscape.org
%% PDF/EPS/PS + LaTeX output extension by Johan Engelen, 2010
%% Accompanies image file 'cross_simple.pdf' (pdf, eps, ps)
%%
%% To include the image in your LaTeX document, write
%%   \input{<filename>.pdf_tex}
%%  instead of
%%   \includegraphics{<filename>.pdf}
%% To scale the image, write
%%   \def\svgwidth{<desired width>}
%%   \input{<filename>.pdf_tex}
%%  instead of
%%   \includegraphics[width=<desired width>]{<filename>.pdf}
%%
%% Images with a different path to the parent latex file can
%% be accessed with the `import' package (which may need to be
%% installed) using
%%   \usepackage{import}
%% in the preamble, and then including the image with
%%   \import{<path to file>}{<filename>.pdf_tex}
%% Alternatively, one can specify
%%   \graphicspath{{<path to file>/}}
%% 
%% For more information, please see info/svg-inkscape on CTAN:
%%   http://tug.ctan.org/tex-archive/info/svg-inkscape
%%
\begingroup%
  \makeatletter%
  \providecommand\color[2][]{%
    \errmessage{(Inkscape) Color is used for the text in Inkscape, but the package 'color.sty' is not loaded}%
    \renewcommand\color[2][]{}%
  }%
  \providecommand\transparent[1]{%
    \errmessage{(Inkscape) Transparency is used (non-zero) for the text in Inkscape, but the package 'transparent.sty' is not loaded}%
    \renewcommand\transparent[1]{}%
  }%
  \providecommand\rotatebox[2]{#2}%
  \newcommand*\fsize{\dimexpr\f@size pt\relax}%
  \newcommand*\lineheight[1]{\fontsize{\fsize}{#1\fsize}\selectfont}%
  \ifx\svgwidth\undefined%
    \setlength{\unitlength}{666.14173228bp}%
    \ifx\svgscale\undefined%
      \relax%
    \else%
      \setlength{\unitlength}{\unitlength * \real{\svgscale}}%
    \fi%
  \else%
    \setlength{\unitlength}{\svgwidth}%
  \fi%
  \global\let\svgwidth\undefined%
  \global\let\svgscale\undefined%
  \makeatother%
  \begin{picture}(1,1.34042553)%
    \lineheight{1}%
    \setlength\tabcolsep{0pt}%
    \put(0,0){\includegraphics[width=\unitlength,page=1]{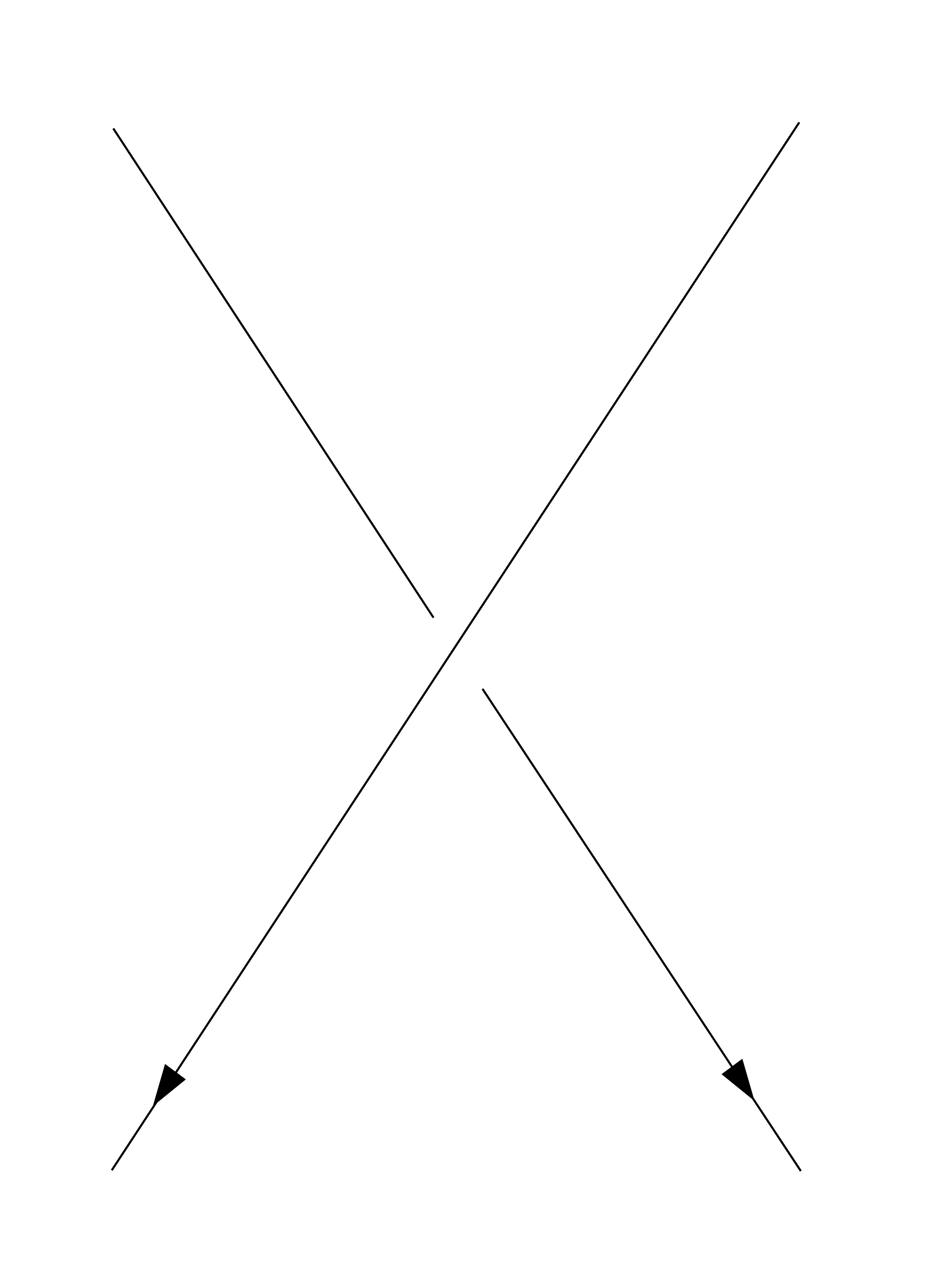}}%
    \put(0.84572423,0.02332179){\color[rgb]{0,0,0}\makebox(0,0)[lt]{\lineheight{1.25}\smash{\begin{tabular}[t]{l}$a_k$\end{tabular}}}}%
    \put(0.00726549,0.02199668){\color[rgb]{0,0,0}\makebox(0,0)[lt]{\lineheight{1.25}\smash{\begin{tabular}[t]{l}$b_k$\end{tabular}}}}%
    \put(0.73748325,1.25886345){\color[rgb]{0,0,0}\makebox(0,0)[lt]{\lineheight{1.25}\smash{\begin{tabular}[t]{l}$b_k -i_k$\end{tabular}}}}%
    \put(0.07803548,1.2652971){\color[rgb]{0,0,0}\makebox(0,0)[lt]{\lineheight{1.25}\smash{\begin{tabular}[t]{l}$a_k +i_k$\end{tabular}}}}%
  \end{picture}%
\endgroup%
% Actual image and text
		   \caption{Positive crossing.}
		 \end{subfigure}%
		 \begin{subfigure}[b]{0.5\textwidth}
		 \centering
		  \def\svgwidth{25mm}
		    %% Creator: Inkscape inkscape 0.92.4, www.inkscape.org
%% PDF/EPS/PS + LaTeX output extension by Johan Engelen, 2010
%% Accompanies image file 'negcross_simple.pdf' (pdf, eps, ps)
%%
%% To include the image in your LaTeX document, write
%%   \input{<filename>.pdf_tex}
%%  instead of
%%   \includegraphics{<filename>.pdf}
%% To scale the image, write
%%   \def\svgwidth{<desired width>}
%%   \input{<filename>.pdf_tex}
%%  instead of
%%   \includegraphics[width=<desired width>]{<filename>.pdf}
%%
%% Images with a different path to the parent latex file can
%% be accessed with the `import' package (which may need to be
%% installed) using
%%   \usepackage{import}
%% in the preamble, and then including the image with
%%   \import{<path to file>}{<filename>.pdf_tex}
%% Alternatively, one can specify
%%   \graphicspath{{<path to file>/}}
%% 
%% For more information, please see info/svg-inkscape on CTAN:
%%   http://tug.ctan.org/tex-archive/info/svg-inkscape
%%
\begingroup%
  \makeatletter%
  \providecommand\color[2][]{%
    \errmessage{(Inkscape) Color is used for the text in Inkscape, but the package 'color.sty' is not loaded}%
    \renewcommand\color[2][]{}%
  }%
  \providecommand\transparent[1]{%
    \errmessage{(Inkscape) Transparency is used (non-zero) for the text in Inkscape, but the package 'transparent.sty' is not loaded}%
    \renewcommand\transparent[1]{}%
  }%
  \providecommand\rotatebox[2]{#2}%
  \newcommand*\fsize{\dimexpr\f@size pt\relax}%
  \newcommand*\lineheight[1]{\fontsize{\fsize}{#1\fsize}\selectfont}%
  \ifx\svgwidth\undefined%
    \setlength{\unitlength}{666.14173228bp}%
    \ifx\svgscale\undefined%
      \relax%
    \else%
      \setlength{\unitlength}{\unitlength * \real{\svgscale}}%
    \fi%
  \else%
    \setlength{\unitlength}{\svgwidth}%
  \fi%
  \global\let\svgwidth\undefined%
  \global\let\svgscale\undefined%
  \makeatother%
  \begin{picture}(1,1.36170213)%
    \lineheight{1}%
    \setlength\tabcolsep{0pt}%
    \put(0.02895136,0.06594472){\color[rgb]{0,0,0}\makebox(0,0)[lt]{\lineheight{1.25}\smash{\begin{tabular}[t]{l}$a_k$\end{tabular}}}}%
    \put(0.84393733,0.06301101){\color[rgb]{0,0,0}\makebox(0,0)[lt]{\lineheight{1.25}\smash{\begin{tabular}[t]{l}$b_k$\end{tabular}}}}%
    \put(0.02204024,1.29666105){\color[rgb]{0,0,0}\makebox(0,0)[lt]{\lineheight{1.25}\smash{\begin{tabular}[t]{l}$b_k -i_k$\end{tabular}}}}%
    \put(0.65897041,1.30148629){\color[rgb]{0,0,0}\makebox(0,0)[lt]{\lineheight{1.25}\smash{\begin{tabular}[t]{l}$a_k +i_k$\end{tabular}}}}%
    \put(0,0){\includegraphics[width=\unitlength,page=1]{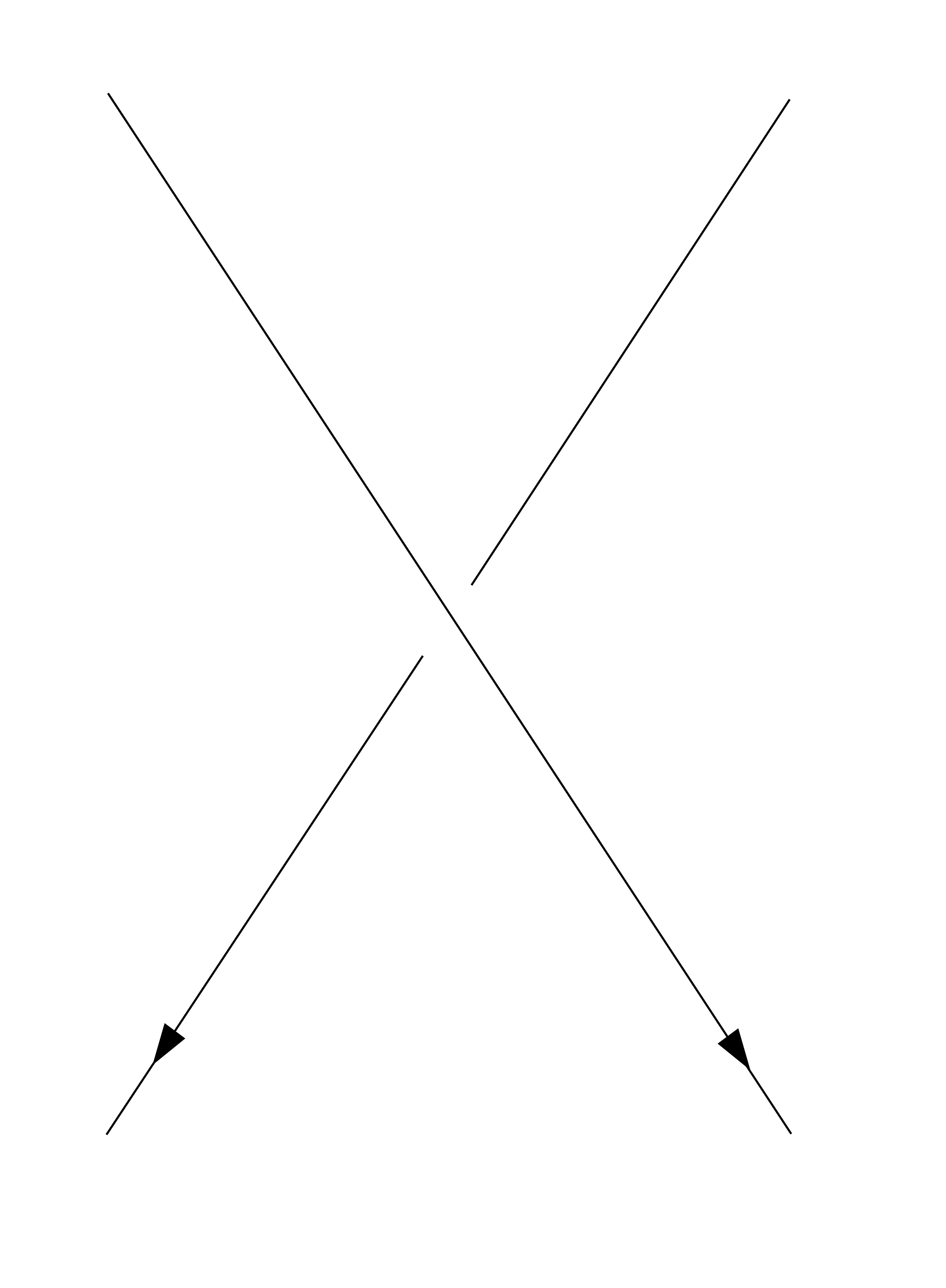}}%
  \end{picture}%
\endgroup%
% Actual image and text
		   \caption{Negative crossing.}
		 \end{subfigure}%
		 \caption{The two possibilities for the k-th crossing in $D$.}
		 \label{crossings_simple_2}
		 \end{figure}

		\bigskip

		Let $D_{\overline{i}}$ be a state diagram of $D$, we define:
		\begin{align*}
		D(i_1, \dots, i_N) = &q^{\frac{f \alpha^2}{2}}(\prod_{j=1}^{S} q^{\mp(\alpha -2 \epsilon_j)})\prod_{k \in pos} q^{\frac{i_k(i_k-1)}{2}} \qbinom{a_k+i_k}{i_k}_{q}  \{\alpha-a_k; i_k\}_{q} \\ &  \times  q^{-(a_k+b_k) \alpha} q^{2(a_k+ i_k)(b_k-i_k)} \prod_{k \in neg}  (-1)^{i_k} q^{-\frac{i_k(i_k-1)}{2}} \qbinom{a_k+i_k}{i_k}_{q} \\ &  \times  \{\alpha-a_k; i_k\}_{q} q^{(a_k+b_k) \alpha} q^{- 2a_k b_k}
		\end{align*}
		where :
		\begin{itemize}
		\item $f$ is the writhe of $D$,
		\item $neg \ \cup \ pos = [|1, N|]$ and $k \in pos$ if the $k$-th crossing of D is positive, else $k \in neg$,
		\item $a_k, b_k$ are the strands' labels at the $k$-th crossing of the state diagram (see Figure \ref{crossings_simple_2}),
		\item  $S$ is the number of \def\svgwidth{5mm}%% Creator: Inkscape inkscape 0.92.4, www.inkscape.org
%% PDF/EPS/PS + LaTeX output extension by Johan Engelen, 2010
%% Accompanies image file 'cup_small.pdf' (pdf, eps, ps)
%%
%% To include the image in your LaTeX document, write
%%   \input{<filename>.pdf_tex}
%%  instead of
%%   \includegraphics{<filename>.pdf}
%% To scale the image, write
%%   \def\svgwidth{<desired width>}
%%   \input{<filename>.pdf_tex}
%%  instead of
%%   \includegraphics[width=<desired width>]{<filename>.pdf}
%%
%% Images with a different path to the parent latex file can
%% be accessed with the `import' package (which may need to be
%% installed) using
%%   \usepackage{import}
%% in the preamble, and then including the image with
%%   \import{<path to file>}{<filename>.pdf_tex}
%% Alternatively, one can specify
%%   \graphicspath{{<path to file>/}}
%% 
%% For more information, please see info/svg-inkscape on CTAN:
%%   http://tug.ctan.org/tex-archive/info/svg-inkscape
%%
\begingroup%
  \makeatletter%
  \providecommand\color[2][]{%
    \errmessage{(Inkscape) Color is used for the text in Inkscape, but the package 'color.sty' is not loaded}%
    \renewcommand\color[2][]{}%
  }%
  \providecommand\transparent[1]{%
    \errmessage{(Inkscape) Transparency is used (non-zero) for the text in Inkscape, but the package 'transparent.sty' is not loaded}%
    \renewcommand\transparent[1]{}%
  }%
  \providecommand\rotatebox[2]{#2}%
  \newcommand*\fsize{\dimexpr\f@size pt\relax}%
  \newcommand*\lineheight[1]{\fontsize{\fsize}{#1\fsize}\selectfont}%
  \ifx\svgwidth\undefined%
    \setlength{\unitlength}{867.71048959bp}%
    \ifx\svgscale\undefined%
      \relax%
    \else%
      \setlength{\unitlength}{\unitlength * \real{\svgscale}}%
    \fi%
  \else%
    \setlength{\unitlength}{\svgwidth}%
  \fi%
  \global\let\svgwidth\undefined%
  \global\let\svgscale\undefined%
  \makeatother%
  \begin{picture}(1,0.55772942)%
    \lineheight{1}%
    \setlength\tabcolsep{0pt}%
    \put(0,0){\includegraphics[width=\unitlength,page=1]{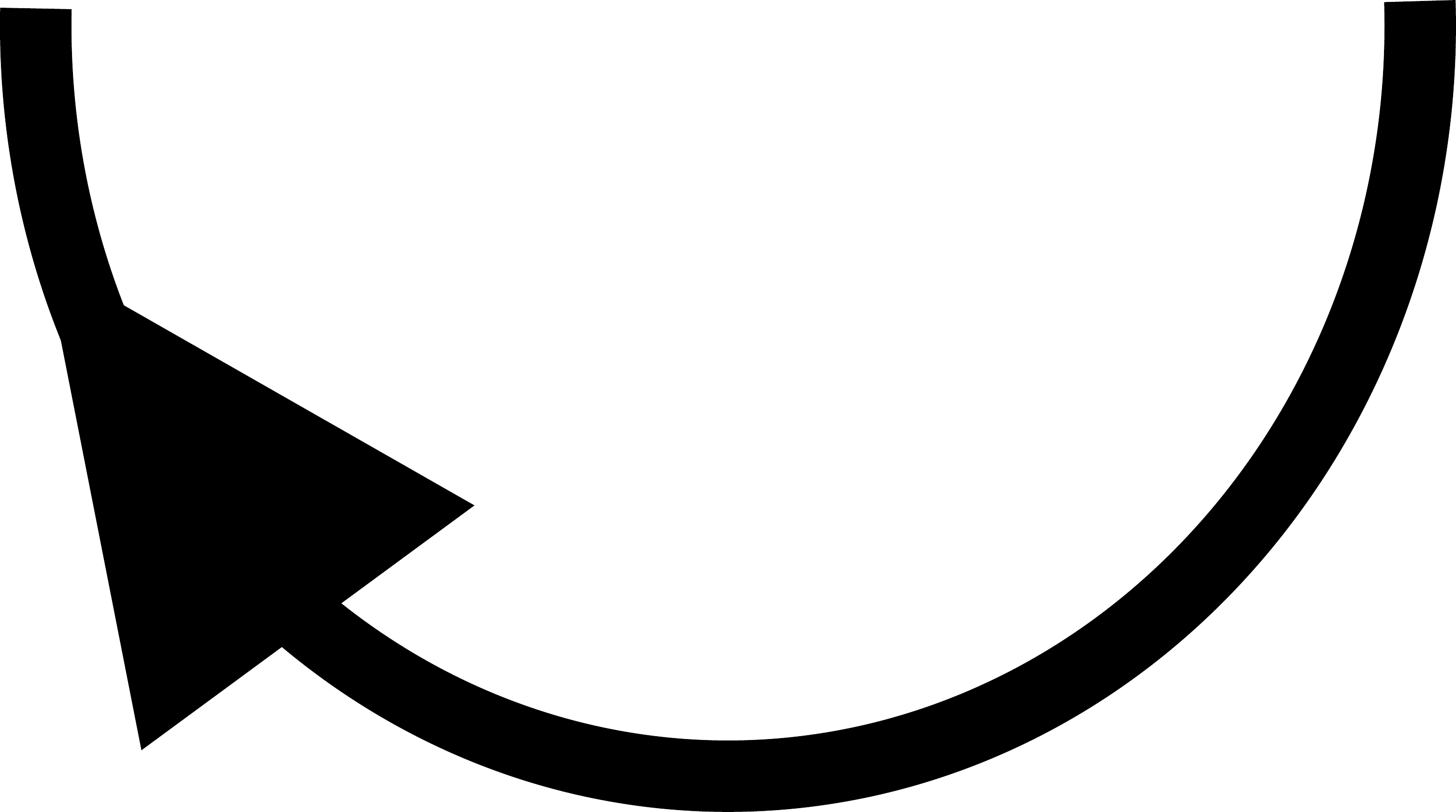}}%
  \end{picture}%
\endgroup%
 $+$ \def\svgwidth{5mm}%% Creator: Inkscape inkscape 0.92.4, www.inkscape.org
%% PDF/EPS/PS + LaTeX output extension by Johan Engelen, 2010
%% Accompanies image file 'cap_small.pdf' (pdf, eps, ps)
%%
%% To include the image in your LaTeX document, write
%%   \input{<filename>.pdf_tex}
%%  instead of
%%   \includegraphics{<filename>.pdf}
%% To scale the image, write
%%   \def\svgwidth{<desired width>}
%%   \input{<filename>.pdf_tex}
%%  instead of
%%   \includegraphics[width=<desired width>]{<filename>.pdf}
%%
%% Images with a different path to the parent latex file can
%% be accessed with the `import' package (which may need to be
%% installed) using
%%   \usepackage{import}
%% in the preamble, and then including the image with
%%   \import{<path to file>}{<filename>.pdf_tex}
%% Alternatively, one can specify
%%   \graphicspath{{<path to file>/}}
%% 
%% For more information, please see info/svg-inkscape on CTAN:
%%   http://tug.ctan.org/tex-archive/info/svg-inkscape
%%
\begingroup%
  \makeatletter%
  \providecommand\color[2][]{%
    \errmessage{(Inkscape) Color is used for the text in Inkscape, but the package 'color.sty' is not loaded}%
    \renewcommand\color[2][]{}%
  }%
  \providecommand\transparent[1]{%
    \errmessage{(Inkscape) Transparency is used (non-zero) for the text in Inkscape, but the package 'transparent.sty' is not loaded}%
    \renewcommand\transparent[1]{}%
  }%
  \providecommand\rotatebox[2]{#2}%
  \newcommand*\fsize{\dimexpr\f@size pt\relax}%
  \newcommand*\lineheight[1]{\fontsize{\fsize}{#1\fsize}\selectfont}%
  \ifx\svgwidth\undefined%
    \setlength{\unitlength}{867.71048959bp}%
    \ifx\svgscale\undefined%
      \relax%
    \else%
      \setlength{\unitlength}{\unitlength * \real{\svgscale}}%
    \fi%
  \else%
    \setlength{\unitlength}{\svgwidth}%
  \fi%
  \global\let\svgwidth\undefined%
  \global\let\svgscale\undefined%
  \makeatother%
  \begin{picture}(1,0.55772942)%
    \lineheight{1}%
    \setlength\tabcolsep{0pt}%
    \put(0,0){\includegraphics[width=\unitlength,page=1]{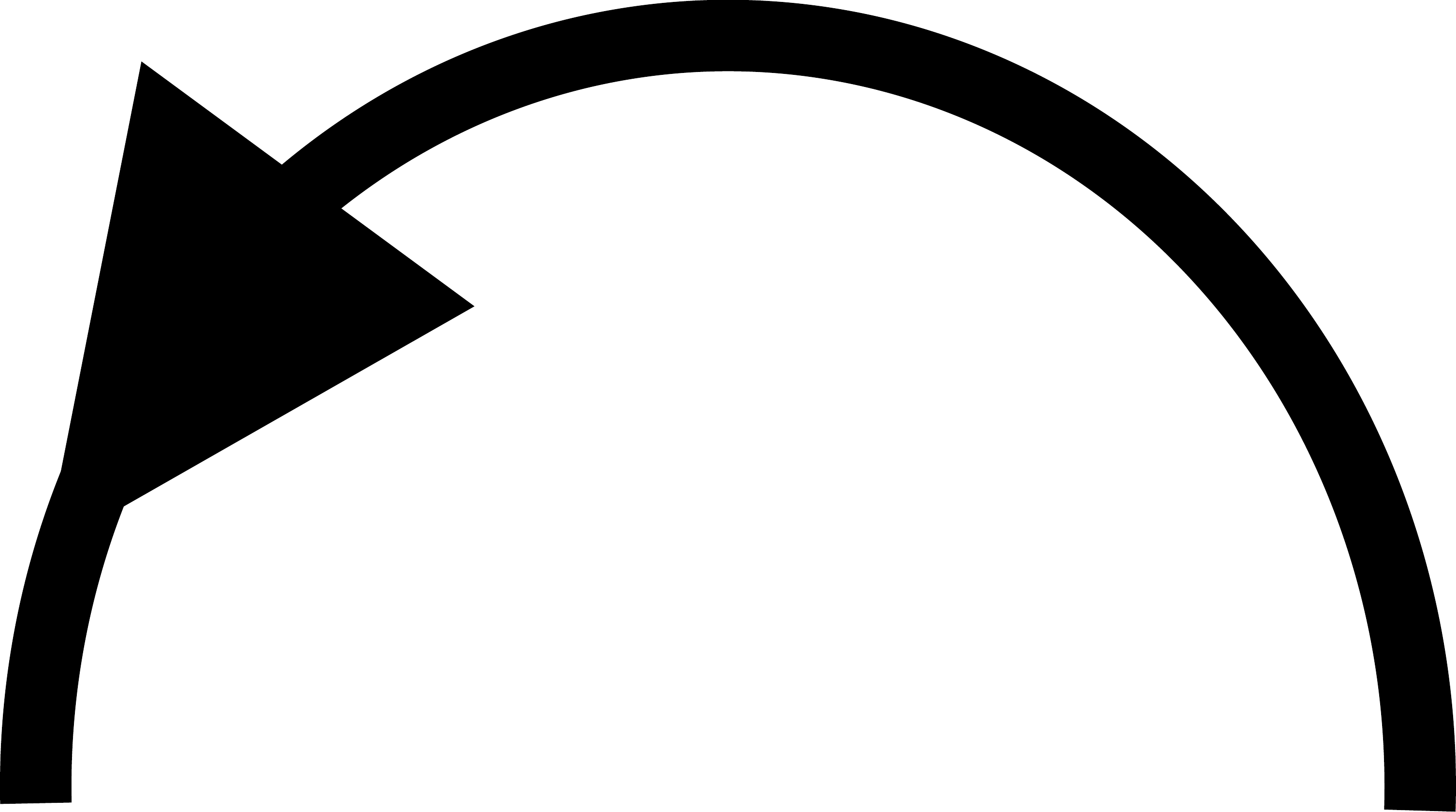}}%
  \end{picture}%
\endgroup%
 appearing in the diagram, and $\epsilon_j$ the strand label at the j-th \def\svgwidth{5mm} or \def\svgwidth{5mm}, the $\mp$ sign is negative if \def\svgwidth{5mm} and positive if \def\svgwidth{5mm}.
		\end{itemize}

		\begin{rem}
		$D(i_1, \dots, i_N)$ is the scalar one gets by considering only the $E^{i_k} \otimes F^{(i_k)}$ term in the the $R$-matrix action of the $k$-th crossing of $D$.
		
		\end{rem}

		\begin{ex}
		See Figure \ref{state_diagrams_examples} for some examples of state diagrams.
		\begin{figure}[h!]
		\begin{subfigure}[b]{0.5\textwidth}
		 \centering
		  \def\svgwidth{30mm}
		    %% Creator: Inkscape inkscape 0.92.3, www.inkscape.org
%% PDF/EPS/PS + LaTeX output extension by Johan Engelen, 2010
%% Accompanies image file 'trefoil_weight.pdf' (pdf, eps, ps)
%%
%% To include the image in your LaTeX document, write
%%   \input{<filename>.pdf_tex}
%%  instead of
%%   \includegraphics{<filename>.pdf}
%% To scale the image, write
%%   \def\svgwidth{<desired width>}
%%   \input{<filename>.pdf_tex}
%%  instead of
%%   \includegraphics[width=<desired width>]{<filename>.pdf}
%%
%% Images with a different path to the parent latex file can
%% be accessed with the `import' package (which may need to be
%% installed) using
%%   \usepackage{import}
%% in the preamble, and then including the image with
%%   \import{<path to file>}{<filename>.pdf_tex}
%% Alternatively, one can specify
%%   \graphicspath{{<path to file>/}}
%% 
%% For more information, please see info/svg-inkscape on CTAN:
%%   http://tug.ctan.org/tex-archive/info/svg-inkscape
%%
\begingroup%
  \makeatletter%
  \providecommand\color[2][]{%
    \errmessage{(Inkscape) Color is used for the text in Inkscape, but the package 'color.sty' is not loaded}%
    \renewcommand\color[2][]{}%
  }%
  \providecommand\transparent[1]{%
    \errmessage{(Inkscape) Transparency is used (non-zero) for the text in Inkscape, but the package 'transparent.sty' is not loaded}%
    \renewcommand\transparent[1]{}%
  }%
  \providecommand\rotatebox[2]{#2}%
  \newcommand*\fsize{\dimexpr\f@size pt\relax}%
  \newcommand*\lineheight[1]{\fontsize{\fsize}{#1\fsize}\selectfont}%
  \ifx\svgwidth\undefined%
    \setlength{\unitlength}{278.06085061bp}%
    \ifx\svgscale\undefined%
      \relax%
    \else%
      \setlength{\unitlength}{\unitlength * \real{\svgscale}}%
    \fi%
  \else%
    \setlength{\unitlength}{\svgwidth}%
  \fi%
  \global\let\svgwidth\undefined%
  \global\let\svgscale\undefined%
  \makeatother%
  \begin{picture}(1,2.05525577)%
    \lineheight{1}%
    \setlength\tabcolsep{0pt}%
    \put(0,0){\includegraphics[width=\unitlength,page=1]{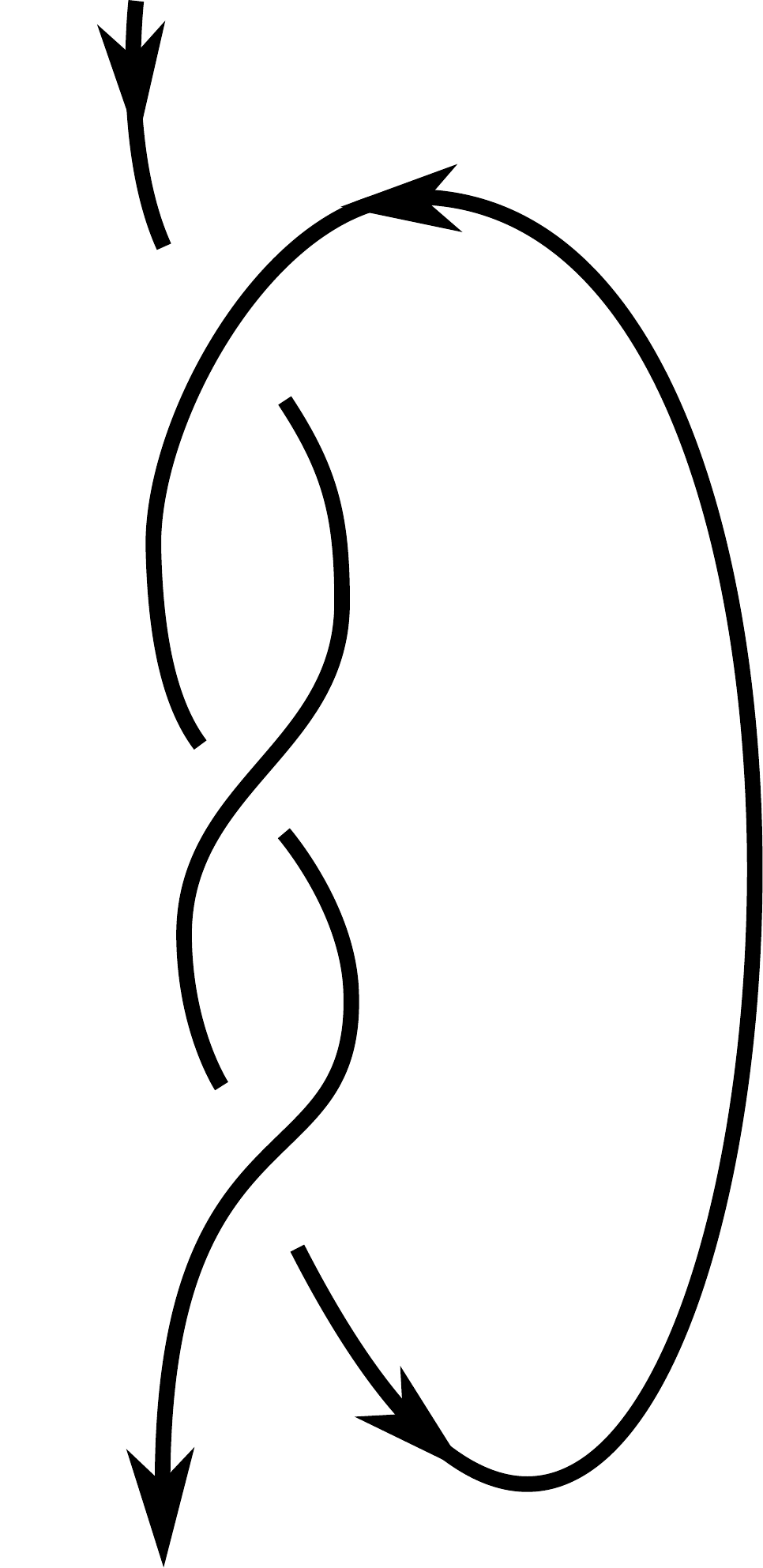}}%
    \put(0.10029114,0.24915924){\color[rgb]{0,0,0}\makebox(0,0)[lt]{\lineheight{1.25}\smash{\begin{tabular}[t]{l}$0$\end{tabular}}}}%
    \put(0.51682291,0.74544307){\color[rgb]{0,0,0}\makebox(0,0)[lt]{\lineheight{1.25}\smash{\begin{tabular}[t]{l}$0$\end{tabular}}}}%
    \put(-0.00358229,1.84347108){\color[rgb]{0,0,0}\makebox(0,0)[lt]{\lineheight{1.25}\smash{\begin{tabular}[t]{l}$0$\end{tabular}}}}%
    \put(0.5031998,1.34486289){\color[rgb]{0,0,0}\makebox(0,0)[lt]{\lineheight{1.25}\smash{\begin{tabular}[t]{l}$0$\end{tabular}}}}%
    \put(0.0512477,1.26544799){\color[rgb]{0,0,0}\makebox(0,0)[lt]{\lineheight{1.25}\smash{\begin{tabular}[t]{l}$i$\end{tabular}}}}%
    \put(0.09408526,0.76996479){\color[rgb]{0,0,0}\makebox(0,0)[lt]{\lineheight{1.25}\smash{\begin{tabular}[t]{l}$i$\end{tabular}}}}%
    \put(0.87060631,1.0015588){\color[rgb]{0,0,0}\makebox(0,0)[lt]{\lineheight{1.25}\smash{\begin{tabular}[t]{l}$i$\end{tabular}}}}%
  \end{picture}%
\endgroup%
		   \caption{The trefoil knot.}
		 \end{subfigure}%
		 \begin{subfigure}[b]{0.5\textwidth}
		 \centering
		  \def\svgwidth{55mm}
		    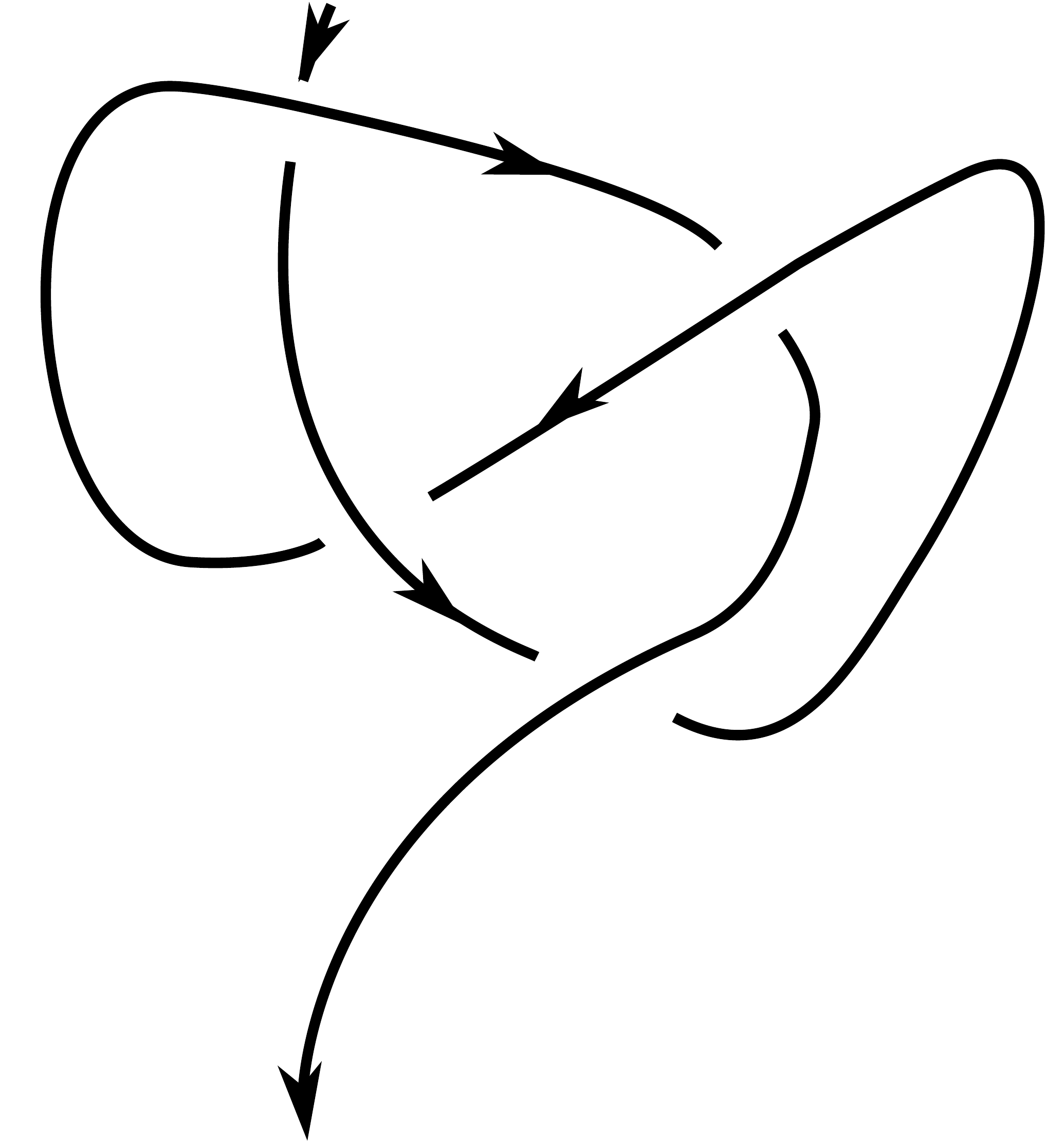%
		   \caption{The figure eight knot.}
		 \end{subfigure}%
		 \caption{Examples of state diagrams to compute the invariants.}
		 \label{state_diagrams_examples}
		 \end{figure}
		
		\end{ex}

		\begin{defn}[{\cite[Def. 20]{willetts2020unification}}]
		We use a definition removing the quadratic component: $$F_{\infty}(q,A,\mathcal{K}):=q^{\frac{-f \alpha^2}{2}}\underset{\overline{i}=0}{\overset{+\infty}{\sum}}D(i_1, \dots, i_N)$$.
		\end{defn}

%		\subsection{Braid representation decomposition}
%		%
%		%Let $V^{\alpha}$ be a $\Laurentcomplet$-module freely generated by vectors $\{ v_0, v_1, \dots\} $, and we endow it with a action of $\mathcal{\tilde{U}}$:
%		%\[ Ev_0 = 0, \ \ Ev_{i+1} = v_i, \ \ Kv_i=q^{\alpha -2i} v_i, \ \ F^{(n)} v_i=\qbinom{n+i}{i}_q \{ \alpha -i ; n \}_q v_{n+i}\]
%		%
%		%\begin{prop}\label{verma_abs_simple}
%		%The endomorphism of $V^{\alpha}$ are scalars i.e. \[End_{\mathcal{\tilde{U}}} (V^{\alpha}) =\hat{R}^{\hat{I}} Id_{V^{\alpha}}.\]
%		%\end{prop}
%		%\begin{proof}
%		%Let $f \in End_{\mathcal{\tilde{U}}} (V^{\alpha}) $.\\
%		%$K f(v_i)=f(K v_i)=q^{\alpha-2i} f(v_i)$, thus $\exists \lambda_i \in \hat{R}^{\hat{I}}$ such that $f(v_i)= \lambda_i v_i$.\\
%		%Now since $E f(v_{i+1})=f(Ev_{i+1})=f(v_i)$, then $\lambda_{i+1} v_i= \lambda_i v_i$ hence we define $\lambda:=\lambda_i$ and we have $f=\lambda Id_{V^{\alpha}}$. 
%		%\end{proof}
%		%
%		
%%		
%%		\begin{proof}
%%		
%%		\end{proof}
%%		
%
		\subsection{Unified invariant from the action of braids on Verma modules}\label{sec_Foo_def_frombraids}
		
%		%%% DEFINITION BRAID GROUP
%		%% MORE DETAILS
%		The braid group with $n$ strands is defined as \[B_n:= \left\langle \sigma_1, \dots, \sigma_{n-1} | \ \sigma_i \sigma_{i+1} \sigma_i = \sigma_{i+1} \sigma_i \sigma_{i+1}, \ \sigma_i \sigma_j = \sigma_j \sigma_i, \  0 \leq i \leq n-2, \ i+2\leq j \leq n-1 \right\rangle \]
%		
%		We can visualise braids as $(n,n)$ tangles without closed components, seeing the braid group operation as juxtaposition of tangles, and sending $\sigma_i$ to the elementary positive crossing of the $i$-th strand with the $(i+1)$-st.
%		
%		\begin{thm} (Alexander)
%		Any knot can be represented as the closure of a braid.
%		\end{thm}
%		%% SOURCE		

		%%% DEFINTION QUANTUM BRAID GROUP REP
		%The Reshetikin-Turaev functor restricted to braids provides quantum braid representations on the Verma module.
		%Seeing the braid group as tangles, we can construct a quantum braid representation on the tensor product of Verma module $V^{\alpha}$ using the $R$ matrix for the action of the $\sigma_i$:
		
		%\begin{defn}
		We recall the definition of the braid group representation on tensor products of Verma modules:\[ \varphi_n(q^{\alpha},.) : B_n \to \End((V^{\alpha})^{\otimes n}). \]% by \[ \varphi_n(q^{\alpha},\sigma_i)=1^{\otimes i-1} \otimes  (\tau \circ R) \otimes 1^{\otimes n-i-2} \] where $\tau(v \otimes w)=w\otimes v$.
		%\end{defn}	
		The notion of {\em partial trace} is used to compute knot invariants out of finite dimensional quantum braid representation. We extend this notion to infinite dimensional modules in the case of Verma modules. 
		%Then we define a particular trace for these infinite dimensional representations. We call it the {\em partial trace} by analogy with its denomination in the case of finite dimensional representations. 

		\begin{defn}[Partial trace on Verma modules]
		Let $\beta \in B_n$ whose closure is a knot,
		\[\Tr_{2,\dots, n} ((1 \otimes K^{\otimes n-1}) \varphi_{n}(\beta)):=\underset{\overline{j}\in \BN_0^n}{\sum} [((1 \otimes K^{\otimes n-1})\varphi_n(q^{\alpha},\beta)) v_{\overline{j}} )]_{v_{\overline{j}}}\in \Laurentcomplet, \]
		where :
		\begin{itemize}
		\item $\BN_0^n := \lbrace (0, j_2, \dots, j_n) \in \N^n \rbrace$,
		\item $[((1 \otimes K^{\otimes n-1})\varphi_n(q^{\alpha},\beta)) v_{\overline{j}} )]_{v_{\overline{j}}} \in \Z[q^{\pm}, q^{\pm \alpha}]$ is the projection of $(1 \otimes K^{\otimes n-1})\varphi_n(q^{\alpha},\beta)v_{\overline{j}}$ on $v_{\overline{j}}$.
		\end{itemize}
		\end{defn}
		
		It is called {\em partial trace} inherited from the standard notion of partial trace on tensor products of vector spaces, see Sec. \ref{sec_Alex_sym}. 
		%We call it a partial trace as its definition resembles that of the finite dimensional case but moreover because one can compute the invariant of knot $F_{\infty}$ from it. 	
		
		%\subsection{Combining states diagrams and trace decomposition}\label{subsection_unified_braid}

		Let $\mathcal{K}$ be a long knot, $\beta \in B_n$ whose closure is $\mathcal{K}$ and $D^{\beta}$ be the diagram associated with $\mathcal{K}$ seen as the closure of $\beta$. The general picture is the following. 
		
		\begin{figure}[h!]
		\begin{subfigure}[b]{1\textwidth}
		 \centering
		  \def\svgwidth{70mm}
		    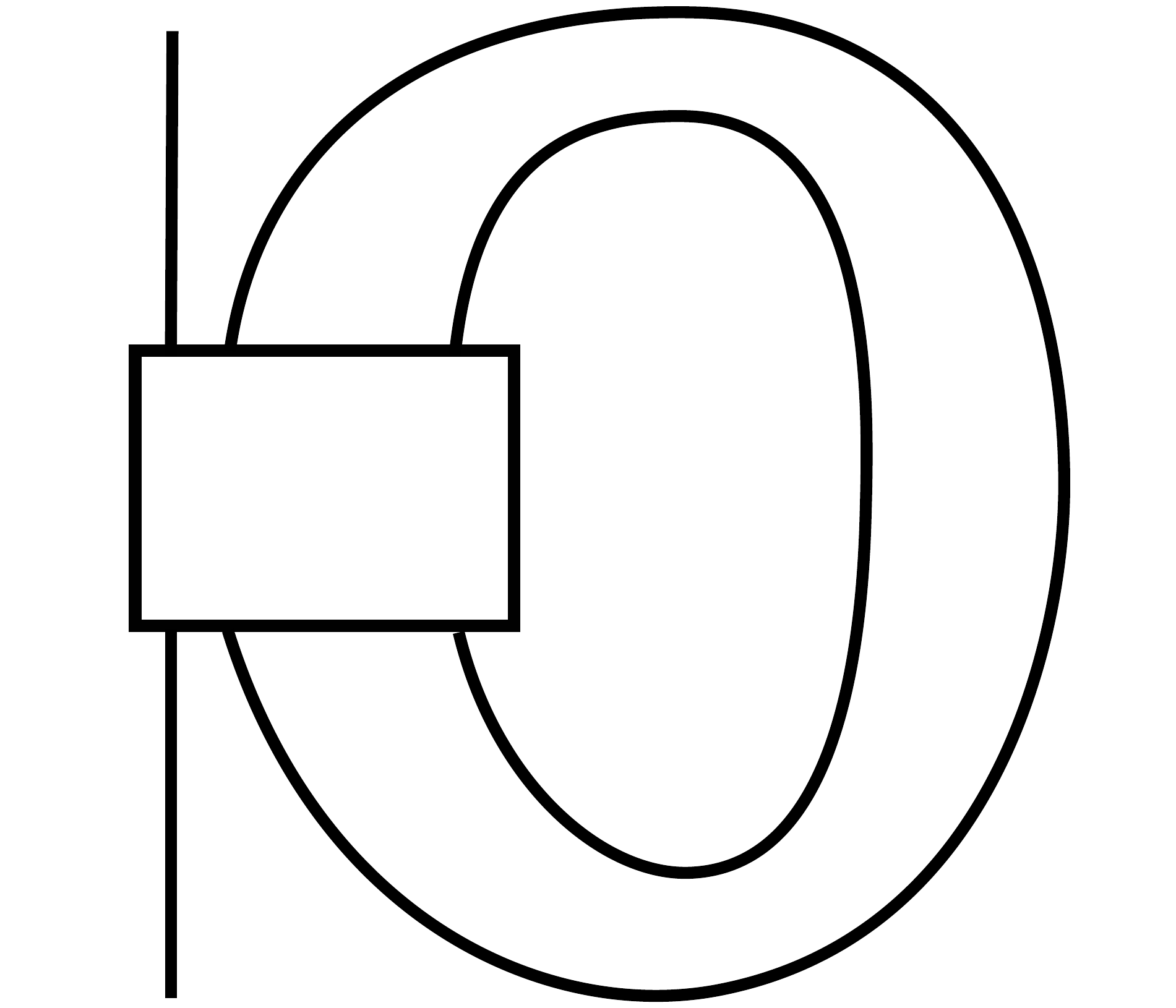
		 \end{subfigure}%
		 \caption{State diagram of a braid partial closure.}
		 \label{diagram_braid}
		 \end{figure}
		 
		 The result is the following.
		 
		 \begin{thm}\label{prop_unifed_braidrep}
		Let $\mathcal{K}$ be a knot in $S^3$ and $\beta \in B_n$ a braid whose closure is $\mathcal{K}$, then we have \[ F_{\infty}(q,q^{\alpha},\mathcal{K}) = \Tr_{2,\dots, n} ((1 \otimes K^{\otimes n-1})\varphi_n(q^{\alpha},\beta)) \in \Laurentcomplet\]
		\end{thm}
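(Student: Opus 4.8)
The plan is to unwind both sides into the same diagrammatic state sum. On the left, $F_\infty$ is by definition $q^{-f\alpha^2/2}\sum_{\overline i}D(i_1,\dots,i_N)$ attached to the closure diagram $D^\beta$, where $D(i_1,\dots,i_N)$ records, for each crossing, only the $E^{i_k}\otimes F^{(i_k)}$ part of the $R$-matrix. On the right, I would expand $\varphi_n(\beta)$ as the composition of local $\RR^{\pm1}$-operators read off a braid word for $\beta$, extract the diagonal matrix coefficient, and sum over the closed-up tensor factors; the claim is that, crossing by crossing and closure arc by closure arc, the two expansions coincide.

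First I would expand each generator $\sigma_i$, acting on $(V^\alpha)^{\otimes n}$ by $\RR$ on the tensor factors $i,i+1$, using $\RR = q^{-\alpha^2/2}\,T\circ q^{H\otimes H/2}\sum_{m\ge 0}q^{\frac{m(m-1)}{2}}E^m\otimes F^{(m)}$ (Def.~\ref{goodRmatrix}, Prop.~\ref{UqhLbraiding}), and each $\sigma_i^{-1}$ with the explicit inverse recalled in \cite[Prop.~6]{willetts2020unification}. Applying $\RR$ to a basis tensor $v_b\otimes v_a$ and using the $V^\alpha$-action of Def.~\ref{GoodVerma}, one gets $E^m v_b=v_{b-m}$ and $F^{(m)}v_a=\qbinom{a+m}{m}_{q}\{\alpha-a;m\}_{q}\,v_{a+m}$, and the $q^{H\otimes H/2}$-eigenvalue on $v_{b-m}\otimes v_{a+m}$ together with the overall $q^{-\alpha^2/2}$ collapses to $q^{-(a+b)\alpha}q^{2(a+m)(b-m)}$, so that the framing scalars $q^{\pm\alpha^2/2}$ built into $\RR$ cancel, consistently with the removal of the quadratic component in the definition of $F_\infty$. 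Thus the $[v_{a+m}\otimes v_{b-m}]$-coefficient of $\RR(v_b\otimes v_a)$ is exactly the positive-crossing weight $q^{\frac{m(m-1)}{2}}\qbinom{a+m}{m}_{q}\{\alpha-a;m\}_{q}q^{-(a+b)\alpha}q^{2(a+m)(b-m)}$ of $D(i_1,\dots,i_N)$, and similarly for negative crossings. Following each strand from bottom to top through $\beta$, the exponent $m=i_k$ appearing at the $k$-th crossing is exactly the $F$-charge transferred there, and the intermediate basis labels between consecutive crossings are precisely the strand labels $a_k,b_k$ of the state diagram $D_{\overline i}$ of Figure~\ref{crossings_simple_2}; hence the product of crossing coefficients along $\beta$ is the crossing part of $D(i_1,\dots,i_N)$, summed over $\overline i\in\N^N$.

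Next I would treat the closure. Presenting $\mathcal K$ as the $(1,1)$-tangle obtained by closing strands $2,\dots,n$ of $\beta$ and leaving strand $1$ open, the closure of each strand is computed in the ribbon structure on $\UqhL$-modules as a right partial trace over that factor twisted by the pivotal element, which acts on $V^\alpha$ as $K$; this pivotal twist on the $n-1$ closed factors is exactly the operator $1\otimes K^{\otimes n-1}$ inserted before $\Trp$, and the open strand being labelled $0$ is exactly the constraint $j_1=0$ in $\BN_0^n$. The triangularity of the $E^m\otimes F^{(m)}$ terms forces the intermediate labels around each closure arc to agree, which collapses the trace over intermediate states to the single sum over $(i_1,\dots,i_N)$; the $K$-eigenvalues on the traced basis vectors, after simplifying the opposite-sign turn-back contributions along each arc, reproduce the boundary factors $\prod_j q^{\mp(\alpha-2\epsilon_j)}$ of $D(i_1,\dots,i_N)$. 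Putting these together gives $\Trp\big((1\otimes K^{\otimes n-1})\varphi_n(\beta)\big)=q^{-f\alpha^2/2}\sum_{\overline i}D(i_1,\dots,i_N)=F_\infty$.

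Finally, the sum over $\overline j=(0,j_2,\dots,j_n)\in\BN_0^n$ converges in $\Laurentcomplet$: the diagonal coefficient built from the $F^{(n)}$-action of Def.~\ref{GoodVerma} always carries factors of the form $\{\alpha-j;m\}_{q}\in I_m$, so for each $m$ only finitely many $\overline j$ contribute modulo $I_m$; this is the same estimate used in \cite{willetts2020unification} to make $\sum_{\overline i}D(i_1,\dots,i_N)$ converge, and invariance under Markov moves (hence well-definedness) is inherited from the original definition of $F_\infty$, or alternatively from $\varphi_n$ being a braid representation commuting with $\UqhL$ together with the Markov property of the partial trace. I expect the main obstacle to be the combinatorial bookkeeping of the second and third steps: showing cleanly that expanding the composition of R-matrices and extracting the diagonal coefficient of the closure really does collapse every intermediate summation to the crossing labels $(i_1,\dots,i_N)$, with all the scalar factors coming from the Verma action, the twists $T$ and the $K$-insertion recombining to precisely $D(i_1,\dots,i_N)$; a secondary subtlety is pinning down the normalization of the (co)evaluation maps and of the pivotal element so that the turn-back factors match the $q^{\mp(\alpha-2\epsilon_j)}$ conventions of \cite{willetts2020unification}, which is also what forces the use of a $(1,1)$-tangle (one open strand) so that the categorical trace is well defined despite $V^\alpha$ being infinite dimensional.
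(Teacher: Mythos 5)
Your proposal is correct and follows essentially the same route as the paper: both sides are unwound into the state sum $\sum_{\overline{i}}D(i_1,\dots,i_N)$, by observing that the diagonal coefficient of $(1\otimes K^{\otimes n-1})\varphi_n(\beta)$ on $v_{\overline{j}}$ collects exactly those states $\overline{i}$ whose closure labels $\mu(\overline{i})$ equal $\overline{j}$, since $D(\overline{i})$ is by construction the scalar arising from the $E^{i_k}\otimes F^{(i_k)}$ term of the $R$-matrix at each crossing. The paper's proof states this identification in two lines and leaves the crossing-by-crossing and turn-back bookkeeping implicit; your write-up simply makes that bookkeeping explicit.
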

		\begin{proof}
		We denote $ \mu_2 (\overline{i}), \dots , \mu_n (\overline{i})$ the labels of the closure strands of the state diagram $D^{\beta}_{\overline{i}}$. Let $\mu(\overline{i})= (0,\mu_2 (\overline{i}), \dots , \mu_n (\overline{i}))$.
		
		We can then write, \[ [((1 \otimes K^{\otimes n-1})\varphi_n(q^{\alpha},\beta)) v_{\overline{j}} )]_{v_{\overline{j}}} = \underset{\underset{\mu(\overline{i})= \overline{j}}{\overline{i}=0} }{\overset{+\infty}{\sum}}D(i_1, \dots, i_N), \]
		where $[((1 \otimes K^{\otimes n-1})\varphi_n(q^{\alpha},\beta)) v_{\overline{j}} )]_{v_{\overline{j}}} \in \Z[q^{\pm}, q^{\pm \alpha}]$ is the projection of $(1 \otimes K^{\otimes n-1})\varphi_n(q^{\alpha},\beta)v_{\overline{j}}$ on $v_{\overline{j}}$.
		
		Hence, \[ \underset{\overline{j}=0}{\overset{+\infty}{\sum}} [((1 \otimes K^{\otimes n-1})\varphi_n(q^{\alpha},\beta)) v_{\overline{j}} )]_{v_{\overline{j}}} = \underset{\overline{i}=0 }{\overset{+\infty}{\sum}}D(i_1, \dots, i_N).\]
		
		Finally we get, \[  \Tr_{2,\dots, n} ((1 \otimes K^{\otimes n-1})\varphi_n(q^{\alpha},\beta)) =F_{\infty}(q,q^{\alpha},\mathcal{K}), \]
		
		concluding the proof.
		
		\end{proof}

		Using Theorem \ref{prop_unifed_braidrep}, we can then write the unified invariant using these sub-representations.
		
		\begin{cor}
		Let $\mathcal{K}$ be a knot in $S^3$ and $\beta \in B_n$ a braid whose closure is $\mathcal{K}$, then we have
		\[ F_{\infty}(q,q^{\alpha},\mathcal{K})= \sum_m \Tr_{2,\dots, n}((1 \otimes K^{\otimes n-1}) \varphi_{n,m}(\beta))\]
		\end{cor}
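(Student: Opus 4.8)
The plan is to deduce the corollary from Theorem~\ref{prop_unifed_braidrep} by decomposing $(V^{\alpha})^{\otimes n}$ into its sub-weight pieces and splitting the partial trace block by block. Recall that $(V^{\alpha})^{\otimes n} = \bigoplus_{m\ge 0} V_{n,m}$, that each $V_{n,m}$ is finite dimensional and stable under the braid action (it is a $\Bn$-subrepresentation carrying $\varphi_{n,m}$), and that the operator $1\otimes K^{\otimes n-1}$, being diagonal in the tensor basis $(v_{\overline j})$, preserves every $V_{n,m}$ as well. Hence $(1\otimes K^{\otimes n-1})\varphi_n(q^{\alpha},\beta)$ is block-diagonal for this decomposition, with $m$-th block $(1\otimes K^{\otimes n-1})\varphi_{n,m}(q^{\alpha},\beta)$.

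The first step is to match up the index sets. The basis vectors $v_{\overline j}$ with $\overline j = (0,j_2,\dots,j_n)\in \BN_0^n$ that enter the definition of $\Tr_{2,\dots,n}$ are partitioned according to $m:=j_2+\cdots+j_n$, which is exactly the sub-weight of $v_{\overline j}$; so $v_{\overline j}\in V_{n,m}$, and by stability of $V_{n,m}$ the diagonal coefficient of $(1\otimes K^{\otimes n-1})\varphi_n(\beta)$ at $v_{\overline j}$ coincides with that of $(1\otimes K^{\otimes n-1})\varphi_{n,m}(\beta)$. Summing these coefficients over the $v_{\overline j}\in\BN_0^n$ of a fixed sub-weight $m$ gives, by definition, $\Tr_{2,\dots,n}\big((1\otimes K^{\otimes n-1})\varphi_{n,m}(\beta)\big)$; since $\dim V_{n,m}<\infty$ this is a finite sum, hence an element of $\Laurent$ (indeed of $\Z[q^{\pm},q^{\pm\alpha}]$), even though the outer sum over $m$ is genuinely infinite.

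It then remains to legitimise the regrouping $\sum_{\overline j\in\BN_0^n}(\cdots) = \sum_{m\ge 0}\big(\sum_{|\overline j|=m}(\cdots)\big)$ inside $\Laurentcomplet$. By Theorem~\ref{prop_unifed_braidrep} the left-hand family is summable; in fact its terms tend to $0$ $\widehat I$-adically, since reaching a given $v_{\overline j}$ of sub-weight $m$ with first index $0$ forces, through the successive $R$-matrix actions, at least one of the $F^{(p)}$-exponents to grow linearly in $m$ (divided by the number of crossings), and each such exponent contributes a factor $\{\alpha-a_k;p\}_q\in I_p$ to the corresponding term — this is precisely the estimate underlying the convergence in Theorem~\ref{prop_unifed_braidrep}. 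A family of elements of $\Laurent$ tending to $0$ adically is summable in $\Laurentcomplet$, so it may be regrouped along any partition into finite blocks, in particular the blocks $\{\,|\overline j| = m\,\}$, without changing the sum. Combining the three observations yields $F_{\infty}(q,q^{\alpha},\mathcal{K}) = \sum_m \Tr_{2,\dots,n}\big((1\otimes K^{\otimes n-1})\varphi_{n,m}(\beta)\big)$. The only delicate point is this adic-vanishing bookkeeping, which is the main obstacle; everything else is a direct unwinding of definitions and of Theorem~\ref{prop_unifed_braidrep}.
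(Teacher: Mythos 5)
Your argument is correct and is exactly the (implicit) route the paper takes: the corollary is stated as an immediate consequence of Theorem~\ref{prop_unifed_braidrep}, obtained by regrouping the partial-trace sum over $\overline{j}\in\BN_0^n$ according to the sub-weight $m=\sum j_i$, using that both $\varphi_n(\beta)$ and $1\otimes K^{\otimes n-1}$ preserve each finite-dimensional block $V_{n,m}$. Your extra care about the legitimacy of the regrouping in $\Laurentcomplet$ is a reasonable filling-in of a detail the paper leaves unstated.
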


		\subsection{At roots of unity: factorization of the unified invariant}\label{sec_Foo_factorization}
		Now we can finally factorize the unified invariant at roots of unity using braid representations. The result is already given in \cite{willetts2020unification}, it uses a conjecture of Melvin--Morton--Rozanski (MMR) proved by Bar-Natan--Garoufalidis rather than a structural study of braid representations on Verma modules. Here, we give another proof of the result, using braid group representations. Hence it re-proves MMR conjecture and moreover a generalization of it. This subsection assumes $q=\zeta_{2r}$. We refer to Section \ref{section_rparts} for notations of submodules in this case. 
		\medskip
		
		First of all we can obtain {\em $ADO$ polynomials} (sometime called {\em colored Alexander invariants}) with the $0$ $r$-part representation.
		
		\begin{prop}\label{prop_braid_ado}
		\[ ADO_r(q^{\alpha},\mathcal{K})= \Tr_{2,\dots, n}((1 \otimes (K^{1-r})^{\otimes n-1})\varphi_n^0(\beta)) \]
		\end{prop}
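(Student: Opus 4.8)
\emph{Plan of proof.} The strategy is to unfold the right-hand side into a \emph{finite} state sum and to recognize it as the state sum computing $ADO_r$, following the same pattern as the proof of Theorem~\ref{prop_unifed_braidrep}. The structural input comes from Section~\ref{section_rparts}: at $q=\zeta_{2r}$ the subspace $V_n^0$ spanned by the tensors $v_{\overline i}$ with all $i_k\le r-1$ is an honest $\Bn$-invariant subspace of $(V^\alpha)^{\otimes n}$, on which $\rho_n^0$ restricts to the identity, so that $\varphi_n^0(\beta)$ is simply the restriction of $\varphi_n(\beta)$ to $V_n^0$. Concretely, on such tensors the $R$-matrix never creates a label $\ge r$: a term $E^{i}\otimes F^{(i)}$ with $i\ge r$ kills the first tensorand (since $E^{i}v_b=v_{b-i}=0$ for $b\le r-1<r\le i$), while for $i\le r-1$ a would-be label $a+i\ge r$ comes multiplied by the vanishing quantum binomial $\qbinom{a+i}{i}_q=0$ (because $[r]_q=0$ divides the numerator but not the denominator of $\qbinom{a+i}{i}_q$); this is the statement already used in Section~\ref{section_rparts} that $F^{(i+rj)}v_{a+ru}=0$ when $i,a\le r-1$ and $a+i\ge r$. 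Hence every intermediate label occurring when computing $\varphi_n^0(\beta)$ stays in $\{0,\dots,r-1\}$.

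Next I would unfold $\Tr_{2,\dots,n}\big((1\otimes(K^{1-r})^{\otimes n-1})\varphi_n^0(\beta)\big)$ exactly as in the proof of Theorem~\ref{prop_unifed_braidrep}: writing $\mu(\overline i)=(0,\mu_2(\overline i),\dots,\mu_n(\overline i))$ for the closure labels of the state diagram $D^{\beta}_{\overline i}$, the coefficient of $v_{\overline j}$ in $(1\otimes(K^{1-r})^{\otimes n-1})\varphi_n^0(\beta)v_{\overline j}$ (for $j_k\le r-1$) is the sum of the local weights over those states with $\mu(\overline i)=\overline j$ and, by the previous paragraph, all $i_k\le r-1$. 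Relative to the weights $D(i_1,\dots,i_N)$ entering $F_\infty$, the only change is that each closure strand now contributes the eigenvalue of $K^{1-r}$ rather than of $K$ on the relevant $v_{\mu_k}$; and since $q^{2r}=1$ one has $K^{1-r}v_j=s^{1-r}q^{-2j}v_j$, which is precisely the shift encoding the pivotal element used for the non--semi-simple (modified) trace. Summing over all $\overline j$ with $j_k\le r-1$ therefore produces a finite state sum over states of $D^{\beta}$ whose labels all lie in $\{0,\dots,r-1\}$.

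Finally I would compare this finite sum with the state sum defining $ADO_r$ (as in \cite{akutsu1992invariants}, reformulated in \cite{willetts2020unification}): the $r$-dimensional simple $\tilde{\mathcal U}$-module underlying $ADO_r$ is exactly the span of $v_0,\dots,v_{r-1}$, its braiding is the restriction of the one above, and the modified trace that repairs the vanishing of the ordinary quantum trace on this module is the ordinary partial trace twisted by $(K^{1-r})^{\otimes n-1}$ --- this is the computation of the modified trace for $\Uq$ at roots of unity (see \cite{CGP}). Matching the two finite sums term by term then yields the claimed identity. The one genuinely delicate point --- and the main obstacle --- is the bookkeeping of the overall normalization: one must check that the $K^{1-r}$--twisted partial trace on $V_n^0$ reproduces exactly the modified-dimension normalization built into the chosen definition of $ADO_r$, with no leftover monomial in $q^\alpha$ (in particular that the cup/cap contributions $q^{\mp(\alpha-2\epsilon_j)}$ and the quadratic framing factor match up as they should); the structural half of the argument is essentially already done in Section~\ref{section_rparts}.
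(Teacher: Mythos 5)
Your proposal is correct and follows essentially the same route as the paper, whose proof is precisely the observation that the argument of Theorem~\ref{prop_unifed_braidrep} applies verbatim with $q=\zeta_{2r}$, the truncated $R$-matrix on $V_n^0$, and the pivotal twist by a power of $K$, the state sum then matching the definition of $\ADO_r$ from \cite{willetts2020unification}. The normalization bookkeeping you flag as the delicate point is handled in the paper simply by taking the state-sum definition of $\ADO_r$ (Prop.~13 of \cite{willetts2020unification}) as the reference, so no further computation is needed.
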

		
		\begin{proof}
		This is the same proof as that for the unified invariant in Theorem \ref{prop_unifed_braidrep} (but from the definition of ADO polynomials, see Prop. 13 of \cite{willetts2020unification}), setting $q=\zeta_{2r}$, using truncated $R$ matrix and $K^{r-1}$ as pivotal element.
		\end{proof}

		Now let's see a factorization proposition at roots of unity, first recall that $F_r(q^{\alpha})=q^{r\alpha}$ defines the Frobenius morphism.
		
%		\begin{prop}
%		Let $i_1, \dots, i_n \leq r-1$,  we denote $v_{\overline{i+rj}} =v_{i_1 + r j_1} \otimes \dots \otimes v_{i_n +r j_n}$, $v_{\overline{i}}=v_{i_1 } \otimes \dots \otimes v_{i_n }$ and $w_{\overline{j}}= w_{j_1} \otimes \dots \otimes w_{j_n}$.
%		
%		\[ [\varphi_n^m(\beta) v_{\overline{i+rj}}]_{v_{\overline{i+rj}}}= [\varphi_n^0(\beta) v_{\overline{i}}]_{v_{\overline{i}}} \times F_r([(1 \otimes K^{\otimes n-1})\psi_{n,m}(\beta) w_{\overline{j}}]_{w_{\overline{j}}}) \]
%		\end{prop}		
%		
%		\begin{proof}
%		This is a direct application of Lemma \ref{factorLemma}.
%		\end{proof}
		
		The following is a direct corollary of Propositions \ref{prop_braid_factor} and \ref{prop_braid_ado}.
		\begin{cor}\label{cor_factor_trace}
		\[ \Tr_{2,\dots, n}((1 \otimes K^{\otimes n-1})\varphi_n^m(\beta)) = ADO_r(q^{\alpha}, \mathcal{K}) \times F_r(\Tr_{2, \dots, n}((1 \otimes K^{\otimes n-1})\psi_{n,m}(\beta)) \] 
		\end{cor}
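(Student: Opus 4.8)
The plan is to combine the braid-representation factorization of Proposition~\ref{prop_braid_factor} with the two trace-formulas at roots of unity: Theorem~\ref{prop_unifed_braidrep} specialized to $q=\zeta_{2r}$ through its $r$-part refinement, and the ADO formula of Proposition~\ref{prop_braid_ado}. First I would recall that by Proposition~\ref{prop_braid_factor} we have, for each weight level $m$, a braid-representation isomorphism $\Phi\colon V_n^m \xrightarrow{\sim} V_n^0 \otimes F_r(\SB_{n,m})$ conjugating $\varphi_n^m(\beta)$ to $\varphi_n^0(\beta) \otimes (F_r\circ\psi_{n,m})(\beta)$. The statement to prove is then purely a statement about partial traces of a tensor product of operators, twisted by the pivotal element $1\otimes K^{\otimes n-1}$.

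The key observation is that the pivotal twist factors through $\Phi$ in a compatible way. On a tensor $v_{\overline{i+rj}}$ with $i_k\le r-1$, the operator $K$ acts by $sq^{-2(i_k+rj_k)} = sq^{-2i_k}\cdot q^{-2rj_k}$; writing $s=q^{\alpha}$ and noting $q=\zeta_{2r}$ so that $q^{-2rj_k}=1$, one sees $K$ acts on the $r$-part decomposition as $K^{r-1}$ times a factor $q^{r-1}$-worth of correction on the $V_n^0$ side, while on the Frobenius factor $F_r(\SB_{n,m})$ the relevant diagonal operator is $F_r(K)$ acting as $s^r q^{-2rj_k}$, i.e. the image under $F_r$ of the $q=1$ pivotal element. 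Concretely I would check the identity
\[
(1\otimes K^{\otimes n-1}) = \Phi^{-1}\circ\Big( (1\otimes (K^{1-r})^{\otimes n-1}) \otimes F_r(1\otimes K^{\otimes n-1}) \Big)\circ\Phi
\]
as operators on $V_n^m$ — this is a direct computation from Definition~\ref{GoodVerma} and the definition of $\Phi$, keeping careful track of which powers of $q$ and $s$ land on which tensor factor. Since the ADO normalization in Proposition~\ref{prop_braid_ado} uses precisely $K^{1-r}$ as pivotal element, this is exactly the right bookkeeping.

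Once that identity is in hand, the partial trace $\Trp$ over factors $2,\dots,n$ of a conjugate of a tensor product of operators splits as a product of the two partial traces: $\Trp$ of the $V_n^0$-part operator, which is $ADO_r(q^{\alpha},\mathcal{K})$ by Proposition~\ref{prop_braid_ado}, times $\Trp$ of the $F_r$-image of the $\psi_{n,m}$-part operator, which is $F_r\big(\Trp((1\otimes K^{\otimes n-1})\psi_{n,m}(\beta))\big)$ since $F_r$ is a ring homomorphism commuting with taking matrix traces. This uses that the basis $v_{\overline{i+rj}}\mapsto v_{\overline{i}}\otimes F_r(w_{\overline{j}})$ respects the "first factor indexed by $0$" condition defining the partial trace on both sides, so the index set $\BN_0^n$ on the left corresponds to the product of the two index sets on the right.

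The main obstacle I expect is the pivotal-element bookkeeping: verifying that the stray powers of $q$ and $s$ produced by splitting $K=sq^{-2i_k}q^{-2rj_k}$ across the two tensor factors really assemble into exactly $K^{1-r}$ on the $V_n^0$ side and exactly $F_r(K)$ on the Frobenius side, with no leftover scalar. One must be attentive that $\varphi_n^0$ is defined via the truncated $R$-matrix and a projection, so the conjugation by $\Phi$ must be checked to be compatible with the projection $\rho_n^m$ as well — but this compatibility is already built into Proposition~\ref{prop_braid_factor}, so it suffices to combine that proposition with the pivotal identity above and invoke linearity of the partial trace. Given these inputs the corollary follows, as claimed, ``directly.''
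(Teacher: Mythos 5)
Your proposal is correct and follows exactly the route the paper intends (the paper offers no written proof, merely declaring the statement a direct corollary of Propositions~\ref{prop_braid_factor} and \ref{prop_braid_ado}); your pivotal-element identity is the right piece of bookkeeping, and it checks out since at $q=\zeta_{2r}$ the eigenvalue of $K$ on $v_{i_k+rj_k}$ is $sq^{-2i_k}=\bigl(s^{1-r}q^{-2i_k}\bigr)\cdot s^{r}$, matching $K^{1-r}$ on the $V_n^0$ factor times $F_r(K)$ on the Frobenius factor. The splitting of the partial trace as a product of partial traces under $\Phi$ is likewise justified, since the condition ``first index $=0$'' corresponds bijectively to both first indices vanishing.
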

		
		Moreover we can use Mac-Mahon master theorem to prove the following proposition.
		
		\begin{prop}\label{prop_trace_factor}
		For $\beta \in B_n$ whose closure is a knot $\mathcal{K}$, then :
		\[ \sum_m \Tr_{2,\dots,n}((1 \otimes K^{\otimes n-1})\psi_{n,m}(\beta))= \frac{q^{f\alpha}}{A_{\mathcal{K}}(q^{2\alpha})}.\]
		where $A_{\mathcal{K}}$ is the {\em Alexander polynomial}. 
		\end{prop}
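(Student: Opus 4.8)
The plan is to reduce the identity, in three steps, to the classical relation between the Burau matrix of $\beta$ and the Alexander polynomial of its closure. First I would dispose of the pivotal insertion: at $q=1$ the generator $K$ acts on $V^{\alpha}$ by the scalar $s=q^{\alpha}$ (since $Kv_j=sq^{-2j}v_j=sv_j$), so $1\otimes K^{\otimes n-1}$ acts on every subspace $\SB_{n,m}$ as multiplication by $s^{n-1}$, and
\[
\sum_m\Tr_{2,\dots,n}\bigl((1\otimes K^{\otimes n-1})\psi_{n,m}(\beta)\bigr)=s^{n-1}\sum_m\Tr_{2,\dots,n}\bigl(\psi_{n,m}(\beta)\bigr)=s^{n-1}\sum_m\Tr_{2,\dots,n}\bigl(\Sym^m(M)\bigr),
\]
where $M:=\psi_{n,1}(\beta)$, using Proposition~\ref{prop_psi_sym}. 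By the formula for $\psi_{n,1}(\sigma_i)$ recalled there, $M$ is, after setting $t:=q^{-2\alpha}=s^{-2}$, the symmetrised unreduced Burau matrix of $\beta$ at parameter $t$.

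The second step is the generating sum of partial traces of symmetric powers. Under the identification $v_{i_1}\otimes\dots\otimes v_{i_n}\leftrightarrow\prod_k e_k^{i_k}$ from the proof of Proposition~\ref{prop_psi_sym}, the partial trace $\Tr_{2,\dots,n}$ on the block $\SB_{n,m}$ picks out the diagonal coefficients of $\Sym^m(M)$ at the monomials of degree $m$ in $e_2,\dots,e_n$ only; equivalently, with $W:=\langle e_2,\dots,e_n\rangle$ one has $\mathrm{proj}_{\Sym^m W}\circ\Sym^m(M)|_{\Sym^m W}=\Sym^m(\widehat M)$, where $\widehat M$ denotes $M$ with its first row and column removed. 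Hence $\Tr_{2,\dots,n}(\Sym^m(M))=\Tr(\Sym^m(\widehat M))$, and MacMahon's master theorem yields
\[
\sum_m\Tr_{2,\dots,n}\bigl(\Sym^m(M)\bigr)=\sum_m\Tr\bigl(\Sym^m(\widehat M)\bigr)=\frac{1}{\det\bigl(I_{n-1}-\widehat M\bigr)},
\]
an identity which also makes transparent the convergence of the left-hand series in the relevant completion, since it is the geometric-type expansion of $\det(I_{n-1}-\widehat M)^{-1}$.

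It remains to identify $s^{n-1}/\det(I_{n-1}-\widehat M)$ with $q^{f\alpha}/A_{\mathcal K}(q^{2\alpha})$. Here I would invoke the classical theorem relating the Burau representation to the Alexander polynomial: when the closure of $\beta\in\Bn$ is a knot, deleting one row and one column from $I_n-M$ produces, up to a unit of $\Laurent$, the Alexander polynomial of the closure. Controlling this unit via $\det M_{\sigma_i}=-t$ (so $\det M=(-t)^{f}$), the parity relation $f\equiv n-1\pmod 2$ forced by the closure being a knot, and the symmetry $A_{\mathcal K}(t)=A_{\mathcal K}(t^{-1})$ used to replace $t=s^{-2}$ by $s^{2}$, one gets $\det(I_{n-1}-\widehat M)=\pm\,t^{(f-n+1)/2}A_{\mathcal K}(t)$, and therefore $s^{n-1}/\det(I_{n-1}-\widehat M)=s^{f}/A_{\mathcal K}(s^{2})=q^{f\alpha}/A_{\mathcal K}(q^{2\alpha})$, as claimed. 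The main obstacle is precisely this last bookkeeping: pinning down the exact unit (power of $q^{\alpha}$ and sign) in the Burau--Alexander identification and matching it with the normalisation of $A_{\mathcal K}$ adopted in the paper; by contrast the passage from the symmetrised Burau matrix to any other standard normalisation is harmless, since the two differ by conjugation by a diagonal matrix, which changes neither $\det(I_{n-1}-\widehat M)$ nor the diagonal entries feeding into the partial traces.
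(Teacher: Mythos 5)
Your proposal is correct and follows essentially the same route as the paper's proof: the identification $\psi_{n,m}(\beta)=\Sym^m(\psi_{n,1}(\beta))$ from Proposition~\ref{prop_psi_sym}, the MacMahon master theorem (the paper implements your reduction to $\widehat M$ by specializing the generating variables to $t_1=0$, $t_i=1$, which is the same thing), and the classical Burau--Alexander identification of the $(n-1)\times(n-1)$ minor up to a unit. You are in fact slightly more explicit than the paper on two points it leaves implicit, namely that $1\otimes K^{\otimes n-1}$ contributes the scalar $s^{n-1}$ at $q=1$ and how the resulting unit is absorbed into $q^{f\alpha}$ via the symmetry of $A_{\mathcal K}$.
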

		
		\begin{proof}
		Using Mac-mahon master theorem we have \[ \sum_m [\Sym^m(\psi_{n,1}(\beta)) v_{\overline{I}}]_{v_{\overline{i}}}t_1^{i_1} \otimes \dots \otimes t_n^{i_n} = \frac{1}{\det\left(I_n - 	\begin{psmallmatrix}
		t_1 & & \\
		& \ddots & \\
		& & t_n
		\end{psmallmatrix}
		 \psi_{n,1}(\beta) \right)} \]
		 
		 Now if one takes $t_1=0$ and $t_i=1$ for $i \neq 1$, we have the following equality:
		\[ \sum_m \Tr_{2,\dots, n } (\Sym^m(\psi_{n,1}(\beta) ))=  \frac{1}{\det(I_n - 			\begin{psmallmatrix}
		  0 
		  & 0\\
		  0 
		  & I_{n-1}
		\end{psmallmatrix}
		 \psi_{n,1}(\beta) )} \]
		 
		 Since $\psi_{n,1}(\beta)$ is the unreduced Burau representation $B(t)$ in the basis $f_k= q^{-k\alpha} e_k$ setting $t=q^{-2 \alpha}$ , and since we are taking a $(n-1) \times (n-1)$ minor of $I_n - B(t)$, we obtain: \[ det(I_n- \begin{psmallmatrix}
		  0 
		  & 0\\
		  0 
		  & I_{n-1}
		\end{psmallmatrix} \psi_{n,1}(\beta) = q^{(n-1+f)\alpha}A_{\mathcal{K}}(q^{2\alpha}). \]
		 %%%% WHY THE HELL q^{(n-1+f)\alpha}?
		\end{proof}
		
		Now,
		\begin{align*} 
		F_{\infty}(\zeta_{2r},q^{\alpha},\mathcal{K}) &= \Tr_{2,\dots, n} ((1 \otimes K^{\otimes n-1})\varphi_n(q^{\alpha},\beta))\\
		&= \sum_m \Tr_{2,\dots, n} ((1 \otimes K^{\otimes n-1})\varphi_n^m(q^{\alpha},\beta))
		\end{align*} 
		using Corollary \ref{cor_factor_trace} and Proposition \ref{prop_trace_factor}, we get back the factorisation Theorem :
		
		\begin{thm}[Factorization of $F_{\infty}$ at roots of unity] \label{thm_factorisation_unified_ADO}
		For a knot $\mathcal{K}$ and an integer $r \in \N^*$, we have the following factorization in $\Laurentcomplet$:
		\[ F_{\infty} (\zeta_{2r}, A, \mathcal{K}) =  \frac{(q^{\alpha})^{rf} \times ADO_r(A,\mathcal{K})}{A_{\mathcal{K}} (A^{2r})} \]
		where $f$ is the framing of the knot. (We have named the variable $A$ instead of $s$ used to define the Verma modules. It is more standard when working with Alexander-like invariants.)
		\end{thm}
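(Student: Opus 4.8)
The strategy is to assemble the statement from the structural results of this section: at $q=\zeta_{2r}$ the $r$-part filtration turns the single partial trace defining $F_\infty$ into a sum of partial traces over the associated graded pieces, and Proposition~\ref{prop_braid_factor} splits each such piece into an $ADO_r$ factor times a Frobenius twist of a symmetric-Burau contribution whose total trace is evaluated in Proposition~\ref{prop_trace_factor}.

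Concretely I would proceed as follows. Start from the expression $F_{\infty}(q,q^\alpha,\mathcal{K}) = \Tr_{2,\dots,n}\bigl((1\otimes K^{\otimes n-1})\varphi_n(q^\alpha,\beta)\bigr)$ of Theorem~\ref{prop_unifed_braidrep} and specialize $q=\zeta_{2r}$. Since the braid action preserves the $r$-part filtration $V_n^{\le 0}\subset V_n^{\le 1}\subset\cdots$ (Section~\ref{section_rparts}), the operator $\varphi_n(\beta)$ is block upper-triangular in a basis adapted to it, so its partial trace equals the sum over $m$ of the partial traces of the induced actions $\varphi_n^m(\beta)$ on the graded pieces $V_n^m$:
\[
F_{\infty}(\zeta_{2r},q^\alpha,\mathcal{K}) = \sum_m \Tr_{2,\dots,n}\bigl((1\otimes K^{\otimes n-1})\varphi_n^m(\beta)\bigr),
\]
convergence in $\Laurentcomplet$ being inherited from that of the series defining $F_\infty$. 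Next, apply Corollary~\ref{cor_factor_trace} term by term,
\[
\Tr_{2,\dots,n}\bigl((1\otimes K^{\otimes n-1})\varphi_n^m(\beta)\bigr) = ADO_r(q^\alpha,\mathcal{K})\cdot F_r\!\left(\Tr_{2,\dots,n}\bigl((1\otimes K^{\otimes n-1})\psi_{n,m}(\beta)\bigr)\right),
\]
pull the $m$-independent factor $ADO_r(q^\alpha,\mathcal{K})$ out of the sum, and commute the ring morphism $F_r$ past the convergent sum, so that
\[
F_{\infty}(\zeta_{2r},q^\alpha,\mathcal{K}) = ADO_r(q^\alpha,\mathcal{K})\cdot F_r\!\left(\sum_m \Tr_{2,\dots,n}\bigl((1\otimes K^{\otimes n-1})\psi_{n,m}(\beta)\bigr)\right).
\]
By Proposition~\ref{prop_trace_factor} the inner sum is $q^{f\alpha}/A_{\mathcal{K}}(q^{2\alpha})$; applying $F_r$ (which sends $q^\alpha\mapsto q^{r\alpha}$) gives $q^{rf\alpha}/A_{\mathcal{K}}(q^{2r\alpha})$, and renaming $q^\alpha=A$ as in the statement yields $F_{\infty}(\zeta_{2r},A,\mathcal{K}) = (q^\alpha)^{rf}\,ADO_r(A,\mathcal{K})/A_{\mathcal{K}}(A^{2r})$, as claimed.

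\textbf{Main obstacle.} Within this argument the only delicate point is the bookkeeping in $\Laurentcomplet$: one should check that $\sum_m \Tr_{2,\dots,n}((1\otimes K^{\otimes n-1})\varphi_n^m(\beta))$ converges there, that $F_r$ is continuous for the $\widehat{I}$-topology so it may be pulled through the sum, and that $A_{\mathcal{K}}(A^{2r})$ is invertible in the completion (it is, since $A_{\mathcal{K}}$ has unit constant term). The substantive work, however, is upstream in the inputs: Proposition~\ref{prop_braid_factor}, which rests on the analysis of the structure of $(V^\alpha)^{\otimes n}$ at a root of unity carried out in Section~\ref{section_rparts}, and the MacMahon-master-theorem computation underlying Proposition~\ref{prop_trace_factor}; once those are in hand, the theorem is a short assembly and, in particular, re-proves the Melvin--Morton--Rozansky statement in the form of an analytic relation between colored Jones and Alexander polynomials.
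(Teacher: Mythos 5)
Your proposal follows the paper's own proof essentially verbatim: starting from the partial-trace expression of Theorem~\ref{prop_unifed_braidrep}, decomposing over the $r$-part graded pieces, applying Corollary~\ref{cor_factor_trace} termwise, and summing via Proposition~\ref{prop_trace_factor}. The only difference is that you spell out the convergence and block-triangularity bookkeeping in $\Laurentcomplet$ more explicitly than the paper does, which is a welcome addition rather than a divergence.
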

		
		%{\color{red} (PLEASE CHECK THE TERM $(q^{\alpha})^{rf}$ IN COMPARISON WITH THE R MATRIX WE USE (ET DANS LES DEUX PROP PRECEDENTES))}
		
		%This concludes the alternative proof of Theorem \ref{ADO_factor_thm} using braid representations and Mac-Mahon master theorem.
		
		We recall the Melvin--Morton--Rozanski conjecture, which is a theorem of Bar-Natan and Garoufalidis. 
		\begin{thm}[Bar-Natan, Garoufalidis {\cite{MMR}}] \label{MMR} ~\\
		For $\mathcal{K}$ a knot, we have the following equality in $\Q[[h]]$:
		\[ \underset{n \to \infty}{\lim} \  J_n (e^{h/n}) = \frac{1}{A_{\mathcal{K}} (e^h)}\]
		in the sense that, $\forall m \in \N$, \[ \underset{n \to \infty}{\lim} \  \text{coeff}\left(J_n (e^{h/n}),h^m \right) = \text{coeff}\left(\frac{1}{A_{\mathcal{K}} (e^h)}, h^m \right)  \]
		where, for any analytic function $f$, $\text{coeff}(f(h),h^m)=\frac{1}{m!} \frac{d^m}{dh^m} f(h)|_{h=0}$, and $J_n$ is the {\em $n$-th colored Jones polynomial}. 
		\end{thm}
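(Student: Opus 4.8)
The plan is to obtain Theorem~\ref{MMR} as the $q\to 1$ limit of the colored Jones specialization of $F_{\infty}$, the limiting value being supplied by the evaluation of $F_{\infty}$ at $q=1$. Two inputs are combined. First, by \cite{willetts2020unification}, setting the Verma parameter to an integral power of $q$, say $s=q^{N}$, the series $F_{\infty}(q,q^{N},\mathcal{K})$ specializes to the $N$-th colored Jones polynomial $J_{N}(q)$, up to an explicit framing monomial that we kill by choosing a $0$-framed diagram ($f=0$). Second, the corollary of Theorem~\ref{prop_unifed_braidrep} writes $F_{\infty}=\sum_{m}\Tr_{2,\dots,n}\bigl((1\otimes K^{\otimes n-1})\varphi_{n,m}(\beta)\bigr)$; specializing $q$ to $1$, Proposition~\ref{prop_psi_sym} turns each sub-weight level into a symmetric power of $\psi_{n,1}$, and Proposition~\ref{prop_trace_factor} then evaluates the generating function of the symmetric-power partial traces via MacMahon's master theorem as a reciprocal Burau determinant, i.e. as an Alexander polynomial. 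Altogether this yields
\[
F_{\infty}(1,s,\mathcal{K})\;=\;\frac{s^{f}}{A_{\mathcal{K}}(s^{2})}\,.
\]

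Next I would feed in the double substitution of the MMR regime, $q=e^{h/N}$ and $s=q^{N}=e^{h}$. With $f=0$ the left-hand side of the first input becomes $J_{N}(e^{h/N})\in\Q[[h]]$, while the displayed identity (read at $q=1$, $s=e^{h}$) gives $1/A_{\mathcal{K}}(e^{2h})$. It remains to prove coefficient-wise convergence in $h$: writing $F_{\infty}=\sum_{n\ge 1}b_{n}$ with $b_{n}\in I_{n-1}$, each $b_{n}$ is an $\Laurent$-combination of products $\{\alpha+l;n-1\}_{q}=\prod_{i=0}^{n-2}\bigl(q^{\alpha+l-i}-q^{-\alpha-l+i}\bigr)$, which under the substitution becomes $\prod_{i=0}^{n-2}\bigl(e^{h}e^{(l-i)h/N}-e^{-h}e^{-(l-i)h/N}\bigr)$: a power series in $h$ of valuation exactly $n-1$ whose coefficients are polynomial in $1/N$, hence bounded uniformly in $N$. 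Thus the image of $b_{n}$ in $\Q[[h]]$ has $h$-valuation $\ge n-1$ uniformly in $N$, so for fixed $m$ only $b_{1},\dots,b_{m+1}$ contribute to the coefficient of $h^{m}$ in $J_{N}(e^{h/N})$; that coefficient is therefore a polynomial in $1/N$, and letting $N\to\infty$ (equivalently, specializing those finitely many $b_{n}$ at $q=1$, $s=e^{h}$) produces the coefficient of $h^{m}$ in $1/A_{\mathcal{K}}(e^{2h})$. Matching this with the conventions of \cite{MMR} (weight versus dimension of the coloring, and $q$ versus the square root of the Reshetikhin--Turaev variable) gives exactly $\lim_{n\to\infty}J_{n}(e^{h/n})=1/A_{\mathcal{K}}(e^{h})$.

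The one delicate step is the uniform-in-$N$ valuation estimate of the last paragraph, which legitimizes interchanging $\lim_{N\to\infty}$ with the infinite $h$-adic expansion of $F_{\infty}$; the same mechanism (at $q=1$ the ideal $I_{n-1}$ becomes divisible by $(s-s^{-1})^{n-1}$, and $1/A_{\mathcal{K}}(s^{2})$ is a well-defined power series since $A_{\mathcal{K}}(1)=\pm 1$) is also what justifies specializing the completed sum to $q=1$ in the first place. Everything else is bookkeeping of framing monomials and of normalization conventions, on top of the cited inputs Theorem~\ref{prop_unifed_braidrep}, Proposition~\ref{prop_psi_sym}, Proposition~\ref{prop_trace_factor}, and the colored Jones specialization of \cite{willetts2020unification}.
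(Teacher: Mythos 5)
Your proposal is correct and follows essentially the same route as the paper's own re-proof: both rest on the double interpolation property of $F_{\infty}$ (colored Jones at $s=q^{N}$ on one side, $F_{\infty}(1,s,\CK)=1/\A_{\CK}(s^{2})$ on the other, the latter coming from Proposition~\ref{prop_psi_sym}, MacMahon's master theorem via Proposition~\ref{prop_trace_factor}, i.e.\ the $r=1$ case of Theorem~\ref{thm_factorisation_unified_ADO}), followed by a continuity argument in the $h$-adic completion. The only difference is that where the paper justifies the interchange $\lim_{N\to\infty}F_{\infty}(q^{1/N},q)=F_{\infty}(1,q)$ by citing Habiro's injective map $\Laurentcomplet\to\Q[\alpha][[h]]$, you supply a self-contained uniform-in-$N$ valuation estimate on the terms $b_{n}\in I_{n-1}$; this is a legitimate and slightly more explicit substitute for the cited input.
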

		
		\begin{rem}[Re-proof of MMR conjecture]
		Let $\mathcal{K}$ be a $0$ framed knot. From the unified invariant,
		\begin{itemize}
		 \item on one hand we get the colored Jones polynomials back (see Corollary 59 in \cite{willetts2020unification}) \[F_{\infty}(q,q^n,  \mathcal{K})= J_n(q^2, \mathcal{K}), \]
		 \item on the other hand, using Theorem \ref{thm_factorisation_unified_ADO} at $r=1$, we get \[ F_{\infty} (1, A, \mathcal{K}) =  \frac{1}{A_{\mathcal{K}} (A^{2})} \]
		\end{itemize}
		 Using the identification $q=e^h$ and $q^{\alpha}=e^{\alpha h}$, we have an injective map (see Prop 6.8, 6.9 in \cite{habiro2007integral}) \[ \Laurentcomplet \to \Q[\alpha][[h]] \]
		 Hence, as elements in $\Q[\alpha][[h]]$, we have the following limit (in the sense defined in Theorem \ref{MMR}) \[ \underset{n \to \infty}{\lim} \  F_{\infty}(q^{\frac{1}{n}},q ,  \mathcal{K}) = F_{\infty}(1,q ,  \mathcal{K})  \] 
		 
		 Thus we get \begin{align*}
		\underset{n \to \infty}{\lim} \  J_n (e^{\frac{2h}{n}}) &=F_{\infty}(q^{\frac{1}{n}},q ,  \mathcal{K}) \\
		&= F_{\infty}(1,q ,  \mathcal{K}) \\
		&= \frac{1}{A_{\mathcal{K}} (e^{2h})}
		 \end{align*}
		giving us another proof of Theorem \ref{MMR}.
		\end{rem}
		
%		{\color{red} \begin{prop}[Unicity property of $F_{\infty}$]
%		The element $F_{\infty}(\CK)$ is the only element in $\Laurentcomplet$ interpolating colored Jones polynomials or ADO over Alexander ones at an infinite number of values. 
%		\end{prop}}
		
		\begin{prop}[Unicity property of $F_{\infty}$]\label{prop_unicity_Foo}
		Let $\CK$ be a knot, $F_{\infty}(q,A,\CK)$ is the only element in $\Laurentcomplet$ interpolating colored Jones polynomials or ADO over Alexander elements at an infinite number of values.
		
		In other word, if $u(q,A)\in \Laurentcomplet$ is such that, for an infinite number of $r$ or $N$ in $\N^*$, we have: \[ u(\zeta_{2r}, A) =  \frac{(q^{\alpha})^{rf} \times ADO_r(A,\mathcal{K})}{A_{\mathcal{K}} (A^{2r})} \] or \[  u(q,q^N)= J_N(q,\CK)\]  then, we have the equality: \[ u(q,A)=F_{\infty}(q,A,\CK).\]
		\end{prop}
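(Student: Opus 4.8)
The strategy is to reduce the uniqueness statement to the fact that an element of $\Laurentcomplet$ is determined by infinitely many of its specializations, together with the injectivity properties of the evaluation maps already recorded in the excerpt. First I would treat the two cases separately, but the skeleton is the same: suppose $u,F_{\infty}(\cdot,\cdot,\CK) \in \Laurentcomplet$ agree after applying the same family of specialization morphisms $\spec_k\colon \Laurentcomplet \to \BC$ (or $\to \BQ[\alpha][[h]]$), indexed by an infinite set; I want to conclude $u = F_{\infty}$. The key algebraic input is that the difference $d := u - F_{\infty}(\cdot,\cdot,\CK)$ is an element of the completed ring which becomes zero under infinitely many specializations, and one must show $d=0$.

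Concretely, for the colored Jones case: by Corollary~59 of \cite{willetts2020unification} we have $F_{\infty}(q,q^N,\CK) = J_N(q,\CK)$ for all $N$, so the hypothesis gives $u(q,q^N) = F_{\infty}(q,q^N,\CK)$ for infinitely many $N$. Under the injective map $\Laurentcomplet \hookrightarrow \BQ[\alpha][[h]]$ of \cite{habiro2007integral} (Prop.~6.8, 6.9), sending $q = e^h$, $q^\alpha = e^{\alpha h}$, the condition $u(q,q^N) = F_\infty(q,q^N,\CK)$ becomes the statement that the images of $u$ and $F_\infty$ in $\BQ[\alpha][[h]]$ agree after the substitution $\alpha = N$, for infinitely many integers $N$. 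But a power series in $\BQ[\alpha][[h]]$ is a sequence of polynomials in $\alpha$ (the coefficients of $h^m$); a polynomial vanishing at infinitely many integer points is identically zero, so coefficient by coefficient the two power series coincide, hence $u = F_\infty$ by injectivity. The ADO case is handled the same way: by Theorem~\ref{thm_factorisation_unified_ADO}, $F_{\infty}(\zeta_{2r},A,\CK) = \frac{(q^\alpha)^{rf}\ADO_r(A,\CK)}{A_\CK(A^{2r})}$, so the hypothesis says $u$ and $F_\infty$ agree at $q = \zeta_{2r}$ for infinitely many $r$; I would again pass to $\BQ[\alpha][[h]]$ via $q = e^h$ (so that $\zeta_{2r}$-specialization corresponds to $h \mapsto$ appropriate value, or more cleanly view the ADO specialization as recorded in \cite{willetts2020unification}) and run the same polynomial-vanishing argument on the $h$-coefficients.

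The main obstacle I anticipate is making the ``infinitely many specializations force equality'' step fully rigorous inside $\Laurentcomplet$ rather than in a naive polynomial ring: one has to be careful that the completion is with respect to the ideals $I_n$, and that the specialization and Frobenius maps interact well with this completion (for instance that the root-of-unity evaluations $q = \zeta_{2r}$ are well-defined on $\Laurentcomplet$, which is asserted in the introduction but should be invoked explicitly). Once the element is pushed into $\BQ[\alpha][[h]]$ the argument is elementary — a nonzero polynomial in $\alpha$ has finitely many roots — so the real content is citing the injectivity of $\Laurentcomplet \hookrightarrow \BQ[\alpha][[h]]$ and checking that the hypotheses of Proposition~\ref{prop_unicity_Foo} translate into genuinely infinitely many distinct constraints on the $h$-coefficients. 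I would close by remarking that this also shows $F_\infty$ is characterized among all elements of $\Laurentcomplet$ by either interpolation property, which is the stated ``unicity''.
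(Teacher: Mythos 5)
Your treatment of the colored Jones case is essentially the paper's argument: combine $F_{\infty}(q,q^N,\CK)=J_N(q,\CK)$ with the injection $\Laurentcomplet\hookrightarrow\Q[\alpha][[h]]$ and observe that each $h^m$-coefficient is a polynomial in $\alpha$, so agreement at infinitely many integers $N$ forces equality. The paper packages this as the injectivity of the map $f_J:\Q[\alpha][[h]]\to\prod_{k\in J}\Q[[h]]$ for any infinite $J\subset\N^*$; you are reproving that injectivity, which is fine.

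The ADO case, however, has a genuine gap. Your plan is to ``pass to $\Q[\alpha][[h]]$ via $q=e^h$ so that the $\zeta_{2r}$-specialization corresponds to $h\mapsto$ an appropriate value and run the same polynomial-vanishing argument on the $h$-coefficients.'' This does not work: $\Q[\alpha][[h]]$ is a ring of \emph{formal} power series in $h$, and there is no substitution $h=\log\zeta_{2r}$ (a nonzero complex number) available in it. The hypothesis $u(\zeta_{2r},A)=\dots$ is a constraint on the image of $u$ under a root-of-unity specialization of $q$, which collapses all higher-order $h$-information at once; it does not translate into constraints on individual $h$-coefficients, so the ``polynomial in $\alpha$ vanishing at infinitely many points'' step has nothing to act on. The paper's actual route is a reduction to the Jones case: for each fixed $N$, evaluate the ADO identity further at $A=\zeta_{2r}^N$ and use $\ADO_r(\zeta_{2r}^N,\CK)=J_N(\zeta_{2r},\CK)$ to get $u(\zeta_{2r},\zeta_{2r}^N)=J_N(\zeta_{2r},\CK)$ for infinitely many $r$; since $J_N(q,\CK)$ is a Laurent polynomial, agreement at infinitely many roots of unity forces $u(q,q^N)=J_N(q,\CK)$ for every $N$, and one then concludes by the already-established Jones case. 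If you want to repair your argument, you need both this intermediate identity relating ADO at integral colors to colored Jones at roots of unity, and a justification that an element of the relevant completion agreeing with a Laurent polynomial at infinitely many roots of unity equals it; neither appears in your proposal.
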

		
		\begin{proof}
		The map $f: \Q[\alpha][[h]] \to \underset{k \in \N}{\prod} \Q[[h]]$, $x \mapsto (f_k(x))_{k \in \N}$ defined in subsection 4.4 in \cite{willetts2020unification} is injective. In fact, for any infinite subset $J \in \N^*$, $f_J: \Q[\alpha][[h]] \to \underset{k \in J}{\prod} \Q[[h]]$, $x \mapsto (f_k(x))_{k \in J}$ is injective. Thus if, for an infinite number of $N\in \N^*$: \[  u(q,q^N)= J_N(q,\CK)\]  then, \[ u(q,A)=F_{\infty}(q,A,\CK).\]
		
		Moreover, for any $N\in\N^*$, $ADO_r(\zeta_{2r}^N,\CK)= J_N(\zeta_{2r},\CK)$. Hence if, for an infinite number of $r \in \N^*$: \[ u(\zeta_{2r}, A) =  \frac{(q^{\alpha})^{rf} \times ADO_r(A,\mathcal{K})}{A_{\mathcal{K}} (A^{2r})} \] then: \[ u(\zeta_{2r}, \zeta_{2r}^N) =J_N(\zeta_{2r},\CK) \] hence, $J_N(q,\CK)$ being a Laurent polynomial, we get: \[ u(q, q^N)=J_N(q,\CK) \]
		\end{proof}
	
\subsection{Generalization of Alexander polynomials' symmetry}\label{sec_Alex_sym}
In order to prove a symmetry for the ADO invariants, we must change a bit how we use the partial trace. Throughout the paper we have set the first element in the tensor products to be $v_0$. 

In fact we can define the partial trace as :

\[ \Trpt : \End\left(((V^{\alpha})^{\otimes n} \right) \to \End(V^{\alpha})\]
and we have, by definition: \[ \Trpt (f) . v_0 = \Tr_{2, \ldots, n} (f) v_0.\]

Since $V^{\alpha}$ is absolutely simple, the partial trace $\Trpt(f)$ is scalar, allowing us to identify to its value $\Trp (f)$. In other words:

\[\Trpt (f) . w = \Tr_{2, \ldots, n} (f)w  \] for any $w \in V^{\alpha}$.

Combining this fact with Lemma \ref{lemma_quotient_Verma_integer}, one can get a symmetry for the unified invariant.

\begin{thm}[An Alexander-like symmetry for $F_{\infty}$]\label{symmetry_unified_invariant}
Let $\mathcal{K}$ be a $0$ framed knot, \[F_{\infty}(q,q^{\alpha}, \mathcal{K})= F_{\infty}(q, q^{-\alpha-2}, \mathcal{K}). \] In other words, $F_{\infty}$ is not sensitive to $s \mapsto s^{-1} q^{-2}$. 
\end{thm}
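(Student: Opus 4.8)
Both $F_{\infty}(q,q^{\alpha},\CK)$ and $F_{\infty}(q,q^{-\alpha-2},\CK)$ lie in $\Laurentcomplet$ — the second one because the substitution $s\mapsto s^{-1}q^{-2}$ merely permutes (up to sign) the generators $\{\alpha+l;n\}_q$ of each ideal $I_n$ and hence extends to the completion. Through the injection $\Laurentcomplet\hookrightarrow\BQ[\alpha][[h]]$ coming from $q=e^{h}$, $q^{\alpha}=e^{\alpha h}$ (the same map used in the re-proof of MMR and in Prop.~\ref{prop_unicity_Foo}), every $h^{m}$-coefficient of the difference is a polynomial in $\alpha$; a polynomial vanishing at infinitely many integers is zero. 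So it suffices to prove $F_{\infty}(q,q^{-N-2},\CK)=F_{\infty}(q,q^{N},\CK)$ for every positive integer $N$, the right-hand side being (a specialization of) the $N$-th colored Jones polynomial.

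\textbf{Step 2 (the partial trace is a scalar, even at integral weight).} Fix $N$ and present $\CK$ as the closure of a braid $\beta\in\Bn$ of writhe $0$ (possible after Markov stabilizations, in accordance with the $0$-framing hypothesis). For $\gamma\in\{N,-N-2\}$ put $g_{\gamma}:=(1\otimes K^{\otimes n-1})\varphi_n(q^{\gamma},\beta)$. By Theorem~\ref{prop_unifed_braidrep} and the discussion of $\Trpt$, each $\Trpt(g_{\gamma})$ is a $\UqhL$-module endomorphism of $V^{\gamma}$. Now $\End_{\UqhL}(V^{-N-2})$ consists of scalars, since $V^{-N-2}$ is a Verma module at the non-dominant weight $-N-2$ and hence simple; and $\End_{\UqhL}(V^{N})$ consists of scalars as well, because Lemma~\ref{lemma_quotient_Verma_integer} exhibits $V^{N}$ as the extension $0\to S_N\to V^{N}\to V^{-N-2}\to 0$ of the non-isomorphic simples $S_N$ and $V^{-N-2}$, which is non-split (the only weight vector of $V^{N}$ of weight $-N-2$ is $v_{N+1}$, and $v_{N+1}$ already generates all of $V^{N}$), and a non-split extension of two non-isomorphic simples has scalar endomorphism ring. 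Hence $\Trpt(g_{\gamma})=F_{\infty}(q,q^{\gamma},\CK)\cdot\mathrm{id}_{V^{\gamma}}$.

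\textbf{Step 3 (descent along the quotient map).} Let $\pi\colon V^{N}\twoheadrightarrow V^{N}/S_N\xrightarrow{\ \sim\ }V^{-N-2}$ be the surjection of Lemma~\ref{lemma_quotient_Verma_integer}; its explicit form shows $\pi$ preserves weights (the $i$-th basis vector of $V^{N}$ and its image both have weight $N-2i$). Because the operator $R$ underlying the braiding (Def.~\ref{goodRmatrix}, Prop.~\ref{UqhLbraiding}) is assembled from the $\UqhL$-action and from $q^{H\otimes H/2}$, it is natural for weight-preserving $\UqhL$-module maps; the only source of non-naturality is the scalar normalization $q^{-\alpha^{2}/2}$ attached to each crossing, and that cancels precisely because $\beta$ has writhe $0$. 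Thus $\pi^{\otimes n}$ intertwines $\varphi_n(q^{N},\beta)$ with $\varphi_n(q^{-N-2},\beta)$ and commutes with $1\otimes K^{\otimes n-1}$, so $\pi^{\otimes n}\circ g_{N}=g_{-N-2}\circ\pi^{\otimes n}$. Granting that the partial trace is natural with respect to $\pi$ in the traced factors, i.e. $\pi\circ\Trpt(g_{N})=\Trpt(g_{-N-2})\circ\pi$, Step~2 makes the two sides $F_{\infty}(q,q^{N},\CK)\,\pi$ and $F_{\infty}(q,q^{-N-2},\CK)\,\pi$; since $\pi\neq0$ the two invariants agree, which is the theorem.

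\textbf{The main obstacle.} The genuinely non-formal point is the compatibility $\pi\circ\Trpt(g_{N})=\Trpt(g_{-N-2})\circ\pi$. Both sides are $\UqhL$-module maps $V^{N}\to V^{-N-2}$, so (as $v_{N+1}$ generates $V^{N}$) it is enough to check equality on $v_{N+1}$; expanding $\Trpt(g_{N})(v_{N+1})$ in the monomial bases, the discrepancy with $\Trpt(g_{-N-2})(\pi v_{N+1})$ equals $\pi$ applied to the partial sum of $g_{N}\bigl(v_{N+1}\otimes v_{j_2}\otimes\cdots\otimes v_{j_n}\bigr)$ taken over states having at least one traced factor in $S_N$, and one must show that sum lands in $S_N$. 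The structural inputs I would use are: $S_N^{\otimes n}$ is itself a $\Bn$-submodule of $(V^{N})^{\otimes n}$ (when $s=q^{N}$, the divided powers $F^{(k)}$ kill $v_j$ once one leaves level $N$, while $E^{k}$ only lowers indices, so $\RR^{\pm1}$ preserves $S_N\otimes S_N$); $W=\ker\pi^{\otimes n}\supseteq S_N^{\otimes n}$ is a $\Bn$-submodule; and the permutation underlying $\beta$ is a single $n$-cycle because the closure is a knot. Tracking through each $\RR^{\pm1}$, membership of a tensor factor in $S_N$ propagates along the unique strand of $\beta$, and under the closure this forces the first (closed-up) factor to reappear in $S_N$ exactly when the trace condition pins the remaining factors back into $S_N$; this is the bookkeeping that makes the discrepancy vanish, and I expect it to be where the real work of the proof lies.
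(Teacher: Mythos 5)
Your proposal follows the same route as the paper's proof: the partial trace $\Trpt$ is a scalar endomorphism of the Verma module, Lemma~\ref{lemma_quotient_Verma_integer} identifies $V^N/S_N$ with $V^{-N-2}$, and the unicity/interpolation property (Prop.~\ref{prop_unicity_Foo}) upgrades the resulting identities at integral weights to the formal weight $q^{\alpha}$. Your minor variations — proving scalarity directly at $s=q^N$ via the extension $0\to S_N\to V^N\to V^{-N-2}\to 0$ and the generator $v_{N+1}$, instead of specializing the generic absolutely simple case, and running the interpolation reduction first — are harmless.

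The step you isolate as ``the main obstacle,'' namely $\pi\circ\Trpt(g_N)=\Trpt(g_{-N-2})\circ\pi$ (compatibility of the partial trace with $\pi$ applied in the \emph{traced} factors), is indeed the only non-formal point: partial trace is natural in the untraced factor but not in the traced ones, and the paper passes over this in silence, so you are right to flag it. As written your proof is therefore incomplete at exactly this spot, but your sketch is the correct idea and closes in a few lines rather than being ``the real work.'' Concretely: at $s=q^N$ one has $F^{(n)}v_j=0$ whenever $j\le N<j+n$, since the coefficient $\{N-j;n\}_q$ vanishes; hence each application of $\RR^{\pm1}$ sends $v_i\otimes v_j$ to a combination of vectors $v_{j+n}\otimes v_{i-n}$ in which, along each strand, the property ``index $\le N$'' can be acquired but never lost. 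Consequently, for a basis tensor $v_{\overline{m}}$ with $A(\overline{m}):=\lbrace i : m_i\le N\rbrace$, every monomial $v_{\overline{m'}}$ occurring in $\varphi_n(q^N,\beta)\,v_{\overline{m}}$ satisfies $A(\overline{m'})\supseteq\sigma_{\beta}\bigl(A(\overline{m})\bigr)$, where $\sigma_{\beta}$ is the underlying permutation ($1\otimes K^{\otimes n-1}$ is diagonal and changes nothing). A diagonal entry of the partial trace requires factors $2,\dots,n$ to return to themselves, and weight conservation then forces the first factor to return to itself as well; such an entry can be nonzero only if $A$ is $\sigma_{\beta}$-invariant. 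Since the closure of $\beta$ is a knot, $\sigma_{\beta}$ is an $n$-cycle, so $A=\emptyset$ or $A=\lbrace1,\dots,n\rbrace$, and the first factor being $v_{N+1}$ forces $A=\emptyset$. Thus every ``mixed'' diagonal term — exactly the discrepancy between $\pi\bigl(\Trpt(g_N)v_{N+1}\bigr)$ and $\Trpt(g_{-N-2})\overline{v_{N+1}}$ — vanishes termwise, which completes your Step 3.
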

\begin{proof}
Using Theorem \ref{prop_unifed_braidrep} at $V^N$, we have the identity \[ \widetilde{\Tr}_{2 \dots n}((1\otimes K^{\otimes n-1})\varphi_n(\beta)) v_0 = F_{\infty}(q, q^{N}, \mathcal{K})v_0 \] and, since the partial trace $\Trpt((1\otimes K^{\otimes n-1})\varphi_n(\beta))$ is scalar, we get: \[ \Trpt((1\otimes K^{\otimes n-1})\varphi_n(\beta)) v_{N+1} = F_{\infty}(q, q^{N}, \mathcal{K})v_{N+1} . \]
But using Lemma \ref{lemma_quotient_Verma_integer}, we also have in $V^N/S_N$:
\[\Trpt((1\otimes K^{\otimes n-1})\varphi_n(\beta)) \overline{v_{N+1}} = F_{\infty}(q, q^{-N-2}, \mathcal{K}) \overline{v_{N+1}}. \]
Thus for all $N \in \N^*$, we have \[F_{\infty}(q, q^{N}, \mathcal{K})=F_{\infty}(q, q^{-N-2}, \mathcal{K}).\]
Using Proposition \ref{prop_unicity_Foo}, we have the equality at formal weight $q^{\alpha}$: \[F_{\infty}(q, q^{\alpha}, \mathcal{K})=F_{\infty}(q, q^{-\alpha-2}, \mathcal{K}).\]
%{\color{red} (REFERENCER L'UNICITE DE L'INVARIANT UNIFIE SUR LES POIDS ENTIERS)}
\end{proof}

\begin{cor}[Colored Alexander symmetry]\label{cor_symmetry_ado}
Let $\mathcal{K}$ be a $0$ framed knot, \[ADO_r(A, \mathcal{K})= ADO_r(A^{-1} \zeta_{2r}^{-2}, \mathcal{K}). \]
\end{cor}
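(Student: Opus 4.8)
The plan is to derive the ADO symmetry directly from the $F_{\infty}$ symmetry (Theorem~\ref{symmetry_unified_invariant}) together with the factorization of $F_{\infty}$ at roots of unity (Theorem~\ref{thm_factorisation_unified_ADO}). First I would specialize the symmetry statement $F_{\infty}(q, q^{\alpha}, \mathcal{K}) = F_{\infty}(q, q^{-\alpha-2}, \mathcal{K})$ to $q = \zeta_{2r}$, a primitive $2r$-th root of unity. Writing $A = q^{\alpha}$, the change of variable $s = q^{\alpha} \mapsto q^{-\alpha-2}$ becomes $A \mapsto A^{-1}\zeta_{2r}^{-2}$. So the left-hand side specializes to $F_{\infty}(\zeta_{2r}, A, \mathcal{K})$ and the right-hand side to $F_{\infty}(\zeta_{2r}, A^{-1}\zeta_{2r}^{-2}, \mathcal{K})$.

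Next I would apply Theorem~\ref{thm_factorisation_unified_ADO} to both sides. For a $0$-framed knot ($f = 0$) this gives
\[
\frac{ADO_r(A,\mathcal{K})}{A_{\mathcal{K}}(A^{2r})} = \frac{ADO_r(A^{-1}\zeta_{2r}^{-2},\mathcal{K})}{A_{\mathcal{K}}\big((A^{-1}\zeta_{2r}^{-2})^{2r}\big)}.
\]
Since $\zeta_{2r}^{2r} = 1$, we have $(A^{-1}\zeta_{2r}^{-2})^{2r} = A^{-2r}$, so the denominator on the right is $A_{\mathcal{K}}(A^{-2r})$. The classical symmetry of the Alexander polynomial says $A_{\mathcal{K}}(t) = A_{\mathcal{K}}(t^{-1})$ (up to the usual normalization, which one fixes once and for all), hence $A_{\mathcal{K}}(A^{-2r}) = A_{\mathcal{K}}(A^{2r})$. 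Cancelling the common denominator yields $ADO_r(A,\mathcal{K}) = ADO_r(A^{-1}\zeta_{2r}^{-2},\mathcal{K})$, which is exactly the claim.

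I expect the main subtlety to be bookkeeping with the framing factor and the normalization of the Alexander polynomial rather than anything structurally hard: one must make sure the $ADO_r$ used here is the same normalization as in Theorem~\ref{thm_factorisation_unified_ADO}, and that the symmetry $A_{\mathcal{K}}(t) = A_{\mathcal{K}}(t^{-1})$ is applied with a normalization (e.g. the symmetric Conway normalization) that is compatible with the appearance of $A_{\mathcal{K}}(A^{2r})$ as the denominator. If one wishes to avoid invoking the Alexander symmetry as an external input, an alternative is to observe that the identity $F_{\infty}(q, q^{N}, \mathcal{K}) = F_{\infty}(q, q^{-N-2}, \mathcal{K})$ for all integers $N$ already encodes, via the $r = 1$ case of the factorization, the statement $1/A_{\mathcal{K}}(A^2) = 1/A_{\mathcal{K}}(A^{-2})$, so the Alexander symmetry is in fact a consequence of the same machinery; one could then phrase the corollary so that both symmetries fall out simultaneously. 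Either way, the proof is a short algebraic manipulation once Theorems~\ref{symmetry_unified_invariant} and~\ref{thm_factorisation_unified_ADO} are in hand, and no new topology or representation theory is needed.
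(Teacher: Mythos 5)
Your proof is correct and follows essentially the same route as the paper: specialize the $F_{\infty}$ symmetry at $q=\zeta_{2r}$, apply the factorization of Theorem~\ref{thm_factorisation_unified_ADO}, note that $(A^{-1}\zeta_{2r}^{-2})^{2r}=A^{-2r}$, and cancel using the classical symmetry $\A_{\CK}(A^{2r})=\A_{\CK}(A^{-2r})$. Your extra remarks on normalization and on deducing the Alexander symmetry from the $r=1$ factorization are sensible but not needed for the argument.
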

\begin{proof}
At $q= \zeta_{2r}$, we have the factorization: \[ F_{\infty}(\zeta_{2r},q^{\alpha}, \mathcal{K}) =  \frac{A^{rf} \times ADO_r(A,\mathcal{K})}{A_{\mathcal{K}} (A^{2r})}\] and since \[A_{\mathcal{K}} (A^{2r})=A_{\mathcal{K}} (A^{-2r}) \] one gets the desired identity.
\end{proof}

In \cite[Sec.~2.2]{costantino2014quantum}, Costantino, Geer and Patureau define an invariant of trivalent graphs denoted $N_r$ constructed also from the theory of $\Uq$ at $q=\zeta_{2r}$. For knots it is the ADO polynomial, more precisely there is a simple change of variable (coming from the fact that they take the variable to be the middle weight instead of here being the highest weight):
\[
N_r\left(q^{\alpha},\CK\right) = \ADO_r\left(q^{\alpha+1-r},\CK\right).
\]

\begin{cor}\label{cor_orientation_Nr}
The $\Uq$ non semi-simple invariant $N_r$ is not sensitive to orientation for knots. 
\end{cor}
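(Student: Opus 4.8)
The plan is to combine the standard fact that reversing the orientation of a knot dualizes its colour with the symmetry of $\ADO_r$ recorded in Corollary~\ref{cor_symmetry_ado}, and then transport the conclusion through the change of variable $N_r(q^{\alpha},\CK)=\ADO_r(q^{\alpha+1-r},\CK)$. Concretely I would argue as follows. For any Reshetikhin--Turaev--type invariant, in particular the renormalised invariant $N_r$ and hence $\ADO_r$, reversing the orientation of a knot $\CK$ coincides with keeping the orientation but replacing the colouring module by its dual: on a $(1,1)$-tangle diagram one simply reverses the single strand, which is absorbed by a pair of (co)evaluation morphisms into the colour change $V\rightsquigarrow V^{*}$. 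I would then identify this dual explicitly. In the highest-weight convention used for $\ADO_r$ this is the root-of-unity avatar of the isomorphism $V^{-N-2}\cong V^{N}/S_{N}$ of Lemma~\ref{lemma_quotient_Verma_integer} (already exploited in the proof of Theorem~\ref{symmetry_unified_invariant}): dualizing sends the parameter $A$ to $A^{-1}\zeta_{2r}^{-2}$. Therefore
\[
\ADO_r\!\left(A,\CK^{\mathrm{rev}}\right)=\ADO_r\!\left(A^{-1}\zeta_{2r}^{-2},\CK\right).
\]

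By Corollary~\ref{cor_symmetry_ado} the right-hand side equals $\ADO_r(A,\CK)$, so $\ADO_r$ is already blind to orientation. Finally, since $N_r(q^{\alpha},\CK)=\ADO_r(q^{\alpha+1-r},\CK)$ with $q=\zeta_{2r}$ a fixed scalar, orientation reversal commutes with this substitution and one obtains $N_r(q^{\alpha},\CK^{\mathrm{rev}})=N_r(q^{\alpha},\CK)$, which is the claim. Equivalently, and more concretely, one can carry out the exponent bookkeeping directly: applying Corollary~\ref{cor_symmetry_ado} to $\ADO_r(q^{\alpha+1-r},\CK)$ and using $\zeta_{2r}^{2r}=1$ gives $N_r(q^{\alpha},\CK)=N_r(q^{-\alpha}\zeta_{2r}^{-4},\CK)$, and a parallel computation shows that $q^{\alpha}\mapsto q^{-\alpha}\zeta_{2r}^{-4}$ is exactly the parameter substitution dual to the $N_r$-colour, so the two coincide. (The statement is limited to knots precisely because the dual-colour trick is clean for a single component; for links one would have to dualize each component separately.)

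The main obstacle is the first step: making rigorous, for the \emph{non-semisimple} invariant $N_r$ of Costantino--Geer--Patureau, that reversal of orientation is implemented by the dual colour and that the induced parameter substitution is exactly the one appearing in Corollary~\ref{cor_symmetry_ado}. For honest Reshetikhin--Turaev functors this is routine, but the construction of $N_r$ involves a modified trace together with a specific choice of pivotal (cup/cap) morphisms, so one must check their compatibility with strand reversal and—more delicately—keep precise track of the shift in the parameter; this is where the $-2$ in $A^{-1}\zeta_{2r}^{-2}$, i.e. the $-N-2$ of Lemma~\ref{lemma_quotient_Verma_integer}, must be matched. Once this identification is in place, the remainder is bookkeeping with powers of $\zeta_{2r}$ together with a single appeal to Corollary~\ref{cor_symmetry_ado}.
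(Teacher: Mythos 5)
Your argument is essentially the paper's: the proof there likewise reduces the claim to the statement that orientation reversal inverts the parameter of $N_r$, and then applies Corollary~\ref{cor_symmetry_ado} after translating $N_r$ into $\ADO_r$. The step you isolate as the main obstacle — that reversing the knot acts by $q^{\alpha}\mapsto q^{-\alpha}$ on $N_r$ — is not re-derived in the paper but simply quoted from Costantino--Geer--Patureau \cite[Sec.~2.2,(1)]{costantino2014quantum}, so your dual-colour sketch is the right mechanism but can be replaced by that citation; the stray factor $\zeta_{2r}^{-4}$ in your explicit bookkeeping is an artifact of the highest- versus middle-weight shift in the stated change of variable and cancels once the conventions are matched (with $N_r(q^{\alpha})=\ADO_r(q^{\alpha+r-1})$ the substitution dual to the colour is exactly $q^{\alpha}\mapsto q^{-\alpha}$).
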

\begin{proof}
From \cite[Sec.~2.2,(1)]{costantino2014quantum}, we know that if $\CK^{-1}$ is the knot $\CK$ with reversed orientation, then:
\[
N_r(q^{\alpha},\CK^{-1}) = N_r(q^{-\alpha},\CK).
\]
Re-expressing $N_r$ as ADO and using the colored Alexander symmetry from previous corollary, one deduces directly the invariance under reverse of orientation. 
\end{proof}

\begin{rem}
Authors don't know how much latter invariance generalizes to next objects (e.g. links, graphs) for the non semi-simple invariant $N_r$. 
\end{rem}

\section{Unified invariant from homology of configuration spaces}\label{sec_Foo_fromHomology}

This section first re-defines the tensor products of Verma modules and the action of braid groups upon them by homeomorphisms using homology of configuration spaces of points in punctured disks (Sec. \ref{sec_martel_homological}). This is another point of view independent of quantum groups theory of representation, that was established in \cite{martel2020homological}. This homological interpretation of quantum Verma tensors as Lagrangians of configuration spaces is the key point for Theorem \ref{thm_homol_formula_Foo} expressing $F_{\infty}$ as an intersection pairing between such Lagrangians. Sec. \ref{sec_homological_duality} presents two families of manifolds in configuration spaces defining dual homology classes regarding the Poincaré duality. In Sec. \ref{sec_Foo_thm_fromHomology} we prove Theorem \ref{thm_homol_formula_Foo} and we discuss its consequences. 

\subsection{A homological definition for $\Uq$ Verma modules}\label{sec_martel_homological}

%\subsection{Configuration spaces}

\begin{defn}\label{configspaceofpoints}
Let $r\in \BN$, $n \in \BN$, $D$ be the unit disk, and $\left\lbrace w_1 , \ldots , w_n \rbrace\right. \in D^n$ points chosen on the real line in the interior of $D$. Let $D_n = D \setminus \left\lbrace w_1 , \ldots , w_n \rbrace\right.$ be the unit disk with $n$ punctures. Let:
\[
\Conf_r(D_n):= \left\lbrace (z_1 , \ldots , z_r ) \in (D_n)^r \text{ s.t. } \begin{array}{c} z_i \neq z_j \forall i,j  \end{array} \right\rbrace
\]
be the configuration space of points in the punctured disk $D_n$. 
We define the following space:
\begin{eqnarray}\label{NotConfig}
X_r(w_1 , \ldots , w_n) & := &  \Conf_r(D_n) \Big/ \Sk_r. 
\end{eqnarray}
to be the space of {\em unordered} configurations of $r$ points inside $D_n$, where the permutation group $\Sk_r$ acts by permutation on coordinates.
%The points $w_1 , \ldots , w_n$ will always be chosen so that they lie in the interior of $D$ and over the real line.
\end{defn} 

When no confusion arises in what follows, we omit the dependence in $w_1 , \ldots , w_n$ to simplify notations. All the following computations rely on a choice of base point that we fix from now on.

\begin{defn}[Base point]\label{basepoint}
Let ${\pmb \xi^r}= \lbrace  \xi_1 , \ldots , \xi_r \rbrace$ be the base point of $X_r$
%\[
%{\pmb \xi^r} 
%\]
chosen so that $ \xi_i \in \partial D_n$ ($\forall i$) as in the following picture:
\begin{equation*}
\begin{tikzpicture}[scale=0.7]
\node (w0) at (-3,0) {};
\node (w1) at (1,0) {};
\node (w2) at (2,0) {};
\node[gray] at (2.8,0) {\ldots};
\node (wn) at (3.4,0) {};
%\node (wn1) at (3,1) {};
%\node (wn) at (5,1) {};

%\node (x0) at (-4.8,-3) {};
%%\node at (-4.7,-3) {$\ldots$};
%\node  (x1) at (-4.3,-3) {};
%\node  (x2) at (-4.25,-3) {};
%\node  (xn1) at (,-3) {};
%\node  (xn) at (4,-3) {};

%\draw[dashed,gray] (-5,0) -- (5,0) -- (5,-3);
\draw[thick,gray] (4,2) -- (-3,2) -- (-3,-2) -- (4,-2) -- (4,2);% node[right] {$\partial D_n$};

%\node[above,red] at (-3.5,0) {$k_0$};
%\node[above,red] at (0,0) {$k_1$};
%\node[above,red] at (4,0) {$k_{n-1}$};

\node[below,red] at (-1,-2) {$\xi_r$};
\node[below,red] at (-0.3,-2) {$\xi_{r-1}$};
%\node[below,red] at (0.15,-2) {$\xi_{r''}$};
\node[below=3pt,red] at (0.3,-2) {\small $\ldots$};
\node[below,red] at (0.8,-2) {$\xi_{1}$};

\node at (-1,-2)[red,circle,fill,inner sep=1pt]{};
\node at (-0.3,-2) [red,circle,fill,inner sep=1pt]{};
%\node at (0.15,-2) [red,circle,fill,inner sep=1pt]{};
\node at (0.8,-2) [red,circle,fill,inner sep=1pt]{};

%
%\node[red] at (-3.8,-1) {$\ldots$};
%\node[red] at (-1.2,-0.9) {$\ldots$};
%\node[red] at (0.25,-0.9) {$\ldots$};
%\node[red] at (2.9,-0.5) {$\ldots$};

\node[gray] at (w0)[left=5pt] {$w_0$};
\node[gray] at (w1)[above=5pt] {$w_1$};
\node[gray] at (w2)[above=5pt] {$w_2$};
%\node[gray] at (wn1)[above=5pt] {$w_{n-1}$};
\node[gray] at (wn)[above=5pt] {$w_n$};
\foreach \n in {w1,w2,wn}
  \node at (\n)[gray,circle,fill,inner sep=3pt]{};
\node at (w0)[gray,circle,fill,inner sep=3pt]{};
\end{tikzpicture} .
\end{equation*}
We have illustrated the unit disk (as a square) with the punctures $w_1 , \ldots , w_n$, we have add another point $w_0$ on the boundary that will be used later on, and also the base point just defined. 
\end{defn}

%In what follows, distances between the $\xi_i$'s may be deformed in drawings but the order on real parts remains the important fact. 

%\end{defn}

We give a presentation of $\pi_1(X_r, {\pmb \xi^r})$ as a braid subgroup ({\em the mixed braid group}). 

\begin{rem}[{\cite[Remark~2.2]{martel2020homological}}]\label{pi_1X_r}
The group $\pi_1(X_, {\pmb \xi^r})$ is isomorphic to the subgroup of $\CB_{r+n}$ generated by:
\[
\langle \sigma_1 , \ldots , \sigma_{r-1}, B_{r,1} , \ldots , B_{r,n} \rangle 
\]
where the $\sigma_i$ ($i=1,\ldots ,r-1$) are the first standard generators of $\CB_{r+n}$, and $B_{r,k}$ (for $k=1,\ldots ,n$) is the following pure braid:
\[
B_{r,k} = \sigma_{r} \cdots \sigma_{r+k-2} \sigma_{r+k-1}^2 \sigma_{r+k-2}^{-1} \cdots \sigma_{r}^{-1} .
\]
\end{rem}

See \cite[Example~2.3]{martel2020homological} for a picture that illustrates the correspondence between above generators and braids. It will help the reader understanding the following definition of a local system. 

%To see the correspondence between loops in $X_r$ and generators of the above braid subgroup we draw two examples. 

%\begin{ex}
%Two types of braid generators for $\pi_1(X_r, {\pmb \xi^r})$ are given in Remark \ref{pi_1X_r}, which correspond to two types of loops generating $\pi_1(X_r, {\pmb \xi^r})$. We give examples for both kinds.
%
%\begin{itemize}
%\item The braid $\sigma_1$ corresponds to a loop swapping $\xi_r$ and $\xi_{r-1}$ letting other base point coordinates fixed. This can be seen by drawing the movie of the loop in Figure \ref{pi1sigma}.
%\begin{figure}[h!]
%\begin{center}
%\def\svgwidth{0.5\columnwidth}
%%\def\svgscale{0.3}
%\input{pi1sigma.pdf_tex}
%\caption{Generator $\sigma_1$. \label{pi1sigma}}
%\end{center}
%\end{figure}
%
%\item The braid $B_{r,k}$ for $k \in \lbrace 1 , \ldots , n \rbrace$ corresponds to $\xi_1$ running once around $w_k$ before going back keeping other base point coordinates fixed. The correspondence in terms of standard braid generators can be seen by drawing the movie of this loop in Figure \ref{pi1B}.
%
%\begin{figure}[h!]
%\begin{center}
%\def\svgwidth{0.5\columnwidth}
%%\def\svgscale{0.3}
%\input{pi1B.pdf_tex}
%\caption{Generator $B_{r,k}$ \label{pi1B}}
%\end{center}
%\end{figure}
%\end{itemize}
%\end{ex}

%Using this set up, we define the local system of interest. 

\begin{defn}[Local ring $\Laurent_r$.]\label{localsystXr}
We define the following morphism:
\[
\rho_r : \bfct
\BZ\left[ \pi_1(X_r, {\pmb \xi^r}) \right] & \to & \Laurent := \BZ \left[ s^{\pm1} , t^{\pm 1}  \right]\\
\sigma_i & \mapsto & t \\
B_{r,k} & \mapsto & s^2 . \\
\efct
\]
In what follows we will use the notation $q^{\alpha}:=s$. Using this notation, the morphism becomes:
\[
\rho_r : \bfct
\BZ\left[ \pi_1(X_r, {\pmb \xi^r}) \right] & \to & \Laurent := \BZ \left[ q^{\pm \alpha} , t^{\pm 1}  \right]\\
\sigma_i & \mapsto & t \\
B_{r,k} & \mapsto & q^{2\alpha} . \\
\efct
\]
(We may sometimes omit the dependence in $(w_1,\ldots,w_n)$.)
The data set $(\rho_r,\Laurent)$ will be re-united under the notation $\Laurent_r$ and named local ring of coefficients. 
\end{defn}

%\subsection{Homology with local coefficients}

\begin{defn}[{\cite[Definition~2.6]{martel2020homological}}]\label{defofH}
Let $r \in \BN$, and let $w_0 = -1$ be the leftmost point in the boundary of $D_n$ (see the picture in Def. \ref{basepoint}), we define the following set:
\[
X_r^-(w_1 , \ldots , w_n) = \left\lbrace \left\lbrace z_1 , \ldots , z_r \right\rbrace \in X_r(w_1 , \ldots , w_n) \text{ s.t. } \exists i, z_i=w_0 \right\rbrace .
\]
We let {\em $\Hlf$} designates the homology of locally finite chains, and we use the following notation for relative homology modules with local coefficients in the ring $\Laurent$:
\[
\Hrelm_r := \Hlf _r \left( X_r, X^{-}_r ; \Laurent_r \right).
\] 
See next remark for precision on such construction. 
\end{defn}

\begin{rem}
We recall how this homology modules are constructed, namely we work with the following homology theories:
\begin{itemize}
%We work with the following homology theories:
\item the {\em locally finite} version of the singular homology, for which we consider locally finite infinite linear combination of singular simplices, (see \cite[Appendix]{martel2020homological}).
\item the homology of the pair $(X_r,X_r^{-})$. 
\item the local ring $\Laurent_r$. Let $\rho_r$ be the morphism from Definition \ref{localsystXr}. This can be seen as the homology associated with the chain complex $ C_{\bullet} (\widehat{X_r}) $ where $\widehat{X_r}$ is the covering naturally associated with the kernel of $\rho_r$ which is naturally endowed with an action of $\Laurent$ by deck transformation as $\rho_r$ is surjective (hence the deck transformation group of $\widehat{X_r}$ is generated by $t$ and $s$). 
\end{itemize}
\end{rem}

%\begin{rem}\label{homologycompletee}
%Let $\Hrelm_r = \Hlf_r(X_r,X_r^{-};\Laurent)$ be the module defined in \cite[Definition~2.6]{martel2020homological}, then:
%\[
%\widehat{\Hrelm}_r = \Hrelm_r \otimes_{\Laurent} \Laurentcomplet
%\]
%where tensor product is made via $\Laurent \to \Laurentcomplet$ defined in \ref{blabla}
%\end{rem}

We define classes in $\Hrelm_r$. We refer the reader to \cite{martel2020homological} for further details on these constructions.

\begin{defn}[Multi-arc diagrams]
Let $(k_0 , \ldots,  k_{n-1})$ such that $\sum k_i = r$. we define $A'(k_0 , \ldots , k_{n-1})$ to be the following diagram:
\begin{equation*}
A'(k_0 , \ldots , k_{n-1}) = \vcenter{\hbox{\begin{tikzpicture}[scale=0.55, every node/.style={scale=0.8},decoration={
    markings,
    mark=at position 0.5 with {\arrow{>}}}
    ]
\node (w0) at (-5,0) {};
\node (w1) at (-3,0) {};
\node (w2) at (-1,0) {};
\node[gray] at (0.0,0.0) {\ldots};
\node (wn1) at (1,0) {};
\node (wn) at (3,0) {};

\draw[dashed] (w0) -- (w1) node[midway] (k0) {$k_0$};
\draw[dashed] (w0) to[bend right=20] node[near end] (k1) {$k_1$} (w2);
\draw[dashed] (w0) to[bend right=40] node[pos=0.85] (k2) {$k_{n-2}$} (wn1);
\draw[dashed] (w0) to[bend right=60] node[pos=0.85] (k3) {$k_{n-1}$} (wn);

\node[gray] at (w0)[left=5pt] {$w_0$};
\node[gray] at (w1)[above=5pt] {$w_1$};
\node[gray] at (w2)[above=5pt] {$w_2$};
\node[gray] at (wn1)[above=5pt] {$w_{n-1}$};
\node[gray] at (wn)[above=5pt] {$w_n$};
\foreach \n in {w1,w2,wn1,wn}
  \node at (\n)[gray,circle,fill,inner sep=3pt]{};
\node at (w0)[gray,circle,fill,inner sep=3pt]{};

\draw[double,thick,red] (-4,-0.10) -- (-4,-3);
\draw[double,thick,red] (k1) -- (-2.1,-3);
\draw[double,thick,red] (k2) -- (0.05,-3);
\draw[double,thick,red] (k3) -- (2.1,-3);
%
%
%\draw[dashed,gray] (-5,-3) -- (3.5,-3);
%\draw[dashed,gray] (3.5,-3) -- (3.5,-4);

\node[gray,circle,fill,inner sep=0.8pt] at (-4.8,-4) {};
\node[below,gray] at (-4.8,-4) {$\xi_r$};
\node[below=5pt,gray] at (-4.2,-4) {$\ldots$};
\node[gray,circle,fill,inner sep=0.8pt] at (-3.5,-4) {};
\node[below,gray] at (-3.5,-4) {$\xi_1$};

%\draw[red] (-4.8,-4) -- (-4.5,-3.8);
%\draw[red] (-3.5,-4) -- (-3,-3.9);

\draw[red] (-4.8,-4) -- (-4,-3);
\draw[red] (-3.5,-4) -- (2.1,-3);
\draw[red] (-3.9,-4) -- (0.05,-3);
\draw[red] (-4.2,-4) -- (-2.1,-3);
%\node[red] at (-2,-3.4) {$\ldots$};

\draw[gray] (-5,0) -- (-5,2);
\draw[gray] (-5,0) -- (-5,-4);
\draw[gray] (-5,-4) -- (4,-4) -- (4,2) -- (-5,2);% node[right] {$\partial D_n$};

\end{tikzpicture}}} .%\text{ ~``multi - arcs'' }
\end{equation*}

\end{defn}

\begin{rem}\label{Aprimes}
These above diagrams are denoted $A'$ since there will be slightly different versions for them later on and denoted $A$.% In the present work, the $A$'s will play a more meaningful role. As in \cite{martel2020homological} theorems relating homology modules and Verma modules are stated using the $A'$'s, we need them. 
\end{rem}

For $A'(k_0 , \ldots , k_{n-1})$ defined above, let:
\[
\phi_i : I_i \to D_n
\]
be the embedding of the dashed black arc number $i$ indexed by $k_{i-1}$, where $I_i$ is a copy of the unit interval.
Let $\Delta^k$ be the standard (open) $k$ simplex:
\[
\Delta^k = \lbrace 0 < t_1 < \cdots < t_k < 1 \rbrace 
\]
for $k \in \BN$.
For all $i$, we consider the map $\phi^{k_{i-1}}$:
\[
\phi^{k_{i-1}}: \bfct
\Delta^{k_{i-1}} & \to & X_{k_{i-1}} \\
(t_1, \ldots , t_{k_{i-1}} ) & \mapsto & \lbrace \phi_i(t_1) , \ldots, \phi_i(t_{k_{i-1}}) \rbrace
\efct
\]
which is a singular locally finite $(k_{i-1})$-chain and moreover a cycle in $X_{k_{i-1}}$ since locally finite homology of an open ball is one dimensional and concentrated in the ambient dimension (\cite[Appendix]{martel2020homological}). %One can think of the image of the simplex $\Delta^{k_{i-1}}$ to be the space of configurations of $k_{i-1}$ points inside the dashed arc. It provides a locally finite cycle as going to a face of the simplex corresponds to going to a collision between either two configuration points, either a configuration point with a puncture. Namely, points in the boundary of the simplex are removed points of the configuration space $X_r$, these simplices are closed submanifold going to infinity, so that they are locally finite cycles, see the Appendix. There is a cycle associated with each dashed arc, so that by considering the product of maps $(\phi^{k_{0}},\ldots,\phi^{k_{n-1}})\in \Conf_r(D_n)$ which is naturally sent to $X_r$, one  generalizes this fact by associating an $r$-cycle of $X_r$ with each object $X(k_0 , \ldots , k_{n-1})$, see following Remark \ref{chainwithdisjointsupport}. This shows how the union of dashed arcs defines a class in the homology with coefficient in $\BZ$.

To get a class in the homology with $\Laurent$ coefficients, one may choose a lift of the chain to the cover $\widehat{X_r}$ associated with the morphism $\rho_r$. We do so using the red handles of $A'(k_0 , \ldots , k_{n-1})$ (the union of red paths) with which is naturally associated a path:
\[
{\bf h}=\lbrace h_1,\ldots,h_r \rbrace: I \to X_r
\]
joining the base point $\pmb{\xi}$ and (a point in) the $r$-chain assigned to the union of dashed arcs. At the cover level ($\widehat{X_r}$) there is a unique lift $\widehat{{\bf h}}$ of ${\bf h}$ that starts at $\widehat{{\pmb \xi}}$, \textbf{a choice of lift of the base point to $\widehat{X_r}$ that we fix from now on}. The lift $\widehat{A}(k_0,\ldots , k_{n-1})$ of $A(k_0, \ldots , k_{n-1})$ passing by $\widehat{\pmb \xi} (1)$ defines a cycle in $\Crelm_r$, and we still call (by abuse of notation) $A'(k_0 , \ldots , k_{n-1})$ the associated class in $\Hrelm_r$ as we will only use this class out of the original object. 

\begin{defn}[Multi arcs (first version)]\label{AprimesDef}
Following the above construction, we naturally assign a class $A'(k_0 , \ldots , k_{n-1}) \in \Hrelm_r$ with any $n$-tuple such that $\sum k_i = r$. This class is called a {\em multi-arc}. 
\end{defn}

Now we state a proposition that clarifies the structure of the homology as $\Laurent$-modules.

\begin{prop}[Multi-arcs generate the homology, {\cite[Proposition~3.6]{martel2020homological}}]
Let $r \in \BN$, the homology of the pair $(X_r, X_r^{-})$ has the following structure:
\begin{itemize}
\item The module $\Hrelm_r$ is free over $\Laurent$. 
\item The set of multi-arcs:
\[
\lbrace A'(k_0, \ldots, k_{n-1}) \text{ s.t. } \sum k_i = r \rbrace
\]
yields a basis of $\Hrelm_r$. 
\item The module $\Hrelm_r$ is the only non vanishing module of $\Hlf_{\bullet}\left( X_r , X_r^{-}; \Laurent \right) $.
\end{itemize}
\end{prop}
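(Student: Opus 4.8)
The statement is established in \cite[Proposition~3.6]{martel2020homological}; the plan I would follow to prove it is to build a handle decomposition of the pair $(X_r,X_r^-)$ whose relative handles are indexed exactly by the multi-arc data, all of the same index $r=\tfrac12\dim_{\R}X_r$, so that the associated chain complex is concentrated in a single degree.

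First I would fix a proper Morse-type function on $X_r$ adapted to the geometry: a generic small perturbation of the function recording the total imaginary displacement of a configuration together with its total leftward displacement along the real axis, normalised so that $X_r^-$ (the locus where a point sits at the boundary point $w_0=-1$) is a sublevel set. The associated gradient-like flow first pushes every moving point onto the real line and then slides the points leftwards towards $w_0$; since the punctures $w_1,\dots,w_n$ obstruct this sliding, a configuration can only limit onto one in which the $r$ points are strung along arcs issued from $w_0$, the arcs sorted according to which puncture they must skirt. These limiting configurations are precisely the interiors of the multi-arc diagrams $A'(k_0,\dots,k_{n-1})$ with $\sum k_i=r$: the unstable manifold through such a configuration records the placement of $k_{i-1}$ ordered points on the $i$-th arc, an open $r$-simplex worth of data, hence $r$-dimensional, and one checks there is no other critical stratum.

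Then I would conclude by standard input. The pair $(X_r,X_r^-)$ is homotopy equivalent, compatibly with the covering $\widehat{X_r}$ associated with $\rho_r$ (Definition \ref{localsystXr}), to a relative CW-pair with exactly one $r$-cell per tuple $(k_0,\dots,k_{n-1})$ and no cells in other dimensions; lifting to $\widehat{X_r}$, each cell contributes a free rank-one summand over $\Laurent$, the chosen red handles of $A'(k_0,\dots,k_{n-1})$ supplying the lift of the base point fixed in Definition \ref{localsystXr}. Because the complex is concentrated in degree $r$, the differential vanishes, so $\Hlf_k\!\left(X_r,X_r^-;\Laurent_r\right)=0$ for $k\neq r$ while $\Hrelm_r$ is free over $\Laurent$ on the classes $A'(k_0,\dots,k_{n-1})$; a short check that the $r$-cells carry exactly the classes of Definition \ref{AprimesDef} finishes the proof. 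As a consistency check, the number of such tuples is $\binom{r+n-1}{n-1}$, matching the dimension of the relevant quantum weight space.

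The main obstacle is making the Morse/handle argument rigorous in this open, relative and stratified setting: one must (i) control the flow near the deleted diagonal and near the punctures so that the function is genuinely proper away from $X_r^-$ and no critical mass escapes to infinity, (ii) carry out the local normal-form analysis at configurations where moving points collide with $w_0$ or pile up against a puncture, confirming that each critical stratum has index exactly $r$, and (iii) verify triviality of $\rho_r$ along each descending cell so the chain groups are indeed free of rank one. A cleaner but more hands-on alternative, which I would fall back on, is to bypass the abstract Morse function and exhibit directly a deformation retraction of $(X_r,X_r^-)$ onto the union of multi-arc cells, by an explicit isotopy of $D_n$ combing all configurations towards $w_0$, then reading off freeness and the spanning property from the resulting explicit cell structure. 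Either way, the non-compactness and the collision strata are where the real work lies.
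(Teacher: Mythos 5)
The paper does not actually prove this proposition: it is imported verbatim from \cite[Proposition~3.6]{martel2020homological}, so there is no in-text argument to compare yours against. What can be said is that your Morse/handle plan is a genuinely different route from the one used in the cited source and echoed elsewhere in this paper. There, the concentration in degree $r$ and the freeness are obtained by induction on the number of punctures via exact sequences of pairs (together with the computation of $\Hlf_\bullet$ of open simplices recalled in the appendix of \cite{martel2020homological}), and the fact that the multi-arcs form a basis is established not by identifying them with unstable cells of a flow but by pairing them against dual \emph{barcode} classes through the Poincar\'e--Lefschetz intersection form --- exactly the mechanism this paper reproduces in Proposition~\ref{dual_bases_A''_B''} for the $A''$/$B''$ families, which transfers to the $A'$ family by the diagonal change of basis of Proposition~\ref{rel_A_A'_A''}. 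That duality argument buys an immediate, rigorous proof of linear independence; your approach would in principle buy a geometric explanation of \emph{why} the homology is concentrated in middle degree, but at the cost of the analysis you defer.

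And that deferred analysis is where your proposal currently has a genuine gap rather than a proof. You assert, without construction, a proper Morse-type function on the open, non-compact, singular-at-infinity space $X_r$ whose sublevel set is $X_r^-$, whose critical strata are exactly the multi-arc configurations, all of index $r$; none of (i) properness away from $X_r^-$ (configurations can escape towards the deleted diagonal and towards the punctures), (ii) the local normal form at collision strata, or (iii) the triviality of $\rho_r$ along the descending cells is carried out, and each is a substantive claim --- for instance the ``total leftward displacement'' function you describe is not obviously Morse--Bott, and its critical locus near a puncture where $k$ points pile up is a stratum of positive dimension whose contribution to the relative chain complex must be computed, not read off. As written, the argument establishes at most that the multi-arc classes are natural candidates for a basis; to close the gap you would either need to make the flow rigorous (your own fallback of an explicit equivariant deformation retraction onto the union of multi-arc cells is the more realistic version of this), or switch to the exact-sequence-plus-duality route of the cited reference, using the pairing of Proposition~\ref{dual_bases_A''_B''} to finish.
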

%\begin{proof}
%Direct consequence of c. One can re-perform the same topological proof while replacing ring $\Laurent$ by $\Laurentcomplet$ or directly use the result of the proposition together with Remark \ref{homologycompletee} above.
%\end{proof}

%\subsection{Action of the braid groups}

The braid group was earlier defined (Def. \ref{Artinpres}) using its so called Artin presentation. Here we give another definition, relying on topological objects.

\begin{defn}
The braid group on $n$ strands is the mapping class group of $D_n$.
\[
\Bn = \Mod(D_n) = \Homeo^+(D_n, \partial D) \big/ {\Homeo}_0(D_n, \partial D),
\]
namely the group of isotopy classes of homeomorphisms of the unit disk: preserving the orientation, the set of punctures, and being the identity on the boundary. 
\end{defn}

\begin{rem}\label{halfDehntwist}
This definition is isomorphic to the Artin presentation of the braid group (Definition \ref{Artinpres}) by sending generator $\sigma_i$ to the isotopy class of the half Dehn twist swapping punctures $w_i$ and $w_{i+1}$. %The {\em pure braid group} $\PBn$ is defined to be braids fixing the punctures pointwise. 
\end{rem}

\begin{lemma}[Lawrence representations]\label{Lawrence_rep}
For all $r,n \in \BN$, the modules $\Hrelm_r$ are endowed with an action of the braid group $\Bn$. 
\end{lemma}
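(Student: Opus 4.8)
The plan is to use the mapping-class-group description $\Bn=\Mod(D_n)$ from the definition above and let homeomorphisms act on the configuration spaces. First I would fix a homeomorphism $f\in\Homeo^+(D_n,\partial D)$ and let it act coordinatewise on unordered configurations, producing a homeomorphism $f_*\colon X_r\to X_r$. Since $f$ is the identity on $\partial D=\partial D_n$ and both the base point $\pmb\xi^r$ and the auxiliary boundary point $w_0$ lie on $\partial D_n$, the map $f_*$ fixes $\pmb\xi^r$ and preserves $X_r^-$; so it is an automorphism of the pair $(X_r,X_r^-)$ fixing the base point, hence induces an automorphism of $\pi_1(X_r,\pmb\xi^r)$.

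The heart of the argument is compatibility with the local system $\Laurent_r$, and this is the step I expect to require the most care. I would check that $\rho_r\circ f_*=\rho_r$. Since $\rho_r$ lands in an abelian group it factors through $H_1(X_r;\BZ)$, where it reads off two topologically intrinsic integers: the total linking number among the configuration points (the exponent of $t$) and the total winding number of the configuration points about the puncture locus $\{w_1,\dots,w_n\}$ (the exponent of $s$). The former is preserved by any orientation-preserving self-homeomorphism of $X_r$ induced from $D_n$; the latter because $f$ permutes the punctures among themselves and preserves orientation. Concretely this amounts to $\rho_r(f_*\sigma_i)=\rho_r(\sigma_i)=t$ and, using that $f_*B_{r,k}$ is conjugate to some $B_{r,k'}$ together with conjugation-invariance of $\rho_r$, $\rho_r(f_*B_{r,k})=s^2=\rho_r(B_{r,k})$. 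Granting this, $f_*$ lifts to the covering $\widehat{X_r}$ associated with $\ker\rho_r$, and I would single out the unique lift $\widehat f$ fixing the chosen lift $\widehat{\pmb\xi}$ of the base point. Because $\rho_r\circ f_*=\rho_r$, the automorphism of the deck group $\cong\operatorname{im}\rho_r$ induced by conjugation by $\widehat f$ is trivial, i.e.\ $\widehat f$ commutes with the $\Laurent$-action by deck transformations; hence $(\widehat f)_*$ is a $\Laurent$-linear automorphism of $\Hrelm_r=\Hlf_r(X_r,X_r^-;\Laurent_r)$.

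Finally I would check that $\beta\mapsto(\widehat\beta)_*$ is well defined on $\Bn$ and is a genuine action. For well-definedness, if $f\simeq f'$ rel $\partial D$ then, lifting the isotopy and observing that it fixes $\pmb\xi^r$ at every time, the lifted isotopy is forced to stay over $\widehat{\pmb\xi}$; it is therefore a proper homotopy from $\widehat f$ to $\widehat{f'}$, so $(\widehat f)_*=(\widehat{f'})_*$. For the group law, $\widehat f\circ\widehat g$ is a lift of $(fg)_*$ fixing $\widehat{\pmb\xi}$, so by uniqueness it equals $\widehat{fg}$, and the identity clearly lifts to the identity; this yields a homomorphism $\Bn\to\operatorname{Aut}_\Laurent(\Hrelm_r)$. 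The main obstacle, as indicated, is the local-system bookkeeping: proving $\rho_r$-invariance under the mapping-class action (so that lifts to $\widehat{X_r}$ exist at all) and then normalizing the covering-space lifts so that composition is strict rather than merely projective. All the details are carried out in \cite{martel2020homological}, and the sketch above is only meant to record their structure.
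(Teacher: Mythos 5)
Your proposal follows essentially the same route as the paper's own (sketched) proof: extend each mapping class coordinatewise to $X_r$, check compatibility with $\rho_r$ so the homeomorphism lifts to $\widehat{X_r}$, and take the induced $\Laurent$-linear map on $\Hrelm_r$, with well-definedness coming from isotopy invariance of the homological action. The paper defers the local-system and lifting details to \cite{martel2020homological} and Lawrence's work, whereas you spell them out correctly (the factorization of $\rho_r$ through $H_1$, the choice of base-point-preserving lift, and the uniqueness-of-lifts argument for multiplicativity), so your write-up is a valid filling-in of the cited construction rather than a different proof.
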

\begin{proof}[Idea of the construction]
It is Lawrence construction of braid groups representations \cite{Law}. See \cite[Lemma~6.33]{martel2020homological} for this precise Lemma. The representations are constructed as follows (sketch of proof).
\begin{itemize}
\item Let $S_i$ be the Dehn twist associated with the standard Artin generator $\sigma_i$ of $\Bn$, for $i \in \lbrace 1, \ldots, n-1 \rbrace$ (see Remark \ref{halfDehntwist}).
\item The homeomorphism $S_i$ extends to $X_r$ coordinate by coordinate.
\item The action of $S_i$ on $X_r$ naturally lifts to $\Hrelm_r$ (it is the heart of \cite[Lemma~6.33]{martel2020homological} and of Lawrence's work). 
\item By defining the action of $\sigma_i$ on $\Hrelm_r$ by that of $S_i$ one obtains a well defined (and multiplicative) action of $\Bn$ on $\Hrelm_r$. It is well defined as braids are homeomorphisms considered up to isotopy while we study their homological action.  
\end{itemize}
\end{proof}

The above representations are often called Lawrence(-like) representations. 
%
%\begin{defn}
%We denote $\Rhom$ the Lawrence representation of the braid group $\Bn$ on $\Hrelm_r$.
%\[
%\Rhom : \Laurent \left[ \Bn \right] \to \End_{\Laurent} \left( \Hrelm_r \right)
%\]
%\end{defn}

We can now recall the main result from \cite{martel2020homological} relating these homological representations with Verma modules representations defined in Section \ref{VermaBraiding}. 

\begin{thm}[{\cite[Theorem~2,3]{martel2020homological}}]\label{ModelMartelHomological2020}
The isomorphism of $\Laurent$-modules:
\[
\bfct
\CH := \bigoplus_{m \in \BN} \Hrelm_m & \to & V_{\alpha}^{\otimes n} = \bigoplus_{m \in \BN} V_{n,m}  \\
A(k_0,\ldots,k_{n-1}) & \mapsto & v_{k_0} \otimes \cdots \otimes v_{k_{n-1}} .
\efct
\]
is $\Bn$ equivariant. In the above isomorphism, the following vectors are involved:
\[
A(k_0,\ldots , k_{n-1}) :=  q^{\alpha \sum_{i=1}^{n-1} i k_i } A'(k_0,\ldots , k_{n-1})
\]
for any $(k_0, \ldots k_{n-1}) \in \BN^n$ (see Definition 6.16 of \cite{martel2020homological}). The identification of rings $\Laurent$ is made by considering $q^{-2} = -t$ (the variable $s$ being the same on both sides).  
\end{thm}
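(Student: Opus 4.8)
The plan is to establish the statement in two stages: first that the displayed map is a well-defined $\Laurent$-module isomorphism, then that it intertwines the two $\Bn$-actions. For the first stage, recall from \cite[Proposition~3.6]{martel2020homological} (quoted above) that $\{A'(k_0,\dots,k_{n-1}) : \sum k_i=m\}$ is a $\Laurent$-basis of $\Hrelm_m$; hence so is $\{A(k_0,\dots,k_{n-1})\}$, since these differ by the diagonal change of basis with entries $q^{\alpha\sum i k_i}=s^{\sum i k_i}$, which is invertible over $\Laurent$. On the quantum side $\{v_{k_0}\otimes\cdots\otimes v_{k_{n-1}} : \sum k_i=m\}$ is a $\Laurent$-basis of $V_{n,m}$ by Definition \ref{GoodVerma}. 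Matching these two bases degree by degree, and fixing once and for all the ring identification $q^{-2}=-t$, gives a $\Laurent$-module isomorphism $\CH\to V_\alpha^{\otimes n}$ restricting to $\Hrelm_m\xrightarrow{\sim}V_{n,m}$ for each $m$. (Here the correct value $q^{-2}=-t$ is not assumed but \emph{read off} at the end, from matching the scalar below.)

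For equivariance it is enough to check the Artin generators $\sigma_i$ of Definition \ref{Artinpres}. The commuting case $|i-j|\ge 2$ is immediate from the locality of both constructions, so the content is concentrated in one generator. Both actions are \emph{supported between consecutive punctures / tensor factors}: the half Dehn twist $S_i$ of Lemma \ref{Lawrence_rep} is realized inside an embedded sub-disk of $D_n$ containing only $\{w_i,w_{i+1}\}$, while $\RR$ in Proposition \ref{UqhLbraiding} acts nontrivially only on the $i$-th and $(i+1)$-st factors. I would exploit this via a gluing argument: decompose $D_n$ as a two-punctured sub-disk (containing $w_i,w_{i+1}$) glued to the remainder, decompose a multi-arc $A'(k_0,\dots,k_{n-1})$ into the sub-collection of arcs ending in that sub-disk versus the rest, and observe that this geometric decomposition corresponds on the algebraic side precisely to the coproduct $\Delta$ of Remark \ref{relationsUqhL} — which is exactly why the $R$-matrix of Definition \ref{goodRmatrix} is built from the divided powers $F^{(n)}$. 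This reduces the whole theorem to the case $n=2$, $\sigma_1$.

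The heart of the proof is then the explicit $n=2$ computation. One pushes each standard multi-arc $A'(a,b)$ (with $a+b=m$) through the half-twist $S_1$ and resolves the twisted configuration of $m$ points in the two-punctured disk back into the standard basis $\{A'(a',b')\}$, using the skein-type relations in $\Hlf_m(X_m,X_m^-;\Laurent_m)$; each application of a relation records a power of $t$ (for crossing two configuration points) or of $s$ (for a strand winding around $w_1$ or $w_2$, via $B_{m,1}\mapsto s^2$). The resulting $\Laurent$-linear combination is then compared term by term with $\RR(v_a\otimes v_b)$ computed from Definitions \ref{GoodVerma} and \ref{goodRmatrix}. The normalization $A=q^{\alpha\sum i k_i}A'$ is chosen so that the $q^{H\otimes H/2}$ and $q^{-\alpha\alpha'/2}$ prefactors of $\RR$ are absorbed into the change of basis, and comparing the scalar produced on a distinguished basis vector (e.g. $A'(m,0)$, all arcs based at $w_1$, which $S_1$ sends to a multiple of $A'(0,m)$) forces $q^{-2}=-t$. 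General $n$ then follows from the gluing/naturality argument of the previous paragraph applied one crossing at a time, or, equivalently, by noting that the transported $n=2$ operators are conjugate copies of a single operator and so automatically satisfy the braid relations, assembling into a $\Bn$-representation isomorphic to $V_\alpha^{\otimes n}$. I expect the main obstacle to be precisely the $n=2$ resolution step: setting up the skein relations in locally finite homology with the $\rho_m$-twisted coefficients and tracking every power of $t$ and $s$ through the half-twist is where all the genuine content — and all the sign and normalization subtleties, including $q^{-2}=-t$ — resides; the rest is bookkeeping or a soft naturality argument.
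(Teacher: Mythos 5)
You should first note that the paper you are commenting on does not prove this statement at all: Theorem \ref{ModelMartelHomological2020} is imported verbatim from \cite{martel2020homological} (Theorems 2 and 3 there), so there is no in-paper proof to compare against. Judged on its own, your outline is a reasonable reconstruction of the strategy of that reference: the module-isomorphism part is exactly right (both sides are free with bases indexed by compositions of $m$ into $n$ parts, and $A\mapsto A'$ is an invertible diagonal change of basis), and the equivariance part is indeed settled there by a direct computation of the half Dehn twist on the multi-arc basis, using the diagrammatic calculus (breaking arcs at $w_0$, the handle rule, and the local-coefficient bookkeeping) and a term-by-term match with the $R$-matrix coefficients, which is where $q^{-2}=-t$ is forced.

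Two caveats. First, your justification for reducing to $n=2$ via ``the geometric decomposition corresponds to the coproduct $\Delta$'' is circular as phrased: the statement that the homological side carries a $\UqhL$-action whose coproduct matches the geometric gluing is itself Theorem 1 of \cite{martel2020homological}, not something you may assume. What actually suffices is the weaker, purely topological observation that $S_i$ is supported in a sub-disk containing only $w_i,w_{i+1}$, so it fixes (representatives of) all arcs and handles that avoid that sub-disk; the comparison then reduces to the two arcs indexed by $k_{i-1},k_i$ without invoking $\Delta$. Second, and more seriously, the entire content of the theorem sits in the $n=2$ resolution you defer: pushing $A'(a,b)$ through $S_1$, re-expanding in the basis $\{A'(a',b')\}$, and checking that every coefficient $q^{\frac{i(i-1)}{2}}\qbin{a+i}{i}_q\{\alpha-a;i\}_q q^{-(a+b)\alpha}q^{2(a+i)(b-i)}$ (up to the $A\mapsto A'$ and $T$ normalizations) comes out correctly, including signs and the orientation conventions for the lifts. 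As a proof your text stops exactly where the proof begins; as an outline of the route taken in the cited reference it is accurate.
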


\begin{rem}
\begin{itemize}
\item A diagonal term $q^{-\alpha \left(\sum_{i=1}^{n-1} i k_i \right)}$ normalizes vectors $A$ (\cite[Definition~6.16]{martel2020homological}).
\item The isomorphism from the above theorem also respects the $\UqhL$ action that is defined on Verma modules in Section \ref{VermaBraiding}. In the sense that there is an action of $\UqhL$ defined on $\CH$, see \cite[Theorem~1]{martel2020homological}. 
\end{itemize}
\end{rem}

We will use this isomorphism relating quantum braid representations with homology so for interpreting the partial trace defining $F_{\infty}$ in terms of homological intersections. 

\subsection{Homological duality}\label{sec_homological_duality}

\subsubsection{Multi-arcs: another version}
We recall that for $(k_0, \ldots , k_{n-1})$ such that $\sum k_i = m$, there is a multi-arc $A'(k_0, \ldots , k_{n-1})$ defining a vector in $\Hrelm_m$, and so that the whole family yields a basis. We draw such an element but with a slightly different drawing that better fits with the knot invariant we are seeking.

\begin{defn}[Multi arcs (second version)]
For $(k_0,\ldots,k_{n-1}) \in \BN^n$ s.t. $\sum k_i = r$, we define the following diagram.
\begin{equation*}
A''(k_0 , \ldots , k_{n-1}) := \vcenter{\hbox{\begin{tikzpicture}[scale=0.55, every node/.style={scale=0.8},decoration={
    markings,
    mark=at position 0.5 with {\arrow{>}}}
    ]
    
%%%%%%%%%%%%%%%%%%%%%%%%%%%%%%%%%%%%%%%%% Contour du disque
\draw[gray] (-5,0) -- (-5,6);
\draw[gray] (-5,0) -- (-5,-6);
\draw[gray] (-5,-6) -- (4,-6) -- (4,6) -- (-5,6);% node[right] {$\partial D_n$};

%%%%%%%%%%%%%%%%%%%%%%%%%%%%%%%%%%%%%%%%% Punctures
\node (w0) at (-5,0) {};
\node (w1) at (0,4) {};
\node (w2) at (0,1.5) {};
\node[gray] at (0,0) {\vdots};
\node (wn1) at (0,-1.5) {};
\node (wn) at (0,-4) {};

\node[gray] at (w0)[left=5pt] {$w_0$};
\node[gray] at (w1)[right=5pt] {$w_1$};
\node[gray] at (w2)[right=5pt] {$w_2$};
\node[gray] at (wn1)[right=5pt] {$w_{n-1}$};
\node[gray] at (wn)[right=5pt] {$w_n$};
\foreach \n in {w1,w2,wn1,wn}
  \node at (\n)[gray,circle,fill,inner sep=3pt]{};
\node at (w0)[gray,circle,fill,inner sep=3pt]{};

%%%%%%%%%%%%%%%%%%%%%%%%%%%%%%%%%%%%%%%% Point base

\node[gray,circle,fill,inner sep=0.8pt] (x1) at (4,-3.5) {};
\node[right,gray] at (x1) {$\xi_r$};
\node[gray,circle,fill,inner sep=0.8pt] (x1p) at (4,-3) {};
\node[right,gray] at (x1p) {$\xi_{i_4}$};
\node[gray,circle,fill,inner sep=0.8pt] (x2)  at (4,-1) {};
\node[right,gray] at (x2) {$\xi_{j_3}$};%$\xi_{j_3}$
\node[gray,circle,fill,inner sep=0.8pt] (x2p)  at (4,-0.5) {};
\node[right,gray] at (x2p) {$\xi_{i_3}$};
\node[below=5pt,gray] at (4,-5) {$\ldots$};
\node[gray,circle,fill,inner sep=0.8pt] (x3) at (4,2) {};
\node[right,gray] at (x3) {$\xi_{j_2}$};%$\xi_1$
\node[gray,circle,fill,inner sep=0.8pt] (x3p) at (4,2.5) {};
\node[right,gray] at (x3p) {$\xi_{i_2}$};%$\xi_1$
\node[gray,circle,fill,inner sep=0.8pt] (x4) at (4,4.5) {};
\node[right,gray] at (x4) {$\xi_{k_0}$};%$\xi_{j$
\node[gray,circle,fill,inner sep=0.8pt] (x4p) at (4,5) {};
\node[right,gray] at (x4p) {$\xi_1$};

%%%%%%%%%%%%%%%%%%%%%%%%%%%%%%%%%%%%%%%% Arcs pointillés et red handles 
\draw[dashed] (w0) to[bend left=10]  node[pos=0.4]  {$k_0$} node[pos=0.4] (k0) {} (w1);
\draw[dashed] (w0) to node[pos=0.4] (k1) {} node[pos=0.4] {$k_1$} (w2);
\draw[dashed] (w0) to node[pos=0.6] (k2) {} node[pos=0.4] {$k_{n-2}$} (wn1);
\draw[dashed] (w0) to[bend right=10] node[pos=0.8] (k3) {} node[pos=0.3] {$k_{n-1}$} (wn);

\coordinate (b) at (-3,-3);
\coordinate (bp) at (-2.5,-2.5);
\coordinate (bpp) at (-1.5,-1.5);

\coordinate (t4) at (3.5,4.75);
\coordinate (t3) at (3.5,2.25);
\coordinate (t2) at (3.5,-0.75);
\coordinate (t1) at (3.5,-3.25);

\draw[red] (x4p)--(t4);
\draw[red] (x4)--(t4);
\draw[red] (x3p)--(t3);
\draw[red] (x3)--(t3);
\draw[red] (x2p)--(t2);
\draw[red] (x2)--(t2);
\draw[red] (x1p)--(t1);
\draw[red] (x1)--(t1);

\draw[double,red] (t1)--(k3);
\draw[double,red] (t2)--(t2-|bpp)--(k2);
\draw[double,red] (t3)--(t3-|bpp)--(k1);
\draw[double,red] (t4)--(t4-|bp)--(k0);

\end{tikzpicture}}} .%\text{ ~``multi - arcs'' }
\end{equation*}
As diagrams from Definition \ref{Aprimes} naturally defines classes in $\Hrelm_r$ (see Definition \ref{AprimesDef}, natural process explained above it), same natural process associates classes in $\Hrelm_r$ with the above $A''(k_0,\ldots,k_{n-1})$. We use latter notation to designate the homology class also.
\end{defn}

We have three families of diagrams corresponding to homology classes. They are related diagonally as follows. 

\begin{prop}\label{rel_A_A'_A''}
In $\Hrelm_r$, the following relations hold.
\begin{align}
A(k_0,\ldots, k_{n-1}) & = q^{\sum_{i=1}^{n-1} i k_i \alpha }A'(k_0,\ldots,k_{n-1}), \\
A'(k_0,\ldots, k_{n-1}) & = (-t)^{\frac{r(r-1)}{2}} q^{2 \alpha \sum_{i=0}^{n-1} (n-i) k_i } A''(k_0,\ldots,k_{n-1})
\end{align}
for all $(k_0,\ldots,k_{n-1})$ such that $\sum k_i = r$. Finally:
\begin{align}
A(k_0,\ldots, k_{n-1}) & = (-t)^{\frac{r(r-1)}{2}} q^{ \alpha  2nr} q^{ - \alpha \sum_{i=0}^{n-1} i k_i }A''(k_0,\ldots,k_{n-1})
\end{align}
\end{prop}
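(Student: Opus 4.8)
The plan is to establish the three displayed identities in sequence, observing that the third is a direct consequence of the first two. The first identity, $A(k_0,\ldots,k_{n-1}) = q^{\alpha\sum_{i=1}^{n-1} i k_i} A'(k_0,\ldots,k_{n-1})$, is not really something to prove here: it is the \emph{definition} of the renormalized classes $A$ recalled in Theorem~\ref{ModelMartelHomological2020} (and Definition~6.16 of \cite{martel2020homological}). So I would simply cite that and move on. The genuine content is the second identity, comparing the two drawings $A'$ and $A''$ of the same underlying multi-arc configuration.

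For the second identity I would argue purely at the level of the lifted cycles in the cover $\widehat{X_r}$ associated with $\rho_r$. Both $A'(k_0,\ldots,k_{n-1})$ and $A''(k_0,\ldots,k_{n-1})$ use the \emph{same} collection of dashed arcs (the $i$-th dashed arc carrying $k_{i-1}$ points), so the underlying locally finite $r$-cycle in $X_r$ is literally the same submanifold; only the red handles joining the fixed basepoint $\pmb\xi$ to the configuration differ, and the two handle systems differ by a loop in $\pi_1(X_r,\pmb\xi)$. Under $\rho_r$ this loop maps to a monomial in $s=q^\alpha$ and $t$, and I would compute this monomial by counting, for each red strand, how many punctures $w_j$ it winds around (each contributing a factor $s^2=q^{2\alpha}$ via some $B_{r,k}$) and how many times red strands cross each other (each contributing a factor $t$ via some $\sigma_i$). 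The handle configuration of $A''$ routes the strand reaching the arc to $w_j$ ``above'' all the punctures $w_1,\ldots,w_{j-1}$ rather than ``below'' them, which accounts for the exponent $2\sum_{i=0}^{n-1}(n-i)k_i$ on $q^\alpha$; and the reordering/interleaving of the red endpoints $\xi_1,\ldots,\xi_r$ forced by the new drawing produces $\binom{r}{2}$ transpositions among the $r$ strands, giving the factor $t^{r(r-1)/2}$, which with the sign convention $-t$ from the ring identification $q^{-2}=-t$ becomes $(-t)^{r(r-1)/2}$. I would present this as a bookkeeping computation rather than spelling out every winding number.

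For the third identity, substitute the second into the first:
\[
A(k_0,\ldots,k_{n-1}) = q^{\alpha\sum_{i=1}^{n-1} i k_i}(-t)^{\frac{r(r-1)}{2}} q^{2\alpha\sum_{i=0}^{n-1}(n-i)k_i} A''(k_0,\ldots,k_{n-1}),
\]
and simplify the exponent of $q^\alpha$. Using $\sum_i k_i = r$ one has $2\sum_{i=0}^{n-1}(n-i)k_i = 2nr - 2\sum_{i=0}^{n-1} i k_i$, and $\sum_{i=1}^{n-1} i k_i = \sum_{i=0}^{n-1} i k_i$, so the total exponent is $\sum_{i=0}^{n-1} i k_i + 2nr - 2\sum_{i=0}^{n-1} i k_i = 2nr - \sum_{i=0}^{n-1} i k_i$, which is exactly the claimed $q^{\alpha 2nr} q^{-\alpha\sum_{i=0}^{n-1} i k_i}$. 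The main obstacle is the middle step: carefully justifying the exponents in the $A'$-to-$A''$ comparison by tracking the homotopy class of the difference of the two handle systems in $\pi_1(X_r,\pmb\xi)$ and evaluating $\rho_r$ on it — in other words, making the ``read the winding numbers off the picture'' argument rigorous. Everything else is either a definition or elementary algebra.
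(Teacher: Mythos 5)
Your proposal follows essentially the same route as the paper: the first identity is just the normalization from Theorem~\ref{ModelMartelHomological2020}, the second is the \emph{handle rule} of \cite[Remark~4.1]{martel2020homological} applied to the loop $\alpha\beta^{-1}$ formed by the two handle systems (after an isotopy of the disk matching the two drawings), with $\rho_r$ evaluated by counting red--red crossings and windings of red strands around punctures, and the third is the substitution you carry out. One small correction: the sign making the crossing contribution $(-t)^{r(r-1)/2}$ rather than $t^{r(r-1)/2}$ does not come from the ring identification $q^{-2}=-t$ with the quantum side; it is intrinsic to the handle rule, which evaluates $\rho_r(\alpha\beta^{-1})$ at $t=-t$ because the permutation of the red strands permutes the coordinates of the embedded configuration and hence multiplies the cycle by the sign of that permutation.
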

\begin{proof}
The first equality of the proposition was already considered in \cite{martel2020homological} and was recalled in Theorem \ref{ModelMartelHomological2020}. The second one follows from the following equalities:
\begin{align*}
\vcenter{\hbox{\begin{tikzpicture}[scale=0.55, every node/.style={scale=0.8},decoration={
    markings,
    mark=at position 0.5 with {\arrow{>}}}
    ]
\node (w0) at (-5,0) {};
\node (w1) at (-3,0) {};
\node (w2) at (-1,0) {};
\node[gray] at (0.0,0.0) {\ldots};
\node (wn1) at (1,0) {};
\node (wn) at (3,0) {};
\draw[dashed] (w0) -- (w1) node[midway] (k0) {$k_0$};
\draw[dashed] (w0) to[bend right=20] node[near end] (k1) {$k_1$} (w2);
\draw[dashed] (w0) to[bend right=40] node[pos=0.85] (k2) {$k_{n-2}$} (wn1);
\draw[dashed] (w0) to[bend right=60] node[pos=0.85] (k3) {$k_{n-1}$} (wn);
\node[gray] at (w0)[left=5pt] {$w_0$};
\node[gray] at (w1)[above=5pt] {$w_1$};
\node[gray] at (w2)[above=5pt] {$w_2$};
\node[gray] at (wn1)[above=5pt] {$w_{n-1}$};
\node[gray] at (wn)[above=5pt] {$w_n$};
\foreach \n in {w1,w2,wn1,wn}
  \node at (\n)[gray,circle,fill,inner sep=3pt]{};
\node at (w0)[gray,circle,fill,inner sep=3pt]{};
\draw[double,thick,red] (-4,-0.10) -- (-4,-3);
\draw[double,thick,red] (k1) -- (-2.1,-3);
\draw[double,thick,red] (k2) -- (0.05,-3);
\draw[double,thick,red] (k3) -- (2.1,-3);
%
%
%\draw[dashed,gray] (-5,-3) -- (3.5,-3);
%\draw[dashed,gray] (3.5,-3) -- (3.5,-4);
\node[gray,circle,fill,inner sep=0.8pt] at (-4.8,-4) {};
\node[below,gray] at (-4.8,-4) {$\xi_r$};
\node[below=5pt,gray] at (-4.2,-4) {$\ldots$};
\node[gray,circle,fill,inner sep=0.8pt] at (-3.5,-4) {};
\node[below,gray] at (-3.5,-4) {$\xi_1$};
%\draw[red] (-4.8,-4) -- (-4.5,-3.8);
%\draw[red] (-3.5,-4) -- (-3,-3.9);
\draw[red] (-4.8,-4) -- (-4,-3);
\draw[red] (-3.5,-4) -- (2.1,-3);
\draw[red] (-3.9,-4) -- (0.05,-3);
\draw[red] (-4.2,-4) -- (-2.1,-3);
%\node[red] at (-2,-3.4) {$\ldots$};
\draw[gray] (-5,0) -- (-5,2);
\draw[gray] (-5,0) -- (-5,-4);
\draw[gray] (-5,-4) -- (4,-4) -- (4,2) -- (-5,2);% node[right] {$\partial D_n$};
\end{tikzpicture}}}
& = \vcenter{\hbox{\begin{tikzpicture}[scale=0.4, every node/.style={scale=0.6},decoration={
    markings,
    mark=at position 0.5 with {\arrow{>}}}
    ]  
%%%%%%%%%%%%%%%%%%%%%%%%%%%%%%%%%%%%%%%%% Contour du disque
\draw[gray] (-5,0) -- (-5,6);
\draw[gray] (-5,0) -- (-5,-6.5);
\draw[gray] (-5,-6.5) -- (4,-6.5) -- (4,6) -- (-5,6);% node[right] {$\partial D_n$};    
%%%%%%%%%%%%%%%%%%%%%%%%%%%%%%%%%%%%%%%%% Punctures
\node (w0) at (-5,0) {};
\node (w1) at (0,4) {};
\node (w2) at (0,1.5) {};
\node[gray] at (0,0) {\vdots};
\node (wn1) at (0,-1.5) {};
\node (wn) at (0,-4) {};
\node[gray] at (w0)[left=5pt] {$w_0$};
\node[gray] at (w1)[above=5pt] {$w_1$};
\node[gray] at (w2)[above=5pt] {$w_2$};
\node[gray] at (wn1)[above=5pt] {$w_{n-1}$};
\node[gray] at (wn)[above=5pt] {$w_n$};
\foreach \n in {w1,w2,wn1,wn}
  \node at (\n)[gray,circle,fill,inner sep=3pt]{};
\node at (w0)[gray,circle,fill,inner sep=3pt]{};
%%%%%%%%%%%%%%%%%%%%%%%%%%%%%%%%%%%%%%%% Point base
\node[gray,circle,fill,inner sep=0.8pt] (x1) at (4,-3.5) {};
\node[right,gray] at (x1) {$\xi_r$};
\node[gray,circle,fill,inner sep=0.8pt] (x1p) at (4,-3) {};
\node[right,gray] at (x1p) {$\xi_{i_4}$};
\node[gray,circle,fill,inner sep=0.8pt] (x2)  at (4,-1) {};
\node[right,gray] at (x2) {$\xi_{j_3}$};%$\xi_{j_3}$
\node[gray,circle,fill,inner sep=0.8pt] (x2p)  at (4,-0.5) {};
\node[right,gray] at (x2p) {$\xi_{i_3}$};
\node[below=5pt,gray] at (4,-5) {$\ldots$};
\node[gray,circle,fill,inner sep=0.8pt] (x3) at (4,2) {};
\node[right,gray] at (x3) {$\xi_{j_2}$};%$\xi_1$
\node[gray,circle,fill,inner sep=0.8pt] (x3p) at (4,2.5) {};
\node[right,gray] at (x3p) {$\xi_{i_2}$};%$\xi_1$
\node[gray,circle,fill,inner sep=0.8pt] (x4) at (4,4.5) {};
\node[right,gray] at (x4) {$\xi_{k_0}$};
\node[gray,circle,fill,inner sep=0.8pt] (x4p) at (4,5) {};
\node[right,gray] at (x4p) {$\xi_1$};
%%%%%%%%%%%%%%%%%%%%%%%%%%%%%%%%%%%%%%%% Arcs pointillés et red handles 
\draw[dashed] (w0) to[bend left=10]  node[above,pos=0.2]  {$k_0$} node[pos=0.2] (k0) {} (w1);
\draw[dashed] (w0) to node[pos=0.4] (k1) {} node[above,pos=0.4] {$k_1$} (w2);
\draw[dashed] (w0) to node[pos=0.6] (k2) {} node[above,pos=0.6] {$k_{n-2}$} (wn1);
\draw[dashed] (w0) to[bend right=10] node[pos=0.8] (k3) {} node[above,pos=0.8] {$k_{n-1}$} (wn);
\coordinate (t4) at (3.5,4.75);
\coordinate (t3) at (3.5,2.25);
\coordinate (t2) at (3.5,-0.75);
\coordinate (t1) at (3.5,-3.25);
\draw[red] (x4p)--(t4);
\draw[red] (x4)--(t4);
\draw[red] (x3p)--(t3);
\draw[red] (x3)--(t3);
\draw[red] (x2p)--(t2);
\draw[red] (x2)--(t2);
\draw[red] (x1p)--(t1);
\draw[red] (x1)--(t1);
\draw[double,red] (t1)--(2.5,-3.25)--(2.5,-6)--(-4,-6)--(k0);
\draw[double,red] (t2)--(2,-0.75)--(2,-5.5)--(-3,-5.5)--(k1);
\draw[double,red] (t3)--(1.5,2.25)--(1.5,-5)--(-2,-5)--(k2);
\draw[double,red] (t4)--(1,4.75)--(1,-4.5)--(-1.25,-4.5)--(k3);
\end{tikzpicture}}} \\
& = (-t)^{\frac{r(r-1)}{2}} q^{2 \alpha \sum_{i=0}^{n-1} (n-i) k_i } \vcenter{\hbox{\begin{tikzpicture}[scale=0.4, every node/.style={scale=0.6},decoration={
    markings,
    mark=at position 0.5 with {\arrow{>}}}
    ]    
%%%%%%%%%%%%%%%%%%%%%%%%%%%%%%%%%%%%%%%%% Contour du disque
\draw[gray] (-5,0) -- (-5,6);
\draw[gray] (-5,0) -- (-5,-6);
\draw[gray] (-5,-6) -- (4,-6) -- (4,6) -- (-5,6);% node[right] {$\partial D_n$};   
%%%%%%%%%%%%%%%%%%%%%%%%%%%%%%%%%%%%%%%%% Punctures
\node (w0) at (-5,0) {};
\node (w1) at (0,4) {};
\node (w2) at (0,1.5) {};
\node[gray] at (0,0) {\vdots};
\node (wn1) at (0,-1.5) {};
\node (wn) at (0,-4) {};
\node[gray] at (w0)[left=5pt] {$w_0$};
\node[gray] at (w1)[right=5pt] {$w_1$};
\node[gray] at (w2)[right=5pt] {$w_2$};
\node[gray] at (wn1)[right=5pt] {$w_{n-1}$};
\node[gray] at (wn)[right=5pt] {$w_n$};
\foreach \n in {w1,w2,wn1,wn}
  \node at (\n)[gray,circle,fill,inner sep=3pt]{};
\node at (w0)[gray,circle,fill,inner sep=3pt]{};
%%%%%%%%%%%%%%%%%%%%%%%%%%%%%%%%%%%%%%%% Point base
\node[gray,circle,fill,inner sep=0.8pt] (x1) at (4,-3.5) {};
\node[right,gray] at (x1) {$\xi_r$};
\node[gray,circle,fill,inner sep=0.8pt] (x1p) at (4,-3) {};
\node[right,gray] at (x1p) {$\xi_{i_4}$};
\node[gray,circle,fill,inner sep=0.8pt] (x2)  at (4,-1) {};
\node[right,gray] at (x2) {$\xi_{j_3}$};%$\xi_{j_3}$
\node[gray,circle,fill,inner sep=0.8pt] (x2p)  at (4,-0.5) {};
\node[right,gray] at (x2p) {$\xi_{i_3}$};
\node[below=5pt,gray] at (4,-5) {$\ldots$};
\node[gray,circle,fill,inner sep=0.8pt] (x3) at (4,2) {};
\node[right,gray] at (x3) {$\xi_{j_2}$};%$\xi_1$
\node[gray,circle,fill,inner sep=0.8pt] (x3p) at (4,2.5) {};
\node[right,gray] at (x3p) {$\xi_{i_2}$};%$\xi_1$
\node[gray,circle,fill,inner sep=0.8pt] (x4) at (4,4.5) {};
\node[right,gray] at (x4) {$\xi_{k_0}$};%$\xi_{j$
\node[gray,circle,fill,inner sep=0.8pt] (x4p) at (4,5) {};
\node[right,gray] at (x4p) {$\xi_1$};
%%%%%%%%%%%%%%%%%%%%%%%%%%%%%%%%%%%%%%%% Arcs pointillés et red handles 
\draw[dashed] (w0) to[bend left=10]  node[pos=0.4]  {$k_0$} node[pos=0.4] (k0) {} (w1);
\draw[dashed] (w0) to node[pos=0.4] (k1) {} node[pos=0.4] {$k_1$} (w2);
\draw[dashed] (w0) to node[pos=0.6] (k2) {} node[pos=0.4] {$k_{n-2}$} (wn1);
\draw[dashed] (w0) to[bend right=10] node[pos=0.8] (k3) {} node[pos=0.3] {$k_{n-1}$} (wn);
\coordinate (b) at (-3,-3);
\coordinate (bp) at (-2.5,-2.5);
\coordinate (bpp) at (-1.5,-1.5);
\coordinate (t4) at (3.5,4.75);
\coordinate (t3) at (3.5,2.25);
\coordinate (t2) at (3.5,-0.75);
\coordinate (t1) at (3.5,-3.25);
\draw[red] (x4p)--(t4);
\draw[red] (x4)--(t4);
\draw[red] (x3p)--(t3);
\draw[red] (x3)--(t3);
\draw[red] (x2p)--(t2);
\draw[red] (x2)--(t2);
\draw[red] (x1p)--(t1);
\draw[red] (x1)--(t1);
\draw[double,red] (t1)--(k3);
\draw[double,red] (t2)--(t2-|bpp)--(k2);
\draw[double,red] (t3)--(t3-|bpp)--(k1);
\draw[double,red] (t4)--(t4-|bp)--(k0);
\end{tikzpicture}}} .
\end{align*}
The first equality comes from an isotopy of the disc, the second one comes from the application of the \textit{handle rule} (\cite[Remark~4.1]{martel2020homological}, see details in following Remark \ref{HandleruleRecall}). Then one recognizes leftmost diagram to be $A(k_0, \ldots, k_{n-1})$ and last one to be $A''(k_0,\ldots , k_{n-1})$. 
Finally:
\begin{align*}
A(k_0,\ldots, k_{n-1}) & = (-t)^{\frac{r(r-1)}{2}} q^{ \alpha \sum_{i=0}^{n-1} (2n-i) k_i }A''(k_0,\ldots,k_{n-1}) \\
& = (-t)^{\frac{r(r-1)}{2}} q^{ \alpha \left( 2nr - \sum_{i=0}^{n-1} i k_i \right) }A''(k_0,\ldots,k_{n-1})
\end{align*}
provides last relation. 
\end{proof}

\begin{rem}[Handle rule]\label{HandleruleRecall}
We give more details on the handle rule applied once in the proof of the previous proposition. The handle rule (\cite[Remark~4.1]{martel2020homological}) states:
\begin{align*}
\vcenter{\hbox{\begin{tikzpicture}[scale=0.4, every node/.style={scale=0.6},decoration={
    markings,
    mark=at position 0.5 with {\arrow{>}}}
    ]  
%%%%%%%%%%%%%%%%%%%%%%%%%%%%%%%%%%%%%%%%% Contour du disque
\draw[gray] (-5,0) -- (-5,6);
\draw[gray] (-5,0) -- (-5,-6.5);
\draw[gray] (-5,-6.5) -- (4,-6.5) -- (4,6) -- (-5,6);% node[right] {$\partial D_n$};    
%%%%%%%%%%%%%%%%%%%%%%%%%%%%%%%%%%%%%%%%% Punctures
\node (w0) at (-5,0) {};
\node (w1) at (0,4) {};
\node (w2) at (0,1.5) {};
\node[gray] at (0,0) {\vdots};
\node (wn1) at (0,-1.5) {};
\node (wn) at (0,-4) {};
\node[gray] at (w0)[left=5pt] {$w_0$};
\node[gray] at (w1)[above=5pt] {$w_1$};
\node[gray] at (w2)[above=5pt] {$w_2$};
\node[gray] at (wn1)[above=5pt] {$w_{n-1}$};
\node[gray] at (wn)[above=5pt] {$w_n$};
\foreach \n in {w1,w2,wn1,wn}
  \node at (\n)[gray,circle,fill,inner sep=3pt]{};
\node at (w0)[gray,circle,fill,inner sep=3pt]{};
%%%%%%%%%%%%%%%%%%%%%%%%%%%%%%%%%%%%%%%% Point base
\node[gray,circle,fill,inner sep=0.8pt] (x1) at (4,-3.5) {};
\node[right,gray] at (x1) {$\xi_r$};
\node[gray,circle,fill,inner sep=0.8pt] (x1p) at (4,-3) {};
%\node[right,gray] at (x1p) {$\xi_{i_4}$};
\node[gray,circle,fill,inner sep=0.8pt] (x2)  at (4,-1) {};
%\node[right,gray] at (x2) {$\xi_{j_3}$};%$\xi_{j_3}$
\node[gray,circle,fill,inner sep=0.8pt] (x2p)  at (4,-0.5) {};
%\node[right,gray] at (x2p) {$\xi_{i_3}$};
\node[below=5pt,gray] at (4,-5) {$\ldots$};
\node[gray,circle,fill,inner sep=0.8pt] (x3) at (4,2) {};
%\node[right,gray] at (x3) {$\xi_{j_2}$};%$\xi_1$
\node[gray,circle,fill,inner sep=0.8pt] (x3p) at (4,2.5) {};
%\node[right,gray] at (x3p) {$\xi_{i_2}$};%$\xi_1$
\node[gray,circle,fill,inner sep=0.8pt] (x4) at (4,4.5) {};
%\node[right,gray] at (x4) {$\xi_{k_0}$};
\node[gray,circle,fill,inner sep=0.8pt] (x4p) at (4,5) {};
\node[right,gray] at (x4p) {$\xi_1$};
%%%%%%%%%%%%%%%%%%%%%%%%%%%%%%%%%%%%%%%% Arcs pointillés et red handles 
\draw[dashed] (w0) to[bend left=10]  node[above,pos=0.2]  {$k_0$} node[pos=0.2] (k0) {} (w1);
\draw[dashed] (w0) to node[pos=0.4] (k1) {} node[above,pos=0.4] {$k_1$} (w2);
\draw[dashed] (w0) to node[pos=0.6] (k2) {} node[above,pos=0.6] {$k_{n-2}$} (wn1);
\draw[dashed] (w0) to[bend right=10] node[pos=0.8] (k3) {} node[above,pos=0.8] {$k_{n-1}$} (wn);
\coordinate (t4) at (3.5,4.75);
\coordinate (t3) at (3.5,2.25);
\coordinate (t2) at (3.5,-0.75);
\coordinate (t1) at (3.5,-3.25);
\draw[red] (x4p)--(t4);
\draw[red] (x4)--(t4);
\draw[red] (x3p)--(t3);
\draw[red] (x3)--(t3);
\draw[red] (x2p)--(t2);
\draw[red] (x2)--(t2);
\draw[red] (x1p)--(t1);
\draw[red] (x1)--(t1);
\draw[double,red] (t1)--(2.5,-3.25)--(2.5,-6)--(-4,-6)--(k0);
\draw[double,red] (t2)--(2,-0.75)--(2,-5.5)--(-3,-5.5)--(k1);
\draw[double,red] (t3)--(1.5,2.25)--(1.5,-5)--(-2,-5)--(k2);
\draw[double,red] (t4)--(1,4.75)--(1,-4.5)--(-1.25,-4.5)--(k3);
\end{tikzpicture}}} %\\
& = \rho_r(\alpha \beta^{-1})_{|t=-t} \vcenter{\hbox{\begin{tikzpicture}[scale=0.4, every node/.style={scale=0.6},decoration={
    markings,
    mark=at position 0.5 with {\arrow{>}}}
    ]    
%%%%%%%%%%%%%%%%%%%%%%%%%%%%%%%%%%%%%%%%% Contour du disque
\draw[gray] (-5,0) -- (-5,6);
\draw[gray] (-5,0) -- (-5,-6);
\draw[gray] (-5,-6) -- (4,-6) -- (4,6) -- (-5,6);% node[right] {$\partial D_n$};   
%%%%%%%%%%%%%%%%%%%%%%%%%%%%%%%%%%%%%%%%% Punctures
\node (w0) at (-5,0) {};
\node (w1) at (0,4) {};
\node (w2) at (0,1.5) {};
\node[gray] at (0,0) {\vdots};
\node (wn1) at (0,-1.5) {};
\node (wn) at (0,-4) {};
\node[gray] at (w0)[left=5pt] {$w_0$};
\node[gray] at (w1)[right=5pt] {$w_1$};
\node[gray] at (w2)[right=5pt] {$w_2$};
\node[gray] at (wn1)[right=5pt] {$w_{n-1}$};
\node[gray] at (wn)[right=5pt] {$w_n$};
\foreach \n in {w1,w2,wn1,wn}
  \node at (\n)[gray,circle,fill,inner sep=3pt]{};
\node at (w0)[gray,circle,fill,inner sep=3pt]{};
%%%%%%%%%%%%%%%%%%%%%%%%%%%%%%%%%%%%%%%% Point base
\node[gray,circle,fill,inner sep=0.8pt] (x1) at (4,-3.5) {};
\node[right,gray] at (x1) {$\xi_r$};
\node[gray,circle,fill,inner sep=0.8pt] (x1p) at (4,-3) {};
\node[right,gray] at (x1p) {$\xi_{i_4}$};
\node[gray,circle,fill,inner sep=0.8pt] (x2)  at (4,-1) {};
\node[right,gray] at (x2) {$\xi_{j_3}$};%$\xi_{j_3}$
\node[gray,circle,fill,inner sep=0.8pt] (x2p)  at (4,-0.5) {};
\node[right,gray] at (x2p) {$\xi_{i_3}$};
\node[below=5pt,gray] at (4,-5) {$\ldots$};
\node[gray,circle,fill,inner sep=0.8pt] (x3) at (4,2) {};
\node[right,gray] at (x3) {$\xi_{j_2}$};%$\xi_1$
\node[gray,circle,fill,inner sep=0.8pt] (x3p) at (4,2.5) {};
\node[right,gray] at (x3p) {$\xi_{i_2}$};%$\xi_1$
\node[gray,circle,fill,inner sep=0.8pt] (x4) at (4,4.5) {};
\node[right,gray] at (x4) {$\xi_{k_0}$};%$\xi_{j$
\node[gray,circle,fill,inner sep=0.8pt] (x4p) at (4,5) {};
\node[right,gray] at (x4p) {$\xi_1$};
%%%%%%%%%%%%%%%%%%%%%%%%%%%%%%%%%%%%%%%% Arcs pointillés et red handles 
\draw[dashed] (w0) to[bend left=10]  node[pos=0.4]  {$k_0$} node[pos=0.4] (k0) {} (w1);
\draw[dashed] (w0) to node[pos=0.4] (k1) {} node[pos=0.4] {$k_1$} (w2);
\draw[dashed] (w0) to node[pos=0.6] (k2) {} node[pos=0.4] {$k_{n-2}$} (wn1);
\draw[dashed] (w0) to[bend right=10] node[pos=0.8] (k3) {} node[pos=0.3] {$k_{n-1}$} (wn);
\coordinate (b) at (-3,-3);
\coordinate (bp) at (-2.5,-2.5);
\coordinate (bpp) at (-1.5,-1.5);
\coordinate (t4) at (3.5,4.75);
\coordinate (t3) at (3.5,2.25);
\coordinate (t2) at (3.5,-0.75);
\coordinate (t1) at (3.5,-3.25);
\draw[red] (x4p)--(t4);
\draw[red] (x4)--(t4);
\draw[red] (x3p)--(t3);
\draw[red] (x3)--(t3);
\draw[red] (x2p)--(t2);
\draw[red] (x2)--(t2);
\draw[red] (x1p)--(t1);
\draw[red] (x1)--(t1);
\draw[double,red] (t1)--(k3);
\draw[double,red] (t2)--(t2-|bpp)--(k2);
\draw[double,red] (t3)--(t3-|bpp)--(k1);
\draw[double,red] (t4)--(t4-|bp)--(k0);
\end{tikzpicture}}} .
\end{align*}
where $\alpha$ is the path in $X_r$ corresponding to the (red)-handle on the left and $\beta$ to that on the right, and $\rho_r$ the representation of $\pi_1(X_r)$ recalled in Definition \ref{localsystXr}. We evaluate $\rho_r$ at $t=-t$, see \cite[Rem.~4.2]{martel2020homological}, because in diagrams the permutation of the red strands implies a permutation of embeddings of configurations. Hence the homology class must be multiplied by the sign of the permutation (i.e. the power of $t$ in $\rho_r(\alpha \beta^{-1})$) corresponding to the induced change of orientation. 

In the present case the path $\alpha \beta^{-1}$ is drawn below.
\[
\vcenter{\hbox{
\begin{tikzpicture}[scale=0.65, every node/.style={scale=0.7}]

%%%%%%%%%%%%%%%%%%%%%% Pointes basses
\coordinate (w1b) at (-4,-4);
\coordinate (w2b) at (-2,-4);
\coordinate (w3b) at (2,-4);
\coordinate (w4b) at (4,-4);

\node[gray] at (0,0) {$\cdots$};

%%%%%%%%%%%%%%%%%%%%%% Pointes hautes
\coordinate (w1h) at (-4,3);
\coordinate (w2h) at (-2,3);
\coordinate (w3h) at (2,3);
\coordinate (w4h) at (4,3);

\node[above,gray] at (w1h) {$w_n$};
\node[above,gray] at (w2h) {$w_{n-1}$};
\node[gray] at (0,3) {$\cdots$};
\node[above,gray] at (w3h) {$w_2$};
\node[above,gray] at (w4h) {$w_1$};

%%%%%%%%%%%%%%%%%%%%% Pointes(t)
\draw[gray,thick] (w1b)--(w1h);
\draw[gray,thick] (w2b)--(w2h);
\draw[gray,thick] (w3b)--(w3h);
\draw[gray,thick] (w4b)--(w4h);

%%%%%%%%%%%%%%%%%%%%% pbase départ
\coordinate (x1d) at (-3,-5);
\coordinate (x2d) at (-1,-5);
\coordinate (x3d) at (3,-5);
\coordinate (x4d) at (5,-5);

\node[red] at (1,-5.5) {$\ldots$};

\coordinate (s1g) at (-3.25,-5.5);
\coordinate (s2g) at (-1.25,-5.5);
\coordinate (s3g) at (2.75,-5.5);
\coordinate (s4g) at (4.75,-5.5);

\coordinate (s1d) at (-2.75,-5.5);
\coordinate (s2d) at (-0.75,-5.5);
\coordinate (s3d) at (3.25,-5.5);
\coordinate (s4d) at (5.25,-5.5);

%%%%%%%%%%%%%%%%%%%%% pbase arrivée
\coordinate (x1a) at (-3,2);
\coordinate (x2a) at (-1,2);
\coordinate (x3a) at (3,2);
\coordinate (x4a) at (5,2);

\coordinate (t1g) at (-3.25,2.5);
\coordinate (t2g) at (-1.25,2.5);
\coordinate (t3g) at (2.75,2.5);
\coordinate (t4g) at (4.75,2.5);

\coordinate (t1d) at (-2.75,2.5);
\coordinate (t2d) at (-0.75,2.5);
\coordinate (t3d) at (3.25,2.5);
\coordinate (t4d) at (5.25,2.5);

%%%%%%%%%%%%%%%%%%% red handles

%% Intersection avec pointes grises
\node[white,circle,fill,inner sep=2.9pt] at (-4,-4) {};
\node[white,circle,fill,inner sep=2.9pt] at (-4,-3.5) {};
\node[white,circle,fill,inner sep=2.9pt] at (-4,-3) {};
%\node[white,circle,fill,inner sep=2.9pt] at (-4,-2.5) {};

\node[white,circle,fill,inner sep=2.9pt] at (-2,-4) {};
\node[white,circle,fill,inner sep=2.9pt] at (-2,-3.5) {};
\node[white,circle,fill,inner sep=2.9pt] at (-2,-3) {};
%\node[white,circle,fill,inner sep=2.9pt] at (-2,-2.5) {};

%\node[white,circle,fill,inner sep=2.9pt] at (2,-4) {};
\node[white,circle,fill,inner sep=2.9pt] at (2,-3.5) {};
\node[white,circle,fill,inner sep=2.9pt] at (2,-3) {};
%\node[white,circle,fill,inner sep=2.9pt] at (2,-2.5) {};

%\node[white,circle,fill,inner sep=2.9pt] at (4,-4) {};
%\node[white,circle,fill,inner sep=2.9pt] at (4,-3.5) {};
\node[white,circle,fill,inner sep=2.9pt] at (4,-3) {};
%\node[white,circle,fill,inner sep=2.9pt] at (4,-2.5) {};

\draw[double,red] (x1d)--(-3,-4.5)--(-6,-4.5)--(-6,1)--(-4.2,1);
\draw[double,red] (-3.8,1)--(-2.2,1);
\draw[double,red] (-1.8,1)--(1.8,1);
\draw[double,red] (2.2,1)--(3.8,1);
\draw[double,red] (4.2,1)--(5,1)--(x4a);

\node[white,circle,fill,inner sep=2.5pt] at (3,1) {};
\draw[double,red] (x2d)--(-1,-4)--(-5.5,-4)--(-5.5,0)--(-4.2,0);
\draw[double,red] (-3.8,0)--(-2.2,0);%--(-2.2,0);
\draw[double,red] (-1.8,0)--(1.8,0);
\draw[double,red] (2.2,0)--(3,0)--(x3a);

\node[white,circle,fill,inner sep=2.5pt] at (-1,0) {};
\node[white,circle,fill,inner sep=2.5pt] at (-1,1) {};
\draw[double,red] (x3d)--(3,-3.5)--(-5,-3.5)--(-5,-1)--(-4.2,-1);
\draw[double,red] (-3.8,-1)--(-2.2,-1);%--(-2.2,0);
\draw[double,red] (-1.8,-1)--(-1,-1)--(x2a);

\node[white,circle,fill,inner sep=2.5pt] at (-3,-1) {};
\node[white,circle,fill,inner sep=2.5pt] at (-3,0) {};
\node[white,circle,fill,inner sep=2.5pt] at (-3,1) {};
\draw[double,red] (x4d)--(5,-3)--(-4.5,-3)--(-4.5,-2)--(-4.2,-2);
\draw[double,red] (-3.8,-2)--(-3,-2)--(x1a);

%%%%%%%%%%%%%%%%%%%%%% Les full twists

\node[red,draw,rectangle,fill=white] at (-3,-2) {$\Delta_{k_{n-1}}$};
\node[red,draw,rectangle,fill=white] at (-1,-1) {$\Delta_{k_{n-2}}$};
\node[red,draw,rectangle,fill=white] at (3,0) {$\Delta_{k_{1}}$};
\node[red,draw,rectangle,fill=white] at (5,1) {$\Delta_{k_{0}}$};
%\node[red] at (-3,-2) {$\Delta$};

%%%%%%%%%%%%%%%%%%%%% les bouts 
\draw[red] (x1a)--(t1g);
\draw[red] (x2a)--(t2g);
\draw[red] (x3a)--(t3g);
\draw[red] (x4a)--(t4g);

\draw[red] (x1a)--(t1d);
\draw[red] (x2a)--(t2d);
\draw[red] (x3a)--(t3d);
\draw[red] (x4a)--(t4d);

\draw[red] (x1d)--(s1g);
\draw[red] (x2d)--(s2g);
\draw[red] (x3d)--(s3g);
\draw[red] (x4d)--(s4g);

\draw[red] (x1d)--(s1d);
\draw[red] (x2d)--(s2d);
\draw[red] (x3d)--(s3d);
\draw[red] (x4d)--(s4d);

%%%%%%%%%%%%%%%%%%%% circle pbase
\node[red,circle,fill,inner sep=0.8pt] at (t1d) {};
\node[red,circle,fill,inner sep=0.8pt] at (t2d) {};
\node[red,circle,fill,inner sep=0.8pt] at (t3d) {};
\node[red,circle,fill,inner sep=0.8pt] at (t4d) {};

\node[red,circle,fill,inner sep=0.8pt] at (t1g) {};
\node[red,circle,fill,inner sep=0.8pt] at (t2g) {};
\node[red,circle,fill,inner sep=0.8pt] at (t3g) {};
\node[red,circle,fill,inner sep=0.8pt] at (t4g) {};

\node[red,circle,fill,inner sep=0.8pt] at (s1d) {};
\node[red,circle,fill,inner sep=0.8pt] at (s2d) {};
\node[red,circle,fill,inner sep=0.8pt] at (s3d) {};
\node[red,circle,fill,inner sep=0.8pt] at (s4d) {};

\node[red,circle,fill,inner sep=0.8pt] at (s1g) {};
\node[red,circle,fill,inner sep=0.8pt] at (s2g) {};
\node[red,circle,fill,inner sep=0.8pt] at (s3g) {};
\node[red,circle,fill,inner sep=0.8pt] at (s4g) {};

\node[above,red] at (t4d) {$\xi_1$};
\node[above,red] at (t1g) {$\xi_r$};

\node[below,red] at (s4d) {$\xi_1$};
\node[below,red] at (s1g) {$\xi_r$};

\end{tikzpicture}
}}
\]
Every little red tube means parallel red paths not crossing with each other. Crossings involving these tubes are materialized using $\Delta$ boxes inside which the following happens:
\begin{equation*}
\vcenter{\hbox{
\begin{tikzpicture}
\draw[red] (-1,-0.5)--(1,-0.5)--(1,0.5)--(-1,0.5)--(-1,-0.5);
\node[red] at (0,0) {$\Delta_k$};
\end{tikzpicture} }} = 
{\color{red} k \lbrace} \vcenter{\hbox{
\begin{tikzpicture}
\draw[red] (-1,-0.5)--(1,-0.5)--(1,0.5)--(-1,0.5)--(-1,-0.5);

%%%%%%%%%% départ gauche

\coordinate (gh) at (-1,0.25);
\coordinate (gb) at (-1,-0.25);

%%%%%%%%% arrivée haut

\coordinate (hg) at (-0.5,0.5);
\coordinate (hd) at (0.5,0.5);

\node[red] at (0.15,0.35) {$\cdots$}; 

%%%%%%%%%

\draw[red] (gh)--(0.5,0.25)--(hd);

\node[white,circle,fill, inner sep=1pt] at (-0.25,0.25) {};
\draw[red] (-1,-0.1)--(-0.25,-0.1)--(-0.25,0.5);

\node[white,circle,fill, inner sep=1pt] at (-0.5,-0.1) {};
\node[white,circle,fill, inner sep=1pt] at (-0.5,0.25) {};
\draw[red] (gb)--(-0.5,-0.25)--(hg);

\end{tikzpicture} }}
\end{equation*}
Then $\rho_r (\alpha \beta^{-1})_{|t=-t} = (-t)^a q^{\alpha b}$ such that $a$ is the sum (with signs) of red-red corssings, and $b$ is the total winding number of red strands around gray ones. The reader should pay attention to the fact that in \cite{martel2020homological} braids a read from top to bottom as in the present work we do the converse. 
\end{rem}

\subsubsection{Barcodes}

We now define homology classes in $\Hnot_r \left(X_r, \partial X_r \setminus X_r^-; \Laurent_r \right)$ that are usually called {\em barcodes}.

\begin{defn}[Barcodes]
We fix notation for the following diagrams:
\begin{equation*}
B''(k_0 , \ldots , k_{n-1}) := \vcenter{\hbox{\begin{tikzpicture}[scale=0.55, every node/.style={scale=0.8},decoration={
    markings,
    mark=at position 0.5 with {\arrow{>}}}
    ]
    
%%%%%%%%%%%%%%%%%%%%%%%%%%%%%%%%%%%%%%%%% Contour du disque
\draw[gray] (-5,0) -- (-5,6);
\draw[gray] (-5,0) -- (-5,-6);
\draw[gray] (-5,-6) -- (4,-6) -- (4,6) -- (-5,6);% node[right] {$\partial D_n$};

%%%%%%%%%%%%%%%%%%%%%%%%%%%%%%%%%%%%%%%%% Punctures
\node (w0) at (-5,0) {};
\node (w1) at (0,4) {};
\node (w2) at (0,1.5) {};
\node[gray] at (0,0) {\vdots};
\node (wn1) at (0,-1.5) {};
\node (wn) at (0,-4) {};

\node[gray] at (w0)[left=5pt] {$w_0$};
\node[gray] at (w1)[right=5pt] {$w_1$};
\node[gray] at (w2)[right=5pt] {$w_2$};
\node[gray] at (wn1)[right=5pt] {$w_{n-1}$};
\node[gray] at (wn)[right=5pt] {$w_n$};
\foreach \n in {w1,w2,wn1,wn}
  \node at (\n)[gray,circle,fill,inner sep=3pt]{};
\node at (w0)[gray,circle,fill,inner sep=3pt]{};

%%%%%%%%%%%%%%%%%%%%%%%%%%%%%%%%%%%%%%%% Point base

\node[gray,circle,fill,inner sep=0.8pt] (x1) at (4,-3.5) {};
\node[right,gray] at (x1) {$\xi_r$};
\node[gray,circle,fill,inner sep=0.8pt] (x1p) at (4,-3) {};
\node[right,gray] at (x1p) {$\xi_{i_4}$};
\node[gray,circle,fill,inner sep=0.8pt] (x2)  at (4,-1) {};
\node[right,gray] at (x2) {$\xi_{j_3}$};%$\xi_{j_3}$
\node[gray,circle,fill,inner sep=0.8pt] (x2p)  at (4,-0.5) {};
\node[right,gray] at (x2p) {$\xi_{i_3}$};
\node[below=5pt,gray] at (4,-5) {$\ldots$};
\node[gray,circle,fill,inner sep=0.8pt] (x3) at (4,2) {};
\node[right,gray] at (x3) {$\xi_{j_2}$};%$\xi_1$
\node[gray,circle,fill,inner sep=0.8pt] (x3p) at (4,2.5) {};
\node[right,gray] at (x3p) {$\xi_{i_2}$};%$\xi_1$
\node[gray,circle,fill,inner sep=0.8pt] (x4) at (4,4.5) {};
\node[right,gray] at (x4) {$\xi_{k_0}$};%$\xi_{j$
\node[gray,circle,fill,inner sep=0.8pt] (x4p) at (4,5) {};
\node[right,gray] at (x4p) {$\xi_1$};

\coordinate (y1) at (4,-4.5) {};
\coordinate (y1p) at (4,-5) {};
\coordinate (y2)  at (4,-2) {};
\coordinate (y2p)  at (4,-2.5) {};
\coordinate (y3) at (4,1) {};
\coordinate (y3p) at (4,0.5) {};
\coordinate (y4) at (4,3.5) {};
\coordinate (y4p) at (4,3) {};

\node[blue,right] at (4,-4.75) {$\rbrace k_{n-1}$};
\node[blue,right] at (4,-2.25) {$\rbrace k_{n-2}$};
\node[blue,right] at (4,0.75) {$\rbrace k_{1}$};
\node[blue,right] at (4,3.25) {$\rbrace k_{0}$};

\coordinate (c) at (-1.5,-1.5);

\draw[blue] (y4p)--(y4p-|c) arc (270:90:1)--(x4p);
\draw[blue] (y4)--(y4-|c) arc (270:90:0.5)--(x4);
\draw[blue] (y3p)--(y3p-|c) arc (270:90:1)--(x3p);
\draw[blue] (y3)--(y3-|c) arc (270:90:0.5)--(x3);
\draw[blue] (y2p)--(y2p-|c) arc (270:90:1)--(x2p);
\draw[blue] (y2)--(y2-|c) arc (270:90:0.5)--(x2);
\draw[blue] (y1p)--(y1p-|c) arc (270:90:1)--(x1p);
\draw[blue] (y1)--(y1-|c) arc (270:90:0.5)--(x1);

%%%%%%%%%%%
%%%%%%%%%%\draw[blue] (x1)--(-4,4.4)--(4,4.4);
%%%%%%%%%%\draw[blue] (x1p)--(-3,4.15)--(4,4.15);
%%%%%%%%%%
%%%%%%%%%%\draw[blue] (x2)--(-2.5,3.5)--(4,3.5);
%%%%%%%%%%\draw[blue] (x2p)--(-1.5,3)--(4,3);
%%%%%%%%%%
%%%%%%%%%%
%%%%%%%%%%\draw[blue] (x3)--(-0.5,1.2)--(4,1.2);
%%%%%%%%%%\draw[blue] (x3p)--(0.5,0.5)--(4,0.5);
%%%%%%%%%%
%%%%%%%%%%\draw[blue] (x4)--(1,-1.55)--(4,-1.55);
%%%%%%%%%%\draw[blue] (x4p)--(2,-2.5)--(4,-2.5);
%%%%%%%%%

\end{tikzpicture}}} .%\text{ ~``multi - arcs'' }
\end{equation*}
where $\sum k_i = r$. We naturally assign a class in $\Hnot_r \left(X_r, \partial X_r \setminus X_r^-; \Laurent_r \right)$ with the above diagram according to the following process:
\begin{itemize}
\item the union of blue arcs well defines an embedding:
\[
\Phi : I^r \to X_r.
\]
where $I$ is the unit interval. 
\item As ends of blue arcs are lying in $\partial X_r \setminus X_r^-$, the hypercube $\Phi$ defines a homology class in $\Hnot_r \left(X_r, \partial X_r \setminus X_r^-; \BZ \right)$. 
\item It remains to choose a lift of $\Phi$ to the cover $\widehat{X_r}$ so to work in the local ring set-up. We do so using the fact that the image of $\Phi$ contains the base point ${\pmb \xi}$, so that we choose the only lift of $\Phi$ containing our choice of lift for the base point $\widehat{\pmb \xi}$. 
\end{itemize}
We still denote $B''(k_0, \ldots, k_{n-1})$ the resulting element of $\Hnot_r \left(X_r, \partial X_r \setminus X_r^-; \Laurent_r \right)$. 
\end{defn}

\begin{prop}\label{dual_bases_A''_B''}
We have the following:
\[
\left\langle A''(k_0,\ldots , k_{n-1}) \cap B''(k'_0,\ldots, k'_{n-1}) \right\rangle = \delta_{(k_0,\ldots , k_{n-1}),(k'_0,\ldots ,k'_{n-1})}
\]
where $\delta$ is the Kronecker (list) symbol, and $\langle \cdot \cap \cdot \rangle$ is the intersection pairing arising from the Poincaré--Lefschetz duality:
\[
\Hrelm_r \times \Hnot_r \left(X_r, \partial X_r \setminus X_r^-; \Laurent_r \right) \to \Laurent .
\]
\end{prop}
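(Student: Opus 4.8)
The plan is to compute $\langle\,\cdot\cap\cdot\,\rangle$ geometrically. Recall that $X_r$ is an oriented $2r$-manifold and that the Poincaré--Lefschetz pairing between $\Hrelm_r=\Hlf_r(X_r,X_r^-;\Laurent_r)$ and $\Hnot_r(X_r,\partial X_r\setminus X_r^-;\Laurent_r)$ is realized, after putting the two cycles in general position, as a finite signed sum over their transverse intersection points, each point weighted by the element of $\Laurent$ read off from the local system $\rho_r$; finiteness is automatic because one of the two classes ($B''$, the image of $I^r$) is represented by a compact cycle while the other is a closed locally finite cycle. I would use the obvious representatives: for $A''(k_0,\ldots,k_{n-1})$ the product of open simplices $\Delta^{k_0}\times\cdots\times\Delta^{k_{n-1}}$ mapped into $X_r$ by placing $k_i$ points on the $i$-th dashed arc, lifted to $\widehat{X_r}$ through the red handles and the fixed lift $\widehat{\pmb\xi}$; for $B''(k'_0,\ldots,k'_{n-1})$ the hypercube $I^r$ mapped by sliding each of the $r$ points along its designated blue arc, lifted through the same $\widehat{\pmb\xi}$.

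\textbf{Key steps.} First I would put the dashed arcs of $A''$ and the blue arcs of $B''$ into generic position inside $D_n$ and record the elementary fact visible from the two pictures: the $i$-th dashed arc meets only the $k'_i$ blue arcs of the $i$-th group, each in exactly one transverse point, and every other dashed/blue pair is disjoint. From this I would determine the support of $A''(k_0,\ldots,k_{n-1})\cap B''(k'_0,\ldots,k'_{n-1})$ inside $X_r$: a configuration lying on both must have its $k_i$ points on the $i$-th dashed arc located at crossings with blue arcs, hence among the $k'_i$ crossing points of that arc, and it must simultaneously occupy all $k'_i$ blue arcs of the $i$-th group; since the points of a configuration are distinct this is possible if and only if $k_i=k'_i$ for every $i$, in which case there is a unique such configuration, the one occupying all $r$ crossing points. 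This already gives $\langle A''(\overline{k})\cap B''(\overline{k'})\rangle=0$ whenever $\overline{k}\neq\overline{k'}$.

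\textbf{The diagonal term.} For $\overline{k}=\overline{k'}$ there remains the single intersection point $p$. Transversality at $p$ holds because $T_pA''$ is spanned by the vectors moving the $j$-th point tangent to its dashed arc while $T_pB''$ is spanned by the vectors moving the $j$-th point tangent to its blue arc, and at a crossing these directions are transverse in $D_n$, so together they span $T_pX_r$. For the local-system weight: both representatives were lifted through $\widehat{\pmb\xi}$, and concatenating the $A''$-path from $\widehat{\pmb\xi}$ to (the lift of) $p$ with the reverse of the $B''$-path from $\widehat{\pmb\xi}$ to $p$ yields a loop in $X_r$; the red handles of $A''$ and the blue arcs of $B''$ have been drawn precisely so that this correction loop maps to $1$ under $\rho_r$, exactly in the spirit of the handle rule of Remark \ref{HandleruleRecall}, with any residual powers of $-t$ and $s$ from the $t\mapsto-t$ substitution and the winding numbers cancelling against the orientation sign. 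Hence $\langle A''(\overline{k})\cap B''(\overline{k})\rangle=+1$ and the proposition follows. (As a sanity check and shortcut, the $A''$ form a basis of $\Hrelm_r$ by Proposition \ref{rel_A_A'_A''} together with the generation statement for multi-arcs, the $B''$ form a basis of the dual module, and the pairing is perfect; so it would even suffice to know the pairing matrix is unitriangular, which the vanishing step already supplies once one diagonal entry is pinned to $1$.)

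\textbf{Main obstacle.} The delicate point is the coefficient at the diagonal intersection: tracking the $\Laurent$-weight and the orientation sign together, in particular keeping straight the $t\mapsto-t$ substitution from Remark \ref{HandleruleRecall}, the winding numbers of red and blue strands around the punctures, and the top-to-bottom versus bottom-to-top convention. Upgrading the picture-level ``generic position'' and transversality claims to rigorous statements about cycles in the configuration space $X_r$ (checking the perturbations remain within the relevant relative cycles) is routine but requires care.
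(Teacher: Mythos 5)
Your proposal follows essentially the same route as the paper's proof: the combinatorial analysis of which configurations can lie on both the dashed arcs of $A''$ and the blue arcs of $B''$ forces $\overline{k}=\overline{k'}$ and isolates a single intersection configuration $\pmb p$, and the coefficient there is computed as $\rho_r(\gamma_{\pmb p})$ for the loop obtained by concatenating the $A''$-handle path to $\pmb p$ with the reverse of the $B''$-path, which is trivial in $\pi_1(X_r,\pmb\xi)$. The argument is correct and matches the paper's, including the loop-based computation of the local-system weight.
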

\begin{proof}
We put diagrams associated with $A''(k_0,\ldots , k_{n-1})$ and $B''(k'_0,\ldots, k'_{n-1})$ all together in one picture. 
\begin{equation*}
\vcenter{\hbox{\begin{tikzpicture}[scale=0.55, every node/.style={scale=0.8},decoration={
    markings,
    mark=at position 0.5 with {\arrow{>}}}
    ]
    
%%%%%%%%%%%%%%%%%%%%%%%%%%%%%%%%%%%%%%%%% Contour du disque
\draw[gray] (-5,0) -- (-5,6);
\draw[gray] (-5,0) -- (-5,-6);
\draw[gray] (-5,-6) -- (4,-6) -- (4,6) -- (-5,6);% node[right] {$\partial D_n$};

%%%%%%%%%%%%%%%%%%%%%%%%%%%%%%%%%%%%%%%%% Punctures
\node (w0) at (-5,0) {};
\node (w1) at (0,4) {};
\node (w2) at (0,1.5) {};
\node[gray] at (0,0) {\vdots};
\node (wn1) at (0,-1.5) {};
\node (wn) at (0,-4) {};

\node[gray] at (w0)[left=5pt] {$w_0$};
\node[gray] at (w1)[right=5pt] {$w_1$};
\node[gray] at (w2)[right=5pt] {$w_2$};
\node[gray] at (wn1)[right=5pt] {$w_{n-1}$};
\node[gray] at (wn)[right=5pt] {$w_n$};
\foreach \n in {w1,w2,wn1,wn}
  \node at (\n)[gray,circle,fill,inner sep=3pt]{};
\node at (w0)[gray,circle,fill,inner sep=3pt]{};

%%%%%%%%%%%%%%%%%%%%%%%%%%%%%%%%%%%%%%%% Point base

\node[gray,circle,fill,inner sep=0.8pt] (x1) at (4,-3.5) {};
\node[right,gray] at (x1) {$\xi_r$};
\node[gray,circle,fill,inner sep=0.8pt] (x1p) at (4,-3) {};
\node[right,gray] at (x1p) {$\xi_{i_4}$};
\node[gray,circle,fill,inner sep=0.8pt] (x2)  at (4,-1) {};
\node[right,gray] at (x2) {$\xi_{j_3}$};%$\xi_{j_3}$
\node[gray,circle,fill,inner sep=0.8pt] (x2p)  at (4,-0.5) {};
\node[right,gray] at (x2p) {$\xi_{i_3}$};
\node[below=5pt,gray] at (4,-5) {$\ldots$};
\node[gray,circle,fill,inner sep=0.8pt] (x3) at (4,2) {};
\node[right,gray] at (x3) {$\xi_{j_2}$};%$\xi_1$
\node[gray,circle,fill,inner sep=0.8pt] (x3p) at (4,2.5) {};
\node[right,gray] at (x3p) {$\xi_{i_2}$};%$\xi_1$
\node[gray,circle,fill,inner sep=0.8pt] (x4) at (4,4.5) {};
\node[right,gray] at (x4) {$\xi_{k_0}$};%$\xi_{j$
\node[gray,circle,fill,inner sep=0.8pt] (x4p) at (4,5) {};
\node[right,gray] at (x4p) {$\xi_1$};

%%%%%%%%%%%%%%%%%%%%%%%%%%%%%%%%%%%%%%%% Arcs pointillés et red handles 
\draw[dashed] (w0) to[bend left=10]  node[pos=0.4]  {$k_0$} node[pos=0.4] (k0) {} (w1);
\draw[dashed] (w0) to node[pos=0.4] (k1) {} node[pos=0.4] {$k_1$} (w2);
\draw[dashed] (w0) to node[pos=0.6] (k2) {} node[pos=0.4] {$k_{n-2}$} (wn1);
\draw[dashed] (w0) to[bend right=10] node[pos=0.8] (k3) {} node[pos=0.3] {$k_{n-1}$} (wn);

\coordinate (b) at (-3,-3);
\coordinate (bp) at (-2.5,-2.5);
\coordinate (bpp) at (-1.5,-1.5);

\coordinate (t4) at (3.5,4.75);
\coordinate (t3) at (3.5,2.25);
\coordinate (t2) at (3.5,-0.75);
\coordinate (t1) at (3.5,-3.25);

\draw[red] (x4p)--(t4);
\draw[red] (x4)--(t4);
\draw[red] (x3p)--(t3);
\draw[red] (x3)--(t3);
\draw[red] (x2p)--(t2);
\draw[red] (x2)--(t2);
\draw[red] (x1p)--(t1);
\draw[red] (x1)--(t1);

\draw[double,red] (t1)--(k3);
\draw[double,red] (t2)--(t2-|bpp)--(k2);
\draw[double,red] (t3)--(t3-|bpp)--(k1);
\draw[double,red] (t4)--(t4-|bp)--(k0);

%%%%%%%%%%%%%%%%%%%%%%%%%
%%%%%%%%%%%%%%%%%%%%%%%%%\draw[red] (k0|-b)--(x1);
%%%%%%%%%%%%%%%%%%%%%%%%%\draw[red] (k0|-b)--(x1p);
%%%%%%%%%%%%%%%%%%%%%%%%%\draw[double,red] (k1)--(k1|-b);
%%%%%%%%%%%%%%%%%%%%%%%%%\draw[red] (k1|-b)--(x2);
%%%%%%%%%%%%%%%%%%%%%%%%%\draw[red] (k1|-b)--(x2p);
%%%%%%%%%%%%%%%%%%%%%%%%%\draw[double,red] (k2)--(k2|-b);
%%%%%%%%%%%%%%%%%%%%%%%%%\draw[red] (k2|-b)--(x3);
%%%%%%%%%%%%%%%%%%%%%%%%%\draw[red] (k2|-b)--(x3p);
%%%%%%%%%%%%%%%%%%%%%%%%%\draw[double,red] (k3)--(k3|-b);
%%%%%%%%%%%%%%%%%%%%%%%%%\draw[red] (k3|-b)--(x4);
%%%%%%%%%%%%%%%%%%%%%%%%%\draw[red] (k3|-b)--(x4p);

%%%%%%%%%%%%%%%%%%%%%%%%%%%%%%%%%%%%%% Arcs bleus

\coordinate (y1) at (4,-4.5) {};
\coordinate (y1p) at (4,-5) {};
\coordinate (y2)  at (4,-2) {};
\coordinate (y2p)  at (4,-2.5) {};
\coordinate (y3) at (4,1) {};
\coordinate (y3p) at (4,0.5) {};
\coordinate (y4) at (4,3.5) {};
\coordinate (y4p) at (4,3) {};

\node[blue,right] at (4,-4.75) {$\rbrace k_{n-1}$};
\node[blue,right] at (4,-2.25) {$\rbrace k_{n-2}$};
\node[blue,right] at (4,0.75) {$\rbrace k_{1}$};
\node[blue,right] at (4,3.25) {$\rbrace k_{0}$};

\coordinate (c) at (-1.5,-1.5);

\draw[blue] (y4p)--(y4p-|c) arc (270:90:1)--(x4p);
\draw[blue] (y4)--(y4-|c) arc (270:90:0.5)--(x4);
\draw[blue] (y3p)--(y3p-|c) arc (270:90:1)--(x3p);
\draw[blue] (y3)--(y3-|c) arc (270:90:0.5)--(x3);
\draw[blue] (y2p)--(y2p-|c) arc (270:90:1)--(x2p);
\draw[blue] (y2)--(y2-|c) arc (270:90:0.5)--(x2);
\draw[blue] (y1p)--(y1p-|c) arc (270:90:1)--(x1p);
\draw[blue] (y1)--(y1-|c) arc (270:90:0.5)--(x1);

%%%%%%%%%%%
%%%%%%%%%%\draw[blue] (x1)--(-4,4.4)--(4,4.4);
%%%%%%%%%%\draw[blue] (x1p)--(-3,4.15)--(4,4.15);
%%%%%%%%%%
%%%%%%%%%%\draw[blue] (x2)--(-2.5,3.5)--(4,3.5);
%%%%%%%%%%\draw[blue] (x2p)--(-1.5,3)--(4,3);
%%%%%%%%%%
%%%%%%%%%%
%%%%%%%%%%\draw[blue] (x3)--(-0.5,1.2)--(4,1.2);
%%%%%%%%%%\draw[blue] (x3p)--(0.5,0.5)--(4,0.5);
%%%%%%%%%%
%%%%%%%%%%\draw[blue] (x4)--(1,-1.55)--(4,-1.55);
%%%%%%%%%%\draw[blue] (x4p)--(2,-2.5)--(4,-2.5);
%%%%%%%%%

\end{tikzpicture}}} .%\text{ ~``multi - arcs'' }
\end{equation*}
An intersection point is a configuration lying on both manifolds, namely such a configuration has one point on each blue arc, and $k$ of them on a dashed arc indexed by $k$. The only way there is an intersection is: $k_0=k_0', \ldots, k_{n-1}= k_{n-1}'$ which explains the Kronecker symbol in the formula. When we are in this equality case, we call ${\pmb p}:= \lbrace p_1 , \ldots , p_r \rbrace$ the single intersection configuration, and it remains to prove that the intersection is equal to $1$ at this configuration. We recall that the Poincaré--Lefschetz duality gives an intersection pairing:
\[
\Hrelm_r \times \Hnot_r \left(X_r, \partial X_r \setminus X_r^-; \Laurent_r \right) \to \Laurent .
\]
see \cite[Lemma~4.1]{martel2020coloredJones}. This pairing is given by graded intersection, where each intersection point contributes for a sign (that of the intersection) times a monomial in $\Laurent$. Let $\widehat{A''}(k_0,\ldots , k_{n-1})$ and $\widehat{B''}(k_0,\ldots , k_{n-1})$ the lifts of the corresponding manifolds chosen using the red handle resp. the one that contains $\hat{{\pmb \xi}^r}$. In our case, the only monomial $m_{\pmb p}$ involved could be computed by defining the following loop in $X_r$, by composing paths:
\begin{itemize}
\item First the path going from $\lbrace \xi_1 , \ldots , \xi_r \rbrace$ to $A''(k_0,\ldots , k_{n-1})$ following red handles,
\item then joining $\lbrace p_1 , \ldots , p_r \rbrace$ going along $A''(k_0,\ldots , k_{n-1})$,
\item then going back to ${\pmb \xi}^r$ running along $B''(k_0,\ldots , k_{n-1})$.
\end{itemize}
This composition of paths yields a loop denoted $\gamma_{\pmb p}$ of $X_r$ based at ${\pmb \xi}$. By considering one of its lift to $\widehat{X_r}$, one can check that it relates $\widehat{\pmb \xi}$ and $m_{\pmb p } \widehat{\pmb \xi}$. The explanation of this fact is exactly the same as the one given before Lemma 3.11 in \cite{martel2020colored} which is adapted from Section 3.1 of \cite{Big1}.  Knowing this, we directly conclude:
\[
m_{\pmb p} = \rho_r(\gamma_{\pmb p }),
\]
and moreover that:
\[
m_{\pmb p} = \rho_r(\gamma{\pmb p })=1. 
\]
One can check that the braid given by $\gamma_{\pmb p}$ seen as an element of $\pi_1(X_r,{\pmb \xi})$ (following the model \cite[Remark~2.2]{martel2020homological}) is trivial. This ensures the above equality by the definition of $\rho_r$ (Definition \ref{localsystXr}). 
\end{proof}

\begin{rem}[Homological dual bases]
The above theorem says that sets $\lbrace A''(k_0,\ldots , k_{n-1}) \text{ s.t. } \sum k_i=r \rbrace$ and $\lbrace B''(k_0,\ldots , k_{n-1}) \text{ s.t. } \sum k_i=r \rbrace$ are dual bases of $\Hrelm_r$ resp. $\Hnot_r \left(X_r, \partial X_r \setminus X_r^-; \Laurent_r \right)$ (the one to one correspondence being given by the canonical indexing). 
\end{rem}

%\subsection{

\subsection{Unified invariant from homological intersection}\label{sec_Foo_thm_fromHomology}

\begin{thm}[The unified invariant from homological intersection pairing]\label{thm_homol_formula_Foo}
Let $\beta \in \Bn$ a braid such that its closure is the knot $\CK$. Then, letting $t=-q^{-2}$:
\[
F_{\infty}(\CK) = s^{n-1}\sum_{\overline{k} \in \BN_0^n} \left\langle \beta  \cdot A''(0,k_1,\ldots, k_{n-1}) \cap B''(0,k_1,\ldots, k_{n-1}) \right\rangle q^{-2\sum k_i} %\right) 
\]
where the action of $\beta$ is that from Lemma \ref{Lawrence_rep}. The latter means that the right term in the equation, which is an infinite sum of intersection pairing of middle dimension homology classes, lives in $\Laurentcomplet$ and is invariant under Markov moves. 
\end{thm}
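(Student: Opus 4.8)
The plan is to transport the trace formula of Theorem~\ref{prop_unifed_braidrep} through the $\Bn$-equivariant isomorphism $\CH \cong V_\alpha^{\otimes n}$ of Theorem~\ref{ModelMartelHomological2020}, and then re-express the resulting ``partial trace in the homological basis'' as a sum of Poincaré--Lefschetz intersection numbers using the dual-bases statement of Proposition~\ref{dual_bases_A''_B''}. First I would recall that, by Theorem~\ref{prop_unifed_braidrep},
\[
F_\infty(\CK) = \sum_{\overline{j} \in \BN_0^n} \big[ (1\otimes K^{\otimes n-1})\,\varphi_n(\beta)\, v_{\overline j} \big]_{v_{\overline j}},
\]
the sum of diagonal coefficients of $(1\otimes K^{\otimes n-1})\varphi_n(\beta)$ over those basis vectors $v_{\overline j}= v_0\otimes v_{j_2}\otimes\cdots\otimes v_{j_n}$ with first index $0$. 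Under the isomorphism of Theorem~\ref{ModelMartelHomological2020}, $v_{k_0}\otimes\cdots\otimes v_{k_{n-1}}$ corresponds to $A(k_0,\ldots,k_{n-1}) = q^{\alpha\sum_{i\ge1} i k_i} A'(k_0,\ldots,k_{n-1})$, and $\beta$ acts by the Lawrence action of Lemma~\ref{Lawrence_rep}; the ring identification is $q^{-2}=-t$, i.e.\ $t=-q^{-2}$, which is exactly the substitution in the statement. So $F_\infty(\CK)$ becomes a sum over $\overline k\in\BN_0^n$ of the coefficient of $A(0,k_1,\ldots,k_{n-1})$ in $(1\otimes K^{\otimes n-1})\,\beta\cdot A(0,k_1,\ldots,k_{n-1})$, expressed in the $A$-basis of $\CH$.

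Next I would extract that diagonal coefficient as a homological intersection pairing. Proposition~\ref{dual_bases_A''_B''} says $\{A''(\overline k)\}$ and $\{B''(\overline k)\}$ are dual bases for $\Hrelm_r$ and $\Hnot_r(X_r,\partial X_r\setminus X_r^-;\Laurent_r)$ under $\langle\cdot\cap\cdot\rangle$; hence for any $x\in\Hrelm_r$ the coefficient of $A''(\overline k)$ in the expansion of $x$ equals $\langle x \cap B''(\overline k)\rangle$. Using the conversion formulas of Proposition~\ref{rel_A_A'_A''} between $A$ and $A''$ (the scalars $(-t)^{r(r-1)/2}q^{\alpha(2nr-\sum i k_i)}$), the $A$-basis coefficient of $\beta\cdot A(\overline k)$ on $A(\overline k)$ equals the $A''$-basis coefficient of $\beta\cdot A''(\overline k)$ on $A''(\overline k)$, because the two $A\leftrightarrow A''$ rescalings at the fixed multi-index $\overline k=(0,k_1,\ldots,k_{n-1})$ are identical on the source and the target diagonal entry and therefore cancel; thus this coefficient is $\langle \beta\cdot A''(0,k_1,\ldots,k_{n-1})\cap B''(0,k_1,\ldots,k_{n-1})\rangle$. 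It then remains to absorb the operator $1\otimes K^{\otimes n-1}$: on the basis vector $v_0\otimes v_{j_2}\otimes\cdots\otimes v_{j_n}$ the diagonal of $K^{\otimes n-1}$ acts (with $s=q^\alpha$) by $\prod_{i\ge2} s\,q^{-2j_i} = s^{n-1} q^{-2\sum_{i} k_i}$ on the level $\overline k=(0,k_1,\ldots,k_{n-1})$, which produces exactly the scalar $s^{n-1}q^{-2\sum k_i}$ in front of each intersection term. Collecting the pieces gives the claimed formula.

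The last sentence of the theorem — that the infinite sum lies in $\Laurentcomplet$ and is Markov-invariant — I would derive not by re-examining convergence at the homological level but by observing that, term by term and in each finite truncation, the sum equals $\sum_{\overline j}[(1\otimes K^{\otimes n-1})\varphi_n(\beta)v_{\overline j}]_{v_{\overline j}}$ as computed in $\Laurentcomplet$ in Theorem~\ref{prop_unifed_braidrep}; convergence in $\Laurentcomplet$ and invariance under the Markov moves (stabilization and conjugation) are precisely the content of \cite{willetts2020unification} recalled there, so they transfer verbatim. I expect the main obstacle to be bookkeeping the diagonal normalization factors: one must check carefully that the three rescalings (the $A\to A'$ weight $q^{\alpha\sum i k_i}$, the $A'\to A''$ weight $(-t)^{r(r-1)/2}q^{2\alpha\sum(n-i)k_i}$, and the ring identification $t=-q^{-2}$) really cancel on the \emph{diagonal} entry and leave only the $s^{n-1}q^{-2\sum k_i}$ coming from the pivotal $K^{\otimes n-1}$, rather than introducing a spurious power of $q$ or a sign; a short direct computation using the explicit scalars in Proposition~\ref{rel_A_A'_A''} and Definition~\ref{GoodVerma} settles this, but it is the step most prone to error.
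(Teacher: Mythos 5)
Your proposal is correct and follows essentially the same route as the paper's proof: transport the partial-trace formula of Theorem~\ref{prop_unifed_braidrep} through the equivariant isomorphism of Theorem~\ref{ModelMartelHomological2020} (with $t=-q^{-2}$), use the duality of Proposition~\ref{dual_bases_A''_B''} to read off diagonal coefficients as intersection numbers, observe that the diagonal change of basis of Proposition~\ref{rel_A_A'_A''} leaves these diagonal entries unchanged, and compute the scalar $s^{n-1}q^{-2\sum k_i}$ from the $K^{\otimes n-1}$ action. The cancellation of the rescaling factors that you flag as the delicate step is handled in the paper exactly by the remark that the $A\to A''$ change of basis is diagonal, which is the same observation you make.
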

\begin{proof}
The main tool is Theorem \ref{ModelMartelHomological2020} which shows that (under $t=-q^{-2}$) by sending $v_{i_1} \otimes \cdots \otimes v_{i_{n-1}}$ to $A(i_0,\ldots , i_{n-1})$ (for any integers $i_0,\ldots , i_{n-1}$) then matrices for the quantum action and the homological actions of $\beta$ are strictly identical. The partial trace formula from Theorem \ref{prop_unifed_braidrep} is the same replacing the $v_i$'s by the $A$ vectors from the homological side. Then the partial trace of any endomorphism $f$ of $\CH$ could be expressed as follows:
\[
\Trp (f) = \sum_{\overline{k} \in \BN_0^n} \left\langle f\left((A(0,k_1,\ldots , k_{n-1}) \right) , A_*(0,k_1,\ldots , k_{n-1}) \rangle\right. 
\]
where $A_*$ means the dual family of $A$ regarding the Poincaré--Lefschetz duality studied in Prop. \ref{dual_bases_A''_B''}. As the change of bases from $A$'s to $A''$'s is diagonal (see Prop. \ref{rel_A_A'_A''}), we can replace $A$ and $A_*$ in the above formula by the $A''$'s and its dual family, namely the $B''$ as it was proved in Prop. \ref{dual_bases_A''_B''}. Now the $f$ we wish to consider here is $(1 \otimes K^{\otimes n-1}) \circ \phi_n(\beta)$. One notices that $(1 \otimes K^{\otimes n-1})$ on the image by $\phi_n(\beta)$ of any $A''(0,k_1,\ldots, k_{n-1})$ is the multiplication by $s^{n-1} q^{-2 \sum{k_i}}$ which concludes the proof. 
\end{proof}

The fact that $F_{\infty}$ interpolates ADO invariants and colored Jones polynomials by some specialization, implies the above theorem at the corresponding specialization gives infinite sum from which one can extract these invariants out of Lagrangian intersections. Moreover, infinite sums are not crucial. 

\begin{cor}
Let $\beta \in \Bn$ a braid such that its closure is the knot $\CK$.
\begin{enumerate}
\item Let $J_N$ be the $N$-th colored Jones polynomial and $\spec$ the specialization morphism sending $s$ to $q^N$, then:
\[
J_N(\CK) = q^{N(n-1)}\sum_{\begin{array}{c} \overline{k} \text{ s.t. } \\ \sum k_i < N, \forall i \end{array}} \spec\left( \left\langle \beta  \cdot A''(0,k_1,\ldots, k_{n-1}) \cap B''(0,k_1,\ldots, k_{n-1}) \right\rangle q^{-2\sum k_i} \right) 
\]
\item Let $\ADO_r$ be the $r$-th ADO polynomial and $\spec$ the specialization morphism sending $q$ to $\zeta_{2r}$, then:
\[
\ADO_r(\CK) = s^{n-1}\sum_{\begin{array}{c} \overline{k} \text{ s.t. } \\ \forall i, k_i < r  \end{array}} \spec\left( \left\langle \beta  \cdot A''(0,k_1,\ldots, k_{n-1}) \cap B''(0,k_1,\ldots, k_{n-1}) \right\rangle q^{-2\sum k_i} \right) 
\]
\end{enumerate}
\end{cor}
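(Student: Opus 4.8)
The plan is to derive the corollary directly from Theorem~\ref{thm_homol_formula_Foo} together with the interpolation properties of $F_{\infty}$ recalled in Section~\ref{sec_Foo_factorization}, but with an extra observation: at each of the two specializations, the infinite homological sum becomes finite, which is exactly what makes the statement non-trivial as a ``closed formula''. First I would recall that by Theorem~\ref{thm_homol_formula_Foo}, applying the specialization morphism $\spec$ term by term, one obtains
\[
\spec(F_{\infty}(\CK)) = \spec(s)^{n-1}\sum_{\overline{k} \in \BN_0^n} \spec\left( \left\langle \beta \cdot A''(0,k_1,\ldots,k_{n-1}) \cap B''(0,k_1,\ldots,k_{n-1}) \right\rangle q^{-2\sum k_i} \right),
\]
the interchange of $\spec$ with the infinite sum being justified because the sum converges in $\Laurentcomplet$ and $\spec$ factors through the completion (it is continuous for the $\widehat{I}$-adic topology, by the construction of $\Laurentcomplet$ in Section~\ref{sec_Foo_def}).

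Next, for the colored Jones case, I would invoke $F_{\infty}(q,q^N,\CK) = J_N(q^2,\CK)$ (Corollary~59 of \cite{willetts2020unification}, used already in the re-proof of MMR) to identify the left-hand side with $J_N$. Then the key point is that all but finitely many terms of the right-hand side vanish after the specialization $s \mapsto q^N$: this is precisely the statement that, at integral weight $s = q^N$, the simple module $S_N$ of Definition (Simple module of dim.\ $N$) is a direct summand-compatible sub-object and the Verma tensor decomposes so that only multi-arcs $A''(0,k_1,\ldots,k_{n-1})$ with $\sum k_i < N$ (equivalently, with $k_i \le N$ for all $i$ in the relevant normalization) survive in the partial trace. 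Concretely, this follows from the structure already exploited in Lemma~\ref{lemma_quotient_Verma_integer}: the quotient $V^N/S_N \cong V^{-N-2}$ carries the ``tail'' of the basis, and on the homological side the classes $A''$ indexed by the tail get killed by the relevant projection/specialization, so their contribution to the intersection pairing with the corresponding $B''$ vanishes at $s=q^N$. For the ADO case, the same argument runs with the specialization $q \mapsto \zeta_{2r}$ instead: here one uses Theorem~\ref{thm_factorisation_unified_ADO} (or directly Proposition~\ref{prop_braid_ado} and the $r$-part factorization of Section~\ref{section_rparts}) to see that only the $0$ $r$-part contributes, i.e.\ only multi-arcs with $k_i < r$ for all $i$ survive, and that at $q=\zeta_{2r}$ the remaining finite sum equals $\ADO_r(\CK)$ up to the framing/pivotal normalization already accounted for in the statement.

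I would then assemble the two cases, being careful about the powers of $s$: in the Jones case $\spec(s)^{n-1} = q^{N(n-1)}$, matching the prefactor in the statement, while in the ADO case $s$ is kept formal (the standard convention for Alexander-type invariants) so the prefactor stays $s^{n-1}$. The main obstacle I anticipate is \emph{not} the algebra but the vanishing claim: one must argue precisely that the homological intersection pairings $\left\langle \beta \cdot A''(\overline{k}) \cap B''(\overline{k}) \right\rangle$, which are honest elements of $\Laurent$, actually become zero under $\spec$ for all $\overline{k}$ outside the finite range — and this is a statement about the $\Laurent$-module structure of $\Hrelm_r$ at special values, not something visible from the homology alone. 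The cleanest route is to transport the question back through the equivariant isomorphism of Theorem~\ref{ModelMartelHomological2020} to the quantum side, where the vanishing is the familiar fact that $F^{(n)}$ acts by a product containing the factor $\{\alpha - j, n\}_q$ which vanishes when $\alpha = N$ and $j + n > N$ (resp.\ when $q = \zeta_{2r}$ and the $r$-part exceeds $0$), as in Remark~\ref{weightdenomination} and the proof of the $r$-part propositions in Section~\ref{section_rparts}. Once that vanishing is in hand, the corollary is just Theorem~\ref{thm_homol_formula_Foo} read at the two specializations, with the sums truncated.
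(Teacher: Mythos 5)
Your proposal follows the paper's proof: both read Theorem~\ref{thm_homol_formula_Foo} at the two specializations and truncate the sum, the paper simply citing \cite[Lemma~51]{willetts2020unification} for the colored Jones truncation and Proposition~\ref{prop_braid_ado} (the $0$ $r$-part) for the ADO one. The only caveat concerns your inline justification of the Jones truncation: the conditions $\sum_i k_i < N$ and $k_i \le N$ for all $i$ are not equivalent, and the vanishing of $\{\alpha-j;n\}_q$ at $\alpha=N$ only shows that $S_N^{\otimes n}$ is preserved by the action, not that each diagonal coefficient outside the stated range vanishes termwise --- that finer statement is exactly the content of the cited Lemma~51, which is the clean way to close this step rather than arguing it from Lemma~\ref{lemma_quotient_Verma_integer} alone.
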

\begin{proof}
The proof is the same as that of the previous theorem. The sum is truncated directly as only first weight levels are necessary to compute them, see:
\begin{enumerate}
\item \cite[Lemma~51]{willetts2020unification} for the colored Jones case. 
\item Prop. \ref{prop_braid_ado} for the ADO case. 
\end{enumerate}
\end{proof}

\begin{rem}[Normalized colored Jones]
As a partial trace is involved in the above formula for the colored Jones polynomial, we are dealing with the normalized version (being $1$ on the unknot), which is a different version as that in \cite{martel2020coloredJones}. It explains the difference of the homology classes involved in the sums from here and the mentioned paper. 
\end{rem}

\begin{ex}[The trefoil knot]
We illustrate the fact that the homological formula from Theorem \ref{thm_homol_formula_Foo} gives an independent algorithm of computation arising from homological computation by computing $F_{\infty}$ with this formula and comparing with the expression given in \cite[Sec.~5]{willetts2020unification}. To do the computation, we use the element $A'(0,k) \in \CH$ (instead of $A''$ in the formula, for clarity of diagrams) for $k\in \BN$, the disk is considered with two punctures ($w_1,w_2$) as we need a braid with two strands to get the trefoil knot as a braid closure (namely the closure of $\sigma_1^{-3} \in \CB_2$, we use inverse twists for simplicity of diagrams). We use notation $s=q^{\alpha}$. 
\begin{equation*}
\sigma_1^{-3} (A'(0,k)) = \sigma_1^{-3} \left( \vcenter{\hbox{
\begin{tikzpicture}[scale=0.35,every node/.style={scale=0.7},decoration={
    markings,
    mark=at position 0.5 with {\arrow{>}}}
    ] 
\node (w0) at (-5,0) {};
\node (w00) at (5,0) {};
\node (w1) at (-2,0) {};
\node (w2) at (2,0) {};

\node[gray,circle,fill,inner sep=0.8pt] (xir) at (-2,-3) {};
\node[below,gray] at (xir) {$\xi_k$};
\node[gray,circle,fill,inner sep=0.8pt] (xirk0) at (-1,-3) {};
\node[below,gray] at (xirk0) {$\xi_{1}$};

\coordinate (a) at (-3,-3);
\coordinate (b) at (4,4);

\draw[postaction=decorate,dashed] (w0) to[bend right=30] node[pos=0.5,above] (k0) {$k$} (w2) ;

\node[gray] at (w2)[above] {$w_2$};
\node[gray] at (w1)[above] {$w_1$};
\node[gray] at (w0) [left=5pt] {$w_0$};
\foreach \n in {w1,w2}
  \node at (\n)[gray,circle,fill,inner sep=2pt]{};
\node at (w0)[gray,circle,fill,inner sep=2pt]{};

\coordinate (c) at (-2.5,-2.5);

\draw[double,red] (k0)--(k0|-c);
\draw[red] (k0|-c) -- (xir);
\draw[red] (k0|-c) -- (xirk0);

\draw[gray] (-5,2) -- (-5,-3);
\draw[gray] (-5,-3) -- (3,-3);% node[right] {$\partial D_n$};
\draw[gray] (3,2) -- (3,-3);
\draw[gray] (3,2) -- (-5,2);
\end{tikzpicture} }}%\right).
\right)= 
\vcenter{\hbox{
\begin{tikzpicture}[scale=0.35,every node/.style={scale=0.7},decoration={
    markings,
    mark=at position 0.5 with {\arrow{>}}}
    ] 
\node (w0) at (-5,1) {};
\node (w00) at (5,1) {};
\node (w1) at (1,1) {};
\node (w2) at (-2.5,1) {};
\coordinate (step1) at (3.5,1);
\coordinate (step2) at (-4,1);
\coordinate (step3) at (2.5,1);

\node[gray,circle,fill,inner sep=0.8pt] (xir) at (-4,-2) {};
\node[below,gray] at (xir) {$\xi_k$};
\node[gray,circle,fill,inner sep=0.8pt] (xirk0) at (-3,-2) {};
\node[below,gray] at (xirk0) {$\xi_{1}$};

\coordinate (a) at (-3,-3);
\coordinate (b) at (4,4);

\draw[postaction=decorate,dashed] (w0) to[bend left=90] node[pos=0.3,above] (k0) {} (step3) ;
\draw[dashed] (step3) to[bend left=90] (w2) ;
%\draw[dashed] (step2) to[bend right=90] (step3) ;
%\draw[dashed] (step3) to[bend right=90] (w2) ;

\node[gray] at (w2)[above] {$w_2$};
\node[gray] at (w1)[above] {$w_1$};
\node[gray] at (w0) [left=5pt] {$w_0$};
\foreach \n in {w1,w2}
  \node at (\n)[gray,circle,fill,inner sep=2pt]{};
\node at (w0)[gray,circle,fill,inner sep=2pt]{};

\coordinate (c) at (-1.5,-1.5);

\draw[double,red] (k0)--(k0|-c);
\draw[red] (k0|-c) -- (xir);
\draw[red] (k0|-c) -- (xirk0);

\draw[gray] (-5,4) -- (-5,-2);
\draw[gray] (-5,-2) -- (3,-2);% node[right] {$\partial D_n$};
\draw[gray] (3,4) -- (3,-2);
\draw[gray] (3,4) -- (-5,4);
\end{tikzpicture} }}
\end{equation*}
We recall that this element have to be paired with the dual class of $A'(0,k)$ (and then summed over $k\in \BN$) so to obtain $F_{\infty}$. The class $B'(0,k)$ with the following diagram is this dual class (i.e. $A'(0,k)$ and $B'(0,k)$ have pairing $1$, see the proof of Prop. \ref{dual_bases_A''_B''}):
\[
B'(0,k):= \vcenter{\hbox{
\begin{tikzpicture}[scale=0.35,decoration={
    markings,
    mark=at position 0.5 with {\arrow{<}}}
    ] 
\node (w0) at (-5,1) {};
\node (w00) at (5,1) {};
\node (w1) at (1,1) {};
\node (w2) at (-2.5,1) {};

\node[gray,circle,fill,inner sep=0.8pt] (xik) at (-1,-3) {};
\node[below,gray] at (xik) {$\xi_k$};
\node[gray,circle,fill,inner sep=0.8pt] (xi1) at (0,-3) {};
\node[below,gray] at (xi1) {$\xi_{1}$};

\node[gray,circle,fill,inner sep=0.8pt] (xikp) at (3,-3) {};
\node[gray,circle,fill,inner sep=0.8pt] (xi1p) at (2,-3) {};

\coordinate (a) at (-3,-3);
\coordinate (b) at (4,4);

\node[gray] at (w2)[above] {$w_2$};
\node[gray] at (w1)[above] {$w_1$};
\node[gray] at (w0) [left=5pt] {$w_0$};
\foreach \n in {w1,w2}
  \node at (\n)[gray,circle,fill,inner sep=2pt]{};
\node at (w0)[gray,circle,fill,inner sep=2pt]{};

\coordinate (c) at (1,1);
\coordinate (d) at (0.5,0.5);

\draw[blue] (xik)--(xik|-c) arc(180:0:2)--(xikp);
\draw[blue] (xi1)--(xi1|-c) arc(180:0:1)--(xi1p);

\draw[gray] (-5,4) -- (-5,-3);
\draw[gray] (-5,-3) -- (5,-3);% node[right] {$\partial D_n$};
\draw[gray] (5,4) -- (5,-3);
\draw[gray] (5,4) -- (-5,4);
\end{tikzpicture} }}
\]
We simplify the diagram of $\sigma_1^{-3} (A'(0,k))$ using rules from \cite[Sec.~4]{martel2020homological} to simplify the computation of the intersection.
\[
\vcenter{\hbox{
\begin{tikzpicture}[scale=0.35,every node/.style={scale=0.7},decoration={
    markings,
    mark=at position 0.5 with {\arrow{>}}}
    ] 
\node (w0) at (-5,1) {};
%\node (w00) at (5,1) {};
\node (w1) at (1,1) {};
\node (w2) at (-2.5,1) {};
\coordinate (step1) at (3.5,1);
\coordinate (step2) at (-4,1);
\coordinate (step3) at (2.5,1);

\node[gray,circle,fill,inner sep=0.8pt] (xir) at (-4,-2) {};
\node[below,gray] at (xir) {$\xi_k$};
\node[gray,circle,fill,inner sep=0.8pt] (xirk0) at (-3,-2) {};
\node[below,gray] at (xirk0) {$\xi_{1}$};

\coordinate (a) at (-3,-3);
\coordinate (b) at (4,4);

\draw[postaction=decorate,dashed] (w0) to[bend left=90] node[pos=0.3,above] (k0) {} (step3) ;
\draw[dashed] (step3) to[bend left=90] (w2) ;
%\draw[dashed] (step2) to[bend right=90] (step3) ;
%\draw[dashed] (step3) to[bend right=90] (w2) ;

\node[gray] at (w2)[above] {$w_2$};
\node[gray] at (w1)[above] {$w_1$};
\node[gray] at (w0) [left=5pt] {$w_0$};
\foreach \n in {w1,w2}
  \node at (\n)[gray,circle,fill,inner sep=2pt]{};
\node at (w0)[gray,circle,fill,inner sep=2pt]{};

\coordinate (c) at (-1.5,-1.5);

\draw[double,red] (k0)--(k0|-c);
\draw[red] (k0|-c) -- (xir);
\draw[red] (k0|-c) -- (xirk0);

\draw[gray] (-5,4) -- (-5,-2);
\draw[gray] (-5,-2) -- (3,-2);% node[right] {$\partial D_n$};
\draw[gray] (3,4) -- (3,-2);
\draw[gray] (3,4) -- (-5,4);
\end{tikzpicture} }}=
%\sum_{l=0}^k 
%\vcenter{\hbox{
%\begin{tikzpicture}[scale=0.35,every node/.style={scale=0.7},decoration={
%    markings,
%    mark=at position 0.5 with {\arrow{<}}}
%    ] 
%\node (w0) at (-5,1) {};
%\node (w00) at (5,1) {};
%\node (w1) at (1,1) {};
%\node (w2) at (-2.5,1) {};
%\coordinate (step1) at (3.5,1);
%\coordinate (step2) at (-4,1);
%\coordinate (step3) at (2.5,1);
%
%\node[gray,circle,fill,inner sep=0.8pt] (xir) at (-4,-2) {};
%\node[below,gray] at (xir) {$\xi_k$};
%\node[gray,circle,fill,inner sep=0.8pt] (xirk0) at (-3,-2) {};
%\node[below,gray] at (xirk0) {$\xi_{1}$};
%
%\coordinate (a) at (-3,-3);
%\coordinate (b) at (4,4);
%
%\draw[dashed] (w0) to[bend left=90] node[pos=0.3,above] (k0) {} (step3) ;
%\draw[dashed] (step3) to[bend left=90] (w2) ;
%%\draw[dashed] (step2) to[bend right=90] (step3) ;
%%\draw[dashed] (step3) to[bend right=90] (w2) ;
%
%\node[gray] at (w2)[above] {$w_2$};
%\node[gray] at (w1)[above] {$w_1$};
%\node[gray] at (w0) [left=5pt] {$w_0$};
%\foreach \n in {w1,w2}
%  \node at (\n)[gray,circle,fill,inner sep=2pt]{};
%\node at (w0)[gray,circle,fill,inner sep=2pt]{};
%
%\coordinate (c) at (-1.5,-1.5);
%
%\draw[double,red] (k0)--(k0|-c);
%\draw[red] (k0|-c) -- (xir);
%\draw[red] (k0|-c) -- (xirk0);
%
%\draw[gray] (-5,4) -- (-5,-2);
%\draw[gray] (-5,-2) -- (3,-2);% node[right] {$\partial D_n$};
%\draw[gray] (3,4) -- (3,-2);
%\draw[gray] (3,4) -- (-5,4);
%\end{tikzpicture} }}=
\sum_{l=0}^k 
\vcenter{\hbox{
\begin{tikzpicture}[scale=0.35,every node/.style={scale=0.7},decoration={
    markings,
    mark=at position 0.5 with {\arrow{>}}}
    ] 
\node (w0) at (-5,1) {};
\node (w00) at (5,1) {};
\node (w1) at (1,1) {};
\node (w2) at (-1.5,1) {};
\coordinate (step1) at (3.5,1);
\coordinate (step2) at (2,1);
\coordinate (step3) at (2.5,1);

\node[gray,circle,fill,inner sep=0.8pt] (xir) at (-4.5,-2) {};
\node[below,gray] at (xir) {$\xi_k$};
\node[gray,circle,fill,inner sep=0.8pt] (xirk0) at (-3.5,-2) {};
\node[below,gray] at (xirk0) {};
\node[gray,circle,fill,inner sep=0.8pt] (xil) at (-3.3,-2) {};
\node[below,gray] at (xil) {$\xi_{l}$};
\node[gray,circle,fill,inner sep=0.8pt] (xi1) at (-2.3,-2) {};
\node[below,gray] at (xi1) {$\xi_{1}$};

\coordinate (a) at (-3,-3);
\coordinate (b) at (4,4);

\draw[postaction=decorate,dashed] (w0) to[bend left=90] node[pos=0.2,above] (k0) {$k-l$} (step3) ;
\draw[dashed] (step3) to[bend left=90] (w0) ;
\draw[dashed] (w0) to node[pos=0.6] (k11) {} node[pos=0.8,above] (k1) {} node[pos=0.5,above] {$l$} (w2) ;
%\draw[dashed] (step2) to[bend right=90] (step3) ;
%\draw[dashed] (step3) to[bend right=90] (w2) ;

\node[gray] at (w2)[above] {$w_2$};
\node[gray] at (w1)[above] {$w_1$};
\node[gray] at (w0) [left=5pt] {$w_0$};
\foreach \n in {w1,w2}
  \node at (\n)[gray,circle,fill,inner sep=2pt]{};
\node at (w0)[gray,circle,fill,inner sep=2pt]{};

\coordinate (c) at (-1.5,-1.5);

\draw[double,red] (k0)--(k0|-c);
\draw[red] (k0|-c) -- (xir);
\draw[red] (k0|-c) -- (xirk0);

\draw[double,red] (k1) to[bend right=90] (step2) to[bend right=90] (k11) to (k11|-c);
\draw[red] (k11|-c) -- (xi1);
\draw[red] (k11|-c) -- (xil);

\draw[gray] (-5,4) -- (-5,-2);
\draw[gray] (-5,-2) -- (3,-2);% node[right] {$\partial D_n$};
\draw[gray] (3,4) -- (3,-2);
\draw[gray] (3,4) -- (-5,4);
\end{tikzpicture} }}
\]
This is similar to \cite[Example~4.6]{martel2020homological}. In the right hand sum, when ever $l$ is not $0$, the manifold associated with the diagram has no intersection with $B'(0,k)$ (an intersection point is a $k$-tuple, one on each blue line of $B'(0,k)$ and respecting the indices of $A'(0,k)$). Hence the only diagram having non trivial intersection with $B'(0,k)$ is when $l=0$ so that:%
\begin{align*}
\left\langle \sigma_1^{-3} A'(0,k) \cap B'(0,k) \right\rangle & =  \sum_{l=0}^k  \left\langle 
\vcenter{\hbox{
\begin{tikzpicture}[scale=0.35,every node/.style={scale=0.7},decoration={
    markings,
    mark=at position 0.5 with {\arrow{<}}}
    ] 
\node (w0) at (-5,1) {};
\node (w00) at (5,1) {};
\node (w1) at (1,1) {};
\node (w2) at (-1.5,1) {};
\coordinate (step1) at (3.5,1);
\coordinate (step2) at (2,1);
\coordinate (step3) at (2.5,1);
\node[gray,circle,fill,inner sep=0.8pt] (xir) at (-4.5,-2) {};
\node[below,gray] at (xir) {$\xi_k$};
\node[gray,circle,fill,inner sep=0.8pt] (xirk0) at (-3.5,-2) {};
\node[below,gray] at (xirk0) {};
\node[gray,circle,fill,inner sep=0.8pt] (xil) at (-3.3,-2) {};
\node[below,gray] at (xil) {$\xi_{l}$};
\node[gray,circle,fill,inner sep=0.8pt] (xi1) at (-2.3,-2) {};
\node[below,gray] at (xi1) {$\xi_{1}$};
\coordinate (a) at (-3,-3);
\coordinate (b) at (4,4);
\draw[dashed] (w0) to[bend left=90] node[pos=0.2,above] (k0) {$k-l$} (step3) ;
\draw[dashed] (step3) to[bend left=90] (w0) ;
\draw[dashed] (w0) to node[pos=0.6] (k11) {} node[pos=0.8,above] (k1) {} node[pos=0.5,above] {$l$} (w2) ;
%\draw[dashed] (step2) to[bend right=90] (step3) ;
%\draw[dashed] (step3) to[bend right=90] (w2) ;
\node[gray] at (w2)[above] {$w_2$};
\node[gray] at (w1)[above] {$w_1$};
\node[gray] at (w0) [left=5pt] {$w_0$};
\foreach \n in {w1,w2}
  \node at (\n)[gray,circle,fill,inner sep=2pt]{};
\node at (w0)[gray,circle,fill,inner sep=2pt]{};
\coordinate (c) at (-1.5,-1.5);
\draw[double,red] (k0)--(k0|-c);
\draw[red] (k0|-c) -- (xir);
\draw[red] (k0|-c) -- (xirk0);
\draw[double,red] (k1) to[bend right=90] (step2) to[bend right=90] (k11) to (k11|-c);
\draw[red] (k11|-c) -- (xi1);
\draw[red] (k11|-c) -- (xil);
\draw[gray] (-5,4) -- (-5,-2);
\draw[gray] (-5,-2) -- (3,-2);% node[right] {$\partial D_n$};
\draw[gray] (3,4) -- (3,-2);
\draw[gray] (3,4) -- (-5,4);
\end{tikzpicture} }}
\cap B'(0,k) \right\rangle \\
& =   \left\langle 
\vcenter{\hbox{
\begin{tikzpicture}[scale=0.35,every node/.style={scale=0.7},decoration={
    markings,
    mark=at position 0.5 with {\arrow{>}}}
    ] 
\node (w0) at (-5,1) {};
\node (w00) at (5,1) {};
\node (w1) at (1,1) {};
\node (w2) at (-1.5,1) {};
\coordinate (step1) at (3.5,1);
\coordinate (step2) at (2,1);
\coordinate (step3) at (2.5,1);
\node[gray,circle,fill,inner sep=0.8pt] (xir) at (-4.5,-2) {};
\node[below,gray] at (xir) {$\xi_k$};
\node[gray,circle,fill,inner sep=0.8pt] (xirk0) at (-3.5,-2) {};
\node[below,gray] at (xirk0) {$\xi_1$};
%\node[gray,circle,fill,inner sep=0.8pt] (xil) at (-3.3,-2) {};
%\node[below,gray] at (xil) {$\xi_{l}$};
%\node[gray,circle,fill,inner sep=0.8pt] (xi1) at (-2.3,-2) {};
%\node[below,gray] at (xi1) {$\xi_{1}$};
\coordinate (a) at (-3,-3);
\coordinate (b) at (4,4);
\draw[dashed,postaction=decorate] (w0) to[bend left=90] node[pos=0.3,above] (k0) {$k$} (step3) ;
\draw[dashed] (step3) to[bend left=90] (w0) ;
%\draw[dashed] (w0) to node[pos=0.6] (k11) {} node[pos=0.8,above] (k1) {} node[pos=0.5,above] {$l$} (w2) ;
%\draw[dashed] (step2) to[bend right=90] (step3) ;
%\draw[dashed] (step3) to[bend right=90] (w2) ;
\node[gray] at (w2)[above] {$w_2$};
\node[gray] at (w1)[above] {$w_1$};
\node[gray] at (w0) [left=5pt] {$w_0$};
\foreach \n in {w1,w2}
  \node at (\n)[gray,circle,fill,inner sep=2pt]{};
\node at (w0)[gray,circle,fill,inner sep=2pt]{};
\coordinate (c) at (-1.5,-1.5);
\draw[double,red] (k0)--(k0|-c);
\draw[red] (k0|-c) -- (xir);
\draw[red] (k0|-c) -- (xirk0);
%\draw[double,red] (k1) to[bend right=90] (step2) to[bend right=90] (k11) to (k11|-c);
%\draw[red] (k11|-c) -- (xi1);
%\draw[red] (k11|-c) -- (xil);
\draw[gray] (-5,4) -- (-5,-2);
\draw[gray] (-5,-2) -- (3,-2);% node[right] {$\partial D_n$};
\draw[gray] (3,4) -- (3,-2);
\draw[gray] (3,4) -- (-5,4);
\end{tikzpicture} }}
\cap B'(0,k) \right\rangle
\end{align*}
We use the {\em handle rule} \cite[Rem~4.2]{martel2020homological} (see also in the proof of Prop. \ref{rel_A_A'_A''}), to rearrange the red handle:
\[
\vcenter{\hbox{
\begin{tikzpicture}[scale=0.35,every node/.style={scale=0.7},decoration={
    markings,
    mark=at position 0.5 with {\arrow{>}}}
    ] 
\node (w0) at (-5,1) {};
\node (w00) at (5,1) {};
\node (w1) at (1,1) {};
\node (w2) at (-1.5,1) {};
\coordinate (step1) at (3.5,1);
\coordinate (step2) at (2,1);
\coordinate (step3) at (2.5,1);
\node[gray,circle,fill,inner sep=0.8pt] (xir) at (-4.5,-2) {};
\node[below,gray] at (xir) {$\xi_k$};
\node[gray,circle,fill,inner sep=0.8pt] (xirk0) at (-3.5,-2) {};
\node[below,gray] at (xirk0) {$\xi_1$};
%\node[gray,circle,fill,inner sep=0.8pt] (xil) at (-3.3,-2) {};
%\node[below,gray] at (xil) {$\xi_{l}$};
%\node[gray,circle,fill,inner sep=0.8pt] (xi1) at (-2.3,-2) {};
%\node[below,gray] at (xi1) {$\xi_{1}$};
\coordinate (a) at (-3,-3);
\coordinate (b) at (4,4);
\draw[dashed,postaction=decorate] (w0) to[bend left=90] node[pos=0.3,above] (k0) {$k$} (step3) ;
\draw[dashed] (step3) to[bend left=90] (w0) ;
%\draw[dashed] (w0) to node[pos=0.6] (k11) {} node[pos=0.8,above] (k1) {} node[pos=0.5,above] {$l$} (w2) ;
%\draw[dashed] (step2) to[bend right=90] (step3) ;
%\draw[dashed] (step3) to[bend right=90] (w2) ;
\node[gray] at (w2)[above] {$w_2$};
\node[gray] at (w1)[above] {$w_1$};
\node[gray] at (w0) [left=5pt] {$w_0$};
\foreach \n in {w1,w2}
  \node at (\n)[gray,circle,fill,inner sep=2pt]{};
\node at (w0)[gray,circle,fill,inner sep=2pt]{};
\coordinate (c) at (-1.5,-1.5);
\draw[double,red] (k0)--(k0|-c);
\draw[red] (k0|-c) -- (xir);
\draw[red] (k0|-c) -- (xirk0);
%\draw[double,red] (k1) to[bend right=90] (step2) to[bend right=90] (k11) to (k11|-c);
%\draw[red] (k11|-c) -- (xi1);
%\draw[red] (k11|-c) -- (xil);
\draw[gray] (-5,4) -- (-5,-2);
\draw[gray] (-5,-2) -- (3,-2);% node[right] {$\partial D_n$};
\draw[gray] (3,4) -- (3,-2);
\draw[gray] (3,4) -- (-5,4);
\end{tikzpicture} }} = (-1)^k (-t)^{\frac{k(k-1)}{2}} \left( q^{2 \alpha} \right)^{2k}
\vcenter{\hbox{
\begin{tikzpicture}[scale=0.35,every node/.style={scale=0.7},decoration={
    markings,
    mark=at position 0.5 with {\arrow{>}}}
    ] 
\node (w0) at (-5,1) {};
\node (w00) at (5,1) {};
\node (w1) at (1,1) {};
\node (w2) at (-1.5,1) {};
\coordinate (step1) at (3.5,1);
\coordinate (step2) at (2,1);
\coordinate (step3) at (2.5,1);
\node[gray,circle,fill,inner sep=0.8pt] (xir) at (-4.5,-2) {};
\node[below,gray] at (xir) {$\xi_k$};
\node[gray,circle,fill,inner sep=0.8pt] (xirk0) at (-3.5,-2) {};
\node[below,gray] at (xirk0) {$\xi_1$};
%\node[gray,circle,fill,inner sep=0.8pt] (xil) at (-3.3,-2) {};
%\node[below,gray] at (xil) {$\xi_{l}$};
%\node[gray,circle,fill,inner sep=0.8pt] (xi1) at (-2.3,-2) {};
%\node[below,gray] at (xi1) {$\xi_{1}$};
\coordinate (a) at (-3,-3);
\coordinate (b) at (4,4);
\draw[dashed,postaction=decorate] (w0) to[bend right=90] node[pos=0.3,above] (k0) {$k$} (step3) ;
\draw[dashed] (step3) to[bend right=90] (w0) ;
%\draw[dashed] (w0) to node[pos=0.6] (k11) {} node[pos=0.8,above] (k1) {} node[pos=0.5,above] {$l$} (w2) ;
%\draw[dashed] (step2) to[bend right=90] (step3) ;
%\draw[dashed] (step3) to[bend right=90] (w2) ;
\node[gray] at (w2)[above] {$w_2$};
\node[gray] at (w1)[above] {$w_1$};
\node[gray] at (w0) [left=5pt] {$w_0$};
\foreach \n in {w1,w2}
  \node at (\n)[gray,circle,fill,inner sep=2pt]{};
\node at (w0)[gray,circle,fill,inner sep=2pt]{};
\coordinate (c) at (-1.5,-1.5);
\draw[double,red] (k0)--(k0|-c);
\draw[red] (k0|-c) -- (xir);
\draw[red] (k0|-c) -- (xirk0);
%\draw[double,red] (k1) to[bend right=90] (step2) to[bend right=90] (k11) to (k11|-c);
%\draw[red] (k11|-c) -- (xi1);
%\draw[red] (k11|-c) -- (xil);
\draw[gray] (-5,4) -- (-5,-2);
\draw[gray] (-5,-2) -- (3,-2);% node[right] {$\partial D_n$};
\draw[gray] (3,4) -- (3,-2);
\draw[gray] (3,4) -- (-5,4);
\end{tikzpicture} }}
\]
The coefficient showing up is the image by $\rho_k$ of the loop in $X_k$ defined as the composition of the red path on the left with that on the right. The $(-1)^k$ appears because we have reversed the orientation of the dashed arc. Then:
\begin{align*}
\left\langle \sigma_1^{-3} A'(0,k) \cap B'(0,k) \right\rangle & =  \left\langle (-1)^k (-t)^{\frac{k(k-1)}{2}} \left( q^{2 \alpha} \right)^{2k}
\vcenter{\hbox{
\begin{tikzpicture}[scale=0.35,every node/.style={scale=0.7},decoration={
    markings,
    mark=at position 0.5 with {\arrow{>}}}
    ] 
\node (w0) at (-5,1) {};
\node (w00) at (5,1) {};
\node (w1) at (1,1) {};
\node (w2) at (-1.5,1) {};
\coordinate (step1) at (3.5,1);
\coordinate (step2) at (2,1);
\coordinate (step3) at (2.5,1);
\node[gray,circle,fill,inner sep=0.8pt] (xir) at (-4.5,-2) {};
\node[below,gray] at (xir) {$\xi_k$};
\node[gray,circle,fill,inner sep=0.8pt] (xirk0) at (-3.5,-2) {};
\node[below,gray] at (xirk0) {$\xi_1$};
%\node[gray,circle,fill,inner sep=0.8pt] (xil) at (-3.3,-2) {};
%\node[below,gray] at (xil) {$\xi_{l}$};
%\node[gray,circle,fill,inner sep=0.8pt] (xi1) at (-2.3,-2) {};
%\node[below,gray] at (xi1) {$\xi_{1}$};
\coordinate (a) at (-3,-3);
\coordinate (b) at (4,4);
\draw[dashed,postaction=decorate] (w0) to[bend right=90] node[pos=0.3,above] (k0) {$k$} (step3) ;
\draw[dashed] (step3) to[bend right=90] (w0) ;
%\draw[dashed] (w0) to node[pos=0.6] (k11) {} node[pos=0.8,above] (k1) {} node[pos=0.5,above] {$l$} (w2) ;
%\draw[dashed] (step2) to[bend right=90] (step3) ;
%\draw[dashed] (step3) to[bend right=90] (w2) ;
\node[gray] at (w2)[above] {$w_2$};
\node[gray] at (w1)[above] {$w_1$};
\node[gray] at (w0) [left=5pt] {$w_0$};
\foreach \n in {w1,w2}
  \node at (\n)[gray,circle,fill,inner sep=2pt]{};
\node at (w0)[gray,circle,fill,inner sep=2pt]{};
\coordinate (c) at (-1.5,-1.5);
\draw[double,red] (k0)--(k0|-c);
\draw[red] (k0|-c) -- (xir);
\draw[red] (k0|-c) -- (xirk0);
%\draw[double,red] (k1) to[bend right=90] (step2) to[bend right=90] (k11) to (k11|-c);
%\draw[red] (k11|-c) -- (xi1);
%\draw[red] (k11|-c) -- (xil);
\draw[gray] (-5,4) -- (-5,-2);
\draw[gray] (-5,-2) -- (3,-2);% node[right] {$\partial D_n$};
\draw[gray] (3,4) -- (3,-2);
\draw[gray] (3,4) -- (-5,4);
\end{tikzpicture} }}
\cap B'(0,k) \right\rangle \\
& =  (-1)^k (-t)^{\frac{k(k-1)}{2}} \left( q^{2 \alpha} \right)^{2k} \left\langle \vcenter{\hbox{\begin{tikzpicture}[scale=0.35,every node/.style={scale=0.7},decoration={
    markings,
    mark=at position 0.5 with {\arrow{>}}}
    ] 
\node (w0) at (-5,1) {};
\node (w00) at (5,1) {};
\node (w1) at (1,1) {};
\node (w2) at (-2.5,1) {};
\coordinate (step1) at (3,1);
\coordinate (step2) at (-5,1);
\coordinate (step3) at (2.5,1);
\coordinate (step4) at (0,1);
\coordinate (step2p) at (-1,1);
\node[gray,circle,fill,inner sep=0.8pt] (xik) at (-4,-3) {};
\node[below,gray] at (xik) {$\xi_k$};
\node[gray,circle,fill,inner sep=0.8pt] (xilp) at (-3,-3) {};
\node[below,gray] at (xilp) {$\xi_{1}$};
%\node[gray,circle,fill,inner sep=0.8pt] (xil) at (0.5,-3) {};
%\node[below,gray] at (xil) {$\xi_l$};
%\node[gray,circle,fill,inner sep=0.8pt] (xilppp) at (1.5,-3) {};
%\node[below,gray] at (xilppp) {$\xi_{1}$};
%%\node[gray,circle,fill,inner sep=0.8pt] (xilpp) at (2,-3) {};
%%\node[below,gray] at (xilpp) {$\xi_{l'}$};
%%\node[gray,circle,fill,inner sep=0.8pt] (xi1) at (3,-3) {};
%%\node[below,gray] at (xi1) {$\xi_{1}$};
\coordinate (a) at (-3,-3);
\coordinate (b) at (4,4);
\draw[dashed,postaction=decorate] (w0) to[bend right=90] node[pos=0.3,above] (k0) {} (step1) ;
\draw[dashed] (step1) to[bend right=90] node[pos=0.3] {$k$} (step2p) ;
\draw[dashed] (step2p) to[bend left=70] (w0);
%\draw[dashed] (step2) to[bend right=70] node[pos=0.75] (k1) {$l$} (step3);
%\draw[dashed] (step3) to[bend right=90] (step4) ;
%\draw[dashed] (step4) to[bend left=70] (w0);
%\draw[dashed] (w0) to node[pos=0.5,above] (k2) {$l'$} (w2) ;
\node[gray] at (w2)[above] {$w_2$};
\node[gray] at (w1)[above] {$w_1$};
\node[gray] at (w0) [left=5pt] {};
\foreach \n in {w1,w2}
  \node at (\n)[gray,circle,fill,inner sep=2pt]{};
\node at (w0)[gray,circle,fill,inner sep=2pt]{};
\coordinate (c) at (-2.5,-2.5);
\coordinate (d) at (3.75,3.75);
\draw[double,red] (k0)--(k0|-c);
\draw[red] (k0|-c) -- (xilp);
\draw[red] (k0|-c) -- (xik);
%\draw[double,red] (k1)--(k1|-c);
%\draw[red] (1,-2.5)--(xilppp);
%\draw[red] (1,-2.5)--(xil);
\draw[gray] (-5,3) -- (-5,-3);
\draw[gray] (-5,-3) -- (4,-3);% node[right] {$\partial D_n$};
\draw[gray] (4,3) -- (4,-3);
\draw[gray] (4,3) -- (-5,3);
\end{tikzpicture} }}
\cap B'(0,k) \right\rangle \\
& =  (-1)^k (-t)^{\frac{k(k-1)}{2}} \left( q^{2 \alpha} \right)^{2k} \left\langle \prod_{i=0}^{k-1} (1 - q^{-2\alpha} (-t)^{-i}) A'(0,k)\cap B'(0,k) \right\rangle
\end{align*}
where in the second equality, we have again applied \cite[Ex.~4.6]{martel2020homological} dividing the $k$ indexed arc into two arcs passing by $w_0$, but again, we have kept the only term of the hypothetical sum that has non trivial intersection with $B'(0,k)$. Last equality is straightforward from \cite[Prop.~7.4]{martel2020homological}. % Finally:
 Finally, to compute $F_{\infty}$ one has to do the identification $-t=q^{-2}$, and:
\begin{align*}
F_{\infty}(\sigma_1^{-3}) & = \sum_{k \in \BN} \langle A'(0,k) \cap B'(0,k) \rangle q^{\alpha-2k} \\ & = \sum_{k \in \BN} q^{\alpha-2k} q^{-k(k-1)} q^{4 \alpha k} (-1)^k \prod_{i=0}^{k-1} (1 - q^{-2\alpha} q^{2i}) \\
& = \sum_{k \in \BN} q^{\alpha-2k} q^{-k(k-1)} q^{4 \alpha k} (-1)^k q^{-\alpha k } q^{\frac{k(k-1)}{2}} \prod_{i=0}^{k-1} (q^{\alpha} q^{-i} - q^{-\alpha} q^{i})\\ & = \sum_{k \in \BN} q^{\alpha-2k} q^{3 \alpha k}  q^{-\frac{k(k-1)}{2}} (-1)^k \prod_{i=0}^{k-1} (q^{\alpha} q^{-i} - q^{-\alpha} q^{i}).
\end{align*}
One recovers precisely the formula for $F_{\infty}$ for the trefoil knot given in \cite[Sec.~5]{willetts2020unification}, with zero framing. The identification is under the change $q \to q^{-1}$ as we chose $\sigma_1^{-3}$ for the trefoil instead of $\sigma_1^{3}$. 
\end{ex}

Theorem \ref{thm_homol_formula_Foo} expresses $F_{\infty}$ from intersections of middle dimension submanifolds of configuration spaces, sometimes called Lagrangians. Such interpretations for quantum invariants was initiated by Lawrence, and then Bigelow for the Jones polynomial \cite{BigelowJones,LawrenceJones}, then in a more quantum way colored Jones and ADO polynomials were formulated in the same spirit in e.g. \cite{AnghelJones,martel2020coloredJones} resp. \cite{ItoAlex,AnghelAlex}. The present theorem should interpolate all these formulae (sometimes under a simple change of \emph{dual bases}, corresponding to changing the manifolds to pair). Moreover there is the uniqueness property of interpolation (Prop. \ref{prop_unicity_Foo}), that we recall in the following remark. 

\begin{rem}
\begin{itemize}
\item The fact that $F_{\infty}$ is the only two variables element interpolating both colored Jones polynomials or ADO polynomials (Prop. \ref{prop_unicity_Foo}), could be interpreted as the only intersection pairing computed from manifolds in abelian covers of configuration spaces of disks interpolating both families. More over it is a knot invariant. 
\item In \cite{martel2020coloredJones}, the second author has also showed that colored Jones polynomials compute some Lefschetz numbers. This is because colored Jones polynomials could be computed from a full trace on homological representations of braids, not only with a partial trace. With some study of the structure of homology modules, the trace formula then satisfies the Lefschetz formula. Unfortunately, authors have tried to interpret $F_{\infty}$ as a full trace on homological braid action, and only found convergence problems seeming to be essential.
\end{itemize}
\end{rem}

\section{Unified invariant from a quantum determinant}\label{sec_Foo_from_qdet}

This section is inspired by the paper \cite{H-L} where Huynh and Lê compute the colored Jones polynomials from $\Uq$ Verma modules. By some assimilation of tensor products of Verma modules with some quantum plane, they succeed in giving a formula for colored Jones polynomials involving a \emph{quantum determinant} for quantum matrices by use of the quantum Mac-Mahon Master theorem \cite{GLZ}. We follow Sec. 0.1 of \cite{H-L} to state the theorem and we will give a direct proof involving their theorem and some interpolation argument. 

\subsection{Deformed Burau matrix}

We recall that $\Laurent_0 := \BZ \left[  q^{\pm 1} \right]$. On the polynomial ring $\Laurent_0 \left[ x^{\pm 1} , y^{\pm 1}, u^{\pm 1} \right]$ we define operators.

\begin{defn}
Let $\hat{x}, \hat{y} , \hat{u}$ and $\tau_x, \tau_y, \tau_u$ be operator acting on $\Laurent_0 \left[ x^{\pm 1} , y^{\pm 1}, u^{\pm 1} \right]$ as follows:
\[
\hat{x} f(x,y,u) = x f(x,y,u) \text{ , } \tau_x f(x,y,u)  = f(qx, y,u) 
\]
the reader can guess definitions of the four remainder operators. Let $x_1,x_2 \in \lbrace x,y,u \rbrace$ then:
\[
 x_1 \tau_{x_2} = q^{\delta_{x_1,x_2}} \tau_{x_2}  x_1,
\]
namely operators $\hat{ . }$ and $\tau$ q-commute if they involve the same variable, commute otherwise. Operators $\hat{ . }$ commute one with each other, so do operators $\tau$. 
\end{defn}

From these operators we define other ones:
\begin{equation}
\begin{array}{ccc}
a_+ := (\hat{u} - \hat{y} \tau_x^{-1}) \tau_y^{-1} & b_+ := \hat{u}^2 & c_+ = \hat{x} \tau_y^{-2}  \tau_u^{-1}
\end{array}
\end{equation}

\begin{equation}
\begin{array}{ccc}
a_- := (\tau_y - \hat{x}^{-1}) \tau_x^{-1} \tau_u & b_- := \hat{u}^2 & c_- = \hat{y}^{-1} \tau_x^{-1}  \tau_u
\end{array}
\end{equation}

and two matrices:
\begin{equation}
\begin{array}{cc}
S_+ := \begin{pmatrix} a_+ & b_+ \\ c_+ & 0  \end{pmatrix} & S_- := \begin{pmatrix} 0 & c_- \\ b_- & a_-  \end{pmatrix}
\end{array} .
\end{equation}

We add more variables, for a fixed index $j \in \BN$, we define operators $a_{j,\pm}, b_{j,\pm}, c_{j,\pm}$ acting on $\Laurent_0\left[ x_j^{\pm 1},y_j^{\pm 1},u_j^{\pm 1} \right] $ as $a_{\pm},b_{\pm},c_{\pm}$ (resp.) do and trivially on any $\Laurent_0\left[ x_i^{\pm 1},y_i^{\pm 1},u_i^{\pm 1} \right] $ if $i \neq j$. We extend as well definitions of $S_{j,+}$ (resp. $S_{j,-}$) as those of $S_+$ (resp. $S_-$) involving $a_j,b_j,c_j$. 

\begin{defn}[Deformed Burau matrix]
Let $\beta := \sigma_{i_1}^{\epsilon_1} \cdots \sigma_{i_k}^{\epsilon_k} \in \Bn$ be a braid written as a product of Artin generators. We define its {\em deformed Burau matrix} as follows:
\[
\rho(\beta) := \prod_{j=1}^k A_k
\]
where $A_j :=   \Id_{i_j -1 } \oplus S_{j,\epsilon_{j}} \oplus \Id_{n-i_j-1}$. Entries of $\rho(\beta)$ are operators acting on $\CP_k := \bigotimes_{j=1}^k \Laurent_0 \left[ x_j^{\pm 1} , y_j^{\pm 1} , u_j^{\pm 1}  \right]$. 
\end{defn}

\begin{defn}[Evaluation of operators]
Let $P$ be a polynomial in operators $a_{\pm} ,b_{\pm} , c_{\pm}$ (with maybe indices) with coefficients in $\Laurent_0$. The evaluation $\CE(P) \in \Laurent := \BZ \left[q^{\pm 1} , s^{\pm 1} \right]$ is defined to be the application of $P$ to the constant function $1 \in \CP_k$ then substituting $u_j$ by $1$ and $x_j,y_j$ by the formal variable $s$ for all $j=1 , \ldots , k$. 
\end{defn}

The following lemma is part of Lemma 1.4 in \cite{HL}:

\begin{lemma}\label{lemma_image_Epsilon_converge}
Let $d,r,s \in \N$, we have:

\[  \CE(b_+^s c_+^r a_+^d)= q^{-rd} s^r (1-sq^{-r})_{q^{-1}}^d \]
\[  \CE(b_-^s c_-^r a_-^d)= s^{-r} (1-s^{-1}q^{r})_{q}^d \]

where $(1-x)_q^d = \prod_{i=0}^{d-1} (1-xq^i)$.

\end{lemma}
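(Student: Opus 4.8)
The plan is to compute $\CE$ directly from the defining action of the operators on the polynomial ring $\CP_k$, working in a single block (so one can drop the index $j$ and work over $\Laurent_0[x^{\pm1},y^{\pm1},u^{\pm1}]$). First I would record the $q$-commutation rules we need: from $x_1\tau_{x_2}=q^{\delta_{x_1,x_2}}\tau_{x_2}x_1$ one gets $\tau_x \hat{x}=q\,\hat{x}\tau_x$, $\tau_y\hat{y}=q\,\hat{y}\tau_y$, the $\hat{\cdot}$'s commute among themselves and the $\tau$'s commute among themselves, and operators with different variable labels commute. From $a_+=(\hat u-\hat y\tau_x^{-1})\tau_y^{-1}$, $b_+=\hat u^2$, $c_+=\hat x\tau_y^{-2}\tau_u^{-1}$ I would first move all $\tau$'s to the right past the $\hat{\cdot}$'s in the product $b_+^s c_+^r a_+^d$, picking up powers of $q$, and then apply the resulting normal-ordered operator to the constant function $1$; since $\tau$'s act trivially on $1$ and $\hat u,\hat x,\hat y$ just multiply, the evaluation at $u=1$, $x=y=s$ reduces everything to a monomial times a product coming from the $d$ factors of $a_+$.

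In more detail for the $+$ case: I expect the only subtlety is the interaction between $c_+$ (which contains $\hat x$ and $\tau_y^{-2}$) and the $d$ copies of $a_+$ (each containing $\hat y\tau_x^{-1}$ and $\hat u\tau_y^{-1}$). Writing $a_+^d$ and commuting its $\tau_y^{-1}$ factors to the right, the $\hat y$'s in successive $a_+$ factors see shifted arguments, producing the telescoping product $\prod_{i=0}^{d-1}(1-sq^{-i})$ after evaluation — but there is also the $\tau_x^{-1}$ inside each $a_+$ which, when $c_+$ is applied first and then $a_+^d$, has already acted; more carefully one commutes $c_+^r$ (carrying $\tau_y^{-2r}$ and $\tau_x$-independent except through $\hat x$) to the right, which shifts the $y$-argument seen by the $a_+$ block. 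After bookkeeping the $q$-powers from $\tau_x\hat x$, $\tau_y\hat y$ and the shift of $\hat y$ by $\tau_y$, one should land on $\CE(b_+^sc_+^ra_+^d)=q^{-rd}s^r\prod_{i=0}^{d-1}(1-sq^{-r}q^{i}) = q^{-rd}s^r(1-sq^{-r})_{q^{-1}}^d$ once one matches the stated convention $(1-x)_q^d=\prod_{i=0}^{d-1}(1-xq^i)$ (so $(1-sq^{-r})_{q^{-1}}^d=\prod_{i=0}^{d-1}(1-sq^{-r}q^{-i})$ — I would double-check the sign of the exponent against this convention as I carry out the shift). The $-$ case is analogous with $a_-=(\tau_y-\hat x^{-1})\tau_x^{-1}\tau_u$, $c_-=\hat y^{-1}\tau_x^{-1}\tau_u$, $b_-=\hat u^2$: here $b_-^s$ contributes nothing after setting $u=1$, $c_-^r$ contributes $s^{-r}$ (from $\hat y^{-r}$) plus $\tau$ shifts, and the $d$ copies of $a_-$ telescope to $\prod_{i=0}^{d-1}(1-s^{-1}q^{r}q^{i})=(1-s^{-1}q^r)_q^d$.

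The convergence claim in the lemma's title — that $\CE$ lands in $\Laurent$ (indeed is a well-defined finite expression) — is essentially automatic here since each term is a genuine Laurent polynomial in $q$ and $s$; the "converge" is really about the infinite sums that will be assembled from these building blocks later, so for this lemma I would just remark that the closed forms above are manifestly elements of $\Laurent=\BZ[q^{\pm1},s^{\pm1}]$. I would also invoke Lemma 1.4 of \cite{HL} where the computation already appears, and present the above as the verification adapted to our normalization (our $\CE$ substitutes $u_j\mapsto 1$, $x_j,y_j\mapsto s$). The main obstacle I anticipate is purely combinatorial: getting the $q$-power $q^{-rd}$ and the argument-shift $q^{-r}$ inside the product exactly right, since several non-commuting $\tau$'s act before the $\hat{\cdot}$'s are evaluated, and an off-by-one or a sign in the exponent of $q$ is easy to introduce; I would guard against this by checking the cases $d=1$ and $r=1$ by hand directly from the operator definitions before writing the general induction on $d$.
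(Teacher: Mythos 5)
Your plan is correct, and it is worth noting that the paper itself offers no proof of this lemma at all: it is stated as ``part of Lemma 1.4 in [HL]'' and simply cited. So your direct verification is a legitimate (and more self-contained) alternative to the paper's pure citation, and the computation you outline does go through. In fact it is cleaner than you anticipate, because the rightmost operator acts first on the constant function $1$, and at that point no normal-ordering gymnastics are needed: an immediate induction gives $a_+^d\cdot 1=\prod_{i=0}^{d-1}\bigl(u-q^{-i}y\bigr)$ (each application of $\tau_y^{-1}$ rescales the existing $y$'s by $q^{-1}$ before the new factor $u-y$ is created, and $\tau_x^{-1}$ acts trivially since no $x$ appears). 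Since $\hat x$ commutes with $\tau_y,\tau_u$, one has $c_+^r=\hat x^{r}\tau_y^{-2r}\tau_u^{-r}$, which sends this to $x^{r}q^{-rd}\prod_{i=0}^{d-1}\bigl(u-q^{-r-i}y\bigr)$; applying $b_+^s=\hat u^{2s}$ and substituting $u\mapsto 1$, $x,y\mapsto s$ yields $q^{-rd}s^{r}\prod_{i=0}^{d-1}\bigl(1-sq^{-r}q^{-i}\bigr)=q^{-rd}s^{r}(1-sq^{-r})_{q^{-1}}^{d}$, exactly matching the stated convention. The minus case is identical: $a_-^d\cdot 1=\prod_{i=0}^{d-1}(1-q^{i}x^{-1})$, then $c_-^r=\hat y^{-r}\tau_x^{-r}\tau_u^{r}$ shifts $x^{-1}\mapsto q^{r}x^{-1}$ and contributes $y^{-r}$, giving $s^{-r}(1-s^{-1}q^{r})_q^{d}$. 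Your worry about an off-by-one in the exponent is resolved by this bookkeeping, and your suggested sanity checks at $d=1$, $r=1$ confirm it. The only slight slip in your narrative is the phrase ``when $c_+$ is applied first and then $a_+^d$'' — in $b_+^sc_+^ra_+^d\cdot 1$ it is $a_+^d$ that acts first — but your alternative description (commuting $c_+^r$ to the right so that it shifts the arguments seen by the $a_+$ block) is equivalent and produces the correct factors $q^{-rd}$ and $q^{-r}$.
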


We hence have a convergent series in $\Laurentcomplet$ when evaluating $a_{\pm}$ series operators with $\CE$.

\begin{defn}[Evaluation of series operators]
Let $P$ be a in operators $b_{\pm} , c_{\pm}$ and a series in $a_{\pm}$ (with maybe indices) with coefficients in $\Laurent_0$. The evaluation $\CE(P) \in \Laurentcomplet$ is defined to be the application of $P$ to the constant function $1 \in \CP_k$ then substituting $u_j$ by $1$ and $x_j,y_j$ by the formal variable $s$ for all $j=1 , \ldots , k$. 
\end{defn}

\subsection{Quantum Determinant}

\begin{defn}[Right quantum matrices]
A $2 \times 2$-matrix $\begin{pmatrix} a & b \\ c & d\end{pmatrix}$ is said to be \emph{right quantum} if:
\[
\begin{array}{ccc}
ac = q ca & bd = q db & ad = da + q cb - q^{-1} bc
\end{array} .
\]
An $m \times m$ matrix is right quantum if any of its $2 \times 2$-submatrix is. 
\end{defn}

\begin{defn}[Quantum determinant]
Let $A = (a_{ij})$ be a right quantum matrix. Its quantum determinant is defined as follows:
\[
\det_q(A) := \sum_{\pi \in \Sk_m} (-q)^{\inv(\pi)} a_{\pi 1 , 1 } a_{\pi 2 , 2 } \cdots a_{\pi m , m }
\]
where $\inv(\pi)$ is the number of inversions. 
\[
\widetilde{\det_q}(\Id - A) := 1-C \text{ , with } C := \sum_{J \subset \lbrace 1 , \ldots , m \rbrace } (-1)^{|J|-1} \det_q (A_J) ,
\]
where $A_J$ is the $J \times J$ submatrix of $A$ (which is right-quantum). 
\end{defn}

\subsection{Unified invariant from a quantum determinant}~

%{\color{red} Montrer que $\frac{1}{\widetilde{\det_q} \left( \Id - q \rho'(\gamma) \right)}$ est une serie en $a_{\pm}$?}

\begin{lemma}\label{lemma_qdet_a_series}
Let $\beta \in \CB_m$ be a braid which standard closure is a knot. The operator $\frac{1}{\widetilde{\det_q} \left( \Id - q \rho'(\beta) \right)}$ is a series in $a_{\pm}$.
\end{lemma}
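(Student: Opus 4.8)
The plan is to analyze the structure of the matrices $S_{j,\pm}$ and the resulting deformed Burau matrix $\rho'(\beta)$, tracking in which entries the operators $a_{\pm}$ appear and in which entries only $b_{\pm}, c_{\pm}$ appear. First I would observe from the definitions that in each block $S_{j,+} = \begin{pmatrix} a_+ & b_+ \\ c_+ & 0 \end{pmatrix}$ and $S_{j,-} = \begin{pmatrix} 0 & c_- \\ b_- & a_- \end{pmatrix}$, the operator $a_{\pm}$ occurs exactly once, on a diagonal entry of the block, while $b_{\pm}$ and $c_{\pm}$ occupy off-diagonal positions (and the remaining entry is $0$). Since $\rho'(\beta)$ is a product $\prod_j A_j$ of matrices each equal to an identity block plus one such $2\times 2$ block, a bookkeeping argument shows that every entry of $\rho'(\beta)$ is a polynomial in the $a_{\pm}$, $b_{\pm}$, $c_{\pm}$ which is of degree at most one in the $a_{j,\pm}$ variables jointly — more precisely, each monomial appearing uses each index $j$ at most once, and an occurrence of $a_{j,\pm}$ precludes any other factor with index $j$.

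Next I would feed this into the definition of $\widetilde{\det_q}(\Id - q\rho'(\beta)) = 1 - C$ with $C = \sum_{J} (-1)^{|J|-1} \det_q(q\rho'(\beta)_J)$. Each $\det_q$ of a $J\times J$ submatrix is a signed sum of products of entries, one from each row/column of $J$; since each entry is at most linear in the $a$-variables with the index-disjointness property, a product of several entries is still a polynomial in the $a_{\pm}$ whose total degree in each individual $a_{j,\pm}$ is at most one, hence $C$ — and therefore $\widetilde{\det_q}(\Id - q\rho'(\beta))$ — is a genuine polynomial (not merely a series) in the operators $a_{\pm}, b_{\pm}, c_{\pm}$ with $\Laurent_0$-coefficients. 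Then $\widetilde{\det_q}(\Id - q\rho'(\beta)) = 1 - C$ where $C$ has ``no constant term'' in an appropriate sense: every monomial of $C$ contains at least one factor coming from a genuine $S_{j,\pm}$ block, so $C$ lies in the (left) ideal generated by the $a_{j,\pm}, b_{j,\pm}, c_{j,\pm}$. Consequently $\frac{1}{1 - C} = \sum_{N \geq 0} C^N$ makes sense as a formal expansion, and each power $C^N$ is a polynomial in the operators; collecting terms by total $a$-degree, one checks that only finitely many $C^N$ contribute to each fixed $a$-degree, so the sum is a well-defined series in the $a_{\pm}$ (with polynomial dependence on $b_{\pm}, c_{\pm}$), i.e.\ an element of the completed operator algebra of the type required.

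The step I expect to be the main obstacle is making precise the claim that ``the geometric series $\sum_N C^N$ converges as a series in $a_{\pm}$'': one must exhibit the correct grading or filtration on the operator algebra $\operatorname{End}(\CP_k)$ under which $C$ has positive degree and in which the completion is taken, and verify that $C^N$ genuinely increases this degree, so that the infinite sum is well-defined and lands in the completion for which $\CE$ extends (as set up in the Definition of evaluation of series operators, using Lemma \ref{lemma_image_Epsilon_converge}). This amounts to a careful but routine check that no cancellation or lowering of $a$-degree can occur when multiplying out $C^N$, which follows from the index-disjointness observed in the first paragraph together with the fact that $b_{j,\pm}, c_{j,\pm}$ never produce $a$-operators under the commutation relations among $\hat{x},\hat{y},\hat{u},\tau_x,\tau_y,\tau_u$. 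Once this filtration is fixed, the conclusion is immediate.
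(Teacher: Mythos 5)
Your first two paragraphs are fine as far as they go: each entry of $\rho'(\beta)$ is a polynomial in the operators with the index-disjointness you describe, $\widetilde{\det_q}(\Id - q\rho'(\beta)) = 1 - C$ with $C$ a polynomial having no constant term, and the formal expansion $\sum_{N\ge 0} C^N$ makes sense term by term. But the step you defer as ``a careful but routine check'' is exactly where the argument fails: it is \emph{not} true that only finitely many $C^N$ contribute to each fixed $a$-degree. The entries of $\rho'(\beta)$ contain monomials built purely from the $b_{j,\pm}$ and $c_{j,\pm}$; for instance for $\beta=\sigma_1^2\in\CB_2$ one computes $\rho'(\beta)=(c_{1,+}b_{2,+})$, so $C=qc_{1,+}b_{2,+}$ has $a$-degree zero and every $C^N$ contributes to $a$-degree zero. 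Since $\CE(b^sc^r)$ is merely a monomial $s^{\pm r}$ (only positive $a$-degree produces the factors $(1-sq^{\mp r})^d_{q^{\mp 1}}$ of Lemma~\ref{lemma_image_Epsilon_converge} that force convergence in $\Laurentcomplet$), such an expansion is not ``a series in $a_{\pm}$'' in the sense required. Tellingly, your argument never uses the hypothesis that the closure of $\beta$ is a knot, and the statement is false without it: $\sigma_1^2$ above closes to the Hopf link and its inverse quantum determinant genuinely diverges under $\CE$.

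The missing idea is the combinatorial core of the paper's proof. Via the quantum MacMahon master theorem one rewrites $1/\widetilde{\det_q}(\Id - q\rho'(\beta))$ as $\sum_{j_2,\dots,j_m} G(0,j_2,\dots,j_m)$, where $G(0,\overline{j})$ is a state sum obtained by pushing the monomial $x_1^{0}x_2^{j_2}\cdots x_m^{j_m}$ through the $k$ crossings of $\beta$; each crossing contributes a state $l$, which is precisely the exponent of the $a_{\pm}$ it produces. The bookkeeping relation $j_{\pi(\beta)(i)} = j_i + (\text{signed sum of states})$, together with the fact that $\pi(\beta)$ is a derangement (the closure is a knot) and the normalization $j_1=0$, shows that each $j_i$ is itself a signed sum of states; hence if $\sum_i j_i = N$ some state is at least $N/(mk)$, i.e.\ every term of total degree $N$ carries a factor $a_{\pm}^{l}$ with $l\ge N/(mk)$. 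This growth of $a$-degree with $N$ is the statement your filtration argument needs, and it cannot be obtained from entry-by-entry degree counting on $\rho'(\beta)$ alone.
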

\begin{proof}
We first need to symmetric powers of the deformed Burau matrices of Artin generators $A_j$.

Let $(x_i)_{1 \leq i \leq m}$ form a $m$ dimensional quantum algebra, meaning that $x_j x_i =q x_i x_j$ for $i <j$. For a right quantum matrix $A=(a_{ij})$ let $X_i=\sum_{j=1}^m a_{ij}x_j$ and let $G(j_1, \dots, j_m)$ be the coefficient of $x_1^{j_1} \dots x_1^{j_m}$ in $\prod_{i=1}^m X_i^{j_i}$.

Recall that $A_j$ is the deformed Burau matrix of the Artin generator $\sigma_k$. Let $\sum_{i=1}^m j_i =N$,
\begin{align*}
\Sym^N(A_i)\prod_{k=1}^m (x_k)^{j_k} &= \prod_{k=1}^m (A_i x_k)^{j_k}\\
&= x_1^{j_1} \cdots x_{i-1}^{j_{i-1}}(a_+ x_i+ b_+ x_{i+1})^{j_i} \times (c_+ x_i)^{j_{i+1}} x_{i+2}^{j_{i+2}} \cdots x_m^{j_m}\\
&= \sum_{l=0}^{j_i} \binom{j_i}{l}_{q^{-1}} q^{(j_i-l)j_{i+1}} a_+^l b_+^{j_i-l} c_+^{j_{i+1}} x_1^{j_1} \dots   x_i^{j_{i+1}+l}  x_{i+1}^{j_i-l} \dots x_m^{j_m}
\end{align*}

Hence, each Artin generator will induce a sum at the level of the action, the index $l$ of the sum is called \textit{state} and the sum is called \textit{state sum}.

Recall that $\beta=\sigma_{i_1}^{\epsilon_1} \cdots \sigma_{i_k}^{\epsilon_k}$ is such that the induced permutation $\pi(\beta)$ is a derangement. The element $G(0, j_2, \dots, j_m)$ is a $k$-states sum that verifies \[j_{\pi(\beta)(i)}= j_i + \textit{linear combination of states with coeff } 1 \textit{ or } -1.\] 

Since $\pi(\beta)$ is a derangement, $j_i$ is a linear combination of states with coeff $1$ or $-1$.

Hence if $\sum_{i=2}^m j_i =N$, there is always a state $l$ that verifies $l \geq \frac{N}{mk}$.

Using quantum Mac Mahon Master theorem \cite{GLZ}, we know that \[\frac{1}{\widetilde{\det_q} \left( \Id - q \rho'(\beta) \right)}= \sum_{j_2,\dots,j_m} G(0,j_2,\dots, j_m). \]

Hence, $\frac{1}{\widetilde{\det_q} \left( \Id - q \rho'(\beta) \right)}$ is a series in $a_{\pm}$.
\end{proof}

\begin{thm}\label{thm_F_from_qdet}
Let $\beta\in \CB_m$ be a braid which standard closure is a knot. One remarks that $\rho(\beta)$ is right quantum. Let $\rho'(\beta)$ be obtained from $\rho(\beta)$ by removing first row and column. Then:
\[
F_{\infty}(\hat{\beta}) = s^{(w(\beta)-m+1)/2} \CE \left( \frac{1}{\widetilde{\det_q} \left( \Id - q \rho'(\gamma) \right)} \right)
\]
\end{thm}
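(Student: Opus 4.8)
The strategy is to reduce Theorem \ref{thm_F_from_qdet} to the colored Jones formula of Huynh--L\^e via the interpolation/unicity argument already established in Proposition \ref{prop_unicity_Foo}. Concretely, denote by $G(\beta) := s^{(w(\beta)-m+1)/2}\,\CE\bigl(\tfrac{1}{\widetilde{\det_q}(\Id - q\rho'(\beta))}\bigr)$ the right-hand side. By Lemma \ref{lemma_qdet_a_series} the operator inside is a genuine series in the $a_{\pm}$, so by Lemma \ref{lemma_image_Epsilon_converge} the evaluation $\CE$ lands in $\Laurentcomplet$ and $G(\beta)$ is a well-defined element of the completion depending only on $\hat\beta$ (invariance under Markov moves can either be imported from \cite{H-L} at each specialization plus density, or checked directly from the quantum Mac-Mahon master theorem as in \cite{HL}). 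The plan is then: (i) show $G(\beta)$ specializes at $s = q^N$ to the $N$-th colored Jones polynomial $J_N(\hat\beta)$ for all $N$; (ii) invoke Proposition \ref{prop_unicity_Foo}, which says $F_\infty$ is the \emph{unique} element of $\Laurentcomplet$ interpolating the $J_N$ at infinitely many $N$, to conclude $G(\beta) = F_\infty(\hat\beta)$.

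\textbf{Step (i): specialization to colored Jones.} This is where the content of \cite{HL,H-L} enters. In \cite{HL}, Huynh--L\^e prove that the colored Jones polynomial $J_N$ of the closure of $\beta$ is computed by exactly such a quantum-determinant formula: one forms the deformed Burau matrix $\rho(\beta)$, removes the first row and column, and takes $\CE$ of $1/\widetilde{\det_q}(\Id - q\rho'(\beta))$ after substituting $x_j, y_j$ by $q^N$ rather than by the formal $s$ --- their setup truncates the quantum plane to dimension $N+1$, which is precisely the statement $x_j = y_j = q^N = q^\alpha$. So I would: recall their Theorem (Sec.~0.1 of \cite{H-L}), match the operators $a_\pm, b_\pm, c_\pm, S_\pm$ above with theirs (these are literally copied from \cite{HL}), match the framing normalization $s^{(w(\beta)-m+1)/2}$ with theirs, and conclude that $\spec_{s\mapsto q^N}\bigl(G(\beta)\bigr) = J_N(\hat\beta)$ for every $N \in \BN^*$. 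One must be slightly careful that the substitution $x_j,y_j \mapsto q^N$ commutes with the infinite sum defining $1/\widetilde{\det_q}$, i.e. that evaluating the series termwise at $s = q^N$ agrees with Huynh--L\^e's finite-dimensional computation; this follows from Lemma \ref{lemma_image_Epsilon_converge}, since each term $\CE(b_\pm^s c_\pm^r a_\pm^d)$ is a Laurent polynomial in $s$ and the convergence in $\Laurentcomplet$ is compatible with the specialization $\Laurentcomplet \to \Q[\alpha][[h]]$ used throughout Sec.~\ref{sec_Foo_factorization}.

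\textbf{Step (ii) and the main obstacle.} Once (i) is in hand, Proposition \ref{prop_unicity_Foo} applies verbatim: $G(\beta)$ and $F_\infty(\hat\beta)$ are both elements of $\Laurentcomplet$ agreeing after $s \mapsto q^N$ for all (hence infinitely many) $N$, so by the injectivity of the map $\Q[\alpha][[h]] \to \prod_{k} \Q[[h]]$ they are equal. I expect the main obstacle to be \textbf{Step (i)}, specifically the bookkeeping required to see that the formal-variable evaluation $\CE$ really does reduce, under $s \mapsto q^N$, to the truncated evaluation of \cite{HL}: one has to track the passage from their ``quantum plane of dimension $N+1$'' picture to the Verma-module / formal-weight picture used here, and make sure the convergence of the $a_\pm$-series (Lemma \ref{lemma_qdet_a_series}) is what legitimizes treating the two as the same object before and after specialization. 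The framing factor and the $\pm$ conventions for $\sigma_i^{\pm 1}$ are a secondary source of friction but are handled by direct comparison with \cite[Sec.~0.1]{H-L}. No genuinely new topology or representation theory is needed beyond what Propositions \ref{prop_unicity_Foo}, Lemmas \ref{lemma_image_Epsilon_converge} and \ref{lemma_qdet_a_series} already provide.
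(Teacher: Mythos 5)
Your proposal is correct and follows essentially the same route as the paper: specialize $s$ to integral weights, identify the result with the colored Jones polynomials via \cite[Theorem~1]{H-L}, and conclude by the interpolation/unicity property of Proposition \ref{prop_unicity_Foo}. The only point to watch is the indexing convention in Step (i): the paper's evaluation $\CE_N$ substitutes $s=q^{N-1}$ (matching Huynh--L\^e's dimension-$N$ normalization) rather than $s=q^N$, which is exactly the bookkeeping issue you flagged.
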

\begin{proof}
Let $\CE_N$ be the evaluation corresponding to the substitution $s= q^{N-1}$. Then, for any $N \in \BN$:
\[
\CE_N \left( s^{(w(\beta)-m+1)/2} \CE \left( \frac{1}{\widetilde{\det_q} \left( \Id - q \rho'(\gamma)\right)} \right)  \right) = \Jones'_{\hat{\beta}} (N)
\]
where $\Jones'_{\hat{\beta}} (N)$ is the open $N^{\text{th}}$-colored Jones polynomial of $\hat{\beta}$ (It is \cite[Theorem~1]{H-L}). We conclude using the unique element interpolating colored Jones polynomials property, Prop. \ref{prop_unicity_Foo}. %{\color{red} TO CHECK}
%{\color{red} TO STATE MIEUX: As $F_{\infty}$ is the only element of (???) interpolating all Jones polynomials under specialization $s=q^{N-1}$, we find the result. (REMARK: There is a subtle detail for the convergence, maybe we need to be more precise here)}
\end{proof}

\begin{rem}
The entire proof of Theorem~1 from \cite{H-L} adapts to $F_{\infty}$ almost step by step and word by word, although here we have preferred to use a stronger and concise argument. The proof from \cite{H-L} explains in details the relations between the operators $a_{\pm} , b_{\pm} , c_{\pm}$ and the braiding of Verma modules, so that it is important for the understanding of the formula. 
\end{rem}

\begin{cor}[ADO polynomials from quantum determinant]
Let $\beta\in \CB_m$ be a braid which standard closure is a knot, and $\CE_{\zeta_r}$ be the composition of $\CE$ with the substitution $q= \zeta_r$. Then:
%\[
%\ADO_r(\hat{\beta})  = s^{(w(\beta)-m+1)/2} \CE_{\zeta_r} \left( \frac{\det \left( \Id - \rho'(\gamma) \right) }{\widetilde{\det_q} \left( \Id - q \rho'(\gamma) \right)} \right)
%\]

\[
\ADO_r(\hat{\beta})  = s^{(w(\beta)-m+1)/2}   F_r \left( \CE \left( \det \left( \Id - \rho'(\gamma) \right) \right) \right) \CE_{\zeta_{r}} \left( \frac{1}{ \widetilde{\det_q} \left( \Id - q \rho'(\gamma) \right)} \right) 
\] where $F_r: \Z[s^{\pm1}] \to \Z[s^{\pm1}]$, $s \mapsto s^r$.

%{\color{red} (I Think the formula could be cleaned up)}
\end{cor}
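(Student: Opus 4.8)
The plan is to deduce the ADO formula from the two already-established facts: the quantum-determinant expression for $F_\infty$ (Theorem~\ref{thm_F_from_qdet}) and the factorization of $F_\infty$ at roots of unity (Theorem~\ref{thm_factorisation_unified_ADO}). First I would specialize Theorem~\ref{thm_F_from_qdet} at $q = \zeta_r$ (equivalently $\zeta_{2r}$ with the appropriate convention already in use in Section~\ref{sec_Foo_factorization}). The left-hand side becomes $F_\infty(\zeta_{2r},A,\hat\beta)$, which by Theorem~\ref{thm_factorisation_unified_ADO} equals $\dfrac{(q^\alpha)^{rf}\,\ADO_r(A,\hat\beta)}{\A_{\hat\beta}(A^{2r})} = \dfrac{s^{rf}\,\ADO_r(A,\hat\beta)}{\A_{\hat\beta}(s^{2r})}$, where $f = w(\beta)$ up to the usual framing normalization already absorbed in the prefactor $s^{(w(\beta)-m+1)/2}$. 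Solving for $\ADO_r$, one gets $\ADO_r(\hat\beta)$ as $\A_{\hat\beta}(s^{2r})$ times the evaluation $\CE_{\zeta_r}\!\left(\frac{1}{\widetilde{\det_q}(\Id - q\rho'(\gamma))}\right)$, times the right power of $s$.

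The remaining task is therefore to identify $\A_{\hat\beta}(s^{2r})$ with $F_r\!\left(\CE\left(\det(\Id - \rho'(\gamma))\right)\right)$. This is exactly the classical Burau determinant formula for the Alexander polynomial, transported through the $\CE$-evaluation and the Frobenius map. Concretely: when one evaluates the operator matrix $\rho'(\beta)$ under $\CE$ — i.e.\ applies it to the constant function $1$ and sets $u_j = 1$, $x_j = y_j = s$ — each block $S_{j,\pm}$ collapses (by Lemma~\ref{lemma_image_Epsilon_converge}, or rather its order-zero part in the $a_\pm$-expansion) to a scalar $2\times 2$ matrix which is precisely a block of the reduced (unreduced-minus-one) Burau matrix with Burau variable $t = s^{-2}$, matching the computation already carried out in the proof of Proposition~\ref{prop_trace_factor}. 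Hence $\CE(\det(\Id - \rho'(\gamma)))$ equals, up to a monomial in $s$, the reduced Burau determinant, which is $(t^{1/2}\text{-normalized})$ the Alexander polynomial $\A_{\hat\beta}(s^{-2})$; applying $F_r\colon s\mapsto s^r$ turns this into $\A_{\hat\beta}(s^{-2r}) = \A_{\hat\beta}(s^{2r})$, the last equality being the standard symmetry $\A_\CK(u) = \A_\CK(u^{-1})$ of the Alexander polynomial (already invoked in Corollary~\ref{cor_symmetry_ado}). Care must be taken that the ``$(1-x)_q^d$'' tails in Lemma~\ref{lemma_image_Epsilon_converge} do not contribute: since we are taking an honest finite determinant $\det$, not the series $\widetilde{\det_q}$, only the $d=0$ terms survive the evaluation, so the identification with the plain Burau matrix is exact.

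The main obstacle I anticipate is bookkeeping the powers of $s$ (and possibly signs) so that the prefactor $s^{(w(\beta)-m+1)/2}$ on the right matches the $s^{rf}$ coming from the framing in Theorem~\ref{thm_factorisation_unified_ADO} together with the extra monomials produced by the $\CE$-evaluation of the Burau blocks. I would handle this by first checking the identity after specializing further at $s = q^{N-1}$ (where $F_\infty$ becomes the open colored Jones polynomial $\Jones'_{\hat\beta}(N)$ and $\ADO_r(q^{N-1}) = \Jones'_{\hat\beta}(N)|_{q=\zeta_{2r}}$, as noted in Proposition~\ref{prop_unicity_Foo}), pinning the exponent there, and then invoking that all quantities involved are genuine elements of $\Laurent$ or $\Laurentcomplet$ — so an equality valid on an infinite set of specializations of $s$ forces the equality of exponents. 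Alternatively, and more cleanly, one simply rearranges the already-proven factorization $F_\infty(\zeta_{2r}) = s^{rf}\ADO_r/\A(s^{2r})$ into $\ADO_r = s^{-rf}\A(s^{2r})\,F_\infty(\zeta_{2r})$ and substitutes the Theorem~\ref{thm_F_from_qdet} expression for $F_\infty(\zeta_{2r})$ directly, so that no independent exponent computation is needed beyond recognizing $\A(s^{2r}) = F_r(\CE(\det(\Id-\rho'(\gamma))))$ up to the stated normalization; the $s^{rf}$ and the $s^{(w(\beta)-m+1)/2}$ then combine into the single prefactor displayed in the corollary.
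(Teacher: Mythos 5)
Your proposal is correct and follows essentially the same route as the paper: combine Theorem~\ref{thm_F_from_qdet} with the factorization Theorem~\ref{thm_factorisation_unified_ADO} and identify $\CE\left(\det\left(\Id-\rho'(\gamma)\right)\right)$ with the Alexander polynomial (the paper simply cites \cite[Theorem~1(b)]{H-L} for this last point, which you instead re-derive via the Burau specialization). The extra detail you supply is consistent with the paper's one-line proof and adds nothing contradictory.
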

\begin{proof}
Straightforward consequence of Theorems \ref{thm_F_from_qdet} and \ref{thm_factorisation_unified_ADO}, and the fact that $\CE \left( \det \left( \Id - \rho'(\gamma) \right) \right)$ is the Alexander polynomial (\cite[Theorem~1(b)]{H-L}). 
\end{proof}

\bibliographystyle{abbrv}
\bibliography{unified_inv_braid_rep.bib}
\end{document}